\numberwithin{equation}{section}
\theoremstyle{definition}
\newtheorem{example}{Example}[section]
\newtheorem{defn}[example]{Definition}
\newtheorem{remark}[example]{Remark}
\newtheorem{defprop}[example]{Definition-Proposition}
\newtheorem{propdef}[example]{Proposition-Definition}
\theoremstyle{plain}
\newtheorem{lem}[example]{Lemma}
\newtheorem{thm}[example]{Theorem}
\newtheorem{prop}[example]{Proposition}
\newtheorem{cor}[example]{Corollary}
\newtheorem{conjecture}[example]{Conjecture}
\newlength\squareheight
\DeclareMathOperator{\bAL}{\mathbb{AL}}
\DeclareMathOperator{\bM}{\mathbb{M}}
\DeclareMathOperator{\bQ}{\mathbb{Q}}
\DeclareMathOperator{\bR}{\mathbb{R}}
\DeclareMathOperator{\bZ}{\mathbb{Z}}
\DeclareMathOperator{\cA}{\mathcal{A}}
\DeclareMathOperator{\cC}{\mathcal{C}}
\DeclareMathOperator{\cF}{\mathcal{F}}
\DeclareMathOperator{\cG}{\mathcal{G}}
\DeclareMathOperator{\cM}{\mathcal{M}}
\DeclareMathOperator{\cS}{\mathcal{S}}
\DeclareMathOperator{\cT}{\mathcal{T}}
\DeclareMathOperator{\be}{\mathbf{e}}
\DeclareMathOperator{\bv}{\mathbf{v}}
\DeclareMathOperator{\cl}{\mathsf{c}}
\DeclareMathOperator{\sC}{\mathsf{C}}
\DeclareMathOperator{\sM}{\mathsf{M}}
\DeclareMathOperator{\sX}{\mathsf{X}}
\renewcommand{\mod}{{\rm mod}}
\DeclareMathOperator{\add}{{\rm add}}
\DeclareMathOperator{\Cok}{{\rm Cok}}
\DeclareMathOperator{\End}{{\rm End}}
\DeclareMathOperator{\Fac}{{\rm Fac}}
\DeclareMathOperator{\Hom}{{\rm Hom}}
\DeclareMathOperator{\Int}{{\rm Int}}
\DeclareMathOperator{\proj}{{\rm proj}}
\DeclareMathOperator{\trigid}{{\rm \tau-rigid}}
\DeclareMathOperator{\itrigid}{{\rm i\tau-rigid}}
\DeclareMathOperator{\sttilt}{{\rm s\tau-tilt}}
\DeclareMathOperator{\ctilt}{{\rm c-tilt}}
\DeclareMathOperator{\rigid}{{\rm rigid}}
\DeclareMathOperator{\irigid}{{\rm irigid}}
\DeclareMathOperator{\rctilt}{{\rm c-tilt^{\Sigma}}}
\DeclareMathOperator{\rrigid}{{\rm rigid^{\Sigma}}}
\DeclareMathOperator{\ririgid}{{\rm irigid^{\Sigma}}}
\DeclareMathOperator{\udim}{\underline{\rm dim}}
\title[Dimension vectors and $f$-vectors from triangulated surfaces]
{Dimension vectors of $\tau$-rigid modules and $f$-vectors of cluster monomials from triangulated surfaces}
\author{Toshiya Yurikusa}
\address{Laboratoire de Math\'{e}matiques de Versailles, UVSQ, CNRS, Universit\'{e} Paris-Saclay, 78035 Versailles, France and Mathematical Institute, Tohoku University, Sendai 980-8578, Japan}
\email{toshiya.yurikusa.d8@tohoku.ac.jp}
\subjclass[2010]{13F60, 16G20}
\keywords{Cluster algebra, marked surface, denominator conjecture, $f$-vector, $\tau$-tilting theory, dimension vector}
\begin{document}

\begin{abstract}
For the cluster algebra $\mathcal{A}$ associated with a triangulated surface, we give a characterization of the triangulated surface such that different non-initial cluster monomials in $\mathcal{A}$ have different $f$-vectors. Similarly, for the associated Jacobian algebra $J$, we give a characterization of the triangulated surface such that different $\tau$-rigid $J$-modules have different dimension vectors. Moreover, we also show that different basic support $\tau$-tilting $J$-modules have different dimension vectors. Our main ingredient is a notion of intersection numbers defined by Qiu and Zhou. As an application, we show that the denominator conjecture holds for $\mathcal{A}$ if the marked surface is a closed surface with exactly one puncture, or the given tagged triangulation has neither loops nor tagged arcs connecting punctures.
\end{abstract}
\maketitle

\section{Introduction}\label{sec:intro}

Cluster algebras \cite{FZ02} are commutative algebras with generators called cluster variables. The certain tuples of cluster variables are called clusters and they have combinatorial structures called mutations. Their original motivation was to study total positivity of semisimple Lie groups and canonical bases of quantum groups.  In recent years, cluster algebras have interacted with various subjects in mathematics, for example, representation theory of quivers, Poisson geometry, integrable systems, and so on.

In a cluster algebra with principal coefficients, by Laurent phenomenon (Proposition \ref{prop:Laurent phenomenon}), every nonzero element $x$ is expressed by a Laurent polynomial of the initial cluster variables $(x_1,\ldots,x_n)$ and coefficients $(y_1,\ldots,y_n)$
\[
x=\frac{F(x_1, \ldots,x_n,y_1,\ldots,y_n)}{x_1^{d_1} \cdots x_n^{d_n}},
\]
where $d_i\in\bZ$ and $F(x_1, \ldots, x_n,y_1,\ldots,y_n)\in\bZ[x_1,\ldots, x_n,y_1,\ldots,y_n]$ is not divisible by any $x_i$. We call $d(x):=(d_i)_{1 \le i \le n}$ the denominator vector of $x$. For the maximal degree $f_i$ of $y_i$ in the $F$-polynomial $F(1,\ldots,1,y_1,\ldots,y_n)$ of $x$, we call $f(x):=(f_i)_{1 \le i \le n}$ the $f$-vector of $x$.

A monomial in cluster variables belonging to the same cluster is called a cluster monomial. The following conjecture is known as the denominator conjecture in cluster algebra theory.

\begin{conjecture}[{\cite[Conjecture 4.17]{FZ03}}]\label{conj:denominator}
For any cluster monomials $x$ and $x'$ in a cluster algebra, if $d(x)=d(x')$, then $x=x'$.
\end{conjecture}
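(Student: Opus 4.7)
The plan is to deduce the denominator conjecture, in the two special cases named in the abstract (closed surface with exactly one puncture, or a tagged triangulation with no loops and no tagged arcs between punctures), from the $f$-vector analogue proved earlier in the paper. That main theorem asserts that under these hypotheses different non-initial cluster monomials have different $f$-vectors, so it suffices to show that on non-initial cluster monomials the $f$-vector determines the denominator vector---most cleanly, that $d(x)=f(x)$. The initial cluster variables cause no trouble, because $d(x_j)=-\be_j$ has a negative entry whereas non-initial cluster monomials have nonnegative denominator vectors, and among products of initial variables the vector of exponents is itself the denominator.

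The heart of the proof is thus the equality $d(x)=f(x)$ for non-initial cluster monomials under the geometric hypotheses. The inequality $f_i(x)\leq d_i(x)$ is standard: the $F$-polynomial degree in $y_i$ is bounded by the pole order of the Laurent expansion at $x_i$. For the reverse inequality I would give a geometric interpretation of both sides as intersection numbers of the tagged arc(s) representing $x$ with each initial arc of the triangulation---the $d$-side via the Fomin--Shapiro--Thurston surface model, and the $f$-side via the Qiu--Zhou intersection numbers that are the paper's main tool. When the initial triangulation has no self-folded triangle and no cluster arc is tagged at a puncture, these two counts coincide arc by arc, so the two vectors agree. The hypothesis ``no loops, no tagged arcs between punctures'' rules out both sources of discrepancy directly; for the closed one-punctured case, one checks that the only available tagging pattern at the lone puncture makes the two counts agree after a global combinatorial reconciliation.

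The main obstacle will be exactly this second case: on a closed one-punctured surface the initial triangulation is forced to contain loops and generically self-folded triangles, and typical non-initial cluster monomials involve notched arcs at the puncture, so the arc-by-arc agreement of $d$ and $f$ need not hold. The point, I expect, is that the local discrepancies are global invariants of the topology (they record how often one has to tag), and because there is a single puncture these discrepancies cancel when comparing the two vectors of the same cluster monomial. Carrying this cancellation out precisely, by matching the Qiu--Zhou tagged intersection numbers to the denominator contributions coming from self-folded triangles, is the technical heart of this special case and is where I would expect the \emph{a priori} estimates to require the most care.
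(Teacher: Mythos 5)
Your overall architecture is the same as the paper's proof of Theorem \ref{thm:conj}: reduce the statement to non-initial cluster monomials, then combine the equality $d(x)=f(x)$ with the injectivity of $f$-vectors on non-initial cluster monomials (Theorem \ref{thm:unique f-vector}). The reduction you sketch is essentially Lemma \ref{lem:conj non-initial}; note, however, that to recover the decomposition $x=x_Ix_N$ from $d(x)$ you need not only non-negativity of denominator vectors of non-initial cluster monomials but also the second half of Theorem \ref{thm:denominator} (from \cite{CL20}): if the initial variable $x_k$ lies in a cluster containing the non-initial part $x_N$, then the $k$-th entry of $d(x_N)$ vanishes. Without this, a negative entry contributed by $x_I$ could be cancelled against a positive entry of $d(x_N)$, and the splitting would not be determined by $d(x)$ alone.

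The genuine gap is the equality $d(x)=f(x)$ for non-initial cluster monomials under the stated hypotheses, which you assert but do not prove. The paper does not re-derive it either: it is exactly Theorem \ref{thm:d=f}, quoted from \cite{Y24}, and it is a nontrivial theorem in its own right. Your sketch for the case where $T$ has no loops and no tagged arcs connecting punctures is imprecise, since the hypothesis constrains $T$ and not the arcs representing $x$; arcs notched at punctures do occur among the non-initial arcs there, so the claimed ``arc-by-arc'' agreement is not immediate. More seriously, your picture of the closed once-punctured case is inaccurate: such a surface has no self-folded triangles (its tagged triangulations decompose into triangle pieces only, as in Subsection \ref{subsec:1-punc}), and by Theorem \ref{thm:bijection x}(2) cluster variables correspond only to tagged arcs carrying the same tags as $T$, so no notched arcs relative to $T$ enter at all; the ``cancellation of local tagging discrepancies'' you hope for is not the mechanism by which $d=f$ holds there. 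As written, the hardest ingredient of your argument is acknowledged but not supplied; if you instead invoke Theorem \ref{thm:d=f} (\cite{Y24}) at that point, your proof closes and coincides with the paper's.
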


Conjecture \ref{conj:denominator} was proved for cluster algebras of rank $2$ \cite{SZ04}, of finite type \cite{FG22,FG24b}, acyclic cluster algebras with respect to an acyclic initial seed \cite{RS20} (see also \cite{CK06,CK08,FZ07}), and cluster algebras associated with triangulated surfaces with respect to certain initial seed \cite{FG24a} (see the remark after Theorem 1.7). Recently, Fei \cite{F} gave its counterexample (Example \ref{ex:counterexample}). It will give rise to a natural question: For which cluster algebras, does Conjecture \ref{conj:denominator} hold?

One of our motivations is to study the question and an analogue of Conjecture \ref{conj:denominator} for $f$-vectors. Since all $f$-vectors of initial cluster variables are zero, we restrict to non-initial cluster monomials, that are cluster monomials in non-initial cluster variables. In an acyclic skew-symmetric cluster algebra with respect to an acyclic initial seed, it immediately follows from \cite{CK06,FK10} that different non-initial cluster monomials have different $f$-vectors. However, it does not hold in general. We study it for cluster algebras associated with triangulated surfaces that were developed in \cite{FG06, FG09, FoST08, FT18, GSV05}. One of our results gives a characterization of the triangulated surface such that different non-initial cluster monomials in the associated cluster algebra have different $f$-vectors (Theorem \ref{thm:unique f-vector}). As an application, we show that Conjecture \ref{conj:denominator} holds if the marked surface is a closed surface with exactly one puncture, or the given tagged triangulation has neither loops nor tagged arcs connecting punctures (Theorem \ref{thm:conj}).

Our other motivation comes from $\tau$-tilting theory \cite{AIR14}. In representation theory of finite dimensional algebras $\Lambda$, one of important problems is to describe the isomorhism classes of $\Lambda$-modules with the same dimension vector. From this point of view, restricting to $\tau$-rigid modules, we consider when different $\tau$-rigid $\Lambda$-modules have different dimension vectors. Via a categorification of cluster algebras, this problem is closely related to the above problem for $f$-vectors (see Section \ref{sec:tau}). As above, we consider this problem for certain Jacobian algebras $J$ associated with triangulated surfaces. We give a characterization of the triangulated surface such that different $\tau$-rigid $J$-modules have different dimension vectors (Theorem \ref{thm:unique rigid}), and we also show that different basic support $\tau$-tilting $J$-modules have different dimension vectors (Theorem \ref{thm:unique sttilt}).

Remark that there are some weak or related results for the above problems as follows:
\begin{itemize}
\item Different cluster variables in a finite type or affine type cluster algebra have different denominator vectors. Moreover, different indecomposable $\tau$-rigid modules over the associated Jacobian algebra have different dimension vectors \cite{FG19,GP12,R11}.
\item Different non-initial cluster variables in the cluster algebra associated with a triangulated surface have different $f$-vectors if and only if the given tagged triangulation does not have two tagged arcs connecting two (possibly same) common punctures such that the underlying curves are different \cite{GY20}.
\item Different $\tau$-rigid modules over a finite dimensional gentle algebra $\Lambda$ have different dimension vectors if and only if the quiver of $\Lambda$ does not admit an oriented cycle of even length with full relations \cite{FG22}.
\end{itemize}
In particular, our results for intersection numbers and $f$-vectors in Subsections \ref{subsec:result cluster} and \ref{subsec:result tau} can be seen as a natural generalization of ones in \cite{GY20}.

This paper is organized as follows: In the rest of this section, we give results of this paper. Since our main ingredient is a notion of intersection numbers defined in \cite{QZ17}, we first state our results in terms of intersection numbers. After that, we restate them in terms of $f$-vectors in cluster algebra theory and dimension vectors in $\tau$-tilting theory. In Section \ref{sec:cS}, we are devoted to studying on triangulated surfaces and prove theorems of the next subsection. Theorem \ref{thm:segment} plays an important role to prove them, and its proof will be given in Section \ref{sec:modif}. To prove Theorem \ref{thm:segment}, we introduce and study a notion of modifications for multi-sets of certain curves. In Section \ref{sec:cluster}, we recall cluster algebras associated with triangulated surfaces and prove our results in Subsection \ref{subsec:result cluster} for $f$-vectors. In Section \ref{sec:tau}, we recall $\tau$-tilting theory and cluster tilting theory. Via a categorification of cluster algebras, we prove our results in Subsection \ref{subsec:result tau} for dimension vectors.

\medskip\noindent{\bf Notations}.
In this paper, we assume that all sets and multi-sets are finite. A set, of course, means a non-multi-set. For a multi-set
\[
S=\{s_1,\ldots,s_1,s_2,\dots,s_{n-1},s_n,\ldots,s_n\},
\]
we denote by $m_S(s)$ the multiplicity of an element $s$, where $m_S(s)=0$ if $s\notin S$, and we represent it as
\[
S=\{s_1^{m_S(s_1)},\ldots,s_n^{m_S(s_n)}\}.
\]
Moreover, for $(r_{s_1},\ldots,r_{s_n})\in\bR^n$, we define a sum over the multi-set $S$ as follows:
\[
\sum_{s\in S}r_s:=\sum_{i=1}^nm_S(s_i)r_{s_i}.
\]
For multi-sets $S_1,\ldots,S_m$, their sum (or disjoint union) is a multi-set such that the multiplicity of each element $s$ is given by $m_{S_1}(s)+\cdots+m_{S_m}(s)$. We denote it by
\[
\bigsqcup_{i=1}^mS_i.
\]

\subsection{Our results on triangulated surfaces}\label{subsec:result}

We refer to Section \ref{sec:cS} for the details on this subsection. Let $\cS$ be a marked surface. A \emph{punctured loop} is a loop whose both ends are tagged in the same way such that it cuts out a monogon with exactly one puncture (see Figure \ref{fig:punctured loop}). Note that punctured loops are not tagged arcs. To a pair of conjugate arcs, we associate a punctured loop as in Figure \ref{fig:punctured loop}.

%
%
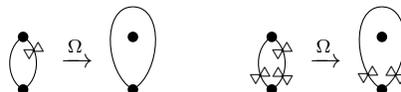
\begin{figure}[ht]
\[
\begin{tikzpicture}[baseline=2mm]
\coordinate(u)at(90:1);\coordinate(p)at(0,0.6);\coordinate(d)at(0,-0.1);
\draw(d)to[out=150,in=-150](p);\draw(d)to[out=30,in=-30]node[pos=0.7]{\rotatebox{30}{\footnotesize $\bowtie$}}(p);
\fill(p)circle(0.07);\fill(d)circle(0.07);
\end{tikzpicture}
\xrightarrow{\Omega}
\begin{tikzpicture}[baseline=2mm]
\coordinate(u)at(90:1);\coordinate(p)at(0,0.6);\coordinate(d)at(0,-0.1);
\draw(d)to[out=150,in=180](u);\draw(d)to[out=30,in=0](u);
\fill(p)circle(0.07);\fill(d)circle(0.07);
\end{tikzpicture}
\hspace{10mm}
\begin{tikzpicture}[baseline=2mm]
\coordinate(u)at(90:1);\coordinate(p)at(0,0.6);\coordinate(d)at(0,-0.1);
\draw(d)to[out=150,in=-150]node[pos=0.3]{\rotatebox{30}{\footnotesize $\bowtie$}}(p);
\draw(d)to[out=30,in=-30]node[pos=0.3]{\rotatebox{150}{\footnotesize $\bowtie$}}node[pos=0.7]{\rotatebox{30}{\footnotesize $\bowtie$}}(p);
\fill(p)circle(0.07);\fill(d)circle(0.07);
\end{tikzpicture}
\xrightarrow{\Omega}
\begin{tikzpicture}[baseline=2mm]
\coordinate(u)at(90:1);\coordinate(p)at(0,0.6);\coordinate(d)at(0,-0.1);
\draw(d)to[out=150,in=180]node[pos=0.2]{\rotatebox{30}{\footnotesize $\bowtie$}}(u);
\draw(d)to[out=30,in=0]node[pos=0.2]{\rotatebox{150}{\footnotesize $\bowtie$}}(u);
\fill(p)circle(0.07);\fill(d)circle(0.07);
\end{tikzpicture}
\]
\caption{Punctured loops associated with pairs of conjugate arcs}
\label{fig:punctured loop}
\end{figure}

We denote by $\bM_{\cS}$ the set of all multi-sets of pairwise compatible tagged arcs in $\cS$. For $U\in\bM_{\cS}$, we denote by $\Omega(U)$ the multi-set obtained from $U$ by replacing a maximal set of disjoint pairs of conjugate arcs with the corresponding punctured loops (Definition-Proposition \ref{defprop:Omega}).

Let $T$ be a tagged triangulation of $\cS$. We assume that all tags in $\Omega(T)$ are plain (see Subsection \ref{subsec:Int}). For $U\in\bM_{\cS}$ with $U\cap T=\emptyset$, we consider a decomposition
\[
\Omega(U)=U_1\sqcup U_2,
\]
where $U_2$ consists of all $2$-notched curves in $\Omega(U)$ whose underlying plain curves are in $\Omega(T)$. For each puncture $p$, we denote by $n(U_2,p)$ the number of notched tags in $U_2$ incident to $p$.

On the other hand, we consider the intersection vector of $U\in\bM_{\cS}$ with respect to $T$
\[
\Int_T(U):=\left(\Int(t,U)\right)_{t\in T}:=\left(\sum_{u\in U}\Int(t,u)\right)_{t\in T}\in\bZ_{\ge 0}^T,
\]
where $\Int(t,u)$ is an intersection number defined in \cite{QZ17} (see Definition \ref{def:Int}). We are ready to state the main result in this paper.

\begin{thm}\label{thm:main}
Let $T$ be a tagged triangulation of $\cS$, and $U,V\in\bM_{\cS}$ with $U\cap T=V\cap T=\emptyset$. If $\Int_T(U)=\Int_T(V)$, then $U_1=V_1$ and $n(U_2,p)=n(V_2,p)$ for all punctures $p$.
\end{thm}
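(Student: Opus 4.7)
The plan is to leverage additivity of intersection numbers,
\[
\Int_T(U)=\sum_{u\in\Omega(U)}m_{\Omega(U)}(u)\,\Int_T(u),
\]
and reduce the reconstruction of $U_1$ and of the notch counts $n(U_2,p)$ to an analysis of how each individual curve in $\Omega(U)$ contributes to the intersection vector. The technical engine should be Theorem \ref{thm:segment} announced in the introduction, which (as its name suggests) encodes a multi-set of curves via its segments with respect to $T$.

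First I would isolate the contribution of $U_2$. Each $u\in U_2$ is a $2$-notched curve whose underlying plain curve already lies in $\Omega(T)$; in particular, the underlying curves do not cross arcs of $T$ transversely, so $\Int_T(u)$ can be nonzero only at coordinates corresponding to arcs of $T$ incident to the punctures where $u$ is notched, and its value there depends only on the incidence, not on which specific curve in $\Omega(T)$ lies underneath $u$. Hence $\sum_{u\in U_2}\Int_T(u)$ depends only on the integers $n(U_2,p)$, and those numbers can be read off from the components of $\Int_T(U)$ localized at the arcs of $T$ incident to each puncture $p$. Subtracting this contribution isolates the residual intersection vector $\sum_{u\in U_1}m_{\Omega(U)}(u)\,\Int_T(u)$ corresponding to $U_1$ alone.

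Second, for $U_1$ I would apply Theorem \ref{thm:segment}: the residual intersection vector should encode the multi-set of segments appearing in the curves of $U_1$ (the pieces cut out by the arcs of $T$). Since every curve in $U_1$ has underlying plain curve not lying in $\Omega(T)$, each such curve is genuinely a concatenation of several segments, and the residual intersection vector counts these segments with the correct multiplicities. Pairwise compatibility of the curves in $\Omega(U)$, together with the assumption $U\cap T=\emptyset$ and the hypothesis that all tags in $\Omega(T)$ are plain, should then force the reassembly of segments into curves to be unambiguous.

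The main obstacle is precisely this reassembly step: even once the multi-set of segments is known, one must argue that the unique way to concatenate them into a pairwise compatible multi-set of curves produces exactly $U_1$. In principle, two distinct multi-sets of compatible curves could share a segment multi-set, and the delicate point is to rule this out. This is where the modifications of multi-sets of curves developed in Section \ref{sec:modif} enter the picture: one introduces local moves that preserve the segment multi-set and studies their effect on the underlying curve multi-set, and the proof of Theorem \ref{thm:segment} must show that no such modification produces a distinct multi-set of compatible curves satisfying $U\cap T=\emptyset$ and the standing assumption on $\Omega(T)$.
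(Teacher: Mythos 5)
Your overall strategy of passing to segments with respect to $T$ and invoking Theorem \ref{thm:segment} plus the modification machinery of Section \ref{sec:modif} is in the right spirit, but the first step of your plan contains a genuine gap that effectively assumes the hard part of the theorem. You propose to isolate the contribution of $U_2$ first, claiming that the numbers $n(U_2,p)$ ``can be read off from the components of $\Int_T(U)$ localized at the arcs of $T$ incident to each puncture $p$,'' and then to subtract it. This is unjustified: the curves in $U_1$ contribute to exactly those same coordinates, both through transversal crossings ($A$-terms) and through their own notched ends at $p$ ($B$-terms), and there is no a priori way to disentangle the two contributions coordinate by coordinate before $U_1$ is known. (Already in Example \ref{ex:T Int} the curve $\beta\in U_1$, notched at the bottom puncture, contributes to every coordinate incident to that puncture, on top of the contribution of the $2$-notched loop in $U_2$.) Asserting that the $U_2$-part of the vector is recoverable in isolation is essentially a restatement of the theorem, not a reduction of it, so the subtraction that launches your argument cannot be performed.

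The paper's proof runs in the opposite order and avoids this separation entirely: all notched ends of $\Omega(U)$ — those of $U_1$ and $U_2$ alike — are converted into closed curves $c_p$ in the auxiliary multi-set $U^{\circ}$, whose segment decomposition is $S_U$. Note also that $S_U$ is \emph{not} pairwise compatible (the circles $c_p$ cross curves of $U_0$ ending at $p$), so the intersection vector does not directly ``count'' the segments of $U_1$ as in your second paragraph; one first passes to the maximal modification $\Phi(S_U)$, which is compatible and satisfies $\Int(\gamma,\Phi(S_U)\cap\triangle)=\Int(\gamma,U)$. Then Propositions \ref{prop:Int triangle} and \ref{prop:Int monogon} determine $\Phi(S_U)$ piecewise from $\Int_T(U)$, the recoverability half of Theorem \ref{thm:segment} gives $S_U=S_V$, gluing (Lemmas \ref{lem:recover0} and \ref{lem:recover1}) recovers $U_1=V_1$ together with the \emph{total} notch counts $n(\Omega(U),p)=n(\Omega(V),p)$, and only at the very end does one obtain $n(U_2,p)=n(\Omega(U),p)-n(U_1,p)$ by subtraction. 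So the quantity you wanted to extract first is, in the actual proof, the last thing recovered; to repair your argument you would have to replace your first step by this (or an equivalent) construction rather than a direct localization of the intersection vector.
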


In Theorem \ref{thm:main}, $U$ and $V$ do not coincide in general. We give a sufficient condition of $U$ and $V$ such that they coincide, and a necessary and sufficient condition of $T$ such that they always coincide. Let $G_T$ be a (multi-)graph whose vertices are punctures in $\cS$ incident to $\Omega(T)$, and whose edges are tagged arcs and punctured loops in $\Omega(T)$ connecting punctures.

\begin{thm}\label{thm:unique tag tri}
Let $T$, $U$, and $V$ be tagged triangulations of $\cS$. If $\Int_T(U)=\Int_T(V)$, then $U=V$.
\end{thm}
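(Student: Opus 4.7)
The plan is to reduce to Theorem \ref{thm:main} and then use the maximality of tagged triangulations to sharpen its conclusion.

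First I would separate the arcs that $U$ and $V$ share with $T$. For any $t \in U \cap T$, pairwise compatibility of the tagged triangulation $U$ gives $\Int(t,u) = 0$ for all $u \in U$, so the $t$-th coordinate of $\Int_T(U)$ vanishes. The hypothesis forces the $t$-th coordinate of $\Int_T(V)$ to vanish as well, and nonnegativity of intersection numbers yields $\Int(t,v) = 0$ for every $v \in V$; hence $t$ is compatible with every arc of $V$. Maximality of the tagged triangulation $V$ then gives $t \in V$, and by symmetry $U \cap T = V \cap T$. Setting $U' := U \setminus T$ and $V' := V \setminus T$, we have $U' \cap T = V' \cap T = \emptyset$, and the shared arcs contribute $0$ to both intersection vectors, so $\Int_T(U') = \Int_T(V')$. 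Theorem \ref{thm:main} then yields
\[
U_1' = V_1' \quad\text{and}\quad n(U_2',p) = n(V_2',p) \text{ for every puncture } p,
\]
where $\Omega(U') = U_1' \sqcup U_2'$ and $\Omega(V') = V_1' \sqcup V_2'$ are the canonical decompositions.

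The crux is to upgrade this to $U_2' = V_2'$. Every element of $U_2'$ is a $2$-notched curve whose underlying plain curve lies in $\Omega(T)$ and is determined by that plain curve, so $U_2'$ is encoded by a sub-multi-set $\bar U_2' \subseteq \Omega(T)$, and likewise $\bar V_2' \subseteq \Omega(T)$ for $V_2'$. Each curve in $\bar U_2'$ contributes exactly two notched ends, distributed over its endpoints, to the degrees $(n(U_2',p))_p$. I would then exploit that $U$ and $V$ are tagged triangulations: at every puncture $p$, all tagged arcs of $U$ (respectively $V$) incident to $p$ share the same tag at $p$, except possibly for a single conjugate pair at $p$, whose image under $\Omega$ is a punctured loop already pinned down by $U_1' = V_1'$ whenever it falls into $U_1'$. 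This local uniformity, combined with the matching degrees $n(U_2',p) = n(V_2',p)$ at every puncture, should force $\bar U_2' = \bar V_2'$ by a puncture-by-puncture analysis.

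Finally, $U_1' = V_1'$ and $U_2' = V_2'$ give $\Omega(U') = \Omega(V')$, and since $\Omega$ is invertible on multi-sets of pairwise compatible tagged arcs (its inverse replaces each punctured loop by the associated conjugate pair with the same outer tag), we obtain $U' = V'$. Combined with $U \cap T = V \cap T$ this gives $U = V$. The main obstacle is the middle paragraph: translating the local tagged-triangulation data at each puncture into the equality $\bar U_2' = \bar V_2'$. I expect this step to require a case analysis according to whether the tagged arcs of $U$ at $p$ are uniformly plain, uniformly notched, or form a conjugate pair, together with a counting argument to match up the notched ends in $U_2'$ and $V_2'$.
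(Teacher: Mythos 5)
Your outer skeleton (strip off $U\cap T=V\cap T$, apply Theorem \ref{thm:main} to $U':=U\setminus T$ and $V':=V\setminus T$, then recover $U'=V'$ from $\Omega(U')=\Omega(V')$ via the bijectivity of $\Omega$) is sound, and your first paragraph is a legitimate reduction that the paper's own proof leaves implicit. But the proof has a genuine gap exactly where you locate it: the step $U_2'=V_2'$ is not established, and the mechanism you propose for it cannot work as stated. You plan to deduce $\bar U_2'=\bar V_2'$ from the equalities $n(U_2',p)=n(V_2',p)$ together with the local uniformity of tags at each puncture, ``by a puncture-by-puncture analysis.'' The degree data, however, is precisely the information that fails to determine $U_2$ in general: this is the content of Proposition \ref{prop:equivalence graph} and Theorem \ref{thm:unique Int}, with the failures exhibited in Figures \ref{fig:even}--\ref{fig:odd odd} (an even cycle, or two odd cycles, in $G_T$). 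Moreover, the tag-uniformity at punctures produces no local obstruction in those configurations: e.g.\ for a $4$-cycle $\gamma_1\gamma_2\gamma_3\gamma_4$ in $G_T$, hypothetical triangulations with $U_2'=\{\rho(\gamma_1),\rho(\gamma_3)\}$ and $V_2'=\{\rho(\gamma_2),\rho(\gamma_4)\}$ have identical degrees and perfectly consistent tag patterns at every single puncture; what rules them out is not a count at any puncture but the global maximality of $U$ and $V$. So the ``crux'' paragraph is not just unfinished, it is aimed at the wrong tool.

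The way to close the gap — and this is the paper's entire proof — is a maximality argument that never needs $U_2'=V_2'$ (nor the degree equality from Theorem \ref{thm:main}) as an intermediate step. Suppose $\gamma\in U\setminus V$. Since $V$ is a tagged triangulation, $\gamma$ is incompatible with $V$, hence with $\Omega(V)$ by Proposition \ref{prop:Int conjugate}, and again by Proposition \ref{prop:Int conjugate} some element of $\Omega(U)$ is incompatible with $\Omega(V)$. On the other hand, $U_1=V_1$ (Theorem \ref{thm:main}) makes $U_1$ compatible with $V_1\sqcup V_2$, and $U_2$ is compatible with $V_2$ because all curves involved are $2$-notched curves whose underlying plain curves lie in the pairwise compatible set $\Omega(T)$ and carry notched tags at every shared puncture. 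Hence $\Omega(U)$ is compatible with $\Omega(V)$, a contradiction; so $U\subseteq V$ and, symmetrically, $U=V$. If you want to keep your three-step structure, the same argument applied to an element $\rho(\gamma)\in V_2'\setminus U_2'$ (it is compatible with $U_1'=V_1'$, with $U_2'$, and with $U\cap T$, contradicting maximality of $U$) proves $U_2'=V_2'$ directly — but note that this is a global compatibility-plus-maximality argument, not the counting analysis your proposal envisions.
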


\begin{thm}\label{thm:unique Int}
Let $T$ be a tagged triangulation of $\cS$. Then the following are equivalent:
\begin{itemize}
\item[(1)] For any $U,V\in\bM_{\cS}$ with $U\cap T=V\cap T=\emptyset$, if $\Int_T(U)=\Int_T(V)$, then $U=V$.
\item[(2)] Each connected component of $G_T$ contains at most one cycle of odd length and no cycles of even length.
\end{itemize}
\end{thm}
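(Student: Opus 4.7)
The plan is to combine Theorem \ref{thm:main} with a classical fact about the unsigned incidence matrix of a multi-graph.

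For the direction (2) $\Rightarrow$ (1), assume $\Int_T(U)=\Int_T(V)$. Theorem \ref{thm:main} gives $U_1=V_1$ and $n(U_2,p)=n(V_2,p)$ for every puncture $p$, so it remains to show $U_2=V_2$. Each $2$-notched curve in $U_2$ has an underlying plain curve that is an edge of $G_T$, so $U_2$ is encoded by a multiplicity vector $m_U\in\bZ_{\ge 0}^{E(G_T)}$, and similarly for $V_2$ by $m_V$. The puncture-degree tuple $(n(U_2,p))_p$ is exactly $A\,m_U$, where $A$ is the unsigned incidence matrix of $G_T$ with a loop at $p$ contributing $2$ to the $p$-entry. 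So $U_2=V_2$ reduces to showing $\ker A = 0$ on non-negative integer vectors; passing back through $\Omega^{-1}$ together with $U_1=V_1$ then yields $U=V$.

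I would invoke the standard fact: on each connected component of $G_T$ with $n$ vertices and $m$ edges, $A$ has rank $n$ if the component contains an odd cycle (loops counting as length-$1$ odd cycles) and rank $n-1$ if the component is bipartite. Hence $\ker A = 0$ on a given component iff either $m=n-1$ (a tree) or $m=n$ with the component non-bipartite (unicyclic with odd cycle). Summing over components, this is condition (2) verbatim: no even cycles, and at most one odd cycle.

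For (1) $\Rightarrow$ (2), I would argue the contrapositive. Choose a nonzero $k\in\ker A\cap\bZ^{E(G_T)}$ supported on a component violating (2), write $k=k^+-k^-$ with disjoint non-negative parts, and set $U_1=V_1=\emptyset$ with $U_2,V_2$ the multi-sets of $2$-notched curves encoded by $k^+, k^-$. Then let $U:=\Omega^{-1}(U_2)$ and $V:=\Omega^{-1}(V_2)$ by replacing each $2$-notched punctured loop with its pair of conjugate arcs. Pairwise compatibility inside $U$ and $V$ follows from that of $\Omega(T)$ together with the uniform notching at each puncture; disjointness from $T$ holds because the arcs of $U,V$ are notched at their puncture endpoints while the arcs of $T$ projecting into $\Omega(T)$ are plain there. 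Finally, for a $2$-notched curve $u$ whose underlying plain curve lies in $\Omega(T)$, geometric crossings with $T$ vanish, so $\Int(t,u)$ is a function only of the notched endpoints of $u$; consequently $\Int_T(U_2)$ is a linear functional of $(n(U_2,p))_p$, giving $\Int_T(U)=\Int_T(V)$ from $Ak^+=Ak^-$, while $U\neq V$ since $k\neq 0$.

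The main obstacle will be this last verification, namely that $\Int(t,u)$ for $t\in T$ and $u$ a $2$-notched curve with underlying plain curve in $\Omega(T)$ depends only on the notched endpoints of $u$. I would extract this directly from the Qiu--Zhou intersection formula recalled in Section \ref{sec:cS}; once that is in hand the argument is a clean reduction to the incidence-matrix fact above.
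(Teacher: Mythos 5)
Your proposal is correct, and it reaches the statement by a genuinely different route for the combinatorial core. The paper derives the theorem from Theorem \ref{thm:main} together with Proposition \ref{prop:equivalence graph}, whose proof is a hands-on elimination (stripping degree-one vertices, solving explicitly around the unique odd cycle) for one direction and an explicit case analysis with families of counterexamples (an even cycle; two odd cycles meeting in at least two vertices, reduced to an even cycle; two disjoint odd cycles joined by a path of even or odd length, with multiplicity-two path curves) for the other. You replace all of this by the classical rank formula for the unsigned incidence matrix of a multigraph with loops (rank $n$ on non-bipartite components, $n-1$ on bipartite ones, loops counting as odd cycles), so that injectivity of $U_2\mapsto (n(U_2,p))_p$ is exactly the tree-or-odd-unicyclic condition, which matches (2) verbatim; and in the converse you manufacture the counterexamples abstractly from a nonzero integer kernel vector $k=k^+-k^-$ rather than case by case -- the paper's Figures \ref{fig:even}--\ref{fig:odd odd} are precisely the kernel vectors your argument produces. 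The step you single out as the main obstacle is indeed the one the paper leaves implicit: for a $2$-notched curve $u$ whose underlying plain curve lies in $\Omega(T)$ one has $A_{t,u}=0$ (the underlying curve is compatible with $T$, and $\Int\ge A$ forces $A=0$) and $C_{t,u}=0$ (no conjugate partner of $u$ survives in $T$ under \eqref{diamond}), so $\Int(t,u)=B_{t,u}$ counts plain ends of $t$ at the punctures carrying the notched ends of $u$; hence $\Int_T(U_2)$ is a linear function of $(n(U_2,p))_p$, exactly as you need, and this follows directly from Definition \ref{def:Int} and Proposition \ref{prop:Int conjugate}. One small imprecision: when a loop edge of $G_T$ occurs, $\Omega^{-1}$ of the corresponding $2$-notched punctured loop is a conjugate pair containing a $1$-notched arc that is plain at the enclosed puncture, so your blanket phrase that the arcs of $U,V$ are notched at all puncture endpoints is not literally true; disjointness from $T$ still holds because these arcs are notched at the loop's basepoint where the corresponding arcs of $T$ are plain, so the argument survives with that sentence adjusted. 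What your route buys is a shorter, more conceptual proof of the graph-theoretic equivalence with no case analysis; what the paper's route buys is a self-contained constructive argument with explicit curve configurations and no appeal to incidence-matrix rank facts.
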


Moreover, we can give a complete list of marked surfaces that have tagged triangulations satisfying the equivalent properties in Theorem \ref{thm:unique Int}.

\begin{thm}\label{thm:list cS}
Let $\cS'$ be a marked surface.
\begin{itemize}
\item[(1)] There is at least one tagged triangulation of $\cS'$ that satisfies the equivalent properties in Theorem \ref{thm:unique Int} if and only if the boundary of $\cS'$ is not empty.
\item[(2)] All tagged triangulations of $\cS'$ satisfy the equivalent properties in Theorem \ref{thm:unique Int} if and only if $\cS'$ is one of the following:
\begin{itemize}
\item A polygon with at most two punctures.
\item An annulus with at most one puncture.
\item A marked surface with no punctures.
\end{itemize}
\end{itemize}
\end{thm}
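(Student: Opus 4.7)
Let $(\star)$ denote the graph-theoretic condition on $G_T$: each connected component contains at most one simple cycle, which (if present) is odd. By Theorem \ref{thm:unique Int}, $(\star)$ is equivalent to the uniqueness property, so the plan is to decide when $(\star)$ is realizable by some $T$ (for Part (1)) and when it is forced for every $T$ (for Part (2)).

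For Part (1), the ``if'' direction is by explicit construction. Given $\partial\cS'\neq\emptyset$, for each puncture $p$ I pick a boundary marked point $b_p$ and insert a self-folded triangle at $p$ based at $b_p$, i.e.\ a conjugate pair of tagged arcs from $b_p$ to $p$. These self-folded monogons can be made pairwise disjoint. The complement is a bordered surface with no punctures, triangulated by any tagged triangulation whose arcs have boundary endpoints. Under $\Omega$, each conjugate pair becomes a punctured loop based at $b_p$, contributing no edge to $G_T$, and no remaining arc of $\Omega(T)$ has a puncture endpoint, so $G_T$ is empty. For the ``only if'' direction, assume $\partial\cS'=\emptyset$; then every arc of $T$ joins two punctures. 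Writing $p_1$ for the number of enclosed punctures (those incident in $T$ only to a conjugate pair), $|T|=6g-6+3p$ and $|\Omega(T)|=|T|-p_1$, and every edge of $\Omega(T)$ is an edge of $G_T$, so
\[
e_{G_T}-v_{G_T}=(6g-6+3p-p_1)-(p-p_1)=6g-6+2p>0
\]
for every closed marked surface admitting a triangulation in the usual framework. Since $\sum_i(e_i-v_i+1)=e-v+c>c$, at least one component has cyclomatic number $\ge 2$, giving multiple independent cycles and violating $(\star)$.

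For Part (2), the ``if'' direction is a case analysis. If $\cS'$ has no punctures, $G_T$ is empty. For a polygon with at most one puncture, any simple loop at the puncture bounds disks on both sides and is therefore contractible, so no tagged loop at the puncture exists and $G_T$ has no edges. For a polygon with two punctures, the empty-bigon principle forces at most one underlying arc between them; the tagged-arc compatibility rules then permit only a single tagged arc or a single conjugate pair, and two mutually enclosing loops at $p_1$ and $p_2$ would cross. In every sub-case $G_T$ has at most one cycle, a length-$1$ odd loop. For an annulus with at most one puncture, a non-contractible simple loop at the puncture is unique up to isotopy, so $G_T$ has at most one length-$1$ odd loop. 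For the ``only if'' direction, for every $\cS'$ with boundary outside the listed families I construct a tagged triangulation violating $(\star)$: in a polygon with $\ge 3$ punctures $p_1,p_2,p_3$, two parallel arcs from $p_1$ to $p_2$ with $p_3$ in the enclosed bigon together with arcs $p_1p_3$ and $p_2p_3$ inside produce a $2$-cycle (even) in $G_T$; in an annulus with $\ge 2$ punctures, two arcs from $p_1$ to $p_2$ with distinct windings around the annular direction produce a $2$-cycle; in any remaining surface with boundary and a puncture (pair-of-pants-like or higher genus), multiple non-isotopic simple loops at the puncture, made possible by the richer $\pi_1$, give multiple length-$1$ odd cycles in a single component. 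In each case the local configuration extends to a full tagged triangulation by triangulating the complementary regions.

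The main obstacle is the ``if'' direction of (2), especially the twice-punctured polygon: one must combine the empty-bigon principle with a careful analysis of conjugate pairs and the $\Omega$-operation to exclude any configuration producing an even cycle or two independent odd cycles in $G_T$. A secondary difficulty in the ``only if'' direction of (2) is verifying that the proposed local bad configurations extend uniformly to global tagged triangulations across the full range of remaining topological types.
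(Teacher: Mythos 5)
Your proposal is correct and its overall skeleton matches the paper's: reduce everything to the graph condition of Theorem \ref{thm:unique Int}(2), prove existence in (1) by enclosing every puncture in a punctured loop based at the boundary (so that $G_T$ is empty), verify the three listed families in (2) directly, and refute all remaining surfaces with boundary by exhibiting explicit bad subconfigurations (two parallel arcs between two punctures giving an even $2$-cycle, or two disjoint essential loops at one puncture giving two odd $1$-cycles), which is exactly what the paper does via Figure \ref{fig:small graph}. The one genuinely different ingredient is your treatment of the empty-boundary case: you argue globally, computing $e_{G_T}-v_{G_T}=(6g-6+3p-p_1)-(p-p_1)=6g-6+2p\ge 2$ for every admissible closed surface and concluding that some component of $G_T$ has cyclomatic number at least two, hence contains either an even cycle or two odd cycles; the paper instead argues locally, running through the possible adjacencies of two puzzle pieces of $\Omega(T)$ and producing an even cycle (or a sphere with three punctures) in each case. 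Your counting argument is shorter and avoids the case analysis, at the cost of the (correct, but worth stating) bookkeeping that the number of conjugate pairs equals the number of enclosed punctures, so $|\Omega(T)|=|T|-p_1$ and every element of $\Omega(T)$ is an edge of $G_T$ when $\partial\cS'=\emptyset$; the paper's local argument yields slightly more structural information but needs the exclusion of the thrice-punctured sphere explicitly. Two small presentational points: in (2) ``only if'' you treat only surfaces with boundary, so you should say explicitly that punctured closed surfaces (which are not in the list) are already disposed of by your part (1) argument; and for the once-punctured annulus the statement you actually need is not uniqueness of the essential loop up to isotopy but the weaker fact that two compatible essential loops at the puncture bound an empty bigon and are therefore isotopic, so at most one such loop edge can occur in any $G_T$ --- the same empty-bigon compatibility argument you already invoke for the twice-punctured polygon.
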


\subsection{Our results on cluster algebra theory}\label{subsec:result cluster}

We refer to Section \ref{sec:cluster} for the details on this subsection. To a tagged triangulation $T$ of $\cS$, one can associate a quiver $Q_T$ and a cluster algebra $\cA(Q_T)$. In $\cA(Q_T)$, the $f$-vector of a cluster monomial coincides with the intersection vector of some multi-set in $\bM_{\cS}$ with respect to $T$. Moreover, if $\cS$ is a closed surface with exactly one puncture, then different non-initial cluster monomials in $\cA(Q_{T'})$ have different $f$-vectors for any tagged triangulation $T'$ of $\cS$ (Proposition \ref{prop:1-punc f}). As a consequence, we can obtain analogues of the theorems in Subsection \ref{subsec:result} for $f$-vectors of non-initial cluster monomials. In particular, Theorem \ref{thm:unique Int} induces a desired characterization as follows.

\begin{thm}\label{thm:unique f-vector}
Let $T$ be a tagged triangulation of $\cS$. Then the following are equivalent:
\begin{itemize}
\item[(1)] For any non-initial cluster monomials $x$ and $x'$ in $\cA(Q_T)$, if $f(x)=f(x')$, then $x=x'$.
\item[(2)] The marked surface $\cS$ is a closed surface with exactly one puncture, or each connected component of $G(T)$ contains at most one cycle of odd length and no cycles of even length.
\end{itemize}
\end{thm}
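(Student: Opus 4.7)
The strategy is to transport Theorem~\ref{thm:unique Int} across the dictionary between cluster algebras from surfaces and tagged arcs. As announced in the paragraph preceding the statement, there is a correspondence sending each non-initial cluster monomial $x \in \cA(Q_T)$ to a multi-set $U(x) \in \bM_{\cS}$ with $U(x) \cap T = \emptyset$, and satisfying $f(x) = \Int_T(U(x))$. Together with Proposition~\ref{prop:1-punc f}, which settles the once-punctured closed case, this reduces Theorem~\ref{thm:unique f-vector} to Theorem~\ref{thm:unique Int}.

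First I would verify in Section~\ref{sec:cluster} that $x \mapsto U(x)$ is a \emph{bijection} between non-initial cluster monomials in $\cA(Q_T)$ and multi-sets $U \in \bM_{\cS}$ with $U \cap T = \emptyset$, this being the standard correspondence between cluster monomials and multi-sets of pairwise compatible tagged arcs. Given this, the implication (2)$\Rightarrow$(1) splits into two subcases: if $\cS$ is a closed surface with exactly one puncture, apply Proposition~\ref{prop:1-punc f}; otherwise, if non-initial cluster monomials $x, x'$ satisfy $f(x) = f(x')$, setting $U := U(x)$ and $V := U(x')$ gives $U, V \in \bM_{\cS}$ disjoint from $T$ with $\Int_T(U) = \Int_T(V)$, so by Theorem~\ref{thm:unique Int} applied to the hypothesis on $G(T)$ one obtains $U = V$ and hence $x = x'$.

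For the reverse implication (1)$\Rightarrow$(2) I argue by contraposition. Assume that $\cS$ is not a closed surface with exactly one puncture and that the graph-theoretic condition on $G(T)$ fails. The equivalence in Theorem~\ref{thm:unique Int} furnishes distinct $U, V \in \bM_{\cS}$ with $U \cap T = V \cap T = \emptyset$ and $\Int_T(U) = \Int_T(V)$. Pulling these back through the bijection yields distinct non-initial cluster monomials $x, x'$ with $f(x) = f(x')$, contradicting~(1).

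The one genuinely delicate point will be the once-punctured closed surface: there every arc admits two tagged versions and no tagged triangulation $T$ can contain tagged arcs of both tags at the same puncture, so the bijection $x \leftrightarrow U(x)$ and the comparison $f(x) = \Int_T(U(x))$ must be set up with some care (this case is in fact excluded from Theorem~\ref{thm:unique Int} and is why Proposition~\ref{prop:1-punc f} must be proved separately, by a direct argument on $\Omega(U)$ together with the tag-switching symmetry). Apart from verifying the dictionary cleanly there, the remainder of the proof is a routine translation.
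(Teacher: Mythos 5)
Your proposal is correct and follows essentially the same route as the paper: the case where $\cS$ is not a closed surface with exactly one puncture is handled by the bijection \eqref{eq:bij} (coming from Theorem \ref{thm:bijection x}) together with Theorem \ref{thm:unique Int}, and the once-punctured closed case is delegated to Proposition \ref{prop:1-punc f}, exactly as in the paper's proof. The "delicate point" you flag is indeed why the paper treats that case separately via \eqref{eq:bij 1} and Theorem \ref{thm:main}.
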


As an application, we give a sufficient condition that Conjecture \ref{conj:denominator} holds.

\begin{thm}\label{thm:conj}
Let $T$ be a tagged triangulation of $\cS$. Then Conjecture \ref{conj:denominator} holds for $\cA(Q_T)$ if $\cS$ is a closed surface with exactly one puncture, or $T$ has neither loops nor tagged arcs connecting punctures.
\end{thm}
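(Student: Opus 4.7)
The plan is to combine Theorem~\ref{thm:unique f-vector} with an identification of denominator vectors and $f$-vectors on non-initial cluster monomials.

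First, I would check that the hypothesis of Theorem~\ref{thm:conj} forces condition~(2) of Theorem~\ref{thm:unique f-vector}. The closed-one-puncture case is literally the first alternative of (2). If instead $T$ has no loops, then $\Omega(T) = T$, and if additionally no tagged arc of $T$ connects two punctures, then $G_T$ has no edges and the cycle condition in (2) holds vacuously. Thus, in either case, distinct non-initial cluster monomials in $\cA(Q_T)$ have distinct $f$-vectors.

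Next, I would exploit the additivity of both $d(\cdot)$ and $f(\cdot)$ on cluster monomials. Any cluster monomial decomposes as $x = x^{\mathbf{a}} \cdot y$, where $x^{\mathbf{a}} = \prod_i x_i^{a_i}$ collects the initial cluster variables appearing in the supporting cluster and $y$ is the complementary product of non-initial cluster variables from the same cluster. Since $d(x_i) = -e_i$ and $f(x_i) = 0$, we obtain $d(x) = -\mathbf{a} + d(y)$ and $f(x) = f(y)$.

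The crucial step is to prove, under the hypothesis, that for every non-initial cluster variable $z$,
\[
d(z) = f(z) \in \bZ_{\geq 0}^{T}.
\]
Both quantities admit an intersection-number description involving the tagged arc corresponding to $z$ and the arcs of $T$: the $f$-side uses the Qiu--Zhou numbers of Definition~\ref{def:Int}, while the $d$-side uses a slightly different count. The discrepancy between them is concentrated at self-folded triangles of $T$ and at tagged arcs of $T$ between two punctures, so neither source of discrepancy is present when $T$ has no loops and no tagged arcs joining punctures. For the closed-one-puncture case the same comparison underlies the argument of Proposition~\ref{prop:1-punc f}, the specific geometry forcing the two counts to coincide.

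With this identification in hand, the conclusion is formal: if $d(x) = d(x')$ for cluster monomials $x = x^{\mathbf{a}} y$ and $x' = x^{\mathbf{b}} y'$, then non-negativity of $d(y)$ and $d(y')$ separates the negative and non-negative parts, giving $\mathbf{a} = \mathbf{b}$ and $d(y) = d(y')$; hence $f(y) = f(y')$, so $f(x) = f(x')$, and Theorem~\ref{thm:unique f-vector} yields $y = y'$ and thus $x = x'$. The main obstacle is precisely the surface-model comparison $d(z) = f(z)$ for non-initial cluster variables; given the well-known subtleties of $d$-vectors near punctures and self-folded triangles, this is the step I expect to consume most of the work, and is exactly where the topological hypothesis on $T$ or $\cS$ enters.
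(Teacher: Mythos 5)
Your overall route is the same as the paper's: the hypothesis forces condition (2) of Theorem \ref{thm:unique f-vector}, one reduces the denominator conjecture to non-initial cluster monomials, and one identifies denominator vectors with $f$-vectors on those. Two remarks. First, the identification $d(z)=f(z)$ for non-initial cluster variables under exactly this hypothesis is Theorem \ref{thm:d=f}, which the paper simply quotes (from the author's earlier work); it does not re-derive it, and that is where the paper's proof ends after observing that ``no tagged arcs connecting punctures'' is equivalent to $G_T$ having no edges. Your plan to reprove $d=f$ by a surface comparison is therefore unnecessary extra work, and your attribution of the closed once-punctured case to Proposition \ref{prop:1-punc f} is inaccurate: that proposition only says that $f$-vectors separate non-initial cluster monomials there; the equality $d=f$ in that case again comes from Theorem \ref{thm:d=f}, not from the argument of Proposition \ref{prop:1-punc f}.

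Second, your separation step has a genuine (though easily repaired) gap: non-negativity of $d(y)$ and $d(y')$ alone does not let you recover $\mathbf{a}$ and $d(y)$ from $d(x)=-\mathbf{a}+d(y)$, since an entry with $a_k>0$ and $d(y)_k>0$ would make the split into negative and non-negative parts ambiguous (e.g.\ $\mathbf{a}=(2,0)$, $d(y)=(1,3)$ gives $d(x)=(-1,3)$, whose negative part is not $-\mathbf{a}$). You also need the second half of Theorem \ref{thm:denominator}: if the initial variable $x_k$ lies in a cluster containing a non-initial cluster variable $z$, then $d(z)_k=0$; only then do you get $d(x_I)=-([-d_i]_+)_i$ and $d(x_N)=([d_i]_+)_i$, which is precisely what Lemma \ref{lem:conj non-initial} records and what you should invoke. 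A further harmless slip: ``$T$ has no loops'' does not imply $\Omega(T)=T$, since a pair of conjugate arcs need not involve loops; but under ``no tagged arcs connecting punctures'' the graph $G_T$ still has no edges (any punctured loop in $\Omega(T)$ based at a puncture would come from a conjugate pair joining two punctures), which is all your argument needs.
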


Remark that Fu and Geng independently showed that Conjecture \ref{conj:denominator} holds for $\cA(Q_T)$ if $T$ has no loops and each puncture is enclosed by a punctured loop in $\Omega(T)$ \cite[Theorem 5.1]{FG24a}, or $\cS$ is a polygon with exactly one puncture \cite[Theorem 4.2]{FG24b}. Their proof uses a notion of ``intersection numbers'' defined in \cite{FoST08} while our proof uses a notion of ``intersection numbers'' defined in \cite{QZ17}. As a consequence of Theorem \ref{thm:conj}, we can obtain \cite[Theorem 5.1]{FG24a} and \cite[Theorem 4.2]{FG24b}.

\subsection{Our results on $\tau$-tilting theory}\label{subsec:result tau}

We refer to Section \ref{sec:tau} for the details on this subsection. Let $T$ be a tagged triangulation of $\cS$. If $\cS$ is not a closed surface with exactly one puncture, then we take any non-degenerate potential of $Q_T$. If $\cS$ is a closed surface with exactly one puncture, then we take a non-degenerate potential $W_T^{\lambda,n}$ of $Q_T$ defined in \cite{GLM22,Lab09} (see \eqref{eq:potential}). In both cases, the associated Jacobian algebra $J_T$ is finite dimensional.

Via a categorification of cluster algebras, the dimension vector of a $\tau$-rigid $J_T$-module coincide with the intersection vector of some multi-set in $\bM_{\cS}$ with respect to $T$ unless $\cS$ is a closed surface with exactly one puncture. In which case, we introduce a notion of $n$-intersection numbers and vectors in Subsection \ref{subsec:1-punc}. Then we give the same results as Theorems \ref{thm:main} and \ref{thm:unique tag tri} in terms of $n$-intersection vectors (Theorem \ref{thm:unique Intn} and Corollary \ref{cor:unique tag tri}). Moreover, we show that the dimension vector of a $\tau$-rigid $J_T$-module coincide with the $n$-intersection vector of some multi-set in $\bM_{\cS}$ with respect to $T$ (Proposition \ref{prop:Intn=dim}). As a consequence, we can obtain analogues of the theorems in Subsection \ref{subsec:result} for dimension vectors of $\tau$-rigid modules as below.

For a $\tau$-rigid $J_T$-module $M$, we consider a decomposition $M=M_1\oplus M_2$, where $M_2$ is the maximal projective module whose each indecomposable direct summand is an indecomposable projective module at a vertex of $Q_T$ corresponding to a tagged arc in $T$ connecting punctures. 

\begin{thm}\label{thm:unique rigid0}
Let $T$ be a tagged triangulation of $\cS$, and $M$ and $N$ be $\tau$-rigid $J_T$-modules. If $\udim M=\udim N$, then $M_1=N_1$ and $\udim M_2=\udim N_2$.
\end{thm}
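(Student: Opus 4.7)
The plan is to reduce Theorem \ref{thm:unique rigid0} to the intersection-number theorems established earlier, by passing through the categorification of $\cA(Q_T)$ by the cluster category attached to $J_T$. Specifically, I will invoke Theorem \ref{thm:main} when $\cS$ is not a closed surface with exactly one puncture, and its $n$-intersection refinement Theorem \ref{thm:unique Intn} in the remaining case.

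First, apply Proposition \ref{prop:Intn=dim} to both $M$ and $N$, producing multi-sets $U,V \in \bM_{\cS}$ with $U \cap T = V \cap T = \emptyset$ such that $\udim M$ and $\udim N$ equal the ($n$-)intersection vectors $\Int_T(U)$ and $\Int_T(V)$ respectively. The hypothesis $\udim M = \udim N$ then reads $\Int_T(U) = \Int_T(V)$, and Theorem \ref{thm:main} (resp.\ Theorem \ref{thm:unique Intn}) immediately yields $U_1 = V_1$ and $n(U_2,p) = n(V_2,p)$ for every puncture $p$.

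It remains to translate these conclusions back to the module side. The categorification is set up so that the splitting $\Omega(U) = U_1 \sqcup U_2$ corresponds to the module decomposition $M = M_1 \oplus M_2$: the $2$-notched arcs in $U_2$ whose underlying plain curves lie in $\Omega(T)$ are precisely the ones indexing the indecomposable projective summands of $M_2$ at vertices of $Q_T$ corresponding to tagged arcs in $T$ between punctures (including the case where such an arc appears as one half of a conjugate pair replaced by a punctured loop in $\Omega(T)$). The $U_1$-part of the correspondence being injective up to isomorphism then turns $U_1 = V_1$ into $M_1 \cong N_1$, while the contribution of each $2$-notched arc in $U_2$ to $\udim M_2$ can be organised puncture-by-puncture, so the common values $n(U_2,p) = n(V_2,p)$ force $\udim M_2 = \udim N_2$.

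The main obstacle is making the last paragraph rigorous: one must verify that the $U_1$-part of the correspondence is indeed injective (so that equality of multi-sets lifts to isomorphism of modules, not merely equality of dimension vectors), and one must identify the puncture counts $n(U_2,p)$ with the appropriate sums of entries of the dimension vectors of the indecomposable projectives making up $M_2$. Both steps are combinatorial consequences of the explicit construction behind Proposition \ref{prop:Intn=dim} in Section \ref{sec:tau}, combined with the description of the indecomposable projective $J_T$-modules attached to tagged arcs between punctures (and, in the once-punctured closed case, the careful handling of the potential $W_T^{\lambda,n}$).
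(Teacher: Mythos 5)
Your proposal follows essentially the same route as the paper: use the categorification of Section \ref{sec:tau} to identify $\tau$-rigid $J_T$-modules with multi-sets $U\in\bM_{\cS}$ disjoint from $T$ so that $\udim M$ becomes $\Int_T(U)$ (resp.\ $\Int^n_T(U)$ in the once-punctured closed case), apply Theorem \ref{thm:main} (resp.\ Theorem \ref{thm:unique Intn}), and transport $U_1=V_1$ back through the correspondence between $M=M_1\oplus M_2$ and $\Omega(U)=U_1\sqcup U_2$ --- exactly the paper's argument via the bijections $\sM_{T,W}$ and $\sM_T^{\lambda,n}$. Two small points: Proposition \ref{prop:Intn=dim} only covers the once-punctured closed surface (the general case rests on the bijection built from Theorems \ref{thm:ctilt tau}, \ref{thm:bijection x}, \ref{thm:bijection sC}, \ref{thm:all rigid}, \ref{thm:rotation}), and your puncture-by-puncture derivation of $\udim M_2=\udim N_2$ is unnecessary, since once $M_1=N_1$ it follows at once from $\udim M_2=\udim M-\udim M_1=\udim N-\udim N_1=\udim N_2$.
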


\begin{thm}\label{thm:unique sttilt}
Let $T$ be a tagged triangulation of $\cS$, and $M$ and $N$ be basic support $\tau$-tilting $J_T$-modules. If $\udim M=\udim N$, then $M=N$.
\end{thm}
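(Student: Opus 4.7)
The plan is to combine Theorem \ref{thm:unique rigid0} with the additional rigidity that the support-$\tau$-tilting hypothesis places on the projective summand $M_2$. Since every support $\tau$-tilting module is in particular $\tau$-rigid, Theorem \ref{thm:unique rigid0} applies and yields $M_1=N_1$ and $\udim M_2=\udim N_2$. Writing $M=M_1\oplus M_2$ and $N=M_1\oplus N_2$, it therefore suffices to prove $M_2=N_2$ as basic modules.

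The next step is a counting argument to pin down the size of the remaining projective piece. The support $\operatorname{supp} M:=\{i : e_i M\neq 0\}$ is visibly determined by $\udim M$, so $\operatorname{supp} M=\operatorname{supp} N$. By the support $\tau$-tilting theory of Adachi--Iyama--Reiten, a basic support $\tau$-tilting $J_T$-module has exactly $|\operatorname{supp} M|$ indecomposable direct summands; in particular $|M|=|N|$ and therefore $|M_2|=|N_2|$. Thus $M_2$ and $N_2$ are basic projective $J_T$-modules of equal size, both with indecomposable summands drawn from the fixed collection of projectives at vertices of $Q_T$ corresponding to tagged arcs of $T$ connecting punctures, and sharing the same dimension vector.

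To finish, I would appeal to the cluster-categorical bijection (recalled in Section \ref{sec:tau}) that identifies basic support $\tau$-tilting $J_T$-modules with tagged triangulations of $\cS$. Under this bijection $M$ and $N$ correspond to tagged triangulations $T_M$ and $T_N$, and Proposition \ref{prop:Intn=dim} translates the dimension vector of such a module into the ($n$-)intersection vector of the corresponding tagged triangulation with respect to $T$. The hypothesis $\udim M=\udim N$ then becomes equality of these intersection vectors, and Theorem \ref{thm:unique tag tri} (or Corollary \ref{cor:unique tag tri} in the closed one-puncture case) forces $T_M=T_N$, hence $M=N$.

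The main obstacle will be the final step: one must verify that the geometric description of the projective summand $M_2$ obtained from $T_M$ relative to $T$ is compatible with the purely algebraic decomposition produced in Theorem \ref{thm:unique rigid0}, and carefully treat the closed one-puncture case via $n$-intersection vectors, since Theorem \ref{thm:unique tag tri} does not directly apply in that exceptional setting.
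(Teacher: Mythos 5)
Your final paragraph is precisely the paper's argument: under the bijections $\sM_{T,W}$ (resp. $\sM_T^{\lambda,n}$ in the closed one-puncture case) between tagged triangulations of $\cS$ and basic support $\tau$-tilting $J_T$-modules, $\udim M$ is identified with the intersection vector $\Int_T$ (resp. the $n$-intersection vector $\Int_T^n$) of the corresponding tagged triangulation, and Theorem \ref{thm:unique tag tri} (resp. Corollary \ref{cor:unique tag tri}) then forces the two triangulations, hence $M$ and $N$, to coincide — so your proposal is correct and essentially the same as the paper's proof. The preliminary reduction via Theorem \ref{thm:unique rigid0} and the support-counting step are harmless but redundant: the last step already gives $M=N$ outright, and no compatibility check between the summand $M_2$ and the geometric picture is needed.
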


Remark that in general there are two non-isomorphic basic $\tau$-rigid $J_T$-modules with the same dimension vector. Moreover, there are two non-isomorphic non-basic support $\tau$-tilting $J_T$-modules with the same dimension vector (Example \ref{ex:rep}).

\begin{thm}\label{thm:unique rigid}
Let $T$ be a tagged triangulation of $\cS$. Then the following are equivalent:
\begin{itemize}
\item[(1)] For any $\tau$-rigid $J_T$-modules $M$ and $N$, if $\udim M=\udim N$, then $M=N$.
\item[(2)] Each connected component of $G_T$ contains at most one cycle of odd length and no cycles of even length.
\end{itemize}
\end{thm}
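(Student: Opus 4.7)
The plan is to transfer the intersection-theoretic characterization of Theorem \ref{thm:unique Int} (together with its $n$-intersection variant referred to in Subsection \ref{subsec:1-punc}) to the $\tau$-tilting setting by means of Proposition \ref{prop:Intn=dim}, which identifies dimension vectors of $\tau$-rigid $J_T$-modules with $n$-intersection vectors of pairwise-compatible multi-sets of tagged arcs disjoint from $T$. Under this correspondence, the decomposition $M=M_1\oplus M_2$ of Theorem \ref{thm:unique rigid0} matches the decomposition $\Omega(U)=U_1\sqcup U_2$ of Theorem \ref{thm:main}, so each indecomposable summand of a $\tau$-rigid module is assigned a tagged arc (or punctured loop) with a well-defined multiplicity, making the assignment injective on isomorphism classes.

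For the implication (2) $\Rightarrow$ (1): suppose the graph condition on $G_T$ holds and let $M,N$ be $\tau$-rigid $J_T$-modules with $\udim M=\udim N$. By Proposition \ref{prop:Intn=dim} there exist $U,V\in\bM_{\cS}$ with $U\cap T=V\cap T=\emptyset$ whose $n$-intersection vectors with respect to $T$ equal $\udim M$ and $\udim N$, respectively. When $\cS$ is not a closed surface with exactly one puncture, the $n$-intersection pairing coincides with $\Int_T$, and Theorem \ref{thm:unique Int} forces $U=V$. Unwinding the correspondence yields $M\cong N$.

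For the implication (1) $\Rightarrow$ (2): argue contrapositively. If some connected component of $G_T$ contains an even cycle or two distinct odd cycles, then the explicit construction that disproves (1) in Theorem \ref{thm:unique Int} produces distinct $U,V\in\bM_{\cS}$ with $U\cap T=V\cap T=\emptyset$ sharing the same intersection vector — and hence the same $n$-intersection vector — with respect to $T$. The corresponding $\tau$-rigid modules $M\not\cong N$ then satisfy $\udim M=\udim N$. The case in which $\cS$ is a closed surface with exactly one puncture cannot confound this direction: there every edge of $G_T$ is a loop at the sole puncture, so (2) necessarily fails, and as noted in Example \ref{ex:rep} one can exhibit directly a pair of non-isomorphic $\tau$-rigid $J_T$-modules with the same dimension vector, also falsifying (1).

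The main obstacle is the one-puncture closed surface case, where one must replace $\Int_T$ throughout by the $n$-intersection pairing and verify that the construction of coinciding multi-sets behind Theorem \ref{thm:unique Int} genuinely lifts to non-isomorphic $\tau$-rigid modules via Proposition \ref{prop:Intn=dim}. Away from this case the argument is essentially formal, since the heavy lifting has already been done in Theorem \ref{thm:unique Int} and in the categorification result Proposition \ref{prop:Intn=dim}.
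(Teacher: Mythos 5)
Your overall strategy coincides with the paper's: use the categorification dictionary between multi-sets in $\bM_{\cS}$ disjoint from $T$ and $\tau$-rigid $J_T$-modules to turn dimension vectors into intersection vectors, and then invoke Theorem \ref{thm:unique Int} (i.e.\ Proposition \ref{prop:equivalence graph}) in both directions. One attribution slip: Proposition \ref{prop:Intn=dim} is proved only when $\cS$ is a closed surface with exactly one puncture; in all other cases the dictionary $\Int_T(U)=\udim\sM_{T,W}(U)$ comes from Theorems \ref{thm:bijection x}, \ref{thm:bijection sC}, \ref{thm:all rigid}, \ref{thm:fin dim}, \ref{thm:rotation} combined with Theorem \ref{thm:ctilt tau}. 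Since those ingredients are in the paper, this part of your argument is sound modulo citations, and your claim that the ``$n$-intersection pairing coincides with $\Int_T$'' should simply read that the relevant pairing in that case is $\Int_T$ itself.

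The genuine gap is the closed surface with exactly one puncture. There condition (2) always fails, so for the contrapositive of (1)$\Rightarrow$(2) you must exhibit a failure of (1); but Example \ref{ex:rep}, which you cite for this, concerns a monogon with three punctures and says nothing about this surface, and you yourself leave the needed verification (``the main obstacle'') open. The paper closes it as follows: with the potential $W_T^{\lambda,n}$, Theorem \ref{thm:lambda n} gives $\udim P=(4n,\ldots,4n)$ for every indecomposable projective $J_T^{\lambda,n}$-module $P$; equivalently, by \eqref{eq:Int rho} and the compatibility of $T$ one has $\Int^n_T(\rho(\gamma))=(4n,\ldots,4n)$ for every $\gamma\in T$, so under the bijection $\sM_T^{\lambda,n}$ any two distinct arcs $\gamma\neq\delta$ of $T$ produce non-isomorphic indecomposable $\tau$-rigid modules with the same dimension vector, falsifying (1). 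Since $G_T$ then consists only of loops at the unique puncture and there are at least two of them, (2) fails as well, so the equivalence holds in this case. With this substitution for the appeal to Example \ref{ex:rep}, your argument matches the paper's proof.
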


\section{Triangulated surfaces}\label{sec:cS}

In this section, we are devoted to studying on triangulated surfaces \cite{FoST08,FT18}. In particular, we prove Theorems \ref{thm:main} and \ref{thm:unique tag tri} in Subsection \ref{subsec:pf thm:main}, and Theorems \ref{thm:unique Int} and \ref{thm:list cS} in Subsection \ref{subsec:graph}. Theorem \ref{thm:segment} plays an important role to prove them, and its proof will be given in Section \ref{sec:modif}.

\subsection{Intersection numbers}\label{subsec:Int}

Let $\cS$ be a connected compact oriented Riemann surface with (possibly empty) boundary $\partial\cS$, and $\cM$ be a non-empty finite set of marked points in $\cS$ with at least one marked point on each connected component of $\partial\cS$. We call the pair $(\cS,\cM)$ a \emph{marked surface}. Throughout this paper, we fix a marked surface $(\cS,\cM)$, denoted by $\cS$ for short. A marked point in the interior of $\cS$ is called a \emph{puncture}. For technical reasons, we assume that $\cS$ is not a monogon with at most one puncture, a digon without punctures, a triangle without punctures, and a sphere with at most three punctures (see \cite{FoST08} for the details). A curve in $\cS$ is considered up to isotopy relative to $\cM$.

A \emph{tagged arc} in $\cS$ is a curve in $\cS$ whose endpoints are in $\cM$ and each end is tagged in one of two ways, \emph{plain} or \emph{notched}, such that the following conditions are satisfied:
\begin{itemize}
\item It does not intersect itself except at its endpoints.
\item It is disjoint from $\cM$ and $\partial\cS$ except at its endpoints.
\item It does not cut out a monogon with at most one puncture or a digon without punctures.
\item Its ends incident to $\partial\cS$ are tagged plain.
\item Both ends of a loop are tagged in the same way,
\end{itemize}
where a loop is a curve whose endpoints coincide. In the figures, we represent tags as follows:
\[
\begin{tikzpicture}[baseline=-1mm]
 \coordinate(0)at(0,0) node[left]{plain};
 \coordinate(1)at(1,0); \fill(1)circle(0.07);
 \draw(0)to(1);
\end{tikzpicture}
\hspace{7mm}
\begin{tikzpicture}[baseline=-1mm]
 \coordinate(0)at(0,0) node[left]{notched};
 \coordinate(1)at(1,0); \fill(1)circle(0.07);
 \draw(0)to node[pos=0.8]{\rotatebox{90}{\footnotesize $\bowtie$}}(1);
\end{tikzpicture}\ .
\]
We also consider certain curves, that are not tagged arcs, as in Figure \ref{fig:punctured loop}.

\begin{defn}
A \emph{punctured loop} is a loop whose both ends are tagged in the same way such that it cuts out a monogon with exactly one puncture.
\end{defn}

We call a tagged arc (resp., punctured loop)
\begin{itemize}
\item a \emph{plain arc} (resp., \emph{plain punctured loop}) if both its tags are plain;
\item a \emph{$1$-notched arc} if its tags are different;
\item a \emph{$2$-notched arc} (resp., \emph{$2$-notched punctured loop}) if both its tags are notched.
\end{itemize}

For short, plain (resp., $2$-notched) arcs and plain (resp., $2$-notched) punctured loops in $\cS$ are collectively called \emph{plain} (resp., \emph{$2$-notched}) \emph{curves}.

A \emph{pair of conjugate arcs} is a pair of tagged arcs whose underlying curves coincide and exactly one of their tags is different from the others. To a pair $P$ of conjugate arcs, we associate a punctured loop $\Omega(P)$ as follows: Assume that $P$ connects marked points $p$ and $q$, and tags in $P$ at $p$ are different. Then $\Omega(P)$ is the punctured loop with the same tags at $q$ as $P$ that cuts out a monogon with exactly one puncture $p$ (see Figure \ref{fig:punctured loop}).

Throughout this paper, when we consider intersections of curves, we assume that they intersect transversally in a minimum number of points in $\cS\setminus\cM$. We denote by $\bAL_{\cS}$ the set of all tagged arcs and punctured loops in $\cS$. We extend the notion of intersection numbers of tagged arcs in \cite[Definition 3.3]{QZ17} to elements of $\bAL_{\cS}$.

\begin{defn}[{\cite[Definition 3.3]{QZ17}}]\label{def:Int}
Let $\gamma,\delta\in\bAL_{\cS}$. The \emph{intersection number $\Int(\gamma,\delta)$ of $\gamma$ and $\delta$} is defined by $A_{\gamma,\delta}+B_{\gamma,\delta}+C_{\gamma,\delta}$, where
\begin{itemize}
 \item $A_{\gamma,\delta}$ is the number of intersection points of $\gamma$ and $\delta$ in $\cS\setminus\cM$;
 \item $B_{\gamma,\delta}$ is the number of pairs of an end of $\gamma$ and an end of $\delta$ such that they are incident to a common puncture and their tags are different;
 \item $C_{\gamma,\delta}=0$ unless $\gamma$ and $\delta$ form a pair of conjugate arcs, in which case $C_{\gamma,\delta}=-1$.
\end{itemize}
\end{defn}

Note that the intersection number in Definition \ref{def:Int} is symmetric and different from the ``intersection number'' $(\gamma | \delta)$ in \cite[Definition 8.4]{FoST08} (see \cite{Y24} about the difference between them).

For $\gamma,\delta\in\bAL_{\cS}$, we say that they are \emph{compatible} if $\Int(\gamma,\delta)=0$. In particular, for a pair $\{\gamma,\gamma'\}$ of conjugate arcs, $\gamma$ and $\gamma'$ are compatible since $A_{\gamma,\gamma'}=0$, $B_{\gamma,\gamma'}=1$, and $C_{\gamma,\gamma'}=-1$. For multi-sets $U$ and $V$ of elements of $\bAL_{\cS}$, we also say that they are \emph{compatible} if $\Int(\gamma,\delta)=0$ for all $\gamma\in U$ and $\delta\in V$. We show that the above $\Omega$ preserves intersection numbers.

\begin{prop}\label{prop:Int conjugate}
Let $\{\gamma,\gamma'\}$ be a pair of conjugate arcs in $\cS$ and $\delta\in\bAL_{\cS}$. Then
\[
\Int(\delta,\Omega(\{\gamma,\gamma'\}))=\Int(\delta,\gamma)+\Int(\delta,\gamma').
\]
\end{prop}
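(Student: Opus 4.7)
The plan is to expand both sides of the identity via the decomposition $\Int(\gamma,\delta)=A_{\gamma,\delta}+B_{\gamma,\delta}+C_{\gamma,\delta}$ from Definition~\ref{def:Int} and to match the three contributions separately. Let $\alpha$ denote the common underlying curve of $\gamma$ and $\gamma'$, with endpoints $p$ (where the tags of $\gamma,\gamma'$ differ) and $q$ (where they agree); set $\Omega:=\Omega(\{\gamma,\gamma'\})$ and write $d_p(\delta)$ for the number of ends of $\delta$ incident to $p$.

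First I would record the easy reductions. Since $A_{\delta,-}$ depends only on the underlying curve, one has $A_{\delta,\gamma}=A_{\delta,\gamma'}=A_{\delta,\alpha}$, and $C_{\delta,\Omega}=0$ because $\Omega$ is not a tagged arc and hence cannot form a pair of conjugate arcs with $\delta$. For the $B$-terms, both ends of $\Omega$ at $q$ carry the same tag as the $q$-ends of $\gamma$ and $\gamma'$, while $\gamma$ and $\gamma'$ carry opposite tags at $p$ and $\Omega$ has no endpoint at $p$. A direct count over the ends of $\delta$ at $p$ and $q$ then yields
\[
B_{\delta,\gamma}+B_{\delta,\gamma'}-B_{\delta,\Omega}=d_p(\delta).
\]

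The core of the argument will be the $A$-term identity
\[
A_{\delta,\Omega}=2\,A_{\delta,\alpha}+d_p(\delta)+C_{\delta,\gamma}+C_{\delta,\gamma'}.
\]
I would prove it by representing $\Omega$ as a thin tube around $\alpha$: starting at $q$, following one side of $\alpha$ to near $p$, wrapping tightly once around $p$, and returning along the other side of $\alpha$ to $q$. For a generic representative of $\delta$, each transversal interior crossing of $\delta$ with $\alpha$ produces two crossings with the walls of this tube, contributing the $2\,A_{\delta,\alpha}$ term. Each end of $\delta$ at $p$ lies inside the monogon bounded by $\Omega$, so generically requires one further crossing to enter it, contributing $d_p(\delta)$. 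The correction $C_{\delta,\gamma}+C_{\delta,\gamma'}$ is nonzero precisely when $\delta$ is a conjugate of $\gamma$ or $\gamma'$, in which case $\delta$ shares the underlying curve $\alpha$ and both endpoints with $\Omega$, and can be isotoped to enter the monogon directly from $q$, thereby cancelling the crossing at $p$ predicted in the previous step.

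Combining these three reductions with $C_{\delta,\Omega}=0$ gives the claimed equality. The hardest step will be the $A$-term identity, which requires verifying that the tube representative of $\Omega$ above is in minimum position with $\delta$, i.e., that no bigon between $\delta$ and the tube can be removed. This calls for a careful local analysis near the endpoints $p$ and $q$, especially in the degenerate situations where $\delta$ terminates there or its underlying curve coincides with $\alpha$.
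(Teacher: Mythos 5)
Your proposal is correct and takes essentially the same route as the paper's proof: the same $A/B/C$ decomposition from Definition~\ref{def:Int}, the same tag count giving $B_{\delta,\gamma}+B_{\delta,\gamma'}-B_{\delta,\Omega(\{\gamma,\gamma'\})}=d_p(\delta)$, and the same key geometric count $A_{\delta,\Omega(\{\gamma,\gamma'\})}=2A_{\delta,\alpha}+d_p(\delta)$ (the paper's second bullet). The only difference is organizational: the paper treats the case where $\delta$ has underlying curve $\alpha$ by a separate short enumeration, whereas you absorb it uniformly into the $C_{\delta,\gamma}+C_{\delta,\gamma'}$ correction, which indeed checks out in all taggings, so no gap results.
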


\begin{proof}
First, we assume that the underlying curves of $\gamma$ and $\delta$ coincide. If $\delta$ is equal to either $\gamma$ or $\gamma'$, then it is easy to see that both sides of the desired equality are zero. Assume that $\delta$ is equal to neither $\gamma$ nor $\gamma'$, that is, either $\{\gamma,\delta\}$ or $\{\gamma',\delta\}$ is a pair of conjugate arcs. Then $\Int(\delta,\gamma)=0$ and $\Int(\delta,\gamma')=2$ in the former case; $\Int(\delta,\gamma)=2$ and $\Int(\delta,\gamma')=0$ in the latter case; $\Int(\delta,\Omega(\{\gamma,\gamma'\}))=2$ in the both cases. Therefore, the desired equality holds.

Next, we assume that the underlying curves of $\gamma$ and $\delta$ do not coincide. Let $p$ and $q$ be marked points connected by $\gamma$ such that $q$ is the endpoint of the punctured loop $\varepsilon=\Omega(\{\gamma,\gamma'\})$. Definition \ref{def:Int} means that
\begin{itemize}
\item$A_{\delta,\gamma}=A_{\delta,\gamma'}$;
\item $A_{\delta,\varepsilon}=2A_{\delta,\gamma}+\#\{\text{endpoints of $\delta$ at $p$}\}$;
\item $B_{\delta,\gamma}=\#\{\text{tags of $\delta$ at $p$ different from one of $\gamma$}\}+\#\{\text{tags of $\delta$ at $q$ different from one of $\gamma$}\}$;
\item $B_{\delta,\gamma'}=\#\{\text{tags of $\delta$ at $p$ different from one of $\gamma'$}\}+\#\{\text{tags of $\delta$ at $q$ different from one of $\gamma'$}\}$;
\item $B_{\delta,\varepsilon}=2\#\{\text{tags of $\delta$ at $q$ different from one of $\varepsilon$}\}$;
\item $C_{\delta,\gamma}=C_{\delta,\gamma'}=C_{\delta,\varepsilon}=0$.
\end{itemize}
Therefore,
\begin{align*}
\Int(\delta,\varepsilon)
&=2A_{\delta,\gamma}+\#\{\text{endpoints of $\delta$ at $p$}\}+2\#\{\text{tags of $\delta$ at $q$ different from one of $\varepsilon$}\}\\
&=A_{\delta,\gamma}+A_{\delta,\gamma'}+B_{\delta,\gamma}+B_{\delta,\gamma'}\\
&=\Int(\delta,\gamma)+\Int(\delta,\gamma'),
\end{align*}
where the second equality follows from the facts that tags of $\gamma$ and $\gamma'$ at $p$ are different, and tags of $\gamma$, $\gamma'$, and $\varepsilon$ at $q$ are the same.
\end{proof}

Next, we focus on multi-sets of elements of $\bAL_{\cS}$. For a set $T$ and a multi-set $U$ of elements of $\bAL_{\cS}$, the \emph{intersection vector of $U$ with respect to $T$} is the non-negative vector
\[
\Int_T(U):=\left(\Int(t,U)\right)_{t\in T}:=\Biggl(\sum_{u\in U}\Int(t,u)\Biggr)_{t\in T}\in\bZ_{\ge 0}^T.
\]
We also denote $\Int_T(\{\gamma\})$ by $\Int_T(\gamma)$. We extend the above $\Omega$ to elements of
\[
\bM_{\cS}:=\{\text{multi-sets of pairwise compatible tagged arcs in $\cS$}\}.
\]

\begin{defprop}\label{defprop:Omega}
For $U\in\bM_{\cS}$, we define $\Omega(U)$ as the multi-set obtained from $U$ by replacing the sub-multi-set $\{\gamma^m,(\gamma')^m\}$ with $\{\Omega(\{\gamma,\gamma'\})^m\}$ for each pair $\{\gamma,\gamma'\}$ of conjugate arcs, where $m=\min\{m_U(\gamma),m_U(\gamma')\}$. It induces a bijection
\[
\Omega:\bM_{\cS}
\rightarrow
\Biggl\{{\begin{gathered}\text{multi-sets of pairwise compatible elements}\\\text{of $\bAL_{\cS}$ without pairs of conjugate arcs}\end{gathered}}\Biggr\}.
\]
\end{defprop}

\begin{proof}
If $\{\gamma,\gamma'\}$ and $\{\gamma,\gamma''\}$ are pairs of conjugate arcs in $U$,  then $\gamma'=\gamma''$ since $\gamma'$ and $\gamma''$ are  compatible. Thus the map $\Omega$ is well-defined. On the other hand, by Proposition \ref{prop:Int conjugate}, an element of $\bAL_{\cS}$ is compatible with both $\gamma$ and $\gamma'$ if and only if it is compatible with $\Omega(\{\gamma,\gamma'\})$. Therefore, $\Omega(U)$ is a multi-set of pairwise compatible elements of $\bAL_{\cS}$ without pairs of conjugate arcs. Since the inverse map can be given by replacing all punctured loops with the corresponding pairs of conjugate arcs, $\Omega$ is a bijection.
\end{proof}

Finally, we consider certain sets in $\bM_{\cS}$. A \emph{tagged triangulation} of $\cS$ is a maximal set of pairwise compatible tagged arcs in $\cS$. For a tagged triangulation $T$ of $\cS$ and a tagged arc $\gamma$ in $T$, there is a unique tagged arc $\gamma'\notin T$ such that $\mu_{\gamma}T:=(T\setminus\{\gamma\})\cup\{\gamma'\}$ is a tagged triangulation of $\cS$. Here, $\mu_{\gamma}T$ is called the \emph{flip of $T$ at $\gamma$}.

For $\gamma\in\bAL_{\cS}$ and a puncture $p$ in $\cS$, we define $\gamma^{(p)}$ as the element of $\bAL_{\cS}$ obtained from $\gamma$ by changing all tags at $p$. Note that $\gamma^{(p)}=\gamma$ if $\gamma$ is not incident to $p$. It is easy to see that $\Int(\gamma^{(p)},\delta^{(p)})=\Int(\gamma,\delta)$ for $\delta\in\bAL_{\cS}$. Therefore, when we consider intersection vectors with respect to a tagged triangulation $T$ of $\cS$, by changing tags, we can assume that $T$ satisfies the following condition:
\begin{equation*}\label{diamond}
\tag{$\Diamond$} \begin{array}{l}\text{The tagged triangulation $T$ consists of plain arcs and $1$-notched arcs whose}\\
\text{each $1$-notched arc is contained in a pair of conjugate arcs.}\end{array}
\end{equation*}
In particular, if $T$ satisfies \eqref{diamond}, then all tags in $\Omega(T)$ are plain.

\begin{example}\label{ex:T Int}
Let $\cS$ be a monogon with three punctures. The following set $T$ of tagged arcs in $\cS$ is a tagged triangulation satisfying \eqref{diamond} and $\Omega(T)$ only consists of plain curves:
\[
T=
\begin{tikzpicture}[baseline=-1mm]
\coordinate(d)at(0,-1.2);\coordinate(u)at(90:2);
\coordinate(l)at($(d)+(120:1.3)$);\coordinate(r)at($(d)+(60:1.3)$);
\draw(0,0)circle(2);
\draw[blue](u)..controls(-2,1.7)and(-2,-1.5)..node[left]{$1$}(d);
\draw[blue](u)--node[right]{$2$}(d);
\draw[blue](u)..controls(2,1.7)and(2,-1.5)..node[right]{$3$}(d);
\draw[blue](d)to[out=20,in=130,relative]node[fill=white,inner sep=1]{$4$}(l);
\draw[blue](d)to[out=-20,in=-130,relative]node[fill=white,inner sep=1]{$5$}node[pos=0.8]{\scriptsize\rotatebox{60}{$\bowtie$}}(l);
\draw[blue](d)to[out=20,in=130,relative]node[fill=white,inner sep=1]{$6$}(r);
\draw[blue](d)to[out=-20,in=-130,relative]node[fill=white,inner sep=1]{$7$}node[pos=0.8]{\scriptsize\rotatebox{0}{$\bowtie$}}(r);
\fill(l)circle(0.07);\fill(r)circle(0.07);\fill(d)circle(0.07);\fill(u)circle(0.07);
\end{tikzpicture}
\ ,\hspace{3mm}
\Omega(T)=
\begin{tikzpicture}[baseline=-1mm]
\coordinate(d)at(0,-1.2);\coordinate(u)at(90:2);
\coordinate(l)at($(d)+(120:1.3)$);\coordinate(r)at($(d)+(60:1.3)$);
\draw(0,0)circle(2);
\draw[blue](u)..controls(-2,1.7)and(-2,-1.5)..node[left]{$1$}(d);
\draw[blue](u)--node[fill=white,inner sep=2]{$2$}(d);
\draw[blue](u)..controls(2,1.7)and(2,-1.5)..node[right]{$3$}(d);
\draw[blue](d)to[out=25,in=90,relative]($(d)+(120:1.7)$);
\draw[blue](d)to[out=-25,in=-90,relative]($(d)+(120:1.7)$)node[above]{$8$};
\draw[blue](d)to[out=25,in=90,relative]($(d)+(60:1.7)$);
\draw[blue](d)to[out=-25,in=-90,relative]($(d)+(60:1.7)$)node[above]{$9$};
\fill(l)circle(0.07);\fill(r)circle(0.07);\fill(d)circle(0.07);\fill(u)circle(0.07);
\end{tikzpicture}\ ,
\]
where $8=\Omega(\{4,5\})$ and $9=\Omega(\{6,7\})$. We take multi-sets $U=\{\alpha,\beta,\gamma^3,\gamma'\}\in\bM_{\cS}$ and $\Omega(U)=\{\alpha,\beta,\gamma^2,\Omega(\{\gamma,\gamma'\})\}$ whose each curve is given as follows:
\[
\begin{tikzpicture}[baseline=-1mm]
\coordinate(d)at(0,-1.2);\coordinate(u)at(90:2);
\coordinate(l)at($(d)+(120:1.3)$);\coordinate(r)at($(d)+(60:1.3)$);
\draw(0,0)circle(2);
\draw[blue](u)--node[right]{$\alpha$}(r);
\draw[blue](d)..controls(-2,-1)and(-1.5,2)..node[left]{$\beta$}node[pos=0.05]{\scriptsize\rotatebox{70}{$\bowtie$}}(r);
\draw[blue](d)to[out=-70,in=-110,relative]node[right,pos=0.6]{$\gamma'$}node[pos=0.2]{\scriptsize\rotatebox{0}{$\bowtie$}}node[pos=0.8]{\scriptsize\rotatebox{60}{$\bowtie$}}(l);
\draw[blue](d)--node[left,pos=0.5]{$\gamma$}node[pos=0.2]{\scriptsize\rotatebox{35}{$\bowtie$}}(l);
\fill(l)circle(0.07);\fill(r)circle(0.07);\fill(d)circle(0.07);\fill(u)circle(0.07);
\end{tikzpicture}
\ ,\hspace{3mm}
\begin{tikzpicture}[baseline=-1mm]
\coordinate(0)at(0,0);\coordinate(d)at(0,-1.2);\coordinate(u)at(90:2);
\coordinate(l)at($(d)+(120:1.3)$);\coordinate(r)at($(d)+(60:1.3)$);
\draw(0,0)circle(2);
\draw[blue](d)to[out=25,in=90,relative]node[pos=0.15]{\scriptsize\rotatebox{55}{$\bowtie$}}($(d)+(120:1.7)$);
\draw[blue](d)to[out=-25,in=-90,relative]node[pos=0.15]{\scriptsize\rotatebox{5}{$\bowtie$}}($(d)+(120:1.7)$)node[above]{$\Omega(\{\gamma,\gamma'\})$};
\fill(l)circle(0.07);\fill(r)circle(0.07);\fill(d)circle(0.07);\fill(u)circle(0.07);
\end{tikzpicture}\ .
\]
Then $\Int_T(U)=\Int_T(\Omega(U))=(5,6,5,3,7,5,7)$ is given by
\begin{gather*}
\Int_T(\alpha)=(0,0,0,0,0,0,1), \Int_T(\beta)=(1,2,1,1,1,1,2),\\
\Int_T(\gamma)+\Int_T(\gamma')=(1,1,1,0,2,1,1)+(1,1,1,2,0,1,1)=(2,2,2,2,2,2,2)=\Int_T(\Omega(\{\gamma,\gamma'\})).
\end{gather*}
\end{example}

\subsection{Puzzle pieces, their edges and segments}\label{subsec:segment}

For a tagged triangulation $T$ of $\cS$, $\Omega(T)$ decomposes $\cS$ into triangles and monogons (see \cite[Remark 4.2]{FoST08}), called \emph{triangle pieces} and \emph{monogon pieces}, respectively. We also call them \emph{puzzle pieces}. Remark that puzzle pieces of $T$ are defined in \cite[Remark 4.2]{FoST08}, and they appear in Table \ref{table:Qtri} In a puzzle piece, we define certain curves, called (\emph{loop} or \emph{non-loop}) \emph{edges} and \emph{segments}, as in Table \ref{table:segment}. We often identify each edge in puzzle pieces of $\Omega(T)$ with the corresponding tagged arc or punctured loop in $T$ or $\Omega(T)$.

\renewcommand{\arraystretch}{1.3}
{\begin{table}[ht]
\begin{tabular}{l||c|c|c}
\multirow{2}{*}{Puzzle piece}&\multicolumn{2}{|c|}{Edges}&\multirow{2}{*}{Segments}\\\cline{2-3}
&Non-loop edges&Loop edge&\\\hline\hline
Triangle piece
\begin{tikzpicture}[baseline=1mm]
\coordinate(l)at(-150:1);\coordinate(r)at(-30:1);\coordinate(u)at(90:1);\draw(u)--(l)--(r)--(u);
\node at(90:0.6){$a$};\node at(0,1.1){};\node at(0,-0.6){};
\end{tikzpicture}
&
\begin{tikzpicture}[baseline=1mm]
\coordinate(l)at(-150:1);\coordinate(r)at(-30:1);\coordinate(u)at(90:1);\draw(u)--(l)--(r)--(u);
\draw[blue](l)to[out=20,in=160,relative]node[above]{$e_a$}(r);
\end{tikzpicture}
&
&
\begin{tikzpicture}[baseline=1mm]
\coordinate(l)at(-150:1);\coordinate(r)at(-30:1);\coordinate(u)at(90:1);\draw(u)--(l)--(r)--(u);
\draw[blue]($(u)!0.5!(l)$)--node[below]{$h_a$}($(u)!0.5!(r)$);
\end{tikzpicture}\ 
\begin{tikzpicture}[baseline=1mm]
\coordinate(l)at(-150:1);\coordinate(r)at(-30:1);\coordinate(u)at(90:1);\draw(u)--(l)--(r)--(u);
\draw[blue](u)--node[right,pos=0.7]{$v_a$}($(l)!0.5!(r)$);
\end{tikzpicture}\ 
\begin{tikzpicture}[baseline=1mm]
\coordinate(l)at(-150:1);\coordinate(r)at(-30:1);\coordinate(u)at(90:1);\draw(u)--(l)--(r)--(u);
\draw[blue](0,0)--($(u)!0.5!(l)$);\draw[blue](0,0)--($(u)!0.5!(r)$);\draw[blue](0,0)--node[right,pos=0.3]{$y$}($(l)!0.5!(r)$);
\end{tikzpicture}
\\\hline
Monogon piece\hspace{1mm}
\begin{tikzpicture}[baseline=1mm]
\coordinate(u)at(90:1);\coordinate(p)at(0,0.5);\coordinate(d)at(-90:0.5);
\draw(d)to[out=150,in=180](u);\draw(d)to[out=30,in=0](u);
\fill(p)circle(0.07);\node at(0,-0.2){$a$};\node at(0,1.1){};\node at(0,-0.6){};
\end{tikzpicture}
&
\begin{tikzpicture}[baseline=1mm]
\coordinate(u)at(90:1);\coordinate(p)at(0,0.5);\coordinate(d)at(-90:0.5);
\draw(d)to[out=150,in=180](u);\draw(d)to[out=30,in=0](u);
\draw[blue](d)--(p)node[above]{$f_a$};
\fill(p)circle(0.07);
\end{tikzpicture}\ 
\begin{tikzpicture}[baseline=1mm]
\coordinate(u)at(90:1);\coordinate(p)at(0,0.5);\coordinate(d)at(-90:0.5);
\draw(d)to[out=150,in=180](u);\draw(d)to[out=30,in=0](u);
\draw[blue](d)--node[pos=0.8]{\footnotesize$\bowtie$}(p);
\fill(p)circle(0.07);\node at(0,0.75){$f_a^{\bowtie}$};
\end{tikzpicture}
&
\begin{tikzpicture}[baseline=1mm]
\coordinate(u)at(90:1);\coordinate(p)at(0,0.5);\coordinate(d)at(-90:0.5);
\draw(d)to[out=150,in=180](u);\draw(d)to[out=30,in=0](u);
\draw[blue](d)to[out=130,in=180](90:0.8);\draw[blue](d)to[out=50,in=0](90:0.8);\node[blue]at(0,0){$e_a$};
\fill(p)circle(0.07);
\end{tikzpicture}
&
\begin{tikzpicture}[baseline=1mm]
\coordinate(u)at(90:1);\coordinate(p)at(0,0.5);\coordinate(d)at(-90:0.5);
\draw(d)to[out=150,in=180]coordinate[pos=0.3](l)(u);\draw(d)to[out=30,in=0]coordinate[pos=0.3](r)(u);
\draw[blue](l)--node[above]{$h_a$}(r);
\fill(p)circle(0.07);
\end{tikzpicture}\ 
\begin{tikzpicture}[baseline=1mm]
\coordinate(u)at(90:1);\coordinate(p)at(0,0.5);\coordinate(d)at(-90:0.5);
\draw(d)to[out=150,in=180](u);\draw(d)to[out=30,in=0](u);
\draw[blue](p)--node[below,pos=0.3]{$i_a$}(0.4,0.5);
\fill(p)circle(0.07);
\end{tikzpicture}\ 
\begin{tikzpicture}[baseline=1mm]
\coordinate(u)at(90:1);\coordinate(p)at(0,0.5);\coordinate(d)at(-90:0.5);
\draw(d)to[out=150,in=180](u);\draw(d)to[out=30,in=0](u);
\draw[blue](p)--node[pos=0.4]{\rotatebox{90}{\footnotesize $\bowtie$}}(0.4,0.5);
\fill(p)circle(0.07);\node at(0.1,0.1){$i_a^{\bowtie}$};
\end{tikzpicture}
\end{tabular}\vspace{3mm}
\caption{Edges and segments in each puzzle piece with an angle $a$}
\label{table:segment}
\end{table}}

We can naturally extend the notions of intersection numbers and compatibility of tagged arcs to edges and segments in a puzzle piece as in the following example. We also define that two of them in different puzzle pieces are compatible.

\begin{example}
Let $S$ be a multi-set of edges and segments in a puzzle piece with an angle $a$. In the case of a triangle piece,
\[
\Int(h_a,S)=m_S(e_b)+m_S(e_c)+m_S(v_a),
\]
where $b$ and $c$ are the other angles in the triangle piece. In the case of a monogon piece,
\[
\Int(f_a,S)=m_S(h_a)+m_S(i_a^{\bowtie})\mbox{\ and\ }\Int(f_a^{\bowtie},S)=m_S(h_a)+m_S(i_a).
\]
Moreover, we also have the equalities (cf. Proposition \ref{prop:Int conjugate})
\[
\Int(e_a,S)=2m_S(h_a)+m_S(i_a)+m_S(i_a^{\bowtie})=\Int(f_a,S)+\Int(f_a^{\bowtie},S).
\]
\end{example}

We show that a multi-set $S$ of pairwise compatible segments in a puzzle piece is uniquely determined by their intersection numbers with non-loop edges. Note that $m_S(y)\le 1$ since $\Int(y,y)=1$.

\begin{prop}\label{prop:Int triangle}
Let $\triangle$ be a triangle piece with angles $1$, $2$, and $3$. For multi-sets $S$ and $S'$ of pairwise compatible segments in $\triangle$, if $\Int(e_i,S)=\Int(e_i,S')$ for all $i$, then $S=S'$.
\end{prop}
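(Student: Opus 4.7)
My plan is to classify the pairwise-compatible multi-sets of segments in $\triangle$ and show that the intersection vector $(\Int(e_1,S),\Int(e_2,S),\Int(e_3,S))$ uniquely determines such a multi-set.

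First I would tabulate the pairwise intersection numbers among the seven segment types $h_1,h_2,h_3,v_1,v_2,v_3,y$ by drawing them in a reference triangle. I expect to find that two distinct $v_i,v_j$ cross exactly once (they behave like medians), that $v_a$ crosses both $h_a$ and $y$, and that $\Int(y,y)=1$ as already noted, whereas all remaining pairs of segments are compatible. These compatibility constraints force $S$ into one of two shapes: either (A) $S=\{h_1^{a_1},h_2^{a_2},h_3^{a_3},y^{y_0}\}$ with $y_0\in\{0,1\}$ and each $a_i\ge 0$, or (B) $S=\{v_k^{b_k},h_i^{a_i},h_j^{a_j}\}$ for some $\{i,j,k\}=\{1,2,3\}$ with $b_k\ge 1$ and $a_i,a_j\ge 0$.

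Next, using $\Int(e_i,h_a)=1-\delta_{ia}$, $\Int(e_i,v_a)=\delta_{ia}$, and $\Int(e_i,y)=1$, I would set $x_i:=\Int(e_i,S)$ and compute the intersection vector in each case. In Case~(A) one has $x_i=a_j+a_k+y_0$, hence $x_j+x_k-x_i=2a_i+y_0\ge 0$ (a non-strict triangle inequality) and $x_1+x_2+x_3=2(a_1+a_2+a_3)+3y_0$, so $y_0$ is forced by the parity of $x_1+x_2+x_3$ and then each $a_i=(x_j+x_k-x_i-y_0)/2$ is determined. In Case~(B) one reads off $a_i=x_j$, $a_j=x_i$ and $b_k=x_k-x_i-x_j>0$, which strictly violates the triangle inequality at index $k$. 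Since a strict violation can occur at most at one index, Cases (A) and (B) correspond to mutually exclusive conditions on $(x_1,x_2,x_3)$, so the intersection vector pins down both the case and all multiplicities, yielding $S=S'$.

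The main obstacle will be the geometric bookkeeping in the first step: carefully justifying the table of pairwise intersection numbers among segments, especially the ones involving the tripod $y$ whose compatibility with each $v_a$ is the subtle point, and confirming that no other pairwise-compatible combination of segments can occur. Once that table is in hand, the remaining case analysis is elementary non-negative integer arithmetic.
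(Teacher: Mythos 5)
Your proposal is correct and follows essentially the same route as the paper: classify the pairwise-compatible multi-sets of segments into the two families $\{h_1,h_2,h_3,y\}$ (with $m_S(y)\le 1$) and $\{v_k,h_i,h_j\}$ (with $m_S(v_k)>0$), then recover the multiplicities from $(\Int(e_1,S),\Int(e_2,S),\Int(e_3,S))$ via the triangle-inequality dichotomy and the parity of the sum. The only cosmetic difference is that you replace the paper's WLOG ordering $a_1\ge a_2,a_3$ by the observation that a strict violation of the triangle inequality can occur at only one index, and you spell out the compatibility table that the paper reads off from its figure.
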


\begin{proof}
For short, we denote $\Int(e_i,S)$ by $a_i$. By symmetry, we can assume that $a_1\ge a_2, a_3$. Then it follows from the compatibility of $S$ that $S$ is one of the following (see Figure \ref{fig:compatible segments}):
\begin{itemize}
\item[(1)] A multi-set consisting of $h_2$, $h_3$, and $v_1$ with $m_S(v_1)>0$.
\item[(2)] A multi-set consisting of $h_1$, $h_2$, $h_3$, and $y$ with $m_S(y)\le 1$.
\end{itemize}
By comparing the cases, if $a_1>a_2+a_3$, then $S$ is (1), and it must be the multi-set
\[
\{h_2^{a_3},h_3^{a_2},v_1^{a_1-a_2-a_3}\}.
\]
If $a_1\le a_2+a_3$ and $a_1+a_2+a_3$ is even, then $S$ is (2) with $m_S(y)=0$, and it must be the multi-set
\[
\{h_1^{\frac{a_2+a_3-a_1}{2}},h_2^{\frac{a_3+a_1-a_2}{2}},h_3^{\frac{a_1+a_2-a_3}{2}}\}.
\]
If $a_1\le a_2+a_3$ and $a_1+a_2+a_3$ is odd, then $S$ is (2) with $m_S(y)=1$, and it must be the multi-set
\[
\{h_1^{\frac{a_2+a_3-a_1-1}{2}},h_2^{\frac{a_3+a_1-a_2-1}{2}},h_3^{\frac{a_1+a_2-a_3-1}{2}},y\}.
\]
Therefore, $S$ is uniquely determined by $a_1$, $a_2$, and $a_3$, thus the assertion holds.
\end{proof}

%
%
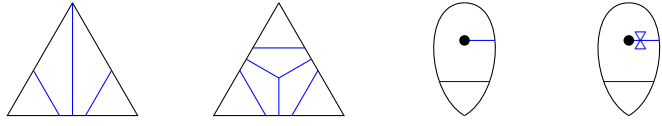
\begin{figure}[ht]
\[
\begin{tikzpicture}
\coordinate(l)at(-150:1);\coordinate(r)at(-30:1);\coordinate(u)at(90:1);\draw(u)--(l)--(r)--(u);
\draw[blue](u)--($(l)!0.5!(r)$);\draw[blue]($(u)!0.6!(l)$)--($(l)!0.4!(r)$);\draw[blue]($(u)!0.6!(r)$)--($(l)!0.6!(r)$);
\end{tikzpicture}
\hspace{10mm}
\begin{tikzpicture}
\coordinate(l)at(-150:1);\coordinate(r)at(-30:1);\coordinate(u)at(90:1);\draw(u)--(l)--(r)--(u);
\draw[blue](0,0)--($(u)!0.5!(l)$);\draw[blue](0,0)--($(u)!0.5!(r)$);\draw[blue](0,0)--($(l)!0.5!(r)$);
\draw[blue]($(u)!0.6!(l)$)--($(l)!0.4!(r)$);\draw[blue]($(u)!0.6!(r)$)--($(l)!0.6!(r)$);\draw[blue]($(u)!0.4!(l)$)--($(u)!0.4!(r)$);
\end{tikzpicture}
\hspace{10mm}
\begin{tikzpicture}
\coordinate(u)at(90:1);\coordinate(p)at(0,0.5);\coordinate(d)at(-90:0.5);
\draw(d)to[out=150,in=180]coordinate[pos=0.3](l)(u);\draw(d)to[out=30,in=0]coordinate[pos=0.3](r)(u);
\draw[blue](l)--(r);
\draw[blue](p)--(0.4,0.5);
\fill(p)circle(0.07);
\end{tikzpicture}
\hspace{10mm}
\begin{tikzpicture}
\coordinate(u)at(90:1);\coordinate(p)at(0,0.5);\coordinate(d)at(-90:0.5);
\draw(d)to[out=150,in=180]coordinate[pos=0.3](l)(u);\draw(d)to[out=30,in=0]coordinate[pos=0.3](r)(u);
\draw[blue](l)--(r);
\draw[blue](p)--node[pos=0.4]{\rotatebox{90}{\footnotesize $\bowtie$}}(0.4,0.5);
\fill(p)circle(0.07);
\end{tikzpicture}
\]
\caption{The maximal sets of pairwise compatible segments in each puzzle piece}
\label{fig:compatible segments}
\end{figure}

\begin{prop}\label{prop:Int monogon}
Let $\triangle$ be a monogon piece with an angle $a$. For multi-sets $S$ and $S'$ of pairwise compatible segments in $\triangle$, if $\Int(f_a,S)=\Int(f_a,S')$ and $\Int(f_a^{\bowtie},S)=\Int(f_a^{\bowtie},S')$, then $S=S'$.
\end{prop}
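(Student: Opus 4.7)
My plan is to mirror the proof of Proposition~\ref{prop:Int triangle} in the much smaller case of the monogon piece $\triangle$, where the classification of compatible configurations is very tight. By Table~\ref{table:segment}, the only segments in $\triangle$ are $h_a$, $i_a$, and $i_a^{\bowtie}$, and by Figure~\ref{fig:compatible segments} the maximal pairwise compatible sets of segments in $\triangle$ are precisely $\{h_a,i_a\}$ and $\{h_a,i_a^{\bowtie}\}$; in particular, $i_a$ and $i_a^{\bowtie}$ are not pairwise compatible. Hence $S$ must take one of the two forms
\[
S=\{h_a^{p},i_a^{q}\}\qquad\text{or}\qquad S=\{h_a^{p},(i_a^{\bowtie})^{q}\}
\]
for some $p,q\in\bZ_{\ge 0}$.

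Using the intersection formulas $\Int(f_a,S)=m_S(h_a)+m_S(i_a^{\bowtie})$ and $\Int(f_a^{\bowtie},S)=m_S(h_a)+m_S(i_a)$ from the example preceding Proposition~\ref{prop:Int triangle}, and setting $A:=\Int(f_a,S)$ and $B:=\Int(f_a^{\bowtie},S)$, the first form yields $(A,B)=(p,p+q)$ while the second yields $(A,B)=(p+q,p)$. Thus the first case occurs exactly when $A\le B$ and forces $S=\{h_a^{A},i_a^{B-A}\}$, and the second occurs exactly when $A\ge B$ and forces $S=\{h_a^{B},(i_a^{\bowtie})^{A-B}\}$. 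The two descriptions agree on the overlap $A=B$, where both collapse to $\{h_a^{A}\}$.

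Consequently $S$ is uniquely determined by the pair $(\Int(f_a,S),\Int(f_a^{\bowtie},S))$, and the hypothesis immediately yields $S=S'$. The only nontrivial input beyond routine unpacking of the intersection formulas is the incompatibility of $i_a$ and $i_a^{\bowtie}$ inside a monogon piece (so that the $m_S(i_a)$ and $m_S(i_a^{\bowtie})$ degrees of freedom do not combine into an underdetermined system); I regard this as the one step worth checking explicitly, and it is directly visible from Figure~\ref{fig:compatible segments}.
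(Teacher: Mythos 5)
Your argument is correct and follows essentially the same route as the paper: the compatibility constraint forces $S$ to consist of $h_a$ together with only one of $i_a$ or $i_a^{\bowtie}$, and the two intersection formulas then pin down the multiplicities, with the case split on $\Int(f_a,S)$ versus $\Int(f_a^{\bowtie},S)$ exactly as in the paper's proof. The explicit check that the two case descriptions agree when the two intersection numbers coincide is a harmless refinement of the same computation.
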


\begin{proof}
It follows from the compatibility of $S$ that $S$ is a multi-set consisting of $h_a$ and either $i_a$ or $i_a^{\bowtie}$ (see Figure \ref{fig:compatible segments}). By comparing the cases, if $\Int(f_a,S)<\Int(f_a^{\bowtie},S)$, then $S$ must be the multi-set
\[
\{h_a^{\Int(f_a,S)},i_a^{\Int(f_a^{\bowtie},S)-\Int(f_a,S)}\}.
\]
If $\Int(f_a,S)\ge\Int(f_a^{\bowtie},S)$, then $S$ must be the multi-set
\[
\{h_a^{\Int(f_a^{\bowtie},S)},(i_a^{\bowtie})^{\Int(f_a,S)-\Int(f_a^{\bowtie},S)}\}.
\]
Therefore, $S$ is uniquely determined by $\Int(f_a,S)$ and $\Int(f_a^{\bowtie},S)$, thus the assertion holds.
\end{proof}

\begin{example}\label{ex:segment}
In the setting of Example \ref{ex:T Int}, the intersection vector $\Int_T(U)=(5,6,5,3,7,5,7)$ induces the following segments in each puzzle piece (see the proofs of Propositions \ref{prop:Int triangle} and \ref{prop:Int monogon}):
\[
\begin{tikzpicture}[baseline=-5mm]
\coordinate(l)at(-150:1);\coordinate(r)at(-30:1);\coordinate(u)at(90:1);
\draw(u)--node[above left]{$1$}(l)--node[below]{$8$}(r)--node[above right]{$2$}(u);
\draw[blue](0,0)--($(u)!0.5!(l)$);\draw[blue](0,0)--($(u)!0.5!(r)$);\draw[blue](0,0)--($(l)!0.5!(r)$);
\draw[blue]($(u)!0.6!(l)$)--($(l)!0.4!(r)$);\draw[blue]($(u)!0.65!(l)$)--($(l)!0.35!(r)$);\draw[blue]($(u)!0.7!(l)$)--($(l)!0.3!(r)$);\draw[blue]($(u)!0.75!(l)$)--($(l)!0.25!(r)$);
\draw[blue]($(u)!0.6!(r)$)--($(l)!0.6!(r)$);\draw[blue]($(u)!0.65!(r)$)--($(l)!0.65!(r)$);\draw[blue]($(u)!0.7!(r)$)--($(l)!0.7!(r)$);\draw[blue]($(u)!0.75!(r)$)--($(l)!0.75!(r)$);\draw[blue]($(u)!0.8!(r)$)--($(l)!0.8!(r)$);
\end{tikzpicture}
\ ,\hspace{7mm}
\begin{tikzpicture}[baseline=-5mm]
\coordinate(l)at(-150:1);\coordinate(r)at(-30:1);\coordinate(u)at(90:1);
\draw(u)--node[above left]{$2$}(l)--node[below]{$9$}(r)--node[above right]{$3$}(u);
\draw[blue](u)--($(l)!0.5!(r)$);
\draw[blue]($(u)!0.6!(l)$)--($(l)!0.4!(r)$);\draw[blue]($(u)!0.65!(l)$)--($(l)!0.35!(r)$);\draw[blue]($(u)!0.7!(l)$)--($(l)!0.3!(r)$);\draw[blue]($(u)!0.75!(l)$)--($(l)!0.25!(r)$);\draw[blue]($(u)!0.8!(l)$)--($(l)!0.2!(r)$);\draw[blue]($(u)!0.85!(l)$)--($(l)!0.15!(r)$);
\draw[blue]($(u)!0.6!(r)$)--($(l)!0.6!(r)$);\draw[blue]($(u)!0.65!(r)$)--($(l)!0.65!(r)$);\draw[blue]($(u)!0.7!(r)$)--($(l)!0.7!(r)$);\draw[blue]($(u)!0.75!(r)$)--($(l)!0.75!(r)$);\draw[blue]($(u)!0.8!(r)$)--($(l)!0.8!(r)$);
\end{tikzpicture}
\ ,\hspace{7mm}
\begin{tikzpicture}[baseline=-5mm]
\coordinate(l)at(-150:1);\coordinate(r)at(-30:1);\coordinate(u)at(90:1);
\draw(u)--node[above left]{$1$}(l)--node[below]{}(r)--node[above right]{$3$}(u);
\draw[blue]($(u)!0.4!(l)$)--($(u)!0.4!(r)$);\draw[blue]($(u)!0.35!(l)$)--($(u)!0.35!(r)$);\draw[blue]($(u)!0.3!(l)$)--($(u)!0.3!(r)$);\draw[blue]($(u)!0.25!(l)$)--($(u)!0.25!(r)$);\draw[blue]($(u)!0.2!(l)$)--($(u)!0.2!(r)$);
\end{tikzpicture}
\ ,\hspace{7mm}
\begin{tikzpicture}[baseline=-3mm]
\coordinate(u)at(90:1);\coordinate(p)at(0,0.5);\coordinate(d)at(-90:0.5);
\draw(d)to[out=150,in=180]coordinate[pos=0.3](l)coordinate[pos=0.25](l1)coordinate[pos=0.35](l2)coordinate[pos=0.5](l3)coordinate[pos=0.65](l4)(u);
\draw(d)to[out=30,in=0]coordinate[pos=0.3](r)coordinate[pos=0.25](r1)coordinate[pos=0.35](r2)coordinate[pos=0.5](r3)coordinate[pos=0.65](r4)(u)node[above]{$8$};
\draw[blue](l)--(r)(l1)--(r1)(l2)--(r2);
\draw[blue](l3)--(p)--(r3)(l4)--(p)--(r4);
\fill(p)circle(0.07);
\end{tikzpicture}
\ ,\hspace{7mm}
\begin{tikzpicture}[baseline=-3mm]
\coordinate(u)at(90:1);\coordinate(p)at(0,0.5);\coordinate(d)at(-90:0.5);
\draw(d)to[out=150,in=180]coordinate[pos=0.3](l)coordinate[pos=0.25](l1)coordinate[pos=0.35](l2)coordinate[pos=0.4](l3)coordinate[pos=0.45](l4)(u);
\draw(d)to[out=30,in=0]coordinate[pos=0.3](r)coordinate[pos=0.25](r1)coordinate[pos=0.35](r2)coordinate[pos=0.4](r3)coordinate[pos=0.45](r4)(u)node[above]{$9$};
\draw[blue](l)--(r)(l1)--(r1)(l2)--(r2)(l3)--(r3)(l4)--(r4);
\draw[blue](-0.4,0.5)--(p)--(0.4,0.5);
\fill(p)circle(0.07);
\end{tikzpicture}\ .
\]
In particular, these segments are pairwise compatible.
\end{example}

\subsection{Proof of Theorems \ref{thm:main} and \ref{thm:unique tag tri}}\label{subsec:pf thm:main}

In this subsection, we prove Theorems \ref{thm:main} and \ref{thm:unique tag tri}. For that, we prepare some notations and lemmas. Fix a tagged triangulation $T$ of $\cS$ satisfying \eqref{diamond}, in particular, all tags in $\Omega(T)$ are plain. Let $U\in\bM_{\cS}$ with $U\cap T=\emptyset$. We consider a decomposition
\[
\Omega(U)=U_1\sqcup U_2,
\]
where $U_2$ consists of all $2$-notched curves in $\Omega(U)$ whose underlying plain curves are in $\Omega(T)$. For each puncture $p$ and $V\in\bM_{\cS}\cup\Omega(\bM_{\cS})$, we denote by $n(V,p)$ the number of notched tags in $V$ incident to $p$, and by $c_p$ the simple closed curve enclosing exactly one puncture $p$. We set
\[
U^{\circ}:=U_0\sqcup\{c_p^{n(\Omega(U),p)}\mid \mbox{$p$ is a puncture incident to $\Omega(T)$}\},
\]
where $U_0$ is the multi-set of tagged arcs and punctured loops obtained from $U_1$ by changing all tags at punctures incident to $\Omega(T)$ to plain. Note that a puncture is not incident to $\Omega(T)$ if and only if it is enclosed by a punctured loop in $\Omega(T)$.

\begin{lem}\label{lem:recover0}
Let $U, V\in\bM_{\cS}$ with $U\cap T=V\cap T=\emptyset$. If $U^{\circ}=V^{\circ}$, then $U_1=V_1$.
\end{lem}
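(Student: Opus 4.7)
The plan is to unpack the hypothesis $U^\circ = V^\circ$ into its two natural pieces. Since $U^\circ = U_0 \sqcup \{c_p^{n(\Omega(U),p)}\}$ and the curves $c_p$ are simple closed curves (distinct in nature from the tagged arcs and punctured loops in $U_0$), the equality $U^\circ = V^\circ$ is equivalent to the pair of identities $U_0 = V_0$ and $n(\Omega(U), p) = n(\Omega(V), p)$ for every puncture $p$ incident to $\Omega(T)$. The first identity already forces $U_1$ and $V_1$ to have the same underlying curves and the same tags at every marked point other than punctures incident to $\Omega(T)$, so it suffices to show that $U_1$ and $V_1$ agree on tags at each such puncture $p$.

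First I would note that at every puncture $p$, all ends of $U_1$-curves at $p$ carry a single common tag. Indeed $U_1 \subseteq \Omega(U)$, and by Definition-Proposition \ref{defprop:Omega}, $\Omega(U)$ is pairwise compatible and contains no pair of conjugate arcs; so for any $\gamma,\delta \in \Omega(U)$ sharing an endpoint $p$ one has $C_{\gamma,\delta}=0$, whence compatibility forces $B_{\gamma,\delta}=0$, so that their tags at $p$ coincide. The same reasoning applies to $V_1$. Hence the tag of $U_1$ (and of $V_1$) at each puncture of incidence is a well-defined symbol, plain or notched.

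Suppose, for contradiction, that there is a puncture $p$ incident to $\Omega(T)$ at which the tag of $U_1$ differs from that of $V_1$; say $U_1$ is plain at $p$ and $V_1$ is notched at $p$. Since $U_0 = V_0$, the number $e_p$ of ends of $U_1$ at $p$ equals that of $V_1$ at $p$ and is positive, giving $n(U_1,p) = 0$ and $n(V_1,p) = e_p$. From the decomposition $n(\Omega(U),p) = n(U_1,p) + n(U_2,p)$, the analogous decomposition for $V$, and the hypothesis $n(\Omega(U),p) = n(\Omega(V),p)$, I obtain
\[
n(U_2, p) \;=\; e_p + n(V_2, p) \;>\; 0.
\]
Hence some 2-notched curve $\gamma \in U_2$ has an end at $p$, necessarily with notched tag. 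Taking any $\delta \in U_1$ with an end at $p$ (plain tag), we have $B_{\gamma,\delta} \geq 1$ while $C_{\gamma,\delta} = 0$ (as $\gamma, \delta \in \Omega(U)$ do not form a pair of conjugate arcs), so $\Int(\gamma,\delta) \geq 1$, contradicting pairwise compatibility. Thus $U_1 = V_1$.

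The main obstacle is the last step, where the only substantive geometric input enters: a 2-notched curve in $U_2$ incident to $p$ is incompatible with any $U_1$-curve having a plain tag at $p$. This relies essentially on the no-conjugate-arcs property built into $\Omega$, since that property is precisely what kills the $C_{\gamma,\delta}$ term that might otherwise balance a nonzero $B_{\gamma,\delta}$. Everything else is bookkeeping with the additive decomposition $n(\Omega(U),p) = n(U_1,p) + n(U_2,p)$ and with the uniform-tag consequence of compatibility.
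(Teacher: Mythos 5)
Your proof is correct and follows essentially the same route as the paper: the paper's two-line argument reconstructs $U_1$ from $U^{\circ}$ by removing the closed curves and re-notching tags at punctures enclosed by closed curves in $U^{\circ}$, which rests on exactly the fact you spell out, namely that pairwise compatibility of $\Omega(U)$ together with the absence of conjugate pairs forces all tags of $\Omega(U)$ at a given puncture to agree, so the multiplicities $n(\Omega(U),p)$ determine the tags of $U_1$. Your version merely makes this implicit compatibility argument explicit and phrases the conclusion as a comparison of $U$ and $V$ rather than as a direct reconstruction.
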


\begin{proof}
Since $U_0$ contains no closed curves, it is obtained from $U^{\circ}$ by removing all closed curves. Moreover, $U_1$ is obtained from $U_0$ by changing all tags at punctures enclosed by closed curves in $U^{\circ}$ to notched. Therefore, the assertion holds.
\end{proof}

For each puzzle piece $\triangle$ of $\Omega(T)$ and a plain curve or simple closed curve $\gamma$, the intersection $\gamma\cap\triangle$ is either an edge or a multi-set of segments in $\triangle$, where we define that $\gamma\cap\triangle=\{\gamma\}$ if $\gamma$ is contained in $\Omega(T)$ and is also an edge of $\triangle$. We define the intersection $U^{\circ}\cap\triangle$ as the multi-set of edges and segments in $\triangle$
\[
U^{\circ}\cap\triangle:=\bigsqcup_{\gamma\in U^{\circ}}(\gamma\cap\triangle).
\]
Note that if $\gamma\in U^{\circ}$ is a common edge of adjacent puzzle pieces $\triangle$ and $\triangle'$, then both $U^{\circ}\cap\triangle$ and $U^{\circ}\cap\triangle'$ contain $\gamma$, that is,
\[
m_{U^{\circ}\cap\triangle}(\gamma)=m_{U^{\circ}\cap\triangle'}(\gamma)=m_{U^{\circ}}(\gamma).
\]

\begin{lem}\label{lem:no loop edge}
Let $U\in\bM_{\cS}$ with $U\cap T=\emptyset$. Then $U^{\circ}$ contains no punctured loops in $\Omega(T)$. In particular, $U^{\circ}\cap\triangle$ contains no loop edges for all monogon pieces $\triangle$ of $\Omega(T)$.
\end{lem}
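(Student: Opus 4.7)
My plan is to reduce the lemma to the statement that $U^{\circ}$ contains no punctured loop belonging to $\Omega(T)$, and then prove this by contradiction, tracking tags carefully through the definitions of $\Omega$, $U_1$, and $U_0$.

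First, the ``in particular'' clause is immediate: for a monogon piece $\triangle$ of $\Omega(T)$, the loop edge of $\triangle$ is precisely the punctured loop of $\Omega(T)$ bounding $\triangle$. By the convention defining $\gamma\cap\triangle$ for $\gamma$ an edge of $\Omega(T)$, this loop edge lies in $U^{\circ}\cap\triangle$ only if the corresponding punctured loop in $\Omega(T)$ belongs to $U^{\circ}$. So everything reduces to the first assertion.

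To prove the first assertion, suppose for contradiction that some punctured loop $\ell\in\Omega(T)$ lies in $U^\circ$. Since $T$ satisfies \eqref{diamond}, $\ell$ is plain; let $q$ be its endpoint and $p$ the puncture it encloses. Note $\ell$ cannot come from the second part of $U^{\circ}$, since each $c_p$ there is a simple closed curve, not a loop based at a marked point. So $\ell\in U_0$, meaning some $\ell^{\ast}\in U_1$ becomes $\ell$ after changing tags at punctures incident to $\Omega(T)$ to plain. Tag-changing does not alter the underlying curve, so $\ell^{\ast}$ is a punctured loop with the same underlying curve as $\ell$; in particular its two tags at $q$ are either both plain or both notched. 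If they are both notched, then $\ell^{\ast}$ is a $2$-notched curve whose underlying plain curve is $\ell\in\Omega(T)$, which contradicts $\ell^{\ast}\in U_1$ (since $U_1$ excludes exactly such curves). Therefore $\ell^{\ast}=\ell$.

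Now $\ell\in U_1\subseteq\Omega(U)$. Elements of $U$ are tagged arcs, so the punctured loop $\ell$ is not in $U$; by the definition of $\Omega$, it must arise as $\Omega(\{\gamma,\gamma'\})$ for some pair of conjugate arcs $\{\gamma,\gamma'\}\subseteq U$. Similarly, since $\ell\in\Omega(T)$, we have $\ell=\Omega(\{\delta,\delta'\})$ for some pair of conjugate arcs $\{\delta,\delta'\}\subseteq T$. But $\Omega$ on conjugate pairs is injective: the underlying curve connects $p$ to $q$, tags at $q$ agree with those of $\ell$ (plain), and tags at $p$ are $\{\text{plain},\text{notched}\}$; this data recovers the pair uniquely. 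Hence $\{\gamma,\gamma'\}=\{\delta,\delta'\}\subseteq U\cap T$, contradicting $U\cap T=\emptyset$.

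The only subtle step is the analysis that forces $\ell^{\ast}=\ell$: one must use both that $p$ is not incident to $\Omega(T)$ (so tag-changing does not operate at $p$, which is irrelevant here since a punctured loop carries no tag at the enclosed puncture) and, more importantly, that the definition of $U_1$ prohibits $2$-notched curves whose underlying plain curves lie in $\Omega(T)$. Without this exclusion one could have $\ell^{\ast}$ a $2$-notched version of $\ell$, which would survive tag-changing at $q$ and collapse to $\ell$; the exclusion is precisely what breaks this possibility.
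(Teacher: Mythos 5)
Your proof is correct and takes essentially the same approach as the paper: the paper likewise observes that a punctured loop $\gamma\in\Omega(T)$ lying in $U^{\circ}$ forces $\Omega(U)$ to contain either $\gamma$ itself (contradicting $U\cap T=\emptyset$ via the bijectivity of $\Omega$ on conjugate pairs) or its $2$-notched version (which lies in $U_2$, not $U_1$, contradicting membership in $U_0$). Your write-up merely spells out the tag-tracking and the exclusion of the closed-curve part of $U^{\circ}$ in more detail than the paper does.
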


\begin{proof}
Assume that $U^{\circ}$ contains a punctured loop $\gamma$ in $\Omega(T)$ with endpoint $p$. Then $\Omega(U)$ must contain either $\gamma$ or $\gamma^{(p)}$. In the former case, it contradicts $U\cap T=\emptyset$. In the latter case, $\gamma^{(p)}$ is in $U_2$, but not in $U_1$. Thus it contradicts $\gamma\in U_0$.
\end{proof}

By Lemma \ref{lem:no loop edge}, $U^{\circ}\cap\triangle$ consists of non-loop edges and segments in $\triangle$. We consider the multi-set of non-loop edges and segments
\[
S_U:=\bigsqcup_{\triangle} (U^{\circ}\cap\triangle),
\]
where $\triangle$ runs over all puzzle pieces of $\Omega(T)$.

\begin{lem}\label{lem:recover1}
Let $U, V\in\bM_{\cS}$ with $U\cap T=V\cap T=\emptyset$. If $S_U=S_V$, then $U^{\circ}=V^{\circ}$, in particular, $n(\Omega(U),p)=n(\Omega(V),p)$ for all punctures $p$ incident to $\Omega(T)$.
\end{lem}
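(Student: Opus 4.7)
The plan is to show that $S_U$ determines $U^\circ$ uniquely, from which the lemma follows immediately.

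The first step is to recover, for each puzzle piece $\triangle$ of $\Omega(T)$, the multi-set $U^\circ\cap\triangle$ from $S_U$. Each segment in Table~\ref{table:segment} is intrinsic to a single puzzle piece (segments in distinct pieces are distinct isotopy classes in $\cS$), so the multiplicity of any segment in $U^\circ\cap\triangle$ coincides with its multiplicity in $S_U$. A non-loop edge of $\Omega(T)$ is shared between two adjacent puzzle pieces and therefore contributes to $S_U$ with twice its multiplicity in $U^\circ$; dividing by two recovers the per-piece multiplicity. The edges $f_a, f_a^\bowtie$ internal to a monogon piece are in $T$ (by condition \eqref{diamond}), hence disjoint from $U$ and absent from $U^\circ$, while loop edges (punctured loops in $\Omega(T)$) are excluded from $U^\circ$ by Lemma~\ref{lem:no loop edge}.

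The second step is to reconstruct $U^\circ$ from the family $(U^\circ\cap\triangle)_\triangle$. Any curve in $U^\circ$ that is not itself an edge of $\Omega(T)$ decomposes as a concatenation of segments across adjacent puzzle pieces, meeting at common endpoints on the shared edges. To trace such a curve, I would glue segments across each shared edge $e$: since the segments within a single puzzle piece are pairwise compatible, their endpoints on $e$ lie in a canonical linear order, which forces a unique ``parallel'' matching with the endpoints coming from the neighboring puzzle piece. Those curves in $U^\circ$ that coincide with an edge of $\Omega(T)$ are already recorded by the multiplicity computation of step one. Performing all these gluings recovers $U^\circ$ as a multi-set. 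The final sentence of the lemma is then immediate: by the definition of $U^\circ$, the multiplicity of $c_p$ in $U^\circ$ equals $n(\Omega(U),p)$ for each puncture $p$ incident to $\Omega(T)$, and likewise for $V$, so $U^\circ = V^\circ$ forces $n(\Omega(U),p) = n(\Omega(V),p)$ for every such $p$.

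The main obstacle is the uniqueness of the gluing in the second step. Distinct copies of a segment of the same type within a puzzle piece are indistinguishable as abstract curves, so \emph{a priori} one could imagine several ways of matching their endpoints across a shared edge, each yielding a different multi-set of global curves. What resolves the ambiguity is the pairwise compatibility (non-crossing) of segments within each puzzle piece: it pins down a canonical order of endpoints on each shared edge, and the only admissible gluing is the parallel one, making the reconstruction deterministic.
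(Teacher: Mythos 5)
Your strategy is the same as the paper's: the paper's own proof is a one-line assertion that $U^{\circ}$ is the disjoint union of $\{\gamma^{\frac{1}{2}m_{S_U}(\gamma)}\mid\gamma\in\Omega(T)\}$ and the curves obtained by gluing the segments of $S_U$ simultaneously when the puzzle pieces are glued, and your two steps (per-piece multiplicities, then the canonical parallel gluing forced by compatibility of the segments along each shared edge) are an elaboration of exactly that; your treatment of the ``in particular'' clause via $m_{U^{\circ}}(c_p)=n(\Omega(U),p)$ is also correct.

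There is, however, one false assertion in your first step: it is not true that the non-loop edges $f_a,f_a^{\bowtie}$ of a monogon piece are absent from $U^{\circ}$. The hypothesis $U\cap T=\emptyset$ constrains $U$, not $U^{\circ}$: the multi-set $U_0$ is obtained from $U_1$ by changing to plain all tags at punctures incident to $\Omega(T)$, and this operation can create tagged arcs of $T$. Concretely, if the base point $q$ of the punctured loop $e_a\in\Omega(T)$ is itself a puncture and $U$ contains the tagged arc with the same underlying curve as $f_a$ that is plain at the enclosed puncture and notched at $q$ (this arc is not in $T$, by \eqref{diamond} and the compatibility of $T$), then that arc lies in $U_1$, and changing its tag at $q$ to plain produces $f_a\in U_0\subseteq U^{\circ}$; similarly one can produce $f_a^{\bowtie}$. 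The paper explicitly allows this: Fact~2 in Subsection~\ref{subsec:proof} permits $m_{U_0}(\gamma)>0$ for $\gamma\in T$, and the proof of Proposition~\ref{prop:Int=Int} works with $m_{S_U}(f_a)$ and $m_{S_U}(f_a^{\bowtie})$. So, as stated, your reconstruction recipe mishandles (indeed, declares impossible) inputs in which these edges occur. The repair is immediate: $f_a$ and $f_a^{\bowtie}$ meet only the one monogon piece, so their multiplicity in $U^{\circ}$ equals their multiplicity in $S_U$ (no halving, and no gluing is involved); with this extra case added to your case analysis, the reconstruction is well defined and the lemma follows as you argue.
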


\begin{proof}
The assertion holds since $U^{\circ}$ is the disjoint union of the multi-set $\{\gamma^{\frac{1}{2}m_{S_U}(\gamma)}\mid\gamma\in\Omega(T)\}$ and the multi-set of curves obtained from $S_U$ by gluing segments simultaneously when we glue puzzle pieces of $\Omega(T)$ (see Example \ref{ex:glue} and Subsection \ref{subsec:glue}).
\end{proof}

For a multi-set $S$ of edges and segments, and a puzzle piece $\triangle$ of $\Omega(T)$, we also denote by $S\cap\triangle$ the maximal sub-multi-set of $S$ consisting of edges and segments in $\triangle$. The following theorem is a key result in this paper.

\begin{thm}\label{thm:segment}
Let $U\in\bM_{\cS}$ with $U\cap T=\emptyset$. Then there is a multi-set $\Phi(S_U)$ of pairwise compatible segments such that $\Int(\gamma,\Phi(S_U)\cap\triangle)=\Int(\gamma,U)$ for each puzzle piece $\triangle$ and $\gamma\in T\cup\Omega(T)$ that is also an edge in $\triangle$. Moreover, if there is $V\in\bM_{\cS}$ such that $V\cap T=\emptyset$ and $\Phi(S_U)=\Phi(S_V)$, then $S_U=S_V$.
\end{thm}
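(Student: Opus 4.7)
The plan is to construct $\Phi(S_U)$ by iteratively applying local ``modifications'' — rewrite rules acting on the multi-set $S_U$ within one puzzle piece (or across two adjacent ones) — that simultaneously (i) strictly decrease the number of edges present, (ii) preserve the intersection number with every edge of $T \cup \Omega(T)$, and (iii) preserve pairwise compatibility. The prototypical modification trades a non-loop or loop edge, together with whatever segments abut it on either side, for a replacement family of segments drawn from the catalogue in Table \ref{table:segment}. Starting from $S_U$ and iterating until no edges remain yields a multi-set $\Phi(S_U)$ consisting entirely of pairwise compatible segments in puzzle pieces.

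For the intersection identity $\Int(\gamma,\Phi(S_U)\cap\triangle)=\Int(\gamma,U)$, invariance under modifications reduces the claim to proving the analogue at the level of $S_U$. By Proposition \ref{prop:Int conjugate} one has $\Int(\gamma,U)=\Int(\gamma,\Omega(U))$, and the passage $\Omega(U)\to U_0\to U^{\circ}$ changes tags at $\Omega(T)$-incident punctures while compensating by adjoining the closed curves $c_p^{n(\Omega(U),p)}$; since $\Int(\gamma,c_p)=2$ when $\gamma$ is incident to $p$, the differences in the $B_{\gamma,\cdot}$-contributions created by those tag changes are absorbed exactly by the intersections with the $c_p$'s. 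Hence $\Int(\gamma,U)=\Int(\gamma,U^{\circ})$, and because every crossing of $U^{\circ}$ with $\gamma$ is counted once in each adjacent puzzle piece of $S_U$, this equals $\Int(\gamma,S_U\cap\triangle)$ for each $\triangle$ bordering $\gamma$.

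For the injectivity, suppose $\Phi(S_U)=\Phi(S_V)$. Inside every puzzle piece $\triangle$, the multi-sets $\Phi(S_U)\cap\triangle=\Phi(S_V)\cap\triangle$ are pairwise compatible collections of segments with the same intersection numbers with every non-loop edge; by Propositions \ref{prop:Int triangle} and \ref{prop:Int monogon} they coincide. To upgrade this to $S_U=S_V$, I would design each modification to be reversible, so that from the adjacent segment data on the two sides of any boundary edge $\gamma\in T\cup\Omega(T)$ one can read off how many copies of $\gamma$ must be reinstated as an edge of $S_U$; iterating the inverse moves then reconstructs $S_U$ uniquely from $\Phi(S_U)$.

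The main obstacle will be confluence of the rewrite procedure: the output $\Phi(S_U)$ must not depend on the order in which modifications are applied, and modifications straddling a shared boundary edge of two puzzle pieces can interact in delicate ways. Establishing a canonical normal form, or equivalently, exhibiting explicit invariants of $S_U$ that pin down $\Phi(S_U)$ independently of choices, is exactly the technical content being deferred to Section \ref{sec:modif}.
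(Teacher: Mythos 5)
There is a genuine gap, and it sits exactly where your proposal defers the work: the recovery of $S_U$ from $\Phi(S_U)$. Your plan is to ``design each modification to be reversible'' and read off locally, from the segment data adjacent to a boundary edge, how many copies of that edge must be reinstated. This local strategy fails. The elementary moves $\phi_a$ are not injective on arbitrary inputs (the paper exhibits $S_1\neq S_2$ with $\phi_a(S_1)=\phi_a(S_2)$ right after Lemma \ref{lem:Int tri}), and even under the $(\ast)$-conditions that guarantee $\psi_a\phi_a(S)=S$ (Propositions \ref{prop:recover tri} and \ref{prop:recover mono}), knowing only the terminal multi-set does not tell you which inverse moves to apply, at which angles or punctures, nor how many times: Example \ref{ex:recoverable} shows a single modified multi-set $\phi_p(S_V)$ admitting several distinct candidate preimages ($\psi_p\phi_p(S_V)\neq\psi_q\phi_p(S_V)$, etc.), and the spurious ones are eliminated only by \emph{global} conditions on the candidate — glueability, absence of enclosed punctures, and the characterization (C1)--(C2) in terms of the invariants $m_S(c_p)$ and $d_p^S$. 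The actual content of the paper's proof is Theorem \ref{thm:unique S} (uniqueness of a $p$-recoverable preimage under $p$-modifications, proved by a delicate inequality argument comparing the quantities $d_p^S$ over all punctures, via Lemma \ref{lem:phi difference} and Proposition \ref{prop:phi difference}), together with Proposition \ref{prop:properties}, which verifies that $S_U$ is $p$-recoverable, and Proposition \ref{prop:modif=max p}, which identifies the maximal $a$-modification with the maximal $p$-modification so that Theorem \ref{thm:unique S} applies. Your sketch contains none of these ideas, and without something playing their role the ``iterate the inverse moves'' step cannot be carried out; confluence (which you flag) is the comparatively easy part, handled by the commutation statements (Propositions \ref{prop:commute tri} and \ref{prop:commute}).

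A secondary inaccuracy: your reduction of the intersection identity assumes the modifications ``preserve the intersection number with every edge of $T\cup\Omega(T)$,'' but they do not. For $\gamma\in\Omega(T)$, each application of $\phi_a$ that consumes a copy of the edge $\gamma$ drops $\Int(\gamma,-)$ by one (Lemma \ref{lem:Int tri}(2)), and for the conjugate-pair arcs $\gamma\in T\setminus\Omega(T)$ the numbers $\Int(f_a,-)$ and $\Int(f_a^{\bowtie},-)$ shift by $\pm1$ (Lemma \ref{lem:Int mono}). The correct statement is $\Int(\gamma,\Phi(S_U)\cap\triangle)=\Int(\gamma,S_U\cap\triangle)-m_{S_U\cap\triangle}(\gamma)$, which matches $\Int(\gamma,U)$ only after comparing with $\Int(\gamma,U^{\circ})-m_{U^{\circ}}(\gamma)=\Int(\gamma,\Omega(U))$ and, in the monogon case, invoking the relation between $m_{U_0}(\gamma)$ and the multiplicity of the conjugate $1$-notched arc (the paper's Fact~2 and Proposition \ref{prop:Int=Int}). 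This part is fixable along your lines, but as written the bookkeeping is wrong, and the case $\gamma\in T\setminus\Omega(T)$ is not addressed at all.
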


We will prove Theorem \ref{thm:segment} in Subsection \ref{subsec:proof}. We are ready to prove Theorems \ref{thm:main} and \ref{thm:unique tag tri}.

\begin{proof}[Proof of Theorem \ref{thm:main}]
The assumption $\Int_T(U)=\Int_T(V)$ and Theorem \ref{thm:segment} induce $\Int(\gamma,\Phi(S_U)\cap\triangle)=\Int(\gamma,\Phi(S_V)\cap\triangle)$ for all puzzle pieces $\triangle$ of $\Omega(T)$ and all non-loop edges $\gamma$ in $\triangle$. Then $\Phi(S_U)\cap\triangle=\Phi(S_V)\cap\triangle$ by Propositions \ref{prop:Int triangle} and \ref{prop:Int monogon}. By Theorem \ref{thm:segment} again, we obtain that $S_U=S_V$. Therefore, it follows from Lemmas \ref{lem:recover0} and \ref{lem:recover1} that $U_1=V_1$ and $n(\Omega(U),p)=n(\Omega(V),p)$ for all punctures $p$. In particular,
\[
n(U_2,p)=n(\Omega(U),p)-n(U_1,p)=n(\Omega(V),p)-n(V_1,p)=n(V_2,p).\qedhere
\]
\end{proof}

\begin{proof}[Proof of Theorem \ref{thm:unique tag tri}]
Assume that there is $\gamma\in U\setminus V$. Since $V$ is a tagged triangulation, $\gamma$ is not compatible with $V$. By Proposition \ref{prop:Int conjugate}, it is also not compatible with $\Omega(V)$. By Proposition \ref{prop:Int conjugate} again, there is an element of $\Omega(U)$ that is not compatible with $\Omega(V)$. On the other hand, we know that $U_1=V_1$ by Theorem \ref{thm:main}. Moreover, $U_2$ is compatible with $V_2$ by the compatibility of $T$. Thus $\Omega(U)$ must be compatible with $\Omega(V)$, a contradiction. Therefore, $U$ is contained in $V$. Similarly, $V$ is contained in $U$, that is, $U=V$.
\end{proof}

\begin{example}\label{ex:glue}
In the setting of Example \ref{ex:T Int}, there are decompositions
\[
\Omega(U)=U_1\sqcup U_2=
\begin{tikzpicture}[baseline=-1mm,scale=0.5]
\coordinate(d)at(0,-1.2);\coordinate(u)at(90:2);
\coordinate(l)at($(d)+(120:1.3)$);\coordinate(r)at($(d)+(60:1.3)$);
\draw(0,0)circle(2);
\draw[blue](u)--(r);
\draw[blue](d)..controls(-2,-1)and(-1.5,2)..node[pos=0.1]{\scriptsize\rotatebox{70}{$\bowtie$}}(r);
\draw[blue](d)--node[pos=0.2]{\scriptsize\rotatebox{35}{$\bowtie$}}(l);
\draw[blue](d)to[out=-90,in=-90,relative]node[pos=0.4]{\scriptsize\rotatebox{10}{$\bowtie$}}(l);
\fill(l)circle(0.14);\fill(r)circle(0.14);\fill(d)circle(0.14);\fill(u)circle(0.14);
\end{tikzpicture}
\ \sqcup\ 
\begin{tikzpicture}[baseline=-1mm,scale=0.5]
\coordinate(0)at(0,0);\coordinate(d)at(0,-1.2);\coordinate(u)at(90:2);
\coordinate(l)at($(d)+(120:1.3)$);\coordinate(r)at($(d)+(60:1.3)$);
\draw(0,0)circle(2);
\draw[blue](d)to[out=40,in=90,relative]node[pos=0.2]{\scriptsize\rotatebox{55}{$\bowtie$}}($(d)+(120:1.7)$);
\draw[blue](d)to[out=-40,in=-90,relative]node[pos=0.2]{\scriptsize\rotatebox{5}{$\bowtie$}}($(d)+(120:1.7)$);
\fill(l)circle(0.14);\fill(r)circle(0.14);\fill(d)circle(0.14);\fill(u)circle(0.14);
\end{tikzpicture}
\ \text{ and }\ 
U^{\circ}=
\begin{tikzpicture}[baseline=-1mm,scale=0.5]
\coordinate(d)at(0,-1.2);\coordinate(u)at(90:2);
\coordinate(l)at($(d)+(120:1.3)$);\coordinate(r)at($(d)+(60:1.3)$);
\draw(0,0)circle(2);
\draw[blue](u)--(r);
\draw[blue](d)..controls(-2,-1)and(-1.5,2)..(r);
\draw[blue](d)to[out=30,in=150,relative](l)(d)to[out=-30,in=-150,relative](l);
\fill(l)circle(0.14);\fill(r)circle(0.14);\fill(d)circle(0.14);\fill(u)circle(0.14);
\end{tikzpicture}
\ \sqcup\ 
\begin{tikzpicture}[baseline=-1mm,scale=0.5]
\coordinate(0)at(0,0);\coordinate(d)at(0,-1.2);\coordinate(u)at(90:2);
\coordinate(l)at($(d)+(120:1.3)$);\coordinate(r)at($(d)+(60:1.3)$);
\draw(0,0)circle(2);\node at(0,0.1){$c_p^5$};
\draw[blue](d)circle(0.3)circle(0.4)circle(0.5)circle(0.6)circle(0.7);
\fill(l)circle(0.14);\fill(r)circle(0.14);\fill(d)circle(0.14);\fill(u)circle(0.14);
\end{tikzpicture}\ ,
\]
where $n(\Omega(U),p)=5$ and $n(U_2,p)=2$ for the bottom puncture $p$. For each puzzle piece $\triangle$ of $\Omega(T)$, the multi-set $U^{\circ}\cap\triangle$ of non-loop edges and segments in $\triangle$ is given as follows (c.f. Example \ref{ex:segment}):
\[
\begin{tikzpicture}[baseline=-5mm]
\coordinate(l)at(-150:1);\coordinate(r)at(-30:1);\coordinate(u)at(90:1);
\draw(u)--node[above left]{$1$}(l)--node[below]{$8$}(r)--node[above right]{$2$}(u);
\draw[blue](l)--($(u)!0.5!(r)$);
\draw[blue]($(u)!0.6!(l)$)--($(l)!0.4!(r)$);\draw[blue]($(u)!0.65!(l)$)--($(l)!0.35!(r)$);\draw[blue]($(u)!0.7!(l)$)--($(l)!0.3!(r)$);\draw[blue]($(u)!0.75!(l)$)--($(l)!0.25!(r)$);\draw[blue]($(u)!0.8!(l)$)--($(l)!0.2!(r)$);
\draw[blue]($(u)!0.6!(r)$)--($(l)!0.6!(r)$);\draw[blue]($(u)!0.65!(r)$)--($(l)!0.65!(r)$);\draw[blue]($(u)!0.7!(r)$)--($(l)!0.7!(r)$);\draw[blue]($(u)!0.75!(r)$)--($(l)!0.75!(r)$);\draw[blue]($(u)!0.8!(r)$)--($(l)!0.8!(r)$);
\end{tikzpicture}
\ ,\hspace{7mm}
\begin{tikzpicture}[baseline=-5mm]
\coordinate(l)at(-150:1);\coordinate(r)at(-30:1);\coordinate(u)at(90:1);
\draw(u)--node[above left]{$2$}(l)--node[below]{$9$}(r)--node[above right]{$3$}(u);
\draw[blue](u)--($(l)!0.5!(r)$);
\draw[blue]($(u)!0.6!(l)$)--($(l)!0.4!(r)$);\draw[blue]($(u)!0.65!(l)$)--($(l)!0.35!(r)$);\draw[blue]($(u)!0.7!(l)$)--($(l)!0.3!(r)$);\draw[blue]($(u)!0.75!(l)$)--($(l)!0.25!(r)$);\draw[blue]($(u)!0.8!(l)$)--($(l)!0.2!(r)$);\draw[blue]($(u)!0.85!(l)$)--($(l)!0.15!(r)$);
\draw[blue]($(u)!0.6!(r)$)--($(l)!0.6!(r)$);\draw[blue]($(u)!0.65!(r)$)--($(l)!0.65!(r)$);\draw[blue]($(u)!0.7!(r)$)--($(l)!0.7!(r)$);\draw[blue]($(u)!0.75!(r)$)--($(l)!0.75!(r)$);\draw[blue]($(u)!0.8!(r)$)--($(l)!0.8!(r)$);
\end{tikzpicture}
\ ,\hspace{7mm}
\begin{tikzpicture}[baseline=-5mm]
\coordinate(l)at(-150:1);\coordinate(r)at(-30:1);\coordinate(u)at(90:1);
\draw(u)--node[above left]{$1$}(l)--node[below]{}(r)--node[above right]{$3$}(u);
\draw[blue]($(u)!0.4!(l)$)--($(u)!0.4!(r)$);\draw[blue]($(u)!0.35!(l)$)--($(u)!0.35!(r)$);\draw[blue]($(u)!0.3!(l)$)--($(u)!0.3!(r)$);\draw[blue]($(u)!0.25!(l)$)--($(u)!0.25!(r)$);\draw[blue]($(u)!0.2!(l)$)--($(u)!0.2!(r)$);
\end{tikzpicture}
\ ,\hspace{7mm}
\begin{tikzpicture}[baseline=-3mm]
\coordinate(u)at(90:1);\coordinate(p)at(0,0.5);\coordinate(d)at(-90:0.5);
\draw(d)to[out=150,in=180]coordinate[pos=0.3](l)coordinate[pos=0.25](l1)coordinate[pos=0.35](l2)coordinate[pos=0.4](l3)coordinate[pos=0.45](l4)(u);
\draw(d)to[out=30,in=0]coordinate[pos=0.3](r)coordinate[pos=0.25](r1)coordinate[pos=0.35](r2)coordinate[pos=0.4](r3)coordinate[pos=0.45](r4)(u)node[above]{$8$};
\draw[blue](l)--(r)(l1)--(r1)(l2)--(r2)(l3)--(r3)(l4)--(r4);
\draw[blue](d)to[out=30,in=150,relative](p)(d)to[out=-30,in=-150,relative](p);
\fill(p)circle(0.07);
\end{tikzpicture}
\ ,\hspace{7mm}
\begin{tikzpicture}[baseline=-3mm]
\coordinate(u)at(90:1);\coordinate(p)at(0,0.5);\coordinate(d)at(-90:0.5);
\draw(d)to[out=150,in=180]coordinate[pos=0.3](l)coordinate[pos=0.25](l1)coordinate[pos=0.35](l2)coordinate[pos=0.4](l3)coordinate[pos=0.45](l4)(u);
\draw(d)to[out=30,in=0]coordinate[pos=0.3](r)coordinate[pos=0.25](r1)coordinate[pos=0.35](r2)coordinate[pos=0.4](r3)coordinate[pos=0.45](r4)(u)node[above]{$9$};
\draw[blue](l)--(r)(l1)--(r1)(l2)--(r2)(l3)--(r3)(l4)--(r4);
\draw[blue](-0.4,0.5)--(p)--(0.4,0.5);
\fill(p)circle(0.07);
\end{tikzpicture}\ .
\]
The multi-set of curves obtained from $S_U$ by gluing segments simultaneously when we glue puzzle pieces of $\Omega(T)$ is given by
\[
\begin{tikzpicture}[baseline=-1mm]
\coordinate(d)at(0,-1.2);\coordinate(u)at(90:2);
\coordinate(l)at($(d)+(120:1.3)$);\coordinate(r)at($(d)+(60:1.3)$);
\draw(0,0)circle(2);
\draw(u)..controls(-2,1.7)and(-2,-1.5)..coordinate[pos=0.74](15)coordinate[pos=0.78](14)coordinate[pos=0.82](13)coordinate[pos=0.86](12)coordinate[pos=0.9](11)(d);
\draw(u)--coordinate[pos=0.5](26)coordinate[pos=0.65](25)coordinate[pos=0.7](24)coordinate[pos=0.75](23)coordinate[pos=0.8](22)coordinate[pos=0.85](21)(d);
\draw(u)..controls(2,1.7)and(2,-1.5)..coordinate[pos=0.74](35)coordinate[pos=0.78](34)coordinate[pos=0.82](33)coordinate[pos=0.86](32)coordinate[pos=0.9](31)(d);
\draw(d)to[out=25,in=90,relative]coordinate[pos=0.2](8l1)coordinate[pos=0.27](8l2)coordinate[pos=0.34](8l3)coordinate[pos=0.41](8l4)coordinate[pos=0.48](8l5)($(d)+(120:1.7)$);
\draw(d)to[out=-25,in=-90,relative]coordinate[pos=0.2](8r1)coordinate[pos=0.27](8r2)coordinate[pos=0.34](8r3)coordinate[pos=0.41](8r4)coordinate[pos=0.48](8r5)($(d)+(120:1.7)$);
\draw(d)to[out=25,in=90,relative]coordinate[pos=0.2](9l1)coordinate[pos=0.27](9l2)coordinate[pos=0.34](9l3)coordinate[pos=0.41](9l4)coordinate[pos=0.48](9l5)coordinate[pos=0.7](9l6)($(d)+(60:1.7)$);
\draw(d)to[out=-25,in=-90,relative]coordinate[pos=0.2](9r1)coordinate[pos=0.27](9r2)coordinate[pos=0.34](9r3)coordinate[pos=0.41](9r4)coordinate[pos=0.48](9r5)($(d)+(60:1.7)$);
\draw[blue](d)to[out=10,in=160,relative](l)(d)to[out=-10,in=-160,relative](l);
\draw[blue](11)--(8l1)--(8r1)--(21)--(9l1)--(9r1)--(31)to[out=-130,in=0](0,-1.4)to[out=180,in=-50](11);
\draw[blue](12)--(8l2)--(8r2)--(22)--(9l2)--(9r2)--(32)to[out=-120,in=0](0,-1.5)to[out=180,in=-60](12);
\draw[blue](13)--(8l3)--(8r3)--(23)--(9l3)--(9r3)--(33)to[out=-110,in=0](0,-1.6)to[out=180,in=-70](13);
\draw[blue](14)--(8l4)--(8r4)--(24)--(9l4)--(9r4)--(34)to[out=-100,in=0](0,-1.7)to[out=180,in=-80](14);
\draw[blue](15)--(8l5)--(8r5)--(25)--(9l5)--(9r5)--(35)to[out=-90,in=0](0,-1.8)to[out=180,in=-90](15);
\draw[blue](u)--($(d)+(60:1.7)$)--(r);
\draw[blue](r)--(9l6)--(26)..controls(-1.5,1.5)and(-1.8,-1)..(d);
\fill(l)circle(0.07);\fill(r)circle(0.07);\fill(d)circle(0.07);\fill(u)circle(0.07);
\end{tikzpicture}
=
\begin{tikzpicture}[baseline=-1mm]
\coordinate(d)at(0,-1.2);\coordinate(u)at(90:2);
\coordinate(l)at($(d)+(120:1.3)$);\coordinate(r)at($(d)+(60:1.3)$);
\draw(0,0)circle(2);
\draw(u)..controls(-2,1.7)and(-2,-1.5)..(d);\draw(u)--(d);\draw(u)..controls(2,1.7)and(2,-1.5)..(d);
\draw(d)to[out=25,in=90,relative]($(d)+(120:1.7)$);\draw(d)to[out=-25,in=-90,relative]($(d)+(120:1.7)$);
\draw(d)to[out=25,in=90,relative]($(d)+(60:1.7)$);\draw(d)to[out=-25,in=-90,relative]($(d)+(60:1.7)$);
\draw[blue](d)circle(0.3)circle(0.4)circle(0.5)circle(0.6)circle(0.7);
\draw[blue](u)--(r);
\draw[blue](d)..controls(-2,-1)and(-1.5,2)..(r);
\draw[blue](d)to[out=10,in=160,relative](l)(d)to[out=-10,in=-160,relative](l);
\fill(l)circle(0.07);\fill(r)circle(0.07);\fill(d)circle(0.07);\fill(u)circle(0.07);
\end{tikzpicture}=U^{\circ}.
\]
On the other hand, the desired multi-set $\Phi(S_U)$ in Theorem \ref{thm:segment} is the multi-set of all segments in Example \ref{ex:segment} and obtained from $S_U$ by the following local modifications defined in the next section:
\[
\Phi(S_U)=
\begin{tikzpicture}[baseline=-1mm]
\coordinate(d)at(0,-1.2);\coordinate(u)at(90:2);
\coordinate(l)at($(d)+(120:1.3)$);\coordinate(r)at($(d)+(60:1.3)$);
\draw(0,0)circle(2);
\draw(u)..controls(-2,1.7)and(-2,-1.5)..coordinate[pos=0.74](15)coordinate[pos=0.78](14)coordinate[pos=0.82](13)coordinate[pos=0.86](12)coordinate[pos=0.9](11)(d);
\draw(u)--coordinate[pos=0.5](26)coordinate[pos=0.65](25)coordinate[pos=0.7](24)coordinate[pos=0.75](23)coordinate[pos=0.8](22)coordinate[pos=0.85](21)(d);
\draw(u)..controls(2,1.7)and(2,-1.5)..coordinate[pos=0.74](35)coordinate[pos=0.78](34)coordinate[pos=0.82](33)coordinate[pos=0.86](32)coordinate[pos=0.9](31)(d);
\draw(d)to[out=25,in=90,relative]coordinate[pos=0.2](8l1)coordinate[pos=0.27](8l2)coordinate[pos=0.34](8l3)coordinate[pos=0.41](8l4)coordinate[pos=0.48](8l5)($(d)+(120:1.7)$);
\draw(d)to[out=-25,in=-90,relative]coordinate[pos=0.2](8r1)coordinate[pos=0.27](8r2)coordinate[pos=0.34](8r3)coordinate[pos=0.41](8r4)coordinate[pos=0.48](8r5)($(d)+(120:1.7)$);
\draw(d)to[out=25,in=90,relative]coordinate[pos=0.2](9l1)coordinate[pos=0.27](9l2)coordinate[pos=0.34](9l3)coordinate[pos=0.41](9l4)coordinate[pos=0.48](9l5)coordinate[pos=0.7](9l6)($(d)+(60:1.7)$);
\draw(d)to[out=-25,in=-90,relative]coordinate[pos=0.2](9r1)coordinate[pos=0.27](9r2)coordinate[pos=0.34](9r3)coordinate[pos=0.41](9r4)coordinate[pos=0.48](9r5)($(d)+(60:1.7)$);
\draw[blue](11)--(8l1)--(8r1)--(21)--(9l1)--(9r1)--(31)to[out=-130,in=0](0,-1.4)to[out=180,in=-50](11);
\draw[blue](12)--(8l2)--(8r2)--(22)--(9l2)--(9r2)--(32)to[out=-120,in=0](0,-1.5)to[out=180,in=-60](12);
\draw[blue](13)--(8l3)--(8r3)--(23)--(9l3)--(9r3)--(33)to[out=-110,in=0](0,-1.6)to[out=180,in=-70](13);
\draw[blue](14)--(8l4)--(l)--(8r4)--(24)--(9l4)--(9r4)--(34)to[out=-100,in=0](0,-1.7)to[out=180,in=-80](14);
\draw[blue](15)--(8l5)--(l)--(8r5)--(25)--(9l5)--(9r5)--(35)to[out=-90,in=0](0,-1.8)to[out=180,in=-90](15);
\draw[blue](u)--($(d)+(60:1.7)$)--(r);
\draw[blue](r)--(9l6)--(26)..controls(-1.2,1.2)and(-1.5,0)..($(15)!0.5!(8l5)$);
\fill(l)circle(0.07);\fill(r)circle(0.07);\fill(d)circle(0.07);\fill(u)circle(0.07);
\end{tikzpicture}
\ \text{is obtained from $S_U$ by}\ 
\begin{cases}
\begin{tikzpicture}[baseline=-5mm]
\coordinate(l)at(-150:1);\coordinate(r)at(-30:1);\coordinate(u)at(90:1);
\draw(u)--node[above left]{$1$}(l)--node[below]{$8$}(r)--node[above right]{$2$}(u);
\draw[blue](l)--($(u)!0.5!(r)$);
\draw[blue]($(u)!0.6!(l)$)--($(l)!0.4!(r)$);\draw[blue]($(u)!0.65!(l)$)--($(l)!0.35!(r)$);\draw[blue]($(u)!0.7!(l)$)--($(l)!0.3!(r)$);\draw[blue]($(u)!0.75!(l)$)--($(l)!0.25!(r)$);\draw[blue]($(u)!0.8!(l)$)--($(l)!0.2!(r)$);
\draw[blue]($(u)!0.6!(r)$)--($(l)!0.6!(r)$);\draw[blue]($(u)!0.65!(r)$)--($(l)!0.65!(r)$);\draw[blue]($(u)!0.7!(r)$)--($(l)!0.7!(r)$);\draw[blue]($(u)!0.75!(r)$)--($(l)!0.75!(r)$);\draw[blue]($(u)!0.8!(r)$)--($(l)!0.8!(r)$);
\node at(1.2,0){$\rightarrow$};
\end{tikzpicture}
\begin{tikzpicture}[baseline=-5mm]
\coordinate(l)at(-150:1);\coordinate(r)at(-30:1);\coordinate(u)at(90:1);
\draw(u)--node[above left]{$1$}(l)--node[below]{$8$}(r)--node[above right]{$2$}(u);
\draw[blue](0,0)--($(u)!0.5!(l)$);\draw[blue](0,0)--($(u)!0.5!(r)$);\draw[blue](0,0)--($(l)!0.5!(r)$);
\draw[blue]($(u)!0.6!(l)$)--($(l)!0.4!(r)$);\draw[blue]($(u)!0.65!(l)$)--($(l)!0.35!(r)$);\draw[blue]($(u)!0.7!(l)$)--($(l)!0.3!(r)$);\draw[blue]($(u)!0.75!(l)$)--($(l)!0.25!(r)$);
\draw[blue]($(u)!0.6!(r)$)--($(l)!0.6!(r)$);\draw[blue]($(u)!0.65!(r)$)--($(l)!0.65!(r)$);\draw[blue]($(u)!0.7!(r)$)--($(l)!0.7!(r)$);\draw[blue]($(u)!0.75!(r)$)--($(l)!0.75!(r)$);\draw[blue]($(u)!0.8!(r)$)--($(l)!0.8!(r)$);
\end{tikzpicture}\ ,
\\
\begin{tikzpicture}[baseline=-3mm]
\coordinate(u)at(90:1);\coordinate(p)at(0,0.5);\coordinate(d)at(-90:0.5);
\draw(d)to[out=150,in=180]coordinate[pos=0.3](l)coordinate[pos=0.25](l1)coordinate[pos=0.35](l2)coordinate[pos=0.4](l3)coordinate[pos=0.45](l4)(u);
\draw(d)to[out=30,in=0]coordinate[pos=0.3](r)coordinate[pos=0.25](r1)coordinate[pos=0.35](r2)coordinate[pos=0.4](r3)coordinate[pos=0.45](r4)(u)node[above]{$8$};
\draw[blue](l)--(r)(l1)--(r1)(l2)--(r2)(l3)--(r3)(l4)--(r4);
\draw[blue](d)to[out=30,in=150,relative](p)(d)to[out=-30,in=-150,relative](p);
\fill(p)circle(0.07);
\node at(0.9,0.2){$\rightarrow$};
\end{tikzpicture}
\begin{tikzpicture}[baseline=-3mm]
\coordinate(u)at(90:1);\coordinate(p)at(0,0.5);\coordinate(d)at(-90:0.5);
\draw(d)to[out=150,in=180]coordinate[pos=0.3](l)coordinate[pos=0.25](l1)coordinate[pos=0.35](l2)coordinate[pos=0.5](l3)coordinate[pos=0.65](l4)(u);
\draw(d)to[out=30,in=0]coordinate[pos=0.3](r)coordinate[pos=0.25](r1)coordinate[pos=0.35](r2)coordinate[pos=0.5](r3)coordinate[pos=0.65](r4)(u)node[above]{$8$};
\draw[blue](l)--(r)(l1)--(r1)(l2)--(r2);
\draw[blue](l3)--(p)--(r3)(l4)--(p)--(r4);
\fill(p)circle(0.07);
\end{tikzpicture}\ .
\end{cases}
\]
\end{example}

\subsection{Graphs associated with tagged triangulations}\label{subsec:graph}

In the rest of this section, we prove Theorems \ref{thm:unique Int} and \ref{thm:list cS}. First, we briefly recall some notations in graph theory.

A (multi-)\emph{graph} is a pair $(V,E)$ consisting of a set $V$ of vertices and a set $E$ of edges each of which is an unordered pair of vertices, called its endpoints. We say that the graph is \emph{empty} if $V=E=\emptyset$. The \emph{degree} of $v\in V$ is the number of edges in $E$ incident to $v$, where a loop incident to $v$ contributes two to the degree of $v$.

A \emph{walk} is a sequence $e_1\cdots e_l$ of edges such that there are vertices $p_i$ and $p_{i+1}$ that are endpoints of $e_i$ for all $i$. It is called a \emph{path} if $p_i\neq p_j$ for distinct $i,j\in\{1,\ldots,l\}$. In addition, if $p_{l+1}=p_1$, then it is called a \emph{cycle}. Here, we say that they have \emph{odd length} (resp., \emph{even length}) if $l$ is odd (resp., even).

Let $T$ be a tagged triangulation of $\cS$ satisfying \eqref{diamond}. In particular, $\Omega(T)$ only consists of plain curves. To $T$, we associate a (possibly empty) graph as follows: We denote by $V_T$ the set of all punctures in $\cS$ incident to $\Omega(T)$, and by $E_T$ the set of all plain curves in $\Omega(T)$ connecting punctures. Then the pair $G_T:=(V_T,E_T)$ can naturally be considered as a graph. 

We also extend a rotation of tagged arcs defined in \cite{BQ15} (see Figure \ref{fig:rho}) to elements of $\bAL_{\cS}$.

\begin{defn}[\cite{BQ15}]\label{def:rotation}
For $\gamma\in\bAL_{\cS}$, its \emph{tagged rotation $\rho(\gamma)$} is defined as follows:
\begin{itemize}
 \item If $\gamma$ has an endpoint $o$ on a component $C$ of $\partial\cS$, then $\rho(\gamma)$ is obtained from $\gamma$ by moving $o$ to the next marked point on $C$ in the counterclockwise direction.
 \item If $\gamma$ has an endpoint at a puncture $p$, then $\rho(\gamma)$ is obtained from $\gamma$ by changing its tags at $p$.
\end{itemize}
\end{defn}

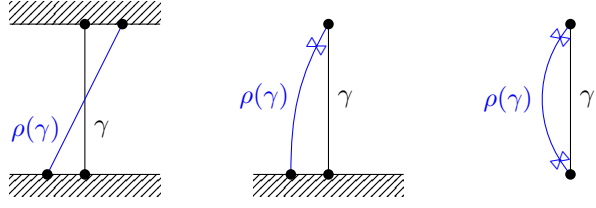
\begin{figure}[ht]
\begin{tikzpicture}[baseline=0mm]
\coordinate(u)at(0,1);\coordinate(d)at(0,-1);\coordinate(ru)at(0.5,1);\coordinate(ld)at(-0.5,-1);
\fill[pattern=north east lines]($(u)+(-1,0)$)rectangle($(u)+(1,0.3)$);\fill[pattern=north east lines]($(d)+(-1,0)$)rectangle($(d)+(1,-0.3)$);
\draw(-1,1)--(1,1) (-1,-1)--(1,-1);
\draw(u)--node[right,pos=0.7]{$\gamma$}(d);\draw[blue](ru)--node[left,pos=0.7]{$\rho(\gamma)$}(ld);
\fill(u)circle(0.07);\fill(d)circle(0.07);\fill(ru)circle(0.07);\fill(ld)circle(0.07);
\end{tikzpicture}
\hspace{7mm}
\begin{tikzpicture}[baseline=0mm]
\coordinate(u)at(0,1);\coordinate(d)at(0,-1);\coordinate(ld)at(-0.5,-1);
\fill[pattern=north east lines]($(d)+(-1,0)$)rectangle($(d)+(1,-0.3)$);
\draw(-1,-1)--(1,-1);
\draw(u)--node[right]{$\gamma$}(d);
\draw[blue](u)to[out=-120,in=90]node[left]{$\rho(\gamma)$}node[pos=0.15]{\rotatebox{-30}{\footnotesize $\bowtie$}}(ld);
\fill(u)circle(0.07);\fill(d)circle(0.07);\fill(ld)circle(0.07);
\end{tikzpicture}
\hspace{7mm}
\begin{tikzpicture}[baseline=0mm]
\coordinate(u)at(0,1);\coordinate(d)at(0,-1);
\draw(u)--node[right]{$\gamma$}(d);
\draw[blue](u)to[out=-130,in=130]node[left,pos=0.5]{$\rho(\gamma)$}node[pos=0.1]{\rotatebox{-40}{\footnotesize $\bowtie$}}node[pos=0.9]{\rotatebox{40}{\footnotesize $\bowtie$}}(d);
\fill(u)circle(0.07); \fill(d)circle(0.07);
\end{tikzpicture}
\caption{Plain curves $\gamma$ and their tagged rotations $\rho(\gamma)$}
\label{fig:rho}
\end{figure}

Note that for $\gamma\in E_T$, $\rho(\gamma)$ is the $2$-notched curve whose underlying plain curve is $\gamma$. Keeping the notations in the previous subsection, we give the following equivalence.

\begin{prop}\label{prop:equivalence graph}
Let $T$ be a tagged triangulation of $\cS$. Then the following are equivalent:
\begin{itemize}
\item[(1)] For any $U, V\in\bM_{\cS}$ with $U\cap T=V\cap T=\emptyset$, if $n(U_2,p)=n(V_2,p)$ for all punctures $p$, then $U_2=V_2$.
\item[(2)] Each connected component of $G_T$ contains at most one cycle of odd length and no cycles of even length.
\end{itemize}
\end{prop}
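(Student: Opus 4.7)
The plan is to identify $U_2$ with a multi-subset of edges of $G_T$ and reduce (1) to a question about the kernel of the unsigned incidence matrix of $G_T$.

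Since $T$ satisfies \eqref{diamond}, no element of $T$ is a $2$-notched curve, and every $2$-notched tagged arc or punctured loop has both endpoints at punctures. Hence sending each element of $U_2$ to its underlying plain curve identifies $U_2$ with a multi-subset $\overline{U_2}$ of $E_T$, and $n(U_2,p)$ is precisely the degree of $p$ in the multi-subgraph $\overline{U_2}$ of $G_T$ (with a loop at $p$ counted twice). Conversely, every multi-subset $W$ of $E_T$ is realized as $\overline{U_2}$ for an admissible $U \in \bM_{\cS}$: for each non-loop edge $e \in W$, include the $2$-notched tagged arc with underlying curve $e$ (with the same multiplicity), and for each punctured loop $\varepsilon \in W$ of multiplicity $m$, include $m$ copies of each of the two $2$-notched conjugate arcs whose $\Omega$-image is $\varepsilon$. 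This $U$ is in $\bM_{\cS}$ (all tags are notched and the underlying plain curves are pairwise compatible, belonging to $\Omega(T)$), satisfies $U \cap T = \emptyset$ and $U_1 = \emptyset$, so $U_2 = W$. Thus (1) is equivalent to the injectivity on $\bZ_{\ge 0}^{E_T}$ of the unsigned incidence map $M_T$ of $G_T$.

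Because $M_T$ has non-negative entries with every column non-zero, $M_T X = 0$ with $X \ge 0$ forces $X = 0$; hence injectivity of $M_T$ on $\bZ_{\ge 0}^{E_T}$ is equivalent to $\ker M_T = 0$ over $\bQ$ (given any non-zero integer $W \in \ker M_T$, the positive and negative parts $W^+, W^-$ are distinct non-zero non-negative vectors with $M_T W^+ = M_T W^-$). I then invoke the classical rank formula for unsigned incidence matrices: over a connected multigraph $G_i$ with $n_i$ vertices and $m_i$ edges, the rank equals $n_i$ if $G_i$ contains an odd cycle and $n_i - 1$ otherwise. Consequently, $\ker M_{G_i}$ vanishes precisely when $G_i$ is either a tree (bipartite, $m_i = n_i - 1$) or unicyclic with odd cycle (non-bipartite, $m_i = n_i$). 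Summing over connected components, $\ker M_T = 0$ if and only if every component of $G_T$ satisfies the condition in (2).

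For the contrapositive of $(1) \Rightarrow (2)$, I will make the non-trivial kernel element concrete so that one may bypass the abstract rank formula if desired: if some component contains an even simple cycle $e_1 e_2 \cdots e_{2k}$, set $\overline{U_2} := \{e_1, e_3, \ldots, e_{2k-1}\}$ and $\overline{V_2} := \{e_2, e_4, \ldots, e_{2k}\}$; otherwise, the component contains two distinct odd simple cycles $C$ and $C'$, and the closed walk $C \cdot P \cdot C' \cdot P^{-1}$, where $P$ is a walk in $G_T$ from a vertex of $C$ to a vertex of $C'$, has even length. Alternating signs $+, -, +, -, \ldots$ along this walk produces, upon summing over repeated occurrences of each edge, a non-zero element of $\ker M_T$ (edges of $P$ receiving coefficient $\pm 2$, edges of $C$ and $C'$ receiving $\pm 1$); its positive and negative parts are distinct multi-subsets of $E_T$ with equal degree sequences. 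Lifting back via the realization of the first paragraph exhibits $U, V \in \bM_{\cS}$ with $U \cap T = V \cap T = \emptyset$, $n(U_2,p) = n(V_2,p)$ for all punctures $p$, and $U_2 \ne V_2$, violating (1). The main obstacle is the rank formula for unsigned incidence matrices, which proceeds by the standard bipartite/non-bipartite case split; the lifting step is the other technical point but is routine given the explicit construction above.
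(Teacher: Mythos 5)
Your argument is correct, and in substance it runs parallel to the paper's proof, though the packaging differs in a way worth recording. The paper likewise observes that $U_2$ is a multi-subset of $\{\rho(\gamma)\mid\gamma\in E_T\}$ and that (1) asks whether the degrees $n(U_2,p)$ determine the multiplicities; it then proves $(2)\Rightarrow(1)$ by hand (stripping vertices of degree one and solving the alternating linear system around the unique odd cycle), which is exactly a re-derivation of the rank statement you invoke, and it proves $(1)\Rightarrow(2)$ with the same two configurations you describe: alternating edges of an even cycle, and two odd cycles joined by a path whose edges are taken with coefficient two (the paper's case (b1) disposes of overlapping odd cycles by extracting an even cycle, which in your framing is the needed remark that, when no even cycle exists, two distinct odd cycles are edge-disjoint, so your coefficient claim is valid once $P$ is chosen as a path meeting $C\cup C'$ only in its endpoints). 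What your route buys is brevity and a clean intermediate statement (injectivity of the unsigned incidence map on $\bZ_{\ge 0}^{E_T}$, settled by a classical rank formula), and you make explicit the lifting step -- every multi-subset of $E_T$ is realized as some $U_2$ -- which the paper uses only implicitly for its particular configurations; what the paper's hands-on elimination buys is self-containedness. Three small repairs are needed in your write-up. First, $G_T$ is a multigraph with loops, and a loop column of $M_T$ must carry the entry $2$ (this is how $n(U_2,p)$ counts a $2$-notched loop); the classical rank formula is usually stated for loopless graphs, but it extends at once since a loop is an odd cycle. Second, in your lift of a punctured-loop edge $\varepsilon\in E_T$ based at $q$ and enclosing a puncture $p$, the conjugate pair to be added consists of one $2$-notched and one $1$-notched arc, and its $\Omega$-image is the $2$-notched loop $\rho(\varepsilon)$, not $\varepsilon$ itself; with this correction the compatibility and $U\cap T=\emptyset$ checks go through because the enclosed puncture $p$ is incident to no element of $\Omega(T)$ and neither arc of the pair lies in $T$. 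Third, as noted above, $P$ should be taken to be a path internally disjoint from the two odd cycles (always arrangeable by minimality) for the asserted coefficients $\pm1$, $\pm2$; otherwise one simply falls back on the abstract kernel argument, which already suffices.
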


\begin{proof}
Since $U_2=\{\rho(\gamma)^{m_{U_2}(\rho(\gamma))}\mid \gamma\in E_T\}$, we only need to prove the desired equivalence under the assumption that $G_T$ is connected. Thus we assume it throughout this proof. We also note that $T$ satisfies (1) if and only if each $m_{U_2}(\rho(\gamma))$ is uniquely determined by $n(U_2,p)$ for all $p\in V_T$.

If there is a vertex $p\in V_T$ incident to exactly one edge $\gamma\in E_T$, then $m_{U_2}(\rho(\gamma))=n(U_2,p)$, and we remove $p$ and $\gamma$ from $G_T$. Repeating this process, we obtain a graph without vertices with degree one. If $G_T$ contains no cycles, then the resulting graph consists of at most one vertex and no edges. In which case, for each $\gamma\in E_T$, $m_{U_2}(\rho(\gamma))$ is described as a linear combination of $n(U_2,p)$ for all $p\in V_T$. Therefore, if $G_T$ contains no cycles, then $T$ satisfies (1).

Assume that $G_T$ satisfies (2) and contains cycles, that is, it contains exactly one cycle of odd length and no cycles of even length. Let $\gamma_1\cdots \gamma_{2l-1}$ be the unique cycle in $G_T$ for $l\in\bZ_{>0}$. Repeating the above process, for each $\gamma\in E_T\setminus\{\gamma_1,\ldots,\gamma_{2l-1}\}$, $m_{U_2}(\rho(\gamma))$ is described as a linear combination of $n(U_2,p)$ for all $p\in V_T$. Thus we assume that $G_T$ only consists of the cycle. Let $p_i$ and $p_{i+1}$ be the endpoints of $\gamma_i$ for all $i$. In particular, $p_{2l}=p_1$. Then since $n(U_2,p_i)=m_{U_2}(\rho(\gamma_{i-1}))+m_{U_2}(\rho(\gamma_i))$ for all $i$, where $\gamma_0:=\gamma_{2l-1}$, we have the equalities
\begin{align*}
n(U_2,p_1)+\sum_{i=1}^{l-1}n(U_2,p_{2i})
&=m_{U_2}(\rho(\gamma_0))+m_{U_2}(\rho(\gamma_1))+\sum_{i=1}^{2l-2}m_{U_2}(\rho(\gamma_i))\\
&=2m_{U_2}(\rho(\gamma_1))+\sum_{i=2}^{2l-1}m_{U_2}(\rho(\gamma_i))\\
&=2m_{U_2}(\rho(\gamma_1))+\sum_{i=1}^{l-1}n(U_2,p_{2i+1}).
\end{align*}
Therefore, $m_{U_2}(\rho(\gamma_1))$ is described as a linear combination of $n(U_2,p_i)$ for all $i$. Similarly, for each $k$, $m_{U_2}(\rho(\gamma_k))$ is also described as a linear combination of $n(U_2,p_i)$ for all $i$. Thus $T$ satisfies (1). Therefore, $T$ satisfies (1) if it satisfies (2).

Assume that $T$ does not satisfy (2). Then $G_T$ must contain at least one of the following: (a) A cycle of even length. (b) Two cycles of odd length. We show that $T$ does not satisfy (1) in each case. For that, we will give $U,V\in\bM_{\cS}$ such that $U_2\neq V_2$ and $n(U_2,p)=n(V_2,p)$ for all punctures $p$ in each case (see Figures \ref{fig:even}, \ref{fig:odd even}, and \ref{fig:odd odd}).

(a) Assume that $G_T$ contains a cycle $\gamma_1\cdots \gamma_{2l}$ for $l\in\bZ_{>0}$. Let $p_i$ and $p_{i+1}$ be the endpoints of $\gamma_i$ for all $i$. Take two multi-sets
\[
\Omega(U)=U_2=\{\rho(\gamma_{2i-1})\mid 1\le i\le l\}\ \text{ and }\ \Omega(V)=V_2=\{\rho(\gamma_{2i})\mid 1\le i\le l\}.
\]
Then $n(U_2,p)=n(V_2,p)$ for all punctures $p$ (see Figure \ref{fig:even}). Thus $T$ does not satisfy (1).

(b) Assume that $G_T$ contains two different cycles $\gamma_1\cdots \gamma_{2l-1}$ and $\delta_1\cdots \delta_{2m-1}$  for $l,m\in\bZ_{>0}$. Let $p_i$ and $p_{i+1}$ be the endpoints of $\gamma_i$, and $q_i$ and $q_{i+1}$ be the endpoints of $\delta_i$ for all $i$. The cycles contain one of the following: (b1) at least two common vertices; (b2) at most one common vertex.

(b1) Without loss of generality, we can assume that $p_1=q_1$ and $\gamma_1\neq \delta_1$. Then there exists
\[
k:=\min\left\{i\in\{2,\ldots,2l-1\}\mid\text{$p_i=q_j$ for some $j\in\{2,\ldots,2m-1\}$}\right\}.
\]
Let $h\in\{2,\ldots,2m-1\}$ with $q_h=p_k$. By the minimality of $k$, the walks $c_1:=\gamma_1\cdots \gamma_{k-1}\delta_{h-1}\delta_{h-2}\cdots \delta_1$ and $c_2:=\gamma_1\cdots \gamma_{k-1}\delta_h\delta_{h+1}\cdots \delta_{2m-1}$ are cycles. If both $k$ and $h$ are either odd or even, then $c_1$ has even length. If one of $k$ and $h$ is odd and the other is even, then $c_2$ has even length. Thus (b1) reduces to (a), and $T$ does not satisfy (1).

(b2) Since $G_T$ is connected, there is a path connecting the cycles. Without loss of generality, we can assume that there is a path $\varepsilon_1\cdots\varepsilon_n$ with endpoints $r_i$ and $r_{i+1}$ of $\varepsilon_i$ for all $i$ such that $r_1=p_1$ and $r_{n+1}=q_1$, where $n\in\bZ_{\ge 0}$ and $n=0$ means that the cycles have exactly one common vertex $p_1=q_1$. When $n$ is even, we take two multi-sets (see Figure \ref{fig:odd even})
\begin{align*}
\Omega(U)&=U_2=\left\{\rho(\gamma_{2i-1})\mid 1\le i\le l\right\}\sqcup\left\{\rho(\delta_{2i})\mid 1\le i\le m-1\right\}\sqcup\left\{\rho(\varepsilon_{2i})^2\mid 1\le i\le \frac{n}{2}\right\},\\
\Omega(V)&=V_2=\left\{\rho(\gamma_{2i})\mid 1\le i\le l-1\right\}\sqcup\left\{\rho(\delta_{2i-1})\mid 1\le i\le m\right\}\sqcup\left\{\rho(\varepsilon_{2i-1})^2\mid 1\le i\le \frac{n}{2}\right\};
\end{align*}
when $n$ is odd, we take two multi-sets (see Figure \ref{fig:odd odd})
\begin{align*}
\Omega(U)&=U_2=\left\{\rho(\gamma_{2i-1})\mid 1\le i\le l\right\}\sqcup\left\{\rho(\delta_{2i-1})\mid 1\le i\le m\right\}\sqcup\left\{\rho(\varepsilon_{2i})^2\mid 1\le i\le \frac{n-1}{2}\right\},\\
\Omega(V)&=V_2=\left\{\rho(\gamma_{2i})\mid 1\le i\le l-1\right\}\sqcup\left\{\rho(\delta_{2i})\mid 1\le i\le m-1\right\}\sqcup\left\{\rho(\varepsilon_{2i-1})^2\mid 1\le i\le \frac{n+1}{2}\right\}.
\end{align*}
In both cases, $n(U_2,p)=n(V_2,p)$ for all punctures $p$. Thus $T$ does not satisfy (1).
\end{proof}

%
%
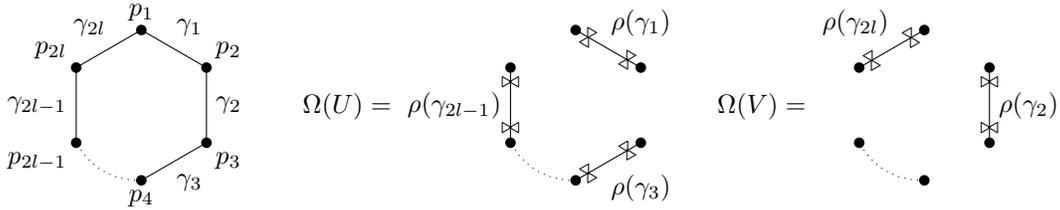
\begin{figure}[ht]
\[
\begin{tikzpicture}[baseline=-1mm]
\coordinate(1)at(90:1);\coordinate(2)at(30:1);\coordinate(3)at(-30:1);\coordinate(4)at(-90:1);
\coordinate(2l)at(150:1);\coordinate(2l-1)at(210:1);
\draw(1)--node[above right,pos=0.4]{$\gamma_1$}(2)--node[right]{$\gamma_2$}(3)--node[below right,pos=0.6]{$\gamma_3$}(4);
\draw(1)--node[above left,pos=0.4]{$\gamma_{2l}$}(2l)--node[left]{$\gamma_{2l-1}$}(2l-1);
\draw[dotted](4)to[out=180,in=-60](2l-1);
\fill(1)circle(0.07)node[above]{$p_1$};\fill(2)circle(0.07)node[above right]{$p_2$};\fill(3)circle(0.07)node[below right]{$p_3$};
\fill(4)circle(0.07)node[below]{$p_4$};\fill(2l)circle(0.07)node[above left]{$p_{2l}$};\fill(2l-1)circle(0.07)node[below left]{$p_{2l-1}$};
\end{tikzpicture}
\hspace{7mm}
\Omega(U)=
\begin{tikzpicture}[baseline=-1mm]
\coordinate(1)at(90:1);\coordinate(2)at(30:1);\coordinate(3)at(-30:1);\coordinate(4)at(-90:1);
\coordinate(2l)at(150:1);\coordinate(2l-1)at(210:1);
\draw(1)--node[above right,pos=0.4]{$\rho(\gamma_1)$}node[pos=0.2]{\rotatebox{60}{\footnotesize$\bowtie$}}node[pos=0.8]{\rotatebox{60}{\footnotesize$\bowtie$}}(2);
\draw(3)--node[below right,pos=0.6]{$\rho(\gamma_3)$}node[pos=0.2]{\rotatebox{120}{\footnotesize$\bowtie$}}node[pos=0.8]{\rotatebox{120}{\footnotesize$\bowtie$}}(4);
\draw(2l)--node[left]{$\rho(\gamma_{2l-1})$}node[pos=0.2]{\rotatebox{0}{\footnotesize$\bowtie$}}node[pos=0.8]{\rotatebox{0}{\footnotesize$\bowtie$}}(2l-1);
\draw[dotted](4)to[out=180,in=-60](2l-1);
\fill(1)circle(0.07);\fill(2)circle(0.07);\fill(3)circle(0.07);\fill(4)circle(0.07);\fill(2l)circle(0.07);\fill(2l-1)circle(0.07);
\end{tikzpicture}
\hspace{5mm}
\Omega(V)=
\begin{tikzpicture}[baseline=-1mm]
\coordinate(1)at(90:1);\coordinate(2)at(30:1);\coordinate(3)at(-30:1);\coordinate(4)at(-90:1);
\coordinate(2l)at(150:1);\coordinate(2l-1)at(210:1);
\draw(2)--node[right]{$\rho(\gamma_2)$}node[pos=0.2]{\rotatebox{0}{\footnotesize$\bowtie$}}node[pos=0.8]{\rotatebox{0}{\footnotesize$\bowtie$}}(3);
\draw(1)--node[above left,pos=0.4]{$\rho(\gamma_{2l})$}node[pos=0.2]{\rotatebox{120}{\footnotesize$\bowtie$}}node[pos=0.8]{\rotatebox{120}{\footnotesize$\bowtie$}}(2l);
\draw[dotted](4)to[out=180,in=-60](2l-1);
\fill(1)circle(0.07);\fill(2)circle(0.07);\fill(3)circle(0.07);\fill(4)circle(0.07);\fill(2l)circle(0.07);\fill(2l-1)circle(0.07);
\end{tikzpicture}
\]
\caption{Multi-sets $\Omega(U)$ and $\Omega(V)$ with $n(U_2,p)=n(V_2,p)$ for all punctures $p$ in the case that $G_T$ contains a cycle of even length}
\label{fig:even}
\end{figure}

%
%
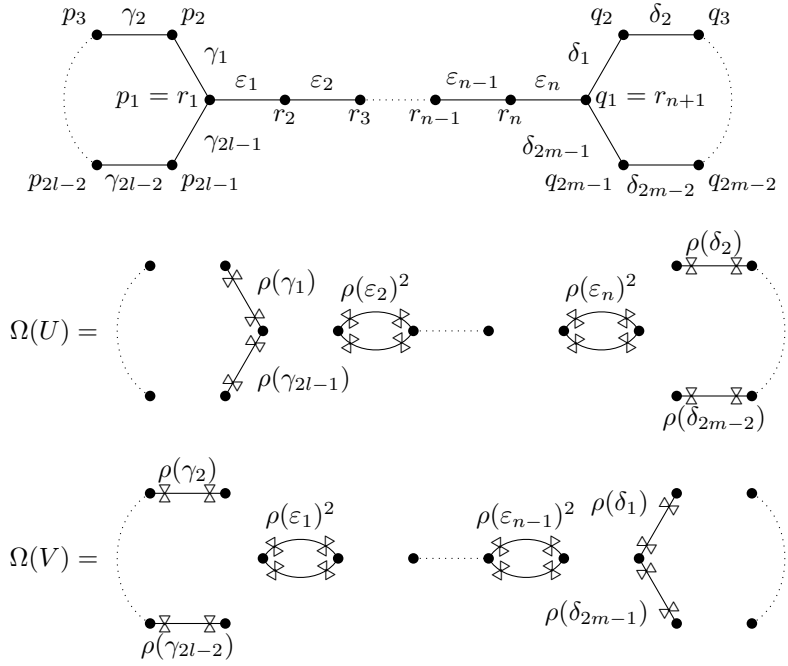
\begin{figure}[ht]
\[
\begin{tikzpicture}[baseline=-1mm]
\coordinate(p1)at(0:1);\coordinate(p2)at(60:1);\coordinate(p3)at(120:1);\coordinate(p2l-1)at(-60:1);\coordinate(p2l-2)at(-120:1);
\coordinate(r2)at($(p1)+(0:1)$);\coordinate(r3)at($(r2)+(0:1)$);\coordinate(rn-1)at($(r3)+(0:1)$);\coordinate(rn)at($(rn-1)+(0:1)$);
\coordinate(q1)at($(rn)+(0:1)$);\coordinate(q2)at($(q1)+(60:1)$);\coordinate(q3)at($(q2)+(0:1)$);
\coordinate(q2m-1)at($(q1)+(-60:1)$);\coordinate(q2m-2)at($(q2m-1)+(0:1)$);
\draw(p1)--node[above right,pos=0.4]{$\gamma_1$}(p2)--node[above]{$\gamma_2$}(p3);
\draw(p1)--node[below right,pos=0.4]{$\gamma_{2l-1}$}(p2l-1)--node[below]{$\gamma_{2l-2}$}(p2l-2);
\draw(p1)--node[above]{$\varepsilon_1$}(r2)--node[above]{$\varepsilon_2$}(r3);
\draw[dotted](r3)--(rn-1);
\draw(rn-1)--node[above]{$\varepsilon_{n-1}$}(rn)--node[above]{$\varepsilon_n$}(q1);
\draw(q1)--node[above left,pos=0.4]{$\delta_1$}(q2)--node[above]{$\delta_2$}(q3);
\draw(q1)--node[below left,pos=0.4]{$\delta_{2m-1}$}(q2m-1)--node[below]{$\delta_{2m-2}$}(q2m-2);
\draw[dotted](p3)to[out=-150,in=150](p2l-2);\draw[dotted](q3)to[out=-30,in=30](q2m-2);
\fill(p1)circle(0.07)node[left]{$p_1=r_1$};\fill(p2)circle(0.07)node[above right]{$p_2$};\fill(p3)circle(0.07)node[above left]{$p_3$};
\fill(p2l-1)circle(0.07)node[below right]{$p_{2l-1}$};\fill(p2l-2)circle(0.07)node[below left]{$p_{2l-2}$};
\fill(r2)circle(0.07)node[below]{$r_2$};\fill(r3)circle(0.07)node[below]{$r_3$};
\fill(rn-1)circle(0.07)node[below]{$r_{n-1}$};\fill(rn)circle(0.07)node[below]{$r_n$};
\fill(q1)circle(0.07)node[right]{$q_1=r_{n+1}$};\fill(q2)circle(0.07)node[above left]{$q_2$};\fill(q3)circle(0.07)node[above right]{$q_3$};
\fill(q2m-1)circle(0.07)node[below left]{$q_{2m-1}$};\fill(q2m-2)circle(0.07)node[below right]{$q_{2m-2}$};
\end{tikzpicture}
\]
\[
\Omega(U)=
\begin{tikzpicture}[baseline=-1mm]
\coordinate(p1)at(0:1);\coordinate(p2)at(60:1);\coordinate(p3)at(120:1);\coordinate(p2l-1)at(-60:1);\coordinate(p2l-2)at(-120:1);
\coordinate(r2)at($(p1)+(0:1)$);\coordinate(r3)at($(r2)+(0:1)$);\coordinate(rn-1)at($(r3)+(0:1)$);\coordinate(rn)at($(rn-1)+(0:1)$);
\coordinate(q1)at($(rn)+(0:1)$);\coordinate(q2)at($(q1)+(60:1)$);\coordinate(q3)at($(q2)+(0:1)$);
\coordinate(q2m-1)at($(q1)+(-60:1)$);\coordinate(q2m-2)at($(q2m-1)+(0:1)$);
\draw(p1)--node[above right,pos=0.4]{$\rho(\gamma_1)$}node[pos=0.2]{\rotatebox{30}{\footnotesize$\bowtie$}}node[pos=0.8]{\rotatebox{30}{\footnotesize$\bowtie$}}(p2);
\draw(p1)--node[below right,pos=0.4]{$\rho(\gamma_{2l-1})$}node[pos=0.2]{\rotatebox{-30}{\footnotesize$\bowtie$}}node[pos=0.8]{\rotatebox{-30}{\footnotesize$\bowtie$}}(p2l-1);
\draw(r2)to[out=60,in=120]node[above]{$\rho(\varepsilon_2)^2$}node[pos=0.2]{\rotatebox{120}{\footnotesize$\bowtie$}}node[pos=0.8]{\rotatebox{60}{\footnotesize$\bowtie$}}(r3);
\draw(r2)to[out=-60,in=-120]node[pos=0.2]{\rotatebox{-120}{\footnotesize$\bowtie$}}node[pos=0.8]{\rotatebox{-60}{\footnotesize$\bowtie$}}(r3);
\draw[dotted](r3)--(rn-1);
\draw(rn)to[out=60,in=120]node[above]{$\rho(\varepsilon_n)^2$}node[pos=0.2]{\rotatebox{120}{\footnotesize$\bowtie$}}node[pos=0.8]{\rotatebox{60}{\footnotesize$\bowtie$}}(q1);
\draw(rn)to[out=-60,in=-120]node[pos=0.2]{\rotatebox{-120}{\footnotesize$\bowtie$}}node[pos=0.8]{\rotatebox{-60}{\footnotesize$\bowtie$}}(q1);
\draw(q2)--node[above]{$\rho(\delta_2)$}node[pos=0.2]{\rotatebox{90}{\footnotesize$\bowtie$}}node[pos=0.8]{\rotatebox{90}{\footnotesize$\bowtie$}}(q3);
\draw(q2m-1)--node[below]{$\rho(\delta_{2m-2})$}node[pos=0.2]{\rotatebox{90}{\footnotesize$\bowtie$}}node[pos=0.8]{\rotatebox{90}{\footnotesize$\bowtie$}}(q2m-2);
\draw[dotted](p3)to[out=-150,in=150](p2l-2);\draw[dotted](q3)to[out=-30,in=30](q2m-2);
\fill(p1)circle(0.07);\fill(p2)circle(0.07);\fill(p3)circle(0.07);\fill(p2l-1)circle(0.07);\fill(p2l-2)circle(0.07);
\fill(r2)circle(0.07);\fill(r3)circle(0.07);\fill(rn-1)circle(0.07);\fill(rn)circle(0.07);
\fill(q1)circle(0.07);\fill(q2)circle(0.07);\fill(q3)circle(0.07);\fill(q2m-1)circle(0.07);\fill(q2m-2)circle(0.07);
\end{tikzpicture}
\]
\[
\Omega(V)=
\begin{tikzpicture}[baseline=-1mm]
\coordinate(p1)at(0:1);\coordinate(p2)at(60:1);\coordinate(p3)at(120:1);\coordinate(p2l-1)at(-60:1);\coordinate(p2l-2)at(-120:1);
\coordinate(r2)at($(p1)+(0:1)$);\coordinate(r3)at($(r2)+(0:1)$);\coordinate(rn-1)at($(r3)+(0:1)$);\coordinate(rn)at($(rn-1)+(0:1)$);
\coordinate(q1)at($(rn)+(0:1)$);\coordinate(q2)at($(q1)+(60:1)$);\coordinate(q3)at($(q2)+(0:1)$);
\coordinate(q2m-1)at($(q1)+(-60:1)$);\coordinate(q2m-2)at($(q2m-1)+(0:1)$);
\draw(p2)--node[above]{$\rho(\gamma_2)$}node[pos=0.2]{\rotatebox{90}{\footnotesize$\bowtie$}}node[pos=0.8]{\rotatebox{90}{\footnotesize$\bowtie$}}(p3);
\draw(p2l-1)--node[below]{$\rho(\gamma_{2l-2})$}node[pos=0.2]{\rotatebox{90}{\footnotesize$\bowtie$}}node[pos=0.8]{\rotatebox{90}{\footnotesize$\bowtie$}}(p2l-2);
\draw(p1)to[out=60,in=120]node[above]{$\rho(\varepsilon_1)^2$}node[pos=0.2]{\rotatebox{120}{\footnotesize$\bowtie$}}node[pos=0.8]{\rotatebox{60}{\footnotesize$\bowtie$}}(r2);
\draw(p1)to[out=-60,in=-120]node[pos=0.2]{\rotatebox{-120}{\footnotesize$\bowtie$}}node[pos=0.8]{\rotatebox{-60}{\footnotesize$\bowtie$}}(r2);
\draw[dotted](r3)--(rn-1);
\draw(rn-1)to[out=60,in=120]node[above]{$\rho(\varepsilon_{n-1})^2$}node[pos=0.2]{\rotatebox{120}{\footnotesize$\bowtie$}}node[pos=0.8]{\rotatebox{60}{\footnotesize$\bowtie$}}(rn);
\draw(rn-1)to[out=-60,in=-120]node[pos=0.2]{\rotatebox{-120}{\footnotesize$\bowtie$}}node[pos=0.8]{\rotatebox{-60}{\footnotesize$\bowtie$}}(rn);
\draw(q1)--node[above left]{$\rho(\delta_1)$}node[pos=0.2]{\rotatebox{-30}{\footnotesize$\bowtie$}}node[pos=0.8]{\rotatebox{-30}{\footnotesize$\bowtie$}}(q2);
\draw(q1)--node[below left]{$\rho(\delta_{2m-1})$}node[pos=0.2]{\rotatebox{30}{\footnotesize$\bowtie$}}node[pos=0.8]{\rotatebox{30}{\footnotesize$\bowtie$}}(q2m-1);
\draw[dotted](p3)to[out=-150,in=150](p2l-2);\draw[dotted](q3)to[out=-30,in=30](q2m-2);
\fill(p1)circle(0.07);\fill(p2)circle(0.07);\fill(p3)circle(0.07);\fill(p2l-1)circle(0.07);\fill(p2l-2)circle(0.07);
\fill(r2)circle(0.07);\fill(r3)circle(0.07);\fill(rn-1)circle(0.07);\fill(rn)circle(0.07);
\fill(q1)circle(0.07);\fill(q2)circle(0.07);\fill(q3)circle(0.07);\fill(q2m-1)circle(0.07);\fill(q2m-2)circle(0.07);
\end{tikzpicture}
\]
\caption{Multi-sets $\Omega(U)$ and $\Omega(V)$ with $n(U_2,p)=n(V_2,p)$ for all punctures $p$ in the case that $G_T$ contains two cycles of odd length connected by a path of even length}
\label{fig:odd even}
\end{figure}

%
%
\begin{figure}[ht]
\[
\Omega(U)=
\begin{tikzpicture}[baseline=-1mm]
\coordinate(p1)at(0:1);\coordinate(p2)at(60:1);\coordinate(p3)at(120:1);\coordinate(p2l-1)at(-60:1);\coordinate(p2l-2)at(-120:1);
\coordinate(r2)at($(p1)+(0:1)$);\coordinate(r3)at($(r2)+(0:1)$);\coordinate(rn-1)at($(r3)+(0:1)$);\coordinate(rn)at($(rn-1)+(0:1)$);
\coordinate(q1)at($(rn)+(0:1)$);\coordinate(q2)at($(q1)+(60:1)$);\coordinate(q3)at($(q2)+(0:1)$);
\coordinate(q2m-1)at($(q1)+(-60:1)$);\coordinate(q2m-2)at($(q2m-1)+(0:1)$);
\draw(p1)--node[above right,pos=0.4]{$\rho(\gamma_1)$}node[pos=0.2]{\rotatebox{30}{\footnotesize$\bowtie$}}node[pos=0.8]{\rotatebox{30}{\footnotesize$\bowtie$}}(p2);
\draw(p1)--node[below right,pos=0.4]{$\rho(\gamma_{2l-1})$}node[pos=0.2]{\rotatebox{-30}{\footnotesize$\bowtie$}}node[pos=0.8]{\rotatebox{-30}{\footnotesize$\bowtie$}}(p2l-1);
\draw(r2)to[out=60,in=120]node[above]{$\rho(\varepsilon_2)^2$}node[pos=0.2]{\rotatebox{120}{\footnotesize$\bowtie$}}node[pos=0.8]{\rotatebox{60}{\footnotesize$\bowtie$}}(r3);
\draw(r2)to[out=-60,in=-120]node[pos=0.2]{\rotatebox{-120}{\footnotesize$\bowtie$}}node[pos=0.8]{\rotatebox{-60}{\footnotesize$\bowtie$}}(r3);
\draw[dotted](r3)--(rn-1);
\draw(rn-1)to[out=60,in=120]node[above]{$\rho(\varepsilon_{n-1})^2$}node[pos=0.2]{\rotatebox{120}{\footnotesize$\bowtie$}}node[pos=0.8]{\rotatebox{60}{\footnotesize$\bowtie$}}(rn);
\draw(rn-1)to[out=-60,in=-120]node[pos=0.2]{\rotatebox{-120}{\footnotesize$\bowtie$}}node[pos=0.8]{\rotatebox{-60}{\footnotesize$\bowtie$}}(rn);
\draw(q1)--node[above left]{$\rho(\delta_1)$}node[pos=0.2]{\rotatebox{-30}{\footnotesize$\bowtie$}}node[pos=0.8]{\rotatebox{-30}{\footnotesize$\bowtie$}}(q2);
\draw(q1)--node[below left]{$\rho(\delta_{2m-1})$}node[pos=0.2]{\rotatebox{30}{\footnotesize$\bowtie$}}node[pos=0.8]{\rotatebox{30}{\footnotesize$\bowtie$}}(q2m-1);
\draw[dotted](p3)to[out=-150,in=150](p2l-2);\draw[dotted](q3)to[out=-30,in=30](q2m-2);
\fill(p1)circle(0.07);\fill(p2)circle(0.07);\fill(p3)circle(0.07);\fill(p2l-1)circle(0.07);\fill(p2l-2)circle(0.07);
\fill(r2)circle(0.07);\fill(r3)circle(0.07);\fill(rn-1)circle(0.07);\fill(rn)circle(0.07);
\fill(q1)circle(0.07);\fill(q2)circle(0.07);\fill(q3)circle(0.07);\fill(q2m-1)circle(0.07);\fill(q2m-2)circle(0.07);
\end{tikzpicture}
\]
\[
\Omega(V)=
\begin{tikzpicture}[baseline=-1mm]
\coordinate(p1)at(0:1);\coordinate(p2)at(60:1);\coordinate(p3)at(120:1);\coordinate(p2l-1)at(-60:1);\coordinate(p2l-2)at(-120:1);
\coordinate(r2)at($(p1)+(0:1)$);\coordinate(r3)at($(r2)+(0:1)$);\coordinate(rn-1)at($(r3)+(0:1)$);\coordinate(rn)at($(rn-1)+(0:1)$);
\coordinate(q1)at($(rn)+(0:1)$);\coordinate(q2)at($(q1)+(60:1)$);\coordinate(q3)at($(q2)+(0:1)$);
\coordinate(q2m-1)at($(q1)+(-60:1)$);\coordinate(q2m-2)at($(q2m-1)+(0:1)$);
\draw(p2)--node[above]{$\rho(\gamma_2)$}node[pos=0.2]{\rotatebox{90}{\footnotesize$\bowtie$}}node[pos=0.8]{\rotatebox{90}{\footnotesize$\bowtie$}}(p3);
\draw(p2l-1)--node[below]{$\rho(\gamma_{2l-2})$}node[pos=0.2]{\rotatebox{90}{\footnotesize$\bowtie$}}node[pos=0.8]{\rotatebox{90}{\footnotesize$\bowtie$}}(p2l-2);
\draw(p1)to[out=60,in=120]node[above]{$\rho(\varepsilon_1)^2$}node[pos=0.2]{\rotatebox{120}{\footnotesize$\bowtie$}}node[pos=0.8]{\rotatebox{60}{\footnotesize$\bowtie$}}(r2);
\draw(p1)to[out=-60,in=-120]node[pos=0.2]{\rotatebox{-120}{\footnotesize$\bowtie$}}node[pos=0.8]{\rotatebox{-60}{\footnotesize$\bowtie$}}(r2);
\draw[dotted](r3)--(rn-1);
\draw(rn)to[out=60,in=120]node[above]{$\rho(\varepsilon_n)^2$}node[pos=0.2]{\rotatebox{120}{\footnotesize$\bowtie$}}node[pos=0.8]{\rotatebox{60}{\footnotesize$\bowtie$}}(q1);
\draw(rn)to[out=-60,in=-120]node[pos=0.2]{\rotatebox{-120}{\footnotesize$\bowtie$}}node[pos=0.8]{\rotatebox{-60}{\footnotesize$\bowtie$}}(q1);
\draw(q2)--node[above]{$\rho(\delta_2)$}node[pos=0.2]{\rotatebox{90}{\footnotesize$\bowtie$}}node[pos=0.8]{\rotatebox{90}{\footnotesize$\bowtie$}}(q3);
\draw(q2m-1)--node[below]{$\rho(\delta_{2m-2})$}node[pos=0.2]{\rotatebox{90}{\footnotesize$\bowtie$}}node[pos=0.8]{\rotatebox{90}{\footnotesize$\bowtie$}}(q2m-2);
\draw[dotted](p3)to[out=-150,in=150](p2l-2);\draw[dotted](q3)to[out=-30,in=30](q2m-2);
\fill(p1)circle(0.07);\fill(p2)circle(0.07);\fill(p3)circle(0.07);\fill(p2l-1)circle(0.07);\fill(p2l-2)circle(0.07);
\fill(r2)circle(0.07);\fill(r3)circle(0.07);\fill(rn-1)circle(0.07);\fill(rn)circle(0.07);
\fill(q1)circle(0.07);\fill(q2)circle(0.07);\fill(q3)circle(0.07);\fill(q2m-1)circle(0.07);\fill(q2m-2)circle(0.07);
\end{tikzpicture}
\]
\caption{Multi-sets $\Omega(U)$ and $\Omega(V)$ with $n(U_2,p)=n(V_2,p)$ for all punctures $p$ in the case that $G_T$ contains two cycles of odd length connected by a path of odd length (cf. Figure \ref{fig:odd even})}
\label{fig:odd odd}
\end{figure}
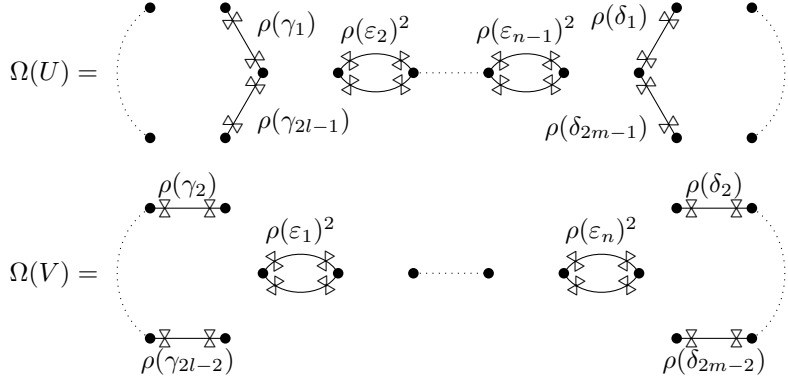

Now, we are ready to prove Theorems \ref{thm:unique Int} and \ref{thm:list cS}.

\begin{proof}[Proof of Theorem \ref{thm:unique Int}]
The assertion follows from Theorem \ref{thm:main} and Proposition \ref{prop:equivalence graph}.
\end{proof}

\begin{proof}[Proof of Theorem \ref{thm:list cS}]
Assume that the boundary of $\cS'$ is empty. Let $T$ be a tagged triangulation of $\cS'$. If there is a triangle piece of $\Omega(T)$ with a loop as an edge, then the other edges form either a cycle of length two or two loops in $G_T$. Thus $T$ does not satisfy Theorem \ref{thm:unique Int}(2). We assume that no such triangle piece exists. Let $\triangle$ and $\triangle'$ be adjacent puzzle pieces of $\Omega(T)$. We consider all cases of them:
\begin{itemize}
\item[(a)] Both of them are monogon pieces.
\item[(b)] One of them is a monogon piece and the other is a triangle piece.
\item[(c1)] Both of them are triangle pieces with exactly one common edge.
\item[(c2)] Both of them are triangle pieces with exactly two common edges.
\item[(c3)] Both of them are triangle pieces with three common edges.
\end{itemize}
In the cases (a) and (c3), $\cS'$ must be a sphere with exactly three punctures, thus it contradicts our assumption. In the cases (b), (c1), and (c2), their edges except for the common edges form a cycle of even length. Therefore, $T$ does not satisfy Theorem \ref{thm:unique Int}(2) in all cases. As a consequence, all tagged triangulations of $\cS'$ do not satisfy Theorem \ref{thm:unique Int}(2) if the boundary of $\cS'$ is empty.

Now, we assume that the boundary of $\cS'$ is not empty in the rest of the proof. Then we can take a tagged triangulation $T$ of $\cS'$ such that each puncture is enclosed by a punctured loop in $\Omega(T)$. Since $G_T$ is empty, it clearly satisfies Theorem \ref{thm:unique Int}(2). Therefore, the assertion (1) holds.

Finally, if $\cS'$ is one of the list in (2), then it is easy to check that $G_T$ has at most one cycle and its length is one for any tagged triangulation $T$ of $\cS'$. Thus it satisfies Theorem \ref{thm:unique Int}(2). In other cases, $\cS'$ satisfies at least one of the following:
\begin{itemize}
\item The genus is one or more and there is at least one puncture.
\item There are at least one boundary component and at least three punctures.
\item There are at least three boundary components and at least one puncture.
\item There are two boundary components and two punctures.
\end{itemize}
Then we can take a tagged triangulation $T$ of $\cS'$ such that a subgraph of $G_T$ consists of one vertex and two loops in the above three cases; a subgraph of $G_T$ consists of two vertices and two edges connecting them in the last case (see Figure \ref{fig:small graph}). Since it does not satisfy Theorem \ref{thm:unique Int}(2), the assertion (2) holds.
\end{proof}

%
%
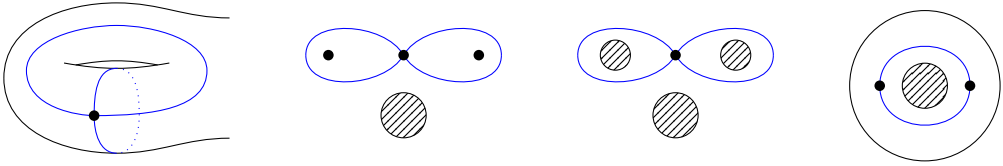
\begin{figure}[ht]
\[
\begin{tikzpicture}[baseline=-3mm]
\coordinate(p)at(-0.3,0);
\draw(-0.7,0.7)to[out=-10,in=-170]coordinate[pos=0.1](a)coordinate[pos=0.5](c)coordinate[pos=0.9](b)(0.7,0.7);
\draw(a)to[out=10,in=170](b);
\draw(-1.5,0.5)to[out=-90,in=-180](0,-0.5)to[out=0,in=-180](1.5,-0.3);
\draw(-1.5,0.5)to[out=90,in=-180](0,1.5)to[out=0,in=-180](1.5,1.3);
\draw[blue](p)..controls(-0.5,0)and(-1.2,0.1)..(-1.2,0.6)..controls(-1.2,1)and(-0.5,1.2)..(0,1.2)..controls(0.5,1.2)and(1.2,1)..(1.2,0.6)..controls(1.2,0.1)and(0.5,0)..(p);
\draw[blue](p)to[out=90,in=180](c);\draw[blue](p)to[out=-90,in=180](0,-0.5);
\draw[blue,dotted](c)to[out=0,in=90](0.3,0)to[out=-90,in=0](0,-0.5);
\fill(p)circle(0.07);
\end{tikzpicture}
\hspace{10mm}
\begin{tikzpicture}
\coordinate(0)at(0,0);\coordinate(p)at(0,0.5);\coordinate(l)at($(p)+(-1,0)$);\coordinate(r)at($(p)+(1,0)$);
\draw[pattern=north east lines](0,-0.3)circle(0.3);
\draw[blue](p)to[out=120,in=90]($(l)+(-0.3,0)$);\draw[blue](p)to[out=-120,in=-90]($(l)+(-0.3,0)$);
\draw[blue](p)to[out=60,in=90]($(r)+(0.3,0)$);\draw[blue](p)to[out=-60,in=-90]($(r)+(0.3,0)$);
\fill(p)circle(0.07);\fill(l)circle(0.07);\fill(r)circle(0.07);
\end{tikzpicture}
\hspace{10mm}
\begin{tikzpicture}
\coordinate(0)at(0,0);\coordinate(p)at(0,0.5);\coordinate(l)at($(p)+(-0.8,0)$);\coordinate(r)at($(p)+(0.8,0)$);
\draw[pattern=north east lines](0,-0.3)circle(0.3);
\draw[pattern=north east lines](l)circle(0.2);\draw[pattern=north east lines](r)circle(0.2);
\draw[blue](p)to[out=120,in=90]($(p)+(-1.3,0)$);\draw[blue](p)to[out=-120,in=-90]($(p)+(-1.3,0)$);
\draw[blue](p)to[out=60,in=90]($(p)+(1.3,0)$);\draw[blue](p)to[out=-60,in=-90]($(p)+(1.3,0)$);
\fill(p)circle(0.07);
\end{tikzpicture}
\hspace{10mm}
\begin{tikzpicture}[baseline=-7mm]
\coordinate(0)at(0,0);\coordinate(l)at(-0.6,0);\coordinate(r)at(0.6,0);
\draw[pattern=north east lines](0)circle(0.3);\draw(0)circle(1);
\draw[blue](l)..controls($(l)+(90:0.7)$)and($(r)+(90:0.7)$)..(r);\draw[blue](l)..controls($(l)+(-90:0.7)$)and($(r)+(-90:0.7)$)..(r);
\fill(l)circle(0.07);\fill(r)circle(0.07);
\end{tikzpicture}
\]
\caption{Subgraphs of $G_T$ that do not satisfy Theorem \ref{thm:unique Int}(2) in the case that the marked surface has non-empty boundary and does not appear in Theorem \ref{thm:list cS}(2)}
\label{fig:small graph}
\end{figure}

\begin{example}\label{ex:graph}
In the setting of Example \ref{ex:T Int}, the graph $G_T=(V_T,E_T)$ consists of the bottom puncture $p$ and two punctured loops $8$ and $9$ in $\Omega(T)$, that is, it contains two cycles of odd length. Thus $T$ does not satisfy the equivalent properties in Theorem \ref{thm:unique Int} and Proposition \ref{prop:equivalence graph}. In fact, for $U=\{\rho(4),\rho(5)\}$ and $V=\{\rho(6),\rho(7)\}$ in $\bM_{\cS}$, $\Omega(U)=U_2=\{\rho(8)\}$ and $\Omega(V)=V_2=\{\rho(9)\}$ are different, but $n(U_2,p)=n(V_2,p)=2$ as follows:
\[
G_T=
\begin{tikzpicture}[baseline=-1mm,scale=0.5]
\coordinate(0)at(0,0);\coordinate(d)at(0,-1.2);\coordinate(u)at(90:2);
\coordinate(l)at($(d)+(120:1.3)$);\coordinate(r)at($(d)+(60:1.3)$);
\draw[blue](d)to[out=30,in=90,relative]($(d)+(120:1.7)$);
\draw[blue](d)to[out=-30,in=-90,relative]($(d)+(120:1.7)$)node[above]{$8$};
\draw[blue](d)to[out=30,in=90,relative]($(d)+(60:1.7)$);
\draw[blue](d)to[out=-30,in=-90,relative]($(d)+(60:1.7)$)node[above]{$9$};
\fill(d)circle(0.14)node[below]{$p$};
\end{tikzpicture}
\ ,\hspace{3mm}\Omega(U)=U_2=
\begin{tikzpicture}[baseline=-1mm,scale=0.5]
\coordinate(0)at(0,0);\coordinate(d)at(0,-1.2);\coordinate(u)at(90:2);
\coordinate(l)at($(d)+(120:1.3)$);\coordinate(r)at($(d)+(60:1.3)$);
\draw(0,0)circle(2);
\draw[blue](d)to[out=40,in=90,relative]node[pos=0.2]{\scriptsize\rotatebox{55}{$\bowtie$}}($(d)+(120:1.7)$);
\draw[blue](d)to[out=-40,in=-90,relative]node[pos=0.2]{\scriptsize\rotatebox{5}{$\bowtie$}}($(d)+(120:1.7)$)node[above]{$\rho(8)$};
\fill(l)circle(0.14);\fill(r)circle(0.14);\fill(d)circle(0.14);\fill(u)circle(0.14);
\end{tikzpicture}
\ ,\hspace{3mm}\Omega(V)=V_2=
\begin{tikzpicture}[baseline=-1mm,scale=0.5]
\coordinate(0)at(0,0);\coordinate(d)at(0,-1.2);\coordinate(u)at(90:2);
\coordinate(l)at($(d)+(120:1.3)$);\coordinate(r)at($(d)+(60:1.3)$);
\draw(0,0)circle(2);
\draw[blue](d)to[out=40,in=90,relative]node[pos=0.2]{\scriptsize\rotatebox{-5}{$\bowtie$}}($(d)+(60:1.7)$);
\draw[blue](d)to[out=-40,in=-90,relative]node[pos=0.2]{\scriptsize\rotatebox{-55}{$\bowtie$}}($(d)+(60:1.7)$)node[above]{$\rho(9)$};
\fill(l)circle(0.14);\fill(r)circle(0.14);\fill(d)circle(0.14);\fill(u)circle(0.14);
\end{tikzpicture}\ .
\]
In particular, $\Int_T(U)=\Int_T(V)=(2,2,2,2,2,2,2)$.
\end{example}

\section{Modifications}\label{sec:modif}

In this section, we introduce and study a notion of modifications for the multi-set $S_U$ defined in Subsection \ref{subsec:pf thm:main} and prove Theorem \ref{thm:segment}. In Subsections \ref{subsec:modif tri} to \ref{subsec:recover}, we discuss in more general cases, and in Subsection \ref{subsec:proof} we apply them to $S_U$. We first consider local modifications. In fact, we give modifications on a triangle piece in Subsection \ref{subsec:modif tri}, and on a monogon piece in Subsection \ref{subsec:modif mono}. Their examples have already appeared in Example \ref{ex:glue}.

\subsection{Modifications at angles on a triangle piece}\label{subsec:modif tri}

Let $\triangle$ be a triangle piece, and $S$ be a multi-set of edges and segments in $\triangle$. Fix an angle $a$ of $\triangle$. We denote by $a^l$ (resp., $a^r$) the left (resp., right) angle of $\triangle$ with $a$ at the top as follows:
\[
\begin{tikzpicture}[baseline=-5mm]
\coordinate(l)at(-150:1);\coordinate(r)at(-30:1);\coordinate(u)at(90:1);\draw(u)--(l)--(r)--(u);
\node at(90:0.6){$a$};\node at($(l)+(30:0.4)$){$a^l$};\node at($(r)+(155:0.4)$){$a^r$};
\end{tikzpicture}\ .
\]

We consider the following condition of $S$:
\begin{equation*}
\tag{$\phi,a$} \text{$m_S(h_a)>0$ and $\Int(h_a,S)>0$}.
\end{equation*}
It is easy to check that $S$ satisfies exactly one of the following conditions when it satisfies $(\phi,a)$:
\begin{enumerate}
\item[$(\phi 1)$] $m_S(e_{a^l})>0$, $m_S(e_{a^r})>0$.
\item[$(\phi 2)$] $m_S(e_{a^l})>0$, $m_S(e_{a^r})=0$, $m_S(v_a)>0$.
\item[$(\phi 3)$] $m_S(e_{a^l})>0$, $m_S(e_{a^r})=0$, $m_S(v_a)=0$.
\item[$(\phi 4)$] $m_S(e_{a^l})=0$, $m_S(e_{a^r})>0$, $m_S(v_a)>0$.
\item[$(\phi 5)$] $m_S(e_{a^l})=0$, $m_S(e_{a^r})>0$, $m_S(v_a)=0$.
\item[$(\phi 6)$] $m_S(e_{a^l})=0$, $m_S(e_{a^r})=0$, $m_S(v_a)\ge2$.
\item[$(\phi 7)$] $m_S(e_{a^l})=0$, $m_S(e_{a^r})=0$, $m_S(v_a)=1$.
\end{enumerate}
We say that $S$ satisfies $(\phi,a)_k$ if it satisfies $(\phi,a)$ and $(\phi k)$ for $k\in\{1,\ldots,7\}$ (see Table \ref{table:smallest}). When $S$ satisfies $(\phi,a)$, we define $\phi_a(S)$ as the multi-set of edges and segments in $\triangle$ whose multiplicities satisfy Table \ref{table:def phi}. In particular, $\phi_a$ sends the second row to the fourth row in Table \ref{table:smallest}.

%
%
\renewcommand{\arraystretch}{1.3}
{\begin{table}[ht]
\[
\begin{tabular}{c||c|c|c|c|c|c|c}
$X$&$(\phi,a)_1$&$(\phi,a)_2$&$(\phi,a)_3$&$(\phi,a)_4$&$(\phi,a)_5$&$(\phi,a)_6$&$(\phi,a)_7$\\\hline\hline
$S$&
\begin{tikzpicture}[baseline=1mm]
\coordinate(l)at(-150:1);\coordinate(r)at(-30:1);\coordinate(u)at(90:1);\draw(u)--(l)--(r)--(u);
\draw[blue](u)to[out=20,in=160,relative](l);\draw[blue](u)to[out=-20,in=-160,relative](r);\draw[blue]($(u)!0.5!(l)$)--($(u)!0.5!(r)$);
\end{tikzpicture}&
\begin{tikzpicture}[baseline=1mm]
\coordinate(l)at(-150:1);\coordinate(r)at(-30:1);\coordinate(u)at(90:1);\draw(u)--(l)--(r)--(u);
\draw[blue](u)--($(l)!0.5!(r)$);\draw[blue](u)to[out=-20,in=-160,relative](r);\draw[blue]($(u)!0.5!(l)$)--($(u)!0.5!(r)$);
\end{tikzpicture}&
\begin{tikzpicture}[baseline=1mm]
\coordinate(l)at(-150:1);\coordinate(r)at(-30:1);\coordinate(u)at(90:1);\draw(u)--(l)--(r)--(u);
\draw[blue](u)to[out=-20,in=-160,relative](r);\draw[blue]($(u)!0.5!(l)$)--($(u)!0.5!(r)$);
\end{tikzpicture}&
\begin{tikzpicture}[baseline=1mm]
\coordinate(l)at(-150:1);\coordinate(r)at(-30:1);\coordinate(u)at(90:1);\draw(u)--(l)--(r)--(u);
\draw[blue](u)to[out=20,in=160,relative](l);\draw[blue](u)--($(l)!0.5!(r)$);\draw[blue]($(u)!0.5!(l)$)--($(u)!0.5!(r)$);
\end{tikzpicture}&
\begin{tikzpicture}[baseline=1mm]
\coordinate(l)at(-150:1);\coordinate(r)at(-30:1);\coordinate(u)at(90:1);\draw(u)--(l)--(r)--(u);
\draw[blue](u)to[out=20,in=160,relative](l);\draw[blue]($(u)!0.5!(l)$)--($(u)!0.5!(r)$);
\end{tikzpicture}&
\begin{tikzpicture}[baseline=1mm]
\coordinate(l)at(-150:1);\coordinate(r)at(-30:1);\coordinate(u)at(90:1);\draw(u)--(l)--(r)--(u);
\draw[blue](u)--($(l)!0.45!(r)$);\draw[blue](u)--($(l)!0.55!(r)$);\draw[blue]($(u)!0.5!(l)$)--($(u)!0.5!(r)$);
\end{tikzpicture}&
\begin{tikzpicture}[baseline=1mm]
\coordinate(l)at(-150:1);\coordinate(r)at(-30:1);\coordinate(u)at(90:1);\draw(u)--(l)--(r)--(u);
\draw[blue](u)--($(l)!0.5!(r)$);\draw[blue]($(u)!0.5!(l)$)--($(u)!0.5!(r)$);
\node at(0,1.1){};
\end{tikzpicture}
\end{tabular}
\vspace{3mm}\]\[
\begin{tabular}{c||c|c|c|c|c|c|c}
$X$&$(\psi,a)_1$&$(\psi,a)_2$&$(\psi,a)_3$&$(\psi,a)_4$&$(\psi,a)_5$&$(\psi,a)_6$&$(\psi,a)_7$\\\hline\hline
$S$&
\begin{tikzpicture}[baseline=1mm]
\coordinate(l)at(-150:1);\coordinate(r)at(-30:1);\coordinate(u)at(90:1);\draw(u)--(l)--(r)--(u);
\end{tikzpicture}&
\begin{tikzpicture}[baseline=1mm]
\coordinate(l)at(-150:1);\coordinate(r)at(-30:1);\coordinate(u)at(90:1);\draw(u)--(l)--(r)--(u);
\draw[blue]($(u)!0.5!(l)$)--($(l)!0.5!(r)$);
\end{tikzpicture}&
\begin{tikzpicture}[baseline=1mm]
\coordinate(l)at(-150:1);\coordinate(r)at(-30:1);\coordinate(u)at(90:1);\draw(u)--(l)--(r)--(u);
\draw[blue]($(u)!0.5!(l)$)--(r);
\end{tikzpicture}&
\begin{tikzpicture}[baseline=1mm]
\coordinate(l)at(-150:1);\coordinate(r)at(-30:1);\coordinate(u)at(90:1);\draw(u)--(l)--(r)--(u);
\draw[blue]($(u)!0.5!(r)$)--($(l)!0.5!(r)$);
\end{tikzpicture}&
\begin{tikzpicture}[baseline=1mm]
\coordinate(l)at(-150:1);\coordinate(r)at(-30:1);\coordinate(u)at(90:1);\draw(u)--(l)--(r)--(u);
\draw[blue]($(u)!0.5!(r)$)--(l);
\end{tikzpicture}&
\begin{tikzpicture}[baseline=1mm]
\coordinate(l)at(-150:1);\coordinate(r)at(-30:1);\coordinate(u)at(90:1);\draw(u)--(l)--(r)--(u);
\draw[blue]($(u)!0.5!(l)$)--($(l)!0.45!(r)$);\draw[blue]($(u)!0.5!(r)$)--($(l)!0.55!(r)$);
\end{tikzpicture}&
\begin{tikzpicture}[baseline=1mm]
\coordinate(l)at(-150:1);\coordinate(r)at(-30:1);\coordinate(u)at(90:1);\draw(u)--(l)--(r)--(u);
\draw[blue](0,0)--($(u)!0.5!(l)$);\draw[blue](0,0)--($(l)!0.5!(r)$);\draw[blue](0,0)--($(u)!0.5!(r)$);
\node at(0,1.05){};
\end{tikzpicture}
\end{tabular}\vspace{3mm}\]
\caption{The smallest multi-set $S$ that satisfies the condition $X$ on a triangle piece, where $a$ is the angle at the top of each triangle}
\label{table:smallest}
\end{table}}

%
%
\renewcommand{\arraystretch}{1.3}
{\begin{table}[ht]
\begin{tabular}{c||c|c|c|c|c|c|c|c|c|c}
$k\backslash s$&$e_a$&$e_{a^l}$&$e_{a^r}$&$h_a$&$h_{a^l}$&$h_{a^r}$&$v_a$&$v_{a^l}$&$v_{a^r}$&$y$\\\hline\hline
$1$&$0$&$-1$&$-1$ & $-1$&$0$&$0$ & $0$&$0$&$0$ & $0$\\\hline
$2$&$0$&$-1$&$0$ & $-1$&$1$&$0$ & $-1$&$0$&$0$ & $0$\\\hline
$3$&$0$&$-1$&$0$ & $-1$&$0$&$0$ & $0$&$0$&$1$ & $0$\\\hline
$4$&$0$&$0$&$-1$ & $-1$&$0$&$1$ & $-1$&$0$&$0$ & $0$\\\hline
$5$&$0$&$0$&$-1$ & $-1$&$0$&$0$ & $0$&$1$&$0$ & $0$\\\hline
$6$&$0$&$0$&$0$ & $-1$&$1$&$1$ & $-2$&$0$&$0$ & $0$\\\hline
$7$&$0$&$0$&$0$ & $-1$&$0$&$0$ & $-1$&$0$&$0$ & $1$
\end{tabular}\vspace{3mm}
\caption{The difference $m_{\phi_a(S)}(s)-m_S(s)$ when $S$ satisfies $(\phi,a)_k$ on a triangle piece}
\label{table:def phi}
\end{table}}

\begin{lem}\label{lem:Int tri}
Assume that $S$ satisfies $(\phi,a)$. Then $\Int(s,\phi_a(S))\le\Int(s,S)$ for an edge or segment $s$ in $\triangle$. In particular, $\Int(e_a,\phi_a(S))=\Int(e_a,S)$ and $m_{\phi_a(S)}(e_a)=m_S(e_a)$. Moreover, for $e\in\{e_{a^l},e_{a^r}\}$, the following hold:
\begin{itemize}
\item[(1)] If $m_S(e)=0$, then $\Int(e,\phi_a(S))=\Int(e,S)$ and $m_{\phi_a(S)}(e)=m_S(e)=0$.
\item[(2)] If $m_S(e)>0$, then $\Int(e,\phi_a(S))=\Int(e,S)-1$ and $m_{\phi_a(S)}(e)=m_S(e)-1$.
\end{itemize}
\end{lem}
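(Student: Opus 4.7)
My plan is to verify all three assertions by a uniform case analysis indexed by which of the seven sub-conditions $(\phi,a)_k$, $k=1,\dots,7$, holds. The key observation is that for any edge or segment $s$ in $\triangle$, linearity of intersection numbers in multi-sets gives
\[
\Int(s,\phi_a(S))-\Int(s,S)=\sum_t\delta(t)\,\Int(s,t),
\]
where $\delta(t):=m_{\phi_a(S)}(t)-m_S(t)$ is precisely the relevant row of Table \ref{table:def phi}, and the sum ranges over the edge/segment types $t\in\{e_a,e_{a^l},e_{a^r},h_a,h_{a^l},h_{a^r},v_a,v_{a^l},v_{a^r},y\}$ of $\triangle$. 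The constants $\Int(s,t)$ are fixed by the geometry of a triangle piece and can be read directly from Table \ref{table:segment} and Figure \ref{fig:compatible segments} (for instance, the example immediately after Table \ref{table:segment} already records $\Int(h_a,\cdot)$). Thus every assertion of the lemma reduces to a bounded finite arithmetic check.

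I would first handle the claim about $e_a$. Since the column for $e_a$ in Table \ref{table:def phi} is identically zero, the equality $m_{\phi_a(S)}(e_a)=m_S(e_a)$ is immediate. For the intersection equality, note that $e_a$ is compatible with each of $h_a$, $v_a$, $e_{a^l}$ and $e_{a^r}$, so the only types $t$ that can contribute to $\sum_t\delta(t)\Int(e_a,t)$ are $h_{a^l}$, $h_{a^r}$, $v_{a^l}$, $v_{a^r}$, and $y$; a row-by-row inspection of Table \ref{table:def phi} then shows that these contributions cancel in every case, giving $\Int(e_a,\phi_a(S))=\Int(e_a,S)$.

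Next I would handle the assertions about $e_{a^l}$ and $e_{a^r}$. The $e_{a^l}$-column of Table \ref{table:def phi} has entry $-1$ exactly in cases $(\phi,a)_1$, $(\phi,a)_2$, $(\phi,a)_3$, which are precisely those in which $m_S(e_{a^l})>0$ by the definition of the sub-conditions, and entry $0$ otherwise. This immediately yields the claimed multiplicity behaviour, and feeding the same $\delta$'s into the linear formula for $\Int(e_{a^l},\cdot)$ produces a decrease of exactly $1$ when $m_S(e_{a^l})>0$ and no change when $m_S(e_{a^l})=0$. The argument for $e_{a^r}$ is entirely symmetric.

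The bulk of the work, and the only genuine obstacle, is the general inequality $\Int(s,\phi_a(S))\le\Int(s,S)$ for the remaining segments $s\in\{h_a,h_{a^l},h_{a^r},v_a,v_{a^l},v_{a^r},y\}$. I would organise this as a $7\times 7$ array whose $(k,s)$-entry is the signed sum $\sum_t\delta(t)\Int(s,t)$ under condition $(\phi,a)_k$, and then verify non-positivity entry by entry. The conceptual reason this must hold is that $\phi_a$ is by design a local untangling at the angle $a$, replacing the configuration in row two of Table \ref{table:smallest} by the strictly simpler one in row four, so no transversal segment can gain new crossings with the modified multi-set. Nevertheless the rigorous verification is a finite bookkeeping check rather than a formal consequence, and it is there that the actual computation will take place.
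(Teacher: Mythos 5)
Your overall strategy---writing
\[
\Int(s,\phi_a(S))-\Int(s,S)=\sum_t\bigl(m_{\phi_a(S)}(t)-m_S(t)\bigr)\Int(s,t)
\]
and checking the seven rows of Table \ref{table:def phi} against the fixed pairwise intersection numbers of the ten edge/segment types---is exactly the direct verification the paper intends (its proof is a one-line appeal to the definition of $\phi_a$), and with the correct constants every entry of your proposed array does come out as claimed. The genuine problem is in the one computation you actually spell out, for $e_a$: you assert that $e_a$ is compatible with $v_a$ and that only $h_{a^l},h_{a^r},v_{a^l},v_{a^r},y$ can contribute. Under the paper's conventions this is false: $\Int(e_a,v_a)=1$ and $\Int(e_a,y)=1$, while $\Int(e_a,v_{a^l})=\Int(e_a,v_{a^r})=0$. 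These values are forced by the proof of Proposition \ref{prop:Int triangle}: for $S=\{h_2^{a_3},h_3^{a_2},v_1^{a_1-a_2-a_3}\}$ one needs $\Int(e_1,S)=a_1$ and $\Int(e_2,S)=a_2$, which pins down $\Int(e_1,v_1)=1$ and $\Int(e_2,v_1)=0$, and the parity bookkeeping in the case containing $y$ needs $\Int(e_i,y)=1$. Equivalently, a segment whose endpoint lies on the side carrying the arc $e_a$ counts as crossing $e_a$ once, exactly as in the formula $\Int(h_a,S)=m_S(e_b)+m_S(e_c)+m_S(v_a)$ that you yourself quote; your list is internally inconsistent, since it keeps $h_{a^l},h_{a^r}$ (whose only contact with $e_a$ is of the same endpoint-on-side type) while discarding $v_a$.

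This matters because $\Int(e_a,\phi_a(S))=\Int(e_a,S)$ is not a cancellation among the types you list: in cases $(\phi,a)_2$, $(\phi,a)_4$, $(\phi,a)_6$, $(\phi,a)_7$ the positive contributions from $\delta(h_{a^l})$, $\delta(h_{a^r})$, or $\delta(y)$ are offset precisely by $\delta(v_a)\cdot\Int(e_a,v_a)$ with $\delta(v_a)\in\{-1,-2\}$. With your constants the difference in case $(\phi,a)_2$, say, would be $+1$, contradicting the very statement being proved; and the alternative convention that would make $e_a$ and $v_a$ compatible (endpoints on a side never count) would force $\Int(e_{a^l},h_a)=0$ and hence break parts (1) and (2) of the lemma as well. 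So the $e_a$ portion of your argument needs to be redone with the correct local intersection table; once that is fixed, your treatment of $e_{a^l},e_{a^r}$ and the deferred case-by-case non-positivity check for the segments do go through and coincide with the (unwritten) computation behind the paper's proof.
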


\begin{proof}
The assertions follow from the definition of $\phi_a$ (see also Table \ref{table:smallest}).
\end{proof}

Notice that $S$ is not recoverable from $\phi_a(S)$ in general as in the following example.

\begin{example}
The multi-sets $S_1=\{e_{a^l},e_{a^r},h_a,h_{a^l}\}$ and $S_2=\{e_{a^l},h_a,v_a\}$ satisfy $(\phi,a)_1$ and $(\phi,a)_2$, respectively. Then $\phi_a(S_1)=\phi_a(S_2)=\{h_{a^l}\}$ as follows:
\[
S_1=
\begin{tikzpicture}[baseline=1mm]
\coordinate(l)at(-150:1);\coordinate(r)at(-30:1);\coordinate(u)at(90:1);\draw(u)--(l)--(r)--(u);
\draw[blue](u)to[out=20,in=160,relative](l);\draw[blue](u)to[out=-20,in=-160,relative](r);
\draw[blue]($(u)!0.5!(l)$)--($(u)!0.5!(r)$);\draw[blue]($(u)!0.55!(l)$)--($(l)!0.45!(r)$);
\end{tikzpicture}
\xrightarrow{\phi_a}
\begin{tikzpicture}[baseline=1mm]
\coordinate(l)at(-150:1);\coordinate(r)at(-30:1);\coordinate(u)at(90:1);\draw(u)--(l)--(r)--(u);
\draw[blue]($(u)!0.55!(l)$)--($(l)!0.45!(r)$);
\node at(90:0.6){$a$};
\end{tikzpicture}
\xleftarrow{\phi_a}
\begin{tikzpicture}[baseline=1mm]
\coordinate(l)at(-150:1);\coordinate(r)at(-30:1);\coordinate(u)at(90:1);\draw(u)--(l)--(r)--(u);
\draw[blue](u)--($(l)!0.5!(r)$);\draw[blue](u)to[out=-20,in=-160,relative](r);\draw[blue]($(u)!0.5!(l)$)--($(u)!0.5!(r)$);
\end{tikzpicture}=S_2.
\]
\end{example}

To give a sufficient condition of $S$ such that it is recoverable from $\phi_a(S)$, we consider a map sending the fourth row to the second row in Table \ref{table:smallest}. First, we consider the following condition of $S$:
\begin{equation*}
\tag{$\psi,a$} \text{If $\Int(s,t)>0$ for two segments $s$ and $t$ in $S$, then $\{s,t\}=\{h_a,v_a\}$}.
\end{equation*}
It is easy to check that $S$ satisfies exactly one of the following conditions when it satisfies $(\psi,a)$:
\begin{enumerate}
\item[$(\psi1)$] $m_S(h_{a^l})=m_S(h_{a^r})=m_S(v_{a^l})=m_S(v_{a^r})=m_S(y)=0$.
\item[$(\psi2)$] $m_S(h_{a^l})>0$, $m_S(h_{a^r})=m_S(v_{a^r})=m_S(y)=0$.
\item[$(\psi3)$] $m_S(v_{a^r})>0$.
\item[$(\psi4)$] $m_S(h_{a^r})>0$, $m_S(h_{a^l})=m_S(v_{a^l})=m_S(y)=0$.
\item[$(\psi5)$] $m_S(v_{a^l})>0$.
\item[$(\psi6)$] $m_S(h_{a^l})>0$, $m_S(h_{a^r})>0$, $m_S(y)=0$.
\item[$(\psi7)$] $m_S(y)=1$.
\end{enumerate}
We say that $S$ satisfies $(\psi,a)_k$ if it satisfies $(\psi,a)$ and $(\psi k)$ for $k\in\{1,\ldots,7\}$ (see Tables \ref{table:smallest} and \ref{table:max psi}). When $S$ satisfies $(\psi,a)$, we define $\psi_a(S)$ as the multi-set of edges and segments in $\triangle$ whose multiplicities satisfy Table \ref{table:def psi} (cf. Table \ref{table:def phi}). Then $\psi_a$ sends the fourth row to the second row in Table \ref{table:smallest}.

%
%
\renewcommand{\arraystretch}{1.3}
{\begin{table}[ht]\[
\begin{tabular}{c||c|c|c|c|c|c|c}
$k$&$1$&$2$&$3$&$4$&$5$&$6$&$7$\\\hline\hline
$S$&
\begin{tikzpicture}[baseline=1mm]
\coordinate(l)at(-150:1);\coordinate(r)at(-30:1);\coordinate(u)at(90:1);\draw(u)--(l)--(r)--(u);
\draw[blue](u)--($(l)!0.5!(r)$);\draw[blue]($(u)!0.45!(l)$)--($(u)!0.45!(r)$);
\end{tikzpicture}&
\begin{tikzpicture}[baseline=1mm]
\coordinate(l)at(-150:1);\coordinate(r)at(-30:1);\coordinate(u)at(90:1);\draw(u)--(l)--(r)--(u);
\draw[blue](u)--($(l)!0.5!(r)$);\draw[blue]($(u)!0.45!(l)$)--($(u)!0.45!(r)$);
\draw[blue]($(u)!0.55!(l)$)--($(l)!0.45!(r)$);
\end{tikzpicture}&
\begin{tikzpicture}[baseline=1mm]
\coordinate(l)at(-150:1);\coordinate(r)at(-30:1);\coordinate(u)at(90:1);\draw(u)--(l)--(r)--(u);
\draw[blue]($(u)!0.45!(l)$)--($(u)!0.45!(r)$);\draw[blue]($(u)!0.55!(l)$)--($(l)!0.45!(r)$);
\draw[blue]($(u)!0.5!(l)$)--(r);
\end{tikzpicture}&
\begin{tikzpicture}[baseline=1mm]
\coordinate(l)at(-150:1);\coordinate(r)at(-30:1);\coordinate(u)at(90:1);\draw(u)--(l)--(r)--(u);
\draw[blue](u)--($(l)!0.5!(r)$);\draw[blue]($(u)!0.45!(l)$)--($(u)!0.45!(r)$);
\draw[blue]($(u)!0.55!(r)$)--($(l)!0.55!(r)$);
\end{tikzpicture}&
\begin{tikzpicture}[baseline=1mm]
\coordinate(l)at(-150:1);\coordinate(r)at(-30:1);\coordinate(u)at(90:1);\draw(u)--(l)--(r)--(u);
\draw[blue]($(u)!0.45!(l)$)--($(u)!0.45!(r)$);\draw[blue]($(u)!0.55!(r)$)--($(l)!0.55!(r)$);
\draw[blue]($(u)!0.5!(r)$)--(l);
\end{tikzpicture}&
\begin{tikzpicture}[baseline=1mm]
\coordinate(l)at(-150:1);\coordinate(r)at(-30:1);\coordinate(u)at(90:1);\draw(u)--(l)--(r)--(u);
\draw[blue](u)--($(l)!0.5!(r)$);\draw[blue]($(u)!0.45!(l)$)--($(u)!0.45!(r)$);
\draw[blue]($(u)!0.55!(l)$)--($(l)!0.45!(r)$);\draw[blue]($(u)!0.55!(r)$)--($(l)!0.55!(r)$);
\end{tikzpicture}&
\begin{tikzpicture}[baseline=1mm]
\coordinate(l)at(-150:1);\coordinate(r)at(-30:1);\coordinate(u)at(90:1);\draw(u)--(l)--(r)--(u);
\draw[blue](0,0)--($(u)!0.5!(l)$);\draw[blue](0,0)--($(l)!0.5!(r)$);\draw[blue](0,0)--($(u)!0.5!(r)$);
\draw[blue]($(u)!0.45!(l)$)--($(u)!0.45!(r)$);\draw[blue]($(u)!0.55!(l)$)--($(l)!0.45!(r)$);\draw[blue]($(u)!0.55!(r)$)--($(l)!0.55!(r)$);
\node at(0,1.05){};
\end{tikzpicture}
\end{tabular}\vspace{3mm}\]
\caption{The maximal set $S$ of segments that satisfies $(\psi,a)_k$ (or $(\psi\ast,a)_k$) on a triangle piece, where $a$ is the angle at the top of each triangle}
\label{table:max psi}
\end{table}}

%
%
\renewcommand{\arraystretch}{1.3}
{\begin{table}[ht]
\begin{tabular}{c||c|c|c|c|c|c|c|c|c|c}
$k\backslash s$&$e_a$&$e_{a^l}$&$e_{a^r}$&$h_a$&$h_{a^l}$&$h_{a^r}$&$v_a$&$v_{a^l}$&$v_{a^r}$&$y$\\\hline\hline
$1$&$0$&$1$&$1$ & $1$&$0$&$0$ & $0$&$0$&$0$ & $0$\\\hline
$2$&$0$&$1$&$0$ & $1$&$-1$&$0$ & $1$&$0$&$0$ & $0$\\\hline
$3$&$0$&$1$&$0$ & $1$&$0$&$0$ & $0$&$0$&$-1$ & $0$\\\hline
$4$&$0$&$0$&$1$ & $1$&$0$&$-1$ & $1$&$0$&$0$ & $0$\\\hline
$5$&$0$&$0$&$1$ & $1$&$0$&$0$ & $0$&$-1$&$0$ & $0$\\\hline
$6$&$0$&$0$&$0$ & $1$&$-1$&$-1$ & $2$&$0$&$0$ & $0$\\\hline
$7$&$0$&$0$&$0$ & $1$&$0$&$0$ & $1$&$0$&$0$ & $-1$
\end{tabular}\vspace{3mm}
\caption{The difference $m_{\psi_a(S)}(s)-m_S(s)$ when $S$ satisfies $(\psi,a)_k$ on a triangle piece}
\label{table:def psi}
\end{table}}

When $S$ satisfies $(\phi,a)$, $\phi_a(S)$ does not always satisfy $(\psi,a)$. As a sufficient condition of $S$ such that $\phi_a(S)$ satisfies $(\psi,a)$, we give the following condition:
\begin{equation*}
\tag{$\ast,a$} \text{If $\Int(s,t)>0$ for $s\in S\cap\{e_{a^l},e_{a^r},v_a\}$ and $t\in S$, then $t=h_a$}.
\end{equation*}
For short, we say that $S$ satisfies $(\upsilon\ast,a)$ (resp., $(\upsilon\ast,a)_k$) if it satisfies $(\upsilon,a)$ (resp., $(\upsilon,a)_k$) and $(\ast,a)$ for $\upsilon\in\{\phi,\psi\}$ (see Tables \ref{table:max psi} and \ref{table:max phiast}).

%
%
\renewcommand{\arraystretch}{1.3}
{\begin{table}[ht]\[
\begin{tabular}{c||c|c|c|c|c|c}
$k$&$1$&$2$&$3$&$4$&$5$&$6$,$7$\\\hline\hline
$S$&
\begin{tikzpicture}[baseline=1mm]
\coordinate(l)at(-150:1);\coordinate(r)at(-30:1);\coordinate(u)at(90:1);\draw(u)--(l)--(r)--(u);
\draw[blue](u)--($(l)!0.5!(r)$);\draw[blue]($(u)!0.45!(l)$)--($(u)!0.45!(r)$);
\draw[blue](u)to[out=20,in=160,relative](l);\draw[blue](u)to[out=-20,in=-160,relative](r);
\end{tikzpicture}
\hspace{-1mm}\text{or}\hspace{-1mm}
\begin{tikzpicture}[baseline=1mm]
\coordinate(l)at(-150:1);\coordinate(r)at(-30:1);\coordinate(u)at(90:1);\draw(u)--(l)--(r)--(u);
\draw[blue]($(u)!0.45!(l)$)--($(u)!0.45!(r)$);
\draw[blue](u)to[out=20,in=160,relative](l);\draw[blue](u)to[out=-20,in=-160,relative](r);\draw[blue](l)to[out=20,in=160,relative](r);
\end{tikzpicture}&
\begin{tikzpicture}[baseline=1mm]
\coordinate(l)at(-150:1);\coordinate(r)at(-30:1);\coordinate(u)at(90:1);\draw(u)--(l)--(r)--(u);
\draw[blue](u)--($(l)!0.5!(r)$);\draw[blue]($(u)!0.45!(l)$)--($(u)!0.45!(r)$);
\draw[blue]($(u)!0.55!(l)$)--($(l)!0.45!(r)$);
\draw[blue](u)to[out=-20,in=-160,relative](r);
\end{tikzpicture}&
\begin{tikzpicture}[baseline=1mm]
\coordinate(l)at(-150:1);\coordinate(r)at(-30:1);\coordinate(u)at(90:1);\draw(u)--(l)--(r)--(u);
\draw[blue]($(u)!0.45!(l)$)--($(u)!0.45!(r)$);\draw[blue]($(u)!0.55!(l)$)--($(l)!0.45!(r)$);
\draw[blue]($(u)!0.5!(l)$)--(r);
\draw[blue](u)to[out=-20,in=-160,relative](r);\draw[blue](l)to[out=20,in=160,relative](r);
\end{tikzpicture}&
\begin{tikzpicture}[baseline=1mm]
\coordinate(l)at(-150:1);\coordinate(r)at(-30:1);\coordinate(u)at(90:1);\draw(u)--(l)--(r)--(u);
\draw[blue](u)--($(l)!0.5!(r)$);\draw[blue]($(u)!0.45!(l)$)--($(u)!0.45!(r)$);
\draw[blue]($(u)!0.55!(r)$)--($(l)!0.55!(r)$);
\draw[blue](u)to[out=20,in=160,relative](l);
\end{tikzpicture}&
\begin{tikzpicture}[baseline=1mm]
\coordinate(l)at(-150:1);\coordinate(r)at(-30:1);\coordinate(u)at(90:1);\draw(u)--(l)--(r)--(u);
\draw[blue]($(u)!0.45!(l)$)--($(u)!0.45!(r)$);\draw[blue]($(u)!0.55!(r)$)--($(l)!0.55!(r)$);
\draw[blue]($(u)!0.5!(r)$)--(l);
\draw[blue](u)to[out=20,in=160,relative](l);\draw[blue](l)to[out=20,in=160,relative](r);
\end{tikzpicture}&
\begin{tikzpicture}[baseline=1mm]
\coordinate(l)at(-150:1);\coordinate(r)at(-30:1);\coordinate(u)at(90:1);\draw(u)--(l)--(r)--(u);
\draw[blue](u)--($(l)!0.5!(r)$);\draw[blue]($(u)!0.45!(l)$)--($(u)!0.45!(r)$);
\draw[blue]($(u)!0.55!(l)$)--($(l)!0.45!(r)$);\draw[blue]($(u)!0.55!(r)$)--($(l)!0.55!(r)$);
\node at(0,1.05){};
\end{tikzpicture}
\end{tabular}\vspace{3mm}\]
\caption{The maximal underlying set $S$ of a multi-set that satisfies $(\phi\ast,a)_k$ on a triangle piece, where $a$ is the angle at the top of each triangle}
\label{table:max phiast}
\end{table}}

\begin{lem}\label{lem:except}
Assume that $S$ satisfies $(\phi\ast,a)$. Then all sets $\{s,t\}$ such that $s,t\in S\setminus\{h_a^{m_S(h_a)}\}$ and $\Int(s,t)>0$ are either $\{e_a,h_{a^l}\}$ or $\{e_a,h_{a^r}\}$.
\end{lem}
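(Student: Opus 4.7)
The plan is to enumerate all unordered pairs $\{s,t\}$ of edges and segments in the triangle piece $\triangle$ with $\Int(s,t)>0$, and then use $(\phi\ast,a)$ to eliminate every pair except the two claimed. Using Proposition~\ref{prop:Int triangle} and the formula $\Int(h_a,S)=m_S(e_{a^l})+m_S(e_{a^r})+m_S(v_a)$ together with symmetry in the three angles, I would first record the complete positive-intersection table for the three non-loop edges and seven segment-types in Table~\ref{table:segment}: up to unordered pairs, the nonzero cases are exactly $\{e_i,h_j\}$ with $i\ne j$, $\{e_i,v_i\}$, $\{e_i,y\}$, $\{h_i,v_i\}$, $\{v_i,v_j\}$ with $i\ne j$, $\{v_i,y\}$, and the self-pair $\{y,y\}$.

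The second step is to exploit $(\phi,a)$: since $\Int(h_a,S)>0$, there exists at least one element $s_0\in S\cap\{e_{a^l},e_{a^r},v_a\}$, and $(\ast,a)$ then says that any element of $S$ intersecting $s_0$ nontrivially must equal $h_a$. Each of the three possible values of $s_0$ has intersection number $1$ with $y$, so this alone forces $y\notin S$ and simultaneously eliminates all pairs involving $y$, including the self-pair $\{y,y\}$.

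Once $y$ is discarded and all copies of $h_a$ are removed, the pairs potentially realised in $S\setminus\{h_a^{m_S(h_a)}\}$ reduce to $\{e_a,h_{a^l}\}$, $\{e_a,h_{a^r}\}$, $\{e_a,v_a\}$, $\{e_{a^l},h_{a^r}\}$, $\{e_{a^l},v_{a^l}\}$, $\{e_{a^r},h_{a^l}\}$, $\{e_{a^r},v_{a^r}\}$, $\{h_{a^l},v_{a^l}\}$, $\{h_{a^r},v_{a^r}\}$, and $\{v_{a^l},v_{a^r}\}$. Every pair one of whose elements lies in $\{e_{a^l},e_{a^r},v_a\}$ (the other being, by assumption, not $h_a$) is killed directly by $(\ast,a)$; this disposes of all but $\{e_a,h_{a^l}\}$, $\{e_a,h_{a^r}\}$, $\{h_{a^l},v_{a^l}\}$, $\{h_{a^r},v_{a^r}\}$, and $\{v_{a^l},v_{a^r}\}$. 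Each of the last three pairs I would rule out by a three-way case split on $s_0\in\{e_{a^l},e_{a^r},v_a\}$ and the intersection table: for example, if $\{h_{a^l},v_{a^l}\}\subset S$, then $s_0=e_{a^l}$ gives $\Int(e_{a^l},v_{a^l})=1$, $s_0=e_{a^r}$ gives $\Int(e_{a^r},h_{a^l})=1$, and $s_0=v_a$ gives $\Int(v_a,v_{a^l})=1$, each contradicting $(\ast,a)$ since none of $v_{a^l},h_{a^l}$ is $h_a$; the other two pairs are handled by the symmetric computation.

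The only real obstacle is assembling the intersection table correctly, in particular the $y$-intersections and the fact that $\Int(v_i,v_j)=1$ for $i\ne j$; once $y$ is seen to be incompatible with any choice of $s_0$ and hence absent from $S$, what remains is a short mechanical case split, leaving $\{e_a,h_{a^l}\}$ and $\{e_a,h_{a^r}\}$ as the only surviving intersecting pairs.
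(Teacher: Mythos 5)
Your proof is correct. It differs in organization from the paper's argument: the paper disposes of the lemma by inspecting the seven maximal configurations listed in Table~\ref{table:max phiast} (one for each case $(\phi\ast,a)_k$) and reading off the intersecting pairs there, whereas you avoid the case split $(\phi,a)_1$--$(\phi,a)_7$ entirely and argue uniformly: from $\Int(h_a,S)=m_S(e_{a^l})+m_S(e_{a^r})+m_S(v_a)>0$ you extract a witness $s_0\in S\cap\{e_{a^l},e_{a^r},v_a\}$, use $(\ast,a)$ applied to $s_0$ to expel $y$ (and hence the self-pair $\{y,y\}$), kill every pair meeting $\{e_{a^l},e_{a^r},v_a\}$ directly by $(\ast,a)$, and eliminate the three residual pairs $\{h_{a^l},v_{a^l}\}$, $\{h_{a^r},v_{a^r}\}$, $\{v_{a^l},v_{a^r}\}$ by the three-way split on $s_0$; each step checks against the intersection table, which you have assembled correctly (in particular $\Int(e_i,y)=\Int(v_i,y)=\Int(v_i,v_j)=1$ for $i\ne j$, consistent with Figure~\ref{fig:compatible segments}). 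The only blemish is bookkeeping: your intermediate list of surviving pairs omits $\{v_a,v_{a^l}\}$ and $\{v_a,v_{a^r}\}$, but these contain $v_a$ and are swept away by the very rule you state in the next sentence, so nothing is lost. What your route buys is independence from Table~\ref{table:max phiast} (you never need to know the maximal multi-sets satisfying $(\phi\ast,a)_k$), at the cost of handling the full $10\times 10$ intersection table explicitly; the paper's route is shorter on the page because the case classification has already been compiled into Table~\ref{table:max phiast}.
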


\begin{proof}
It is easy to check that the assertion holds in all cases (see Table \ref{table:max phiast}).
\end{proof}

\begin{prop}\label{prop:recover tri}
If $S$ satisfies $(\phi\ast,a)_k$ for $k\in\{1,\ldots,7\}$, then $\phi_a(S)$ satisfies $(\psi\ast,a)_k$ and $\psi_a\phi_a(S)=S$. In addition, if $S$ does not satisfy $(\phi,b)$ for $b\in\{a^l,a^r\}$, then $\phi_a(S)$ does not satisfy $(\phi,b)$ either.
\end{prop}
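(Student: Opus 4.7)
The proof is a case-by-case verification across the seven subcases $k \in \{1,\ldots,7\}$ of condition $(\phi\ast,a)_k$. My plan is to use the multiplicity-difference tables (Tables \ref{table:def phi} and \ref{table:def psi}) as the main computational tool, together with Lemma \ref{lem:except}.

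For the claim that $\phi_a(S)$ satisfies $(\psi\ast,a)_k$, I would in each case read off from Table \ref{table:def phi} the multiplicities of $e_a, e_{a^l}, e_{a^r}, h_a, h_{a^l}, h_{a^r}, v_a, v_{a^l}, v_{a^r}, y$ in $\phi_a(S)$, then check that these multiplicities satisfy condition $(\psi k)$ and the compatibility condition $(\ast,a)$. For instance, in case $k=1$, where $m_S(e_{a^l}),\, m_S(e_{a^r}),\, m_S(h_a) > 0$ and $(\ast,a)$ forces the multiplicities of all segments that would meet these to vanish, $\phi_a$ decreases each of these three multiplicities by one and leaves the rest unchanged. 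The result satisfies $(\psi 1)$ because $m(h_{a^l}) = m(h_{a^r}) = m(v_{a^l}) = m(v_{a^r}) = m(y) = 0$ in both $S$ and $\phi_a(S)$, and $(\ast,a)$ on $\phi_a(S)$ follows from $(\ast,a)$ on $S$ using Lemma \ref{lem:except}. The other six cases are handled analogously by inspecting the corresponding rows of Table \ref{table:def phi} and matching them against the configurations in Table \ref{table:max psi}.

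For the identity $\psi_a\phi_a(S) = S$, the key observation is that, case by case, the entries in Table \ref{table:def psi} are the negatives of the entries in Table \ref{table:def phi} under the same index $k$. Hence, once we know that $\phi_a(S)$ satisfies $(\psi,a)_k$ with the same $k$, applying $\psi_a$ simply reverses every multiplicity change and returns to $S$.

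For the last sentence, I need to check that for each $k$ and each $b \in \{a^l, a^r\}$, if $S$ fails $(\phi,b)$ then so does $\phi_a(S)$. The only non-immediate situations are $k \in \{2,4,6\}$, where Table \ref{table:def phi} increases $m(h_{a^l})$ or $m(h_{a^r})$ by one (and case $k=6$ increases both). In these cases I would verify, using the pictures in Table \ref{table:max phiast}, that the newly created $h_b$ in $\phi_a(S)$ has zero intersection with every remaining segment, so that $\Int(h_b,\phi_a(S)) = 0$ and $(\phi,b)$ still fails. The main obstacle is the bookkeeping: with ten segment types, seven subcases, and the refinement by $(\ast,a)$, each individual verification is straightforward but keeping track of all of them requires care. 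The subtlest case is $k=6$, where $\phi_a$ introduces both $h_{a^l}$ and $h_{a^r}$, so the stability claim must be checked for both angles simultaneously.
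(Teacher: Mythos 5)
Your proposal is correct and follows essentially the same route as the paper: a finite verification against Tables \ref{table:def phi}, \ref{table:def psi}, \ref{table:max phiast}, and \ref{table:max psi}, with the sign-reversal between the two difference tables giving $\psi_a\phi_a(S)=S$ and the cases $k\in\{2,4,6\}$ being the only delicate ones for the final assertion (which the paper packages as a citation of Lemmas \ref{lem:Int tri} and \ref{lem:except}). One small point of care: when checking $\Int(h_b,\phi_a(S))=0$ in those cases, the count involves the edges $e_a$, $e_{a^l}$, $e_{a^r}$ as well as the segments, but the verification you describe via Table \ref{table:max phiast} does cover this.
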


\begin{proof}
It follows from Tables \ref{table:smallest} and \ref{table:max phiast} that there is a multi-set satisfying $(\phi\ast,a)_k$ and containing $\phi_a(S)$ except for $k=7$. In which case, $\phi_a(S)$ only consists of segments $h_a$, $h_{a^l}$, $h_{a^r}$, and $y$. These mean that $\phi_a(S)$ satisfies $(\ast,a)$, and the set of all segments in $\phi_a(S)$ is contained in the corresponding maximal set in Table \ref{table:max psi}. Thus $\phi_a(S)$ satisfies $(\psi\ast,a)_k$. Tables \ref{table:def phi} and \ref{table:def psi} clearly induce $\psi_a\phi_a(S)=S$, and the last assertion follows from Lemmas \ref{lem:Int tri} and \ref{lem:except}.
\end{proof}

Note that we can not exchange $\phi$ and $\psi$ in Proposition \ref{prop:recover tri}. In fact, $\{e_a\}$ satisfies $(\psi\ast,a)_1$, but $\psi_a(\{e_a\})=\{e_a,e_{a^l},e_{a^r},h_a\}$ does not satisfy $(\ast,a)$.

The condition $(\ast,a)$ also gives the commutativity of $\phi_a$.

\begin{prop}\label{prop:commute tri}
Assume that $S$ satisfies $(\phi,a)$ and $(\phi,a^l)$. Then it satisfies $(\ast,a)$ if and only if it satisfies $(\ast,a^l)$. In which case, the following hold:
\begin{itemize}
\item[(1)] $S$ satisfy $(\phi\ast,a)_3$ and $(\phi\ast,a^l)_5$, in particular, it satisfies neither $(\phi,a^r)$ nor $(\ast,a^r)$.
\item[(2)] $\phi_a(S)$ satisfy $(\phi\ast,a^l)_5$ and $\phi_{a^l}(S)$ satisfy $(\phi\ast,a)_3$.
\item[(3)] $\phi_{a^l}\phi_a(S)=\phi_a\phi_{a^l}(S)$.
\end{itemize}
\end{prop}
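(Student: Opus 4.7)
The plan is to extract, from $(\phi,a)$, $(\phi,a^l)$ and $(\ast,a)$, a single essentially forced configuration on $S$ that is manifestly symmetric in $a$ and $a^l$. Using the intersection data recorded in Table~\ref{table:segment} (namely $\Int(h_b,e_c)=1$ for $b\neq c$, $\Int(v_b,e_b)=1$, $\Int(v_b,v_c)=1$ for $b\neq c$, $\Int(v_b,y)=1$, and $\Int(y,e_b)=1$), I would first apply $(\ast,a)$ to the pair $(e_{a^r},h_{a^l})$: since $(\phi,a^l)$ puts $h_{a^l}$ in $S$ and $h_{a^l}\neq h_a$, the defining implication of $(\ast,a)$ forces $m_S(e_{a^r})=0$. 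Condition $(\phi,a)$ then demands at least one element of $\{e_{a^l},v_a\}$ in $S$; if $v_a\in S$, then $(\ast,a)$ simultaneously removes $e_a,v_{a^l},v_{a^r},y$, contradicting $(\phi,a^l)$ which requires one of $e_a,e_{a^r},v_{a^l}$ to lie in $S$. Hence $m_S(e_{a^l})>0$ and $m_S(v_a)=0$, which is $(\phi,a)_3$. Applying $(\ast,a)$ to $e_{a^l}$ next eliminates $h_{a^r},v_{a^l},y$ from $S$, after which $(\phi,a^l)$ can only be satisfied by $m_S(e_a)>0$. This gives $(\phi,a^l)_5$, establishing the content of part~(1).

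To finish the equivalence, I would verify that the derived configuration automatically satisfies $(\ast,a^l)$: the only element of $S\cap\{e_a,e_{a^r},v_{a^l}\}$ is $e_a$, and among the segments intersecting $e_a$ (namely $h_{a^l},h_{a^r},v_a,y$) only $h_{a^l}$ lies in $S$. The reverse direction runs identically under the symbolic swap $a\leftrightarrow a^l$ (noting $(a^l)^l=a^r$ and $(a^l)^r=a$), turning the derivation of $(\phi,a)_3$ into one of $(\phi,a^l)_5$. The in-particular clause is immediate from the forced inventory: $h_{a^r}\notin S$ kills $(\phi,a^r)$, and $e_a\in S\cap\{e_a,e_{a^l},v_{a^r}\}$ is intersected by $h_{a^l}\in S$ with $h_{a^l}\neq h_{a^r}$, killing $(\ast,a^r)$.

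Parts~(2) and~(3) then follow by directly reading Table~\ref{table:def phi}. Row $k=3$ shows $\phi_a$ subtracts $1$ from $m(h_a)$ and $m(e_{a^l})$ and adds $1$ to $m(v_{a^r})$, leaving $h_{a^l},e_a$ intact and introducing none of $h_{a^r},v_a,v_{a^l},y$; hence $\phi_a(S)$ still satisfies $(\phi,a^l)_5$ together with $(\ast,a^l)$, i.e.\ $(\phi\ast,a^l)_5$. Row $k=5$ applied with $a^l$ playing the role of the top angle shows $\phi_{a^l}$ subtracts $1$ from $m(h_{a^l})$ and $m(e_a)$ and adds $1$ to $m(v_{a^r})$; the symmetric check yields $(\phi\ast,a)_3$ for $\phi_{a^l}(S)$. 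Both $\phi_a$ and $\phi_{a^l}$ act by adding fixed integer vectors whose supports are disjoint apart from $v_{a^r}$ (which is merely incremented twice), so the compositions $\phi_{a^l}\phi_a(S)$ and $\phi_a\phi_{a^l}(S)$ coincide with the multi-set obtained from $S$ by decrementing $h_a,h_{a^l},e_a,e_{a^l}$ by one and incrementing $v_{a^r}$ by two. The main obstacle is the case analysis in~(1): verifying that the asymmetric-looking condition $(\ast,a)$ truly forces a configuration symmetric in $a$ and $a^l$, so that $(\ast,a^l)$ becomes automatic; once this is nailed down, parts~(2) and~(3) are purely bookkeeping from Table~\ref{table:def phi}.
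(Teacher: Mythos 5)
Your proof is correct and follows essentially the same route as the paper: use $(\phi,a)$, $(\phi,a^l)$ and the $(\ast)$-condition to force $m_S(e_{a^r})=m_S(v_a)=m_S(v_{a^l})=m_S(h_{a^r})=m_S(y)=0$ and $m_S(e_a),m_S(e_{a^l})>0$, identify the cases $(\phi\ast,a)_3$ and $(\phi\ast,a^l)_5$, and then read (2) and (3) off Table \ref{table:def phi}. Your version just spells out the intersection bookkeeping that the paper's proof leaves implicit.
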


\begin{proof}
The conditions $(\phi,a)$ and $(\phi,a^l)$ mean that $m_S(h_a)$, $m_S(h_{a^l})$, $m_S(e_{a^l})+m_S(e_{a^r})+m_S(v_a)$, and $m_S(e_{a^r})+m_S(e_a)+m_S(v_{a^l})$ are positive. If $S$ satisfies either $(\ast,a)$ or $(\ast,a^l)$, we must have $m_S(e_{a^r})=m_S(v_a)=m_S(v_{a^l})=0$. Thus $S$ satisfies $(\phi\ast,a)_3$ and $(\phi\ast,a^l)_5$, that is, the first assertion and (1) hold. Moreover, (2) follows from the definition (see Tables \ref{table:smallest} and \ref{table:def phi}). Finally, since both $\phi_{a^l}\phi_a(S)$ and $\phi_a\phi_{a^l}(S)$ are obtained from $S$ by removing $\{e_a,e_{a^l},h_a,h_{a^l}\}$ and adding $\{v_{a^r}^2\}$, (3) holds.
\end{proof}

\begin{propdef}
Let $k_b\in\bZ_{\ge 0}$ for all angles $b$ of $\triangle$. We set a formal product
\[
\phi=\prod_{b\in\{\text{angles of $\triangle$}\}}\phi_b^{k_b}.
\]
If $\phi_b^k(S)$ satisfies $(\phi\ast,b)$ for all angles $b$ of $\triangle$ and all $0\le k<k_b$, then there is at least one angle $c$ of $\triangle$ with $k_c=0$, and $\phi(S)$ is well-defined. In which case, $\phi$ or $\phi(S)$ is called a \emph{modification of $S$ at angles}. In addition, it is called \emph{maximal} if $\phi_b^{k_b}(S)$ does not satisfy $(\phi\ast,b)$ for all angles $b$ of $\triangle$.
\end{propdef}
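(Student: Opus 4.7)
My plan is to address the two assertions in sequence: first that some $k_c=0$, and then that $\phi(S)$ is independent of the order in which the factors are applied.

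For the first assertion I argue by contradiction. Suppose $k_b\ge 1$ for every angle $b$ of $\triangle$. Then taking $k=0$ in the hypothesis gives that $S=\phi_b^0(S)$ satisfies $(\phi\ast,b)$ for every angle $b$. Now apply Proposition \ref{prop:commute tri}(1) to the two angles $a$ and $a^l$: since $S$ satisfies both $(\phi\ast,a)$ and $(\phi\ast,a^l)$, it cannot satisfy $(\phi,a^r)$, contradicting $(\phi\ast,a^r)$ (which contains $(\phi,a^r)$). Hence at least one exponent must vanish.

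For the well-definedness, I may relabel so that $k_{a^r}=0$ and it remains to compare arbitrary interleavings of $k_a$ copies of $\phi_a$ with $k_{a^l}$ copies of $\phi_{a^l}$. Since any two such orderings of $k_a+k_{a^l}$ factors differ by a sequence of adjacent transpositions (a bubble-sort argument), it suffices to check that at every intermediate state we have both $\phi_a$ and $\phi_{a^l}$ defined, and that they commute when applied adjacently. Commutativity at a given state is exactly Proposition \ref{prop:commute tri}(3), so the only thing to verify is that the conditions $(\phi\ast,a)$ and $(\phi\ast,a^l)$ propagate to every intermediate state $\phi_a^{i}\phi_{a^l}^{j}(S)$ with $i<k_a$ or $j<k_{a^l}$.

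I plan to establish this propagation by induction on $i+j$. The base cases $i=0$ and $j=0$ are covered directly by the hypothesis of the proposition. For the inductive step, suppose $\phi_a^{i}\phi_{a^l}^{j}(S)$ satisfies $(\phi\ast,a)$ whenever $i<k_a$ and $(\phi\ast,a^l)$ whenever $j<k_{a^l}$. Using Proposition \ref{prop:commute tri}(2) applied to $\phi_a^{i}\phi_{a^l}^{j}(S)$ (which satisfies both conditions when $i<k_a$ and $j<k_{a^l}$), the image $\phi_a^{i+1}\phi_{a^l}^{j}(S)$ still satisfies $(\phi\ast,a^l)$, and symmetrically for $\phi_{a^l}$. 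For the condition $(\phi\ast,a)$ at $\phi_a^{i+1}\phi_{a^l}^{j}(S)$, I use Proposition \ref{prop:commute tri}(3) to rewrite it as $\phi_{a^l}^{j}\phi_a^{i+1}(S)$, which reduces (by the purely iterated case in the hypothesis combined with the inductive version of Proposition \ref{prop:commute tri}(2) applied $j$ times) to the hypothesis on pure $\phi_a$ iterations.

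The main obstacle I anticipate is precisely this propagation step: the hypothesis of the proposition only controls pure iterations $\phi_b^{k}(S)$, but what we need is control over mixed iterations $\phi_a^{i}\phi_{a^l}^{j}(S)$. Getting the induction to close cleanly requires using Proposition \ref{prop:commute tri}(2) and (3) together, essentially to reduce any mixed state to a pure state via commutation and then invoke the hypothesis; handling the bookkeeping so that each invocation of Proposition \ref{prop:commute tri} is valid (i.e.\ both $(\phi\ast,a)$ and $(\phi\ast,a^l)$ hold at the relevant state) is the technical heart of the proof.
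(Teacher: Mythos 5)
Your proposal is correct and follows essentially the same route as the paper, whose entire proof is the one-line observation that everything follows from Proposition \ref{prop:commute tri}: you use part (1) for the existence of an angle $c$ with $k_c=0$, and parts (2) and (3) (propagation of $(\phi\ast,a)$, $(\phi\ast,a^l)$ plus commutativity) to justify that mixed iterations are defined and order-independent. Your elaboration of the induction on mixed states is exactly the bookkeeping the paper leaves implicit, so there is nothing genuinely different to compare.
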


\begin{proof}
The assertion follows from Proposition \ref{prop:commute tri}.
\end{proof}

We refer to modifications at angles as \emph{$a$-modifications}.

\begin{example}\label{exam:2phiast}
Assume that $S$ satisfies $(\phi\ast,a)$ and $(\phi\ast,a^l)$. By Proposition \ref{prop:commute tri},
\[
S=\{e_a^{m_S(e_a)},e_{a^l}^{m_S(e_{a^l})},h_a^{m_S(h_a)},h_{a^l}^{m_S(h_{a^l})},v_{a^r}^{m_S(v_{a^r})}\}.
\]
Note that its maximal underlying set appears in Table \ref{table:max phiast} as $k=3$. For $0\le k_a\le\min\{m_S(e_{a^l}),m_S(h_a)\}$ and $0\le k_{a^l}\le\min\{m_S(e_a),m_S(h_{a^l})\}$,
\[
\phi=\phi_a^{k_a}\phi_{a^l}^{k_{a^l}}=\phi_{a^l}^{k_{a^l}}\phi_a^{k_a}
\]
is an $a$-modification of $S$. If $k_a=\min\{m_S(e_{a^l}),m_S(h_a)\}$ and $k_{a^l}=\min\{m_S(e_a),m_S(h_{a^l})\}$, then $\phi$ is maximal and
\[
\phi(S)=\{e_a^{m_S(e_a)-k_{a^l}},e_{a^l}^{m_S(e_{a^l})-k_a},h_a^{m_S(h_a)-k_a},h_{a^l}^{m_S(h_{a^l})-k_{a^l}},v_{a^r}^{m_S(v_{a^r})+k_a+k_{a^l}}\}
\]
consists of pairwise compatible edges and segments. Moreover, it contains no edges if and only if $m_S(e_{a^l})\le m_S(h_a)$ and $m_S(e_a)\le m_S(h_{a^l})$.
\end{example}

\begin{thm}\label{thm:compatibility tri}
Assume that $S$ satisfies $(\phi\ast,a)$. Then its maximal $a$-modification consists of pairwise compatible edges and segments. In particular, it contains no edges if and only if 
\[
\max\{m_S(e_{b^l}),m_S(e_{b^r})\}\le m_S(h_b)
\]
for all angles $b$ where $S$ satisfies $(\phi,b)$.
\end{thm}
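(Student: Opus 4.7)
The plan is to prove Theorem \ref{thm:compatibility tri} by case analysis based on Proposition \ref{prop:commute tri}. That proposition tells us that the set of angles $b$ at which $S$ satisfies $(\phi\ast, b)$ either equals $\{a\}$ alone, or consists of two adjacent angles, say $\{a, a^l\}$ (up to interchanging $a^l$ and $a^r$), in which case $S$ is simultaneously in the specific cases $(\phi\ast, a)_3$ and $(\phi\ast, a^l)_5$ and in particular does not satisfy $(\phi\ast, a^r)$. This dichotomy reduces the theorem to two independent arguments.

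For the two-angle case, Example \ref{exam:2phiast} already carries out the construction explicitly: by Proposition \ref{prop:commute tri}(3), the operators $\phi_a$ and $\phi_{a^l}$ commute, so the maximal $a$-modification can be written as $\phi_a^{k_a}\phi_{a^l}^{k_{a^l}}(S)$ with $k_a = \min\{m_S(e_{a^l}), m_S(h_a)\}$ and $k_{a^l} = \min\{m_S(e_a), m_S(h_{a^l})\}$. The resulting multi-set is written down there as an explicit combination of $e_a$, $e_{a^l}$, $h_a$, $h_{a^l}$, and $v_{a^r}$, which is pairwise compatible (this matches the left configuration in Figure \ref{fig:compatible segments}). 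The no-edges criterion $m_S(e_{a^l}) \le m_S(h_a)$ and $m_S(e_a) \le m_S(h_{a^l})$ recorded there matches the theorem's criterion at $b \in \{a, a^l\}$, since $m_S(e_{a^r}) = 0$ in case $(\phi\ast, a)_3$ and no other angle $b$ satisfies $(\phi, b)$ in this configuration.

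For the single-angle case, the plan is to iterate $\phi_a$ until we can no longer do so while maintaining $(\phi\ast, a)$, producing $\phi_a^{k_a}(S)$ for the largest admissible $k_a$. By Table \ref{table:def phi}, each application of $\phi_a$ strictly decreases $m(h_a)$ by one, so the iteration terminates in at most $m_S(h_a)$ steps. By Proposition \ref{prop:recover tri}, the condition $(\ast, a)$ is preserved throughout (since $\phi_a(S')$ satisfies $(\psi\ast, a)$ whenever $S'$ satisfies $(\phi\ast, a)$). At termination, either $m(h_a) = 0$ or $\Int(h_a, \cdot) = 0$, so all intersections involving the critical quartet $e_{a^l}, e_{a^r}, v_a, h_a$ vanish; combined with the structural description of $(\psi\ast, a)$-multi-sets provided by Table \ref{table:max psi}, the remaining segments are pairwise compatible among themselves, yielding the desired compatibility of the whole multi-set.

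For the no-edges criterion in the single-angle case, a direct case analysis on the transitions between $(\phi, a)_1, \ldots, (\phi, a)_7$ shows that $\phi_a^{k_a}(S)$ removes all copies of $e_{a^l}$ and $e_{a^r}$ exactly when $\max\{m_S(e_{a^l}), m_S(e_{a^r})\} \le m_S(h_a)$, matching the theorem's criterion at $b = a$. The main obstacle will be confirming that in the single-angle case this is the only nontrivial constraint: that is, that $(\phi, b)$ for $b \ne a$ either fails or does not impose an additional restriction beyond what $(\ast, a)$ already forces. The analysis here is delicate because $(\ast, a)$ constrains only elements of $S \cap \{e_{a^l}, e_{a^r}, v_a\}$, so one must carefully track how the presence of $e_a, h_{a^l}, h_{a^r}, v_{a^l}, v_{a^r}, y$ in $S$ interacts with both the single-angle structure forced by Proposition \ref{prop:commute tri} and the surviving intersections at termination.
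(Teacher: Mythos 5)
Your strategy coincides with the paper's: use Proposition \ref{prop:commute tri} to split into the case where some adjacent angle also satisfies $(\phi,\cdot)$ (which you, like the paper, dispatch via Proposition \ref{prop:commute tri} and Example \ref{exam:2phiast}) and the single-angle case, treated by iterating $\phi_a$. The two-angle half is fine, and your bookkeeping for the no-edges criterion at $b=a$ matches the paper's one-line justification via Lemma \ref{lem:Int tri}. The gap is in the compatibility claim of the single-angle case. What you actually establish at termination is that all intersections involving $h_a$, $e_{a^l}$, $e_{a^r}$, $v_a$ vanish and that the surviving \emph{segments} are pairwise compatible (via $(\psi,a)$ and Table \ref{table:max psi}); from this you jump to compatibility of the whole multi-set. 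But the final multi-set can still contain the edge $e_a$ (its multiplicity is never changed by $\phi_a$, by Lemma \ref{lem:Int tri}) together with $h_{a^l}$ or $h_{a^r}$, and the pairs $\{e_a,h_{a^l}\}$, $\{e_a,h_{a^r}\}$ are incompatible. These are exactly the exceptional pairs isolated in Lemma \ref{lem:except}, and neither $(\psi,a)$, nor $(\ast,a)$, nor the vanishing of intersections with your ``critical quartet'' says anything about them. You flag this interaction at the end as ``the main obstacle'' but leave it unresolved, so as written the single-angle case (both the compatibility statement and the claim that no angle $b\neq a$ contributes to the criterion) is incomplete.

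The missing step is short and uses results you already cite. In the single-angle case $S$ satisfies neither $(\phi,a^l)$ nor $(\phi,a^r)$, and by the final statement of Proposition \ref{prop:recover tri} applying $\phi_a$ never creates these conditions; hence the maximal $a$-modification $\Phi(S)=\phi_a^{k_a}(S)$ satisfies neither. If $e_a$ and $h_{a^l}$ both occurred in $\Phi(S)$, then $m_{\Phi(S)}(h_{a^l})>0$ and $\Int\bigl(h_{a^l},\Phi(S)\bigr)\ge m_{\Phi(S)}(e_a)>0$, so $\Phi(S)$ would satisfy $(\phi,a^l)$, a contradiction; the pair $\{e_a,h_{a^r}\}$ is excluded in the same way. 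Together with Lemma \ref{lem:except} (which limits the possible non-$h_a$ incompatibilities to precisely these two pairs) and your termination argument for pairs involving $h_a$, this yields compatibility of all of $\Phi(S)$, and it simultaneously settles the point you left open for the criterion: under the single-angle hypothesis no $b\neq a$ satisfies $(\phi,b)$, so the only constraint is the one at $a$. This is exactly how the paper closes this case.
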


\begin{proof}
Assume that $S$ satisfies neither $(\phi,a^l)$ nor $(\phi,a^r)$. Then for its maximal $a$-modification $\phi(S)$, Lemmas \ref{lem:Int tri} and \ref{lem:except} induce that any two elements of $\phi(S)\setminus\{h_a^{m_{\phi(S)}(h_a)}\}$ are compatible. On the other hand, we know that $m_{\phi(S)}(h_a)=0$ or $\Int(h_a,\phi(S))=0$ since $\phi(S)$ does not satisfy $(\phi,a)$ by Proposition \ref{prop:recover tri}. Thus $\phi(S)$ consists of pairwise compatible edges and segments. The second assertion follows from $m_{\phi_a(S)}(h_a)=m_S(h_a)-1$ and Lemma \ref{lem:Int tri}.

If $S$ satisfies either $(\phi,a^l)$ or $(\phi,a^r)$, then the assertions follow from Proposition \ref{prop:commute tri} and Example \ref{exam:2phiast}. Since $S$ does not satisfy both $(\phi,a^l)$ and $(\phi,a^r)$ simultaneously by Proposition \ref{prop:commute tri}, the proof finishes.
\end{proof}

\subsection{Modifications at angles on a monogon piece}\label{subsec:modif mono}

Let $\triangle$ be a monogon piece with an angle $a$, and $S$ be a multi-set of non-loop edges and segments in $\triangle$. We make similar observations to those in the previous subsection. First, we consider the following condition of $S$:
\begin{equation*}
\tag{$\phi,a$} \text{$m_S(h_a)>0$ and $\{m_S(f_a),m_S(f_a^{\bowtie})\}=\{0,m\}$ for some $m\in\bZ_{>0}$}.
\end{equation*}
It is trivial that $S$ satisfies exactly one of the following conditions when it satisfies $(\phi,a)$:
\begin{enumerate}
\item[$(\phi 1)$] $m_S(f_a)>m_S(f_a^{\bowtie})=0$.
\item[$(\phi 2)$] $m_S(f_a^{\bowtie})>m_S(f_a)=0$.
\end{enumerate}
We say that $S$ satisfies $(\phi,a)_k$ if it satisfies $(\phi,a)$ and $(\phi k)$ for $k\in\{1,2\}$ (see Table \ref{table:smallest mono}). When $S$ satisfies $(\phi,a)$, we define $\phi_a(S)$ as the multi-set of non-loop edges and segments in $\triangle$ whose multiplicities satisfy Table \ref{table:def phi mono}. In particular, $\phi_a$ sends the second (resp., third) column to the fifth (resp., sixth) column in Table \ref{table:smallest mono}.

%
%
\renewcommand{\arraystretch}{1.3}
{\begin{table}[ht]
\begin{tabular}{c||c|c}
$X$&$(\phi,a)_1$&$(\phi,a)_2$\\\hline\hline
$S$&
\begin{tikzpicture}[baseline=1mm]
\coordinate(u)at(90:1);\coordinate(p)at(0,0.5);\coordinate(d)at(-90:0.5);
\draw(d)to[out=150,in=180]coordinate[pos=0.3](l)(u);\draw(d)to[out=30,in=0]coordinate[pos=0.3](r)(u);
\draw[blue](l)--(r);\draw[blue](d)--(p);
\fill(p)circle(0.07);\node at(0,1.1){};
\end{tikzpicture}&
\begin{tikzpicture}[baseline=1mm]
\coordinate(u)at(90:1);\coordinate(p)at(0,0.5);\coordinate(d)at(-90:0.5);
\draw(d)to[out=150,in=180]coordinate[pos=0.3](l)(u);\draw(d)to[out=30,in=0]coordinate[pos=0.3](r)(u);
\draw[blue](l)--(r);\draw[blue](d)--node[pos=0.8]{\footnotesize $\bowtie$}(p);
\fill(p)circle(0.07);\node at(0,1.1){};
\end{tikzpicture}
\end{tabular}
\hspace{7mm}
\begin{tabular}{c||c|c}
$X$&$(\psi,a)_1$&$(\psi,a)_2$\\\hline\hline
$S$&
\begin{tikzpicture}[baseline=1mm]
\coordinate(u)at(90:1);\coordinate(p)at(0,0.5);\coordinate(d)at(-90:0.5);
\draw(d)to[out=150,in=180](u);\draw(d)to[out=30,in=0](u);
\draw[blue](p)--(-0.4,0.5);\draw[blue](p)--(0.4,0.5);
\fill(p)circle(0.07);\node at(0,1.1){};
\end{tikzpicture}&
\begin{tikzpicture}[baseline=1mm]
\coordinate(u)at(90:1);\coordinate(p)at(0,0.5);\coordinate(d)at(-90:0.5);
\draw(d)to[out=150,in=180](u);\draw(d)to[out=30,in=0](u);
\draw[blue](p)--node[pos=0.4]{\rotatebox{90}{\footnotesize $\bowtie$}}(-0.4,0.5);
\draw[blue](p)--node[pos=0.4]{\rotatebox{90}{\footnotesize $\bowtie$}}(0.4,0.5);
\fill(p)circle(0.07);\node at(0,1.1){};
\end{tikzpicture}
\end{tabular}\vspace{3mm}
\caption{The smallest multi-set $S$ that satisfies the condition $X$ on a monogon piece with an angle $a$}
\label{table:smallest mono}
\end{table}}

%
%
\renewcommand{\arraystretch}{1.3}
{\begin{table}[ht]
\begin{tabular}{c||c|c|c|c|c}
$k\backslash s$&$f_a$&$f_a^{\bowtie}$&$h_a$&$i_a$&$i_a^{\bowtie}$\\\hline\hline
$1$&$-1$&$0$&$-1$&$2$&$0$\\\hline
$2$&$0$&$-1$&$-1$&$0$&$2$
\end{tabular}\vspace{3mm}
\caption{The difference $m_{\phi_a(S)}(s)-m_S(s)$ when $S$ satisfies $(\phi,a)_k$ on a monogon piece with an angle $a$}
\label{table:def phi mono}
\end{table}}

\begin{lem}\label{lem:Int mono}
If $S$ satisfies $(\phi,a)$, then $\Int(e_a,\phi_a(S))=\Int(e_a,S)$, and
\begin{align*}
\Int(f_a,\phi_a(S))-\Int(f_a,S)&=
\begin{cases}
-1&\text{if $S$ satisfies $(\phi,a)_1$};\\
1&\text{if $S$ satisfies $(\phi,a)_2$},
\end{cases}
\\
\Int(f_a^{\bowtie},\phi_a(S))-\Int(f_a^{\bowtie},S)&=
\begin{cases}
1&\text{if $S$ satisfies $(\phi,a)_1$};\\
-1&\text{if $S$ satisfies $(\phi,a)_2$}.
\end{cases}
\end{align*}
\end{lem}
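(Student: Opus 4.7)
The plan is a direct verification using the intersection number formulas on a monogon piece together with the explicit definition of $\phi_a$ given in Table \ref{table:def phi mono}. Recall from the example right after the definition of edges and segments that on a monogon piece with angle $a$ one has
\[
\Int(e_a,S)=2m_S(h_a)+m_S(i_a)+m_S(i_a^{\bowtie}),\qquad \Int(f_a,S)=m_S(h_a)+m_S(i_a^{\bowtie}),\qquad \Int(f_a^{\bowtie},S)=m_S(h_a)+m_S(i_a).
\]
These are the only inputs needed beyond Table \ref{table:def phi mono}, which prescribes how the multiplicities of $f_a,f_a^{\bowtie},h_a,i_a,i_a^{\bowtie}$ change under $\phi_a$ in each of the two cases $(\phi,a)_1$ and $(\phi,a)_2$.

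In case $(\phi,a)_1$, Table \ref{table:def phi mono} says $m_{\phi_a(S)}(h_a)=m_S(h_a)-1$, $m_{\phi_a(S)}(i_a)=m_S(i_a)+2$, and $m_{\phi_a(S)}(i_a^{\bowtie})=m_S(i_a^{\bowtie})$. Substituting into the three formulas above gives
\[
\Int(e_a,\phi_a(S))=2(m_S(h_a)-1)+(m_S(i_a)+2)+m_S(i_a^{\bowtie})=\Int(e_a,S),
\]
\[
\Int(f_a,\phi_a(S))=(m_S(h_a)-1)+m_S(i_a^{\bowtie})=\Int(f_a,S)-1,
\]
\[
\Int(f_a^{\bowtie},\phi_a(S))=(m_S(h_a)-1)+(m_S(i_a)+2)=\Int(f_a^{\bowtie},S)+1,
\]
which are exactly the three equalities claimed in this case.

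In case $(\phi,a)_2$, the roles of $i_a$ and $i_a^{\bowtie}$ (and of $f_a$ and $f_a^{\bowtie}$) are swapped: Table \ref{table:def phi mono} gives $m_{\phi_a(S)}(h_a)=m_S(h_a)-1$, $m_{\phi_a(S)}(i_a^{\bowtie})=m_S(i_a^{\bowtie})+2$, and $m_{\phi_a(S)}(i_a)=m_S(i_a)$, so the identical substitution yields $\Int(e_a,\phi_a(S))=\Int(e_a,S)$, $\Int(f_a,\phi_a(S))=\Int(f_a,S)+1$, and $\Int(f_a^{\bowtie},\phi_a(S))=\Int(f_a^{\bowtie},S)-1$. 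Since there is no obstacle beyond bookkeeping, the proof is complete in at most a few lines. The only mild subtlety to flag is that the multiplicities $m_{\phi_a(S)}(f_a)$ and $m_{\phi_a(S)}(f_a^{\bowtie})$ do not appear in any of the three intersection formulas being evaluated (the formulas depend only on $h_a,i_a,i_a^{\bowtie}$), so one does not need to worry about the entries in the $f_a$ and $f_a^{\bowtie}$ columns of Table \ref{table:def phi mono}; those entries are only relevant for ensuring that $\phi_a(S)$ remains a legitimate multi-set of non-loop edges and segments.
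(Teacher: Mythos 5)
Your proof is correct and follows essentially the same route as the paper, which simply notes that the lemma follows from the definition of $\phi_a$ (Tables \ref{table:smallest mono} and \ref{table:def phi mono}); you merely spell out the bookkeeping via the formulas $\Int(e_a,S)=2m_S(h_a)+m_S(i_a)+m_S(i_a^{\bowtie})$, $\Int(f_a,S)=m_S(h_a)+m_S(i_a^{\bowtie})$, $\Int(f_a^{\bowtie},S)=m_S(h_a)+m_S(i_a)$. Your closing observation that the $f_a$ and $f_a^{\bowtie}$ columns of Table \ref{table:def phi mono} are irrelevant to these three quantities is also accurate, since $f_a$, $f_a^{\bowtie}$, and $e_a$ are pairwise compatible.
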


\begin{proof}
The assertions follow from the definition of $\phi_a$ (see Table \ref{table:smallest mono}).
\end{proof}

Next, we consider the following condition of $S$:
\begin{equation*}
\tag{$\psi,a$} \text{$\{m_S(i_a),m_S(i_a^{\bowtie})\}=\{0,m\}$ for some $m\in\bZ_{\ge 2}$}.
\end{equation*}
It is trivial that $S$ satisfies exactly one of the following conditions when it satisfies $(\phi,a)$:
\begin{enumerate}
\item[$(\psi 1)$] $m_S(i_a)\ge 2$ and $m_S(i_a^{\bowtie})=0$.
\item[$(\psi 2)$] $m_S(i_a)=0$ and $m_S(i_a^{\bowtie})\ge2$.
\end{enumerate}
We say that $S$ satisfies $(\psi,a)_k$ if it satisfies $(\psi,a)$ and $(\psi k)$ for $k\in\{1,2\}$ (see Table \ref{table:smallest mono}). When $S$ satisfies $(\psi,a)$, we define $\psi_a(S)$ as the multi-set of non-loop edges and segments in $\triangle$ whose multiplicities satisfy Table \ref{table:def psi mono}. In particular, $\psi_a$ sends the fifth (resp., sixth) column to the second (resp., third) column in Table \ref{table:smallest mono}.

%
%
\renewcommand{\arraystretch}{1.3}
{\begin{table}[ht]
\begin{tabular}{c||c|c|c|c|c}
$k\backslash s$&$f_a$&$f_a^{\bowtie}$&$h_a$&$i_a$&$i_a^{\bowtie}$\\\hline\hline
$1$&$1$&$0$&$1$&$-2$&$0$\\\hline
$2$&$0$&$1$&$1$&$0$&$-2$
\end{tabular}\vspace{3mm}
\caption{The difference $m_{\psi_a(S)}(s)-m_S(s)$ when $S$ satisfies $(\psi,a)_k$ on a monogon piece with an angle $a$}
\label{table:def psi mono}
\end{table}}

As a sufficient condition of $S$ such that $\phi_a(S)$ satisfies $(\psi,a)$, we give the following condition:
\begin{equation*}
\tag*{$(\ast,a)$} \text{If $\Int(s,t)>0$ for $s, t\in S$, then either $s$ or $t$ is $h_a$}.
\end{equation*}

\begin{prop}\label{prop:recover mono}
Let $k\in\{1,2\}$.
\begin{itemize}
\item[(1)] If $S$ satisfies $(\phi\ast,a)_k$, then $\phi_a(S)$ satisfies $(\psi\ast,a)_k$, and $\psi_a\phi_a(S)=S$.
\item[(2)] If $S$ satisfies $(\psi\ast,a)_k$, then $\psi_a(S)$ satisfies $(\phi\ast,a)_k$, and $\phi_a\psi_a(S)=S$.
\end{itemize}
\end{prop}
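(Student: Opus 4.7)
The proof is a direct bookkeeping check against the three tables, so I would organize it around a single preliminary observation and then dispatch the four sub-cases symmetrically.

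First, I would record the list of incompatible pairs of elements of $\{f_a, f_a^{\bowtie}, h_a, i_a, i_a^{\bowtie}\}$, which are the only elements that can appear in $S$ since $S$ is a multi-set of non-loop edges and segments (so $e_a \notin S$). From the formulas $\Int(f_a,S) = m_S(h_a) + m_S(i_a^{\bowtie})$, $\Int(f_a^{\bowtie},S) = m_S(h_a) + m_S(i_a)$, and the shape of the maximal compatible sets shown in Figure \ref{fig:compatible segments}, the unordered pairs $\{s,t\}$ with $\Int(s,t)>0$ are exactly $\{f_a,h_a\}$, $\{f_a^{\bowtie},h_a\}$, $\{i_a,h_a\}$, $\{i_a^{\bowtie},h_a\}$, $\{f_a,i_a^{\bowtie}\}$, $\{f_a^{\bowtie},i_a\}$, and $\{i_a,i_a^{\bowtie}\}$. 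In particular, the only such pairs that do not contain $h_a$ are the last three. This is the reason why excluding $e_a$ matters: it removes the incompatibilities $\{e_a,h_a\}$, $\{e_a,i_a\}$, $\{e_a,i_a^{\bowtie}\}$ that would otherwise make $(\ast,a)$ incompatible with the transformation.

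To prove part (1) for $k=1$, I assume $(\phi\ast,a)_1$. Then $(\phi,a)_1$ gives $m_S(h_a), m_S(f_a) > 0$ and $m_S(f_a^{\bowtie}) = 0$, while $(\ast,a)$ applied to $\{f_a,i_a^{\bowtie}\}$ forces $m_S(i_a^{\bowtie}) = 0$. Table \ref{table:def phi mono} for $k=1$ then yields $m_{\phi_a(S)}(i_a) = m_S(i_a) + 2 \ge 2$, $m_{\phi_a(S)}(i_a^{\bowtie}) = 0$, and $m_{\phi_a(S)}(f_a^{\bowtie}) = 0$, establishing $(\psi,a)_1$. The condition $(\ast,a)$ is preserved because the three non-$h_a$ incompatible pairs listed above each require at least one of $f_a^{\bowtie}$ or $i_a^{\bowtie}$ to be present, and both multiplicities stay at zero under $\phi_a$. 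Finally, comparing Tables \ref{table:def phi mono} and \ref{table:def psi mono} shows that the multiplicity changes for $k=1$ are exact negatives, so $\psi_a\phi_a(S) = S$. The case $k=2$ follows by interchanging $f_a \leftrightarrow f_a^{\bowtie}$ and $i_a \leftrightarrow i_a^{\bowtie}$.

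Part (2) is the dual argument. Assuming $(\psi\ast,a)_1$, I get $m_S(i_a) \ge 2$, $m_S(i_a^{\bowtie}) = 0$, and $(\ast,a)$ applied to $\{f_a^{\bowtie},i_a\}$ forces $m_S(f_a^{\bowtie}) = 0$. Table \ref{table:def psi mono} then gives $m_{\psi_a(S)}(f_a) \ge 1$, $m_{\psi_a(S)}(h_a) \ge 1$, and $m_{\psi_a(S)}(f_a^{\bowtie}) = 0$, which is $(\phi,a)_1$; the condition $(\ast,a)$ survives by the same absence argument as before; and $\phi_a\psi_a(S) = S$ by the same sign-comparison between the two tables. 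The $k=2$ case is again symmetric. There is no substantive obstacle here — the only subtlety is the standing assumption that $S$ contains no loop edge $e_a$, without which the transformation could create the unforbidden incompatibility $\{e_a,i_a\}$ or $\{e_a,i_a^{\bowtie}\}$.
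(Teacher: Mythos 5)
Your proof is correct and is essentially the paper's proof made explicit: the paper disposes of this proposition with ``the assertions immediately follow from the definitions,'' and your table-by-table verification of the multiplicities and of the preservation of $(\ast,a)$ is exactly that check. One small slip in your preliminary list: $\{h_a,i_a\}$ and $\{h_a,i_a^{\bowtie}\}$ are in fact \emph{compatible} pairs (see Figure \ref{fig:compatible segments} and the proof of Proposition \ref{prop:Int monogon}), but this is harmless, since the incompatibility list only enters through $(\ast,a)$, which constrains pairs not containing $h_a$, and your list of those three pairs $\{f_a,i_a^{\bowtie}\}$, $\{f_a^{\bowtie},i_a\}$, $\{i_a,i_a^{\bowtie}\}$ is correct.
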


\begin{proof}
The assertions immediately follow from the definitions.
\end{proof}

\begin{defn}
Let $k_a\in\bZ_{\ge 0}$. If $\phi_a^k(S)$ satisfies $(\phi\ast,a)$ for all $0\le k<k_a$, then $\phi_a^{k_a}$ or $\phi_a^{k_a}(S)$ is called an \emph{$a$-modification of $S$}. In addition, it is called \emph{maximal} if $\phi_a^{k_a}(S)$ does not satisfy $(\phi\ast,a)$.
\end{defn}

\begin{thm}\label{thm:compatibility mono}
Assume that $S$ satisfies $(\phi\ast,a)$. Then its maximal $a$-modification consists of pairwise compatible non-loop edges and segments. In particular, it contains no edges if and only if
\[
\max\{m_S(f_a),m_S(f_a^{\bowtie})\}\le m_S(h_a).
\]
\end{thm}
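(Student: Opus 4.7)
The plan is to mimic the strategy of Theorem \ref{thm:compatibility tri}, exploiting the fact that a monogon piece has only one angle so an $a$-modification simply cycles between the two edges $f_a,f_a^{\bowtie}$ and the two puncture segments $i_a,i_a^{\bowtie}$ through the horizontal segment $h_a$. By the symmetry between $(\phi,a)_1$ and $(\phi,a)_2$, I may assume $S$ satisfies $(\phi\ast,a)_1$, so that $m:=m_S(f_a)>0$, $m_S(f_a^{\bowtie})=0$, and $h:=m_S(h_a)>0$.

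First I pin down the form of $S$ using $(\ast,a)$. Since the formula $\Int(f_a,S)=m_S(h_a)+m_S(i_a^{\bowtie})$ implies $\Int(f_a,i_a^{\bowtie})=1$ and neither element is $h_a$, the condition $(\ast,a)$ together with $m_S(f_a)>0$ forces $m_S(i_a^{\bowtie})=0$. Hence
\[
S=\{f_a^{m},h_a^{h},i_a^{i}\}\qquad\text{with}\qquad i:=m_S(i_a)\ge 0,
\]
and the only unordered pair of distinct types in $S$ with positive intersection is $\{f_a,h_a\}$.

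Next, using Table \ref{table:def phi mono}, I verify by induction on $k$ that
\[
\phi_a^k(S)=\{f_a^{m-k},\,h_a^{h-k},\,i_a^{i+2k}\}
\]
for all $0\le k\le\min\{m,h\}$, and that $\phi_a^k(S)$ still satisfies $(\phi\ast,a)_1$ whenever $k<\min\{m,h\}$: indeed $f_a$ and $h_a$ keep positive multiplicity, $f_a^{\bowtie}$ stays at zero, and the multi-set remains supported on $\{f_a,h_a,i_a\}$, so no new violation of $(\ast,a)$ is created (inserting copies of $i_a$ could only conflict with $f_a^{\bowtie}$ or $i_a^{\bowtie}$, both absent). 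Thus the maximal $a$-modification is $\phi_a^{\min\{m,h\}}(S)$, which equals $\{h_a^{h-m},i_a^{i+2m}\}$ when $m\le h$ and $\{f_a^{m-h},i_a^{i+2h}\}$ when $m>h$. Both are pairwise compatible: $h_a$ and $i_a$ are compatible by Figure \ref{fig:compatible segments}, and $\Int(f_a,i_a)=0$ by Definition \ref{def:Int}.

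For the second claim, the maximal $a$-modification contains no edges precisely when $m\le h$, i.e.\ $m_S(f_a)\le m_S(h_a)$; under $(\phi,a)_1$ this is equivalent to $\max\{m_S(f_a),m_S(f_a^{\bowtie})\}\le m_S(h_a)$. The argument in the case $(\phi\ast,a)_2$ is completely parallel (swap $f_a\leftrightarrow f_a^{\bowtie}$ and $i_a\leftrightarrow i_a^{\bowtie}$). There is no genuine obstacle here; the monogon setting is strictly simpler than its triangle counterpart because only a single angle is available, so termination is controlled by the explicit minimum $\min\{m_S(f_a)+m_S(f_a^{\bowtie}),m_S(h_a)\}$, and the only subtlety is the stability of $(\ast,a)$ along the iteration, handled by the observation above.
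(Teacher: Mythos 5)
Your proposal is correct and follows essentially the same route as the paper's proof: pin down $S=\{f_a^{m},h_a^{h},i_a^{i}\}$ (the paper leaves the exclusion of $i_a^{\bowtie}$ via $(\ast,a)$ implicit, which you make explicit), iterate $\phi_a$ to get $\phi_a^{k}(S)=\{f_a^{m-k},h_a^{h-k},i_a^{i+2k}\}$, stop at $k=\min\{m,h\}$, and read off compatibility and the edge-free criterion, with the $(\phi\ast,a)_2$ case by symmetry. No gaps; your extra verifications (persistence of $(\ast,a)$ along the iteration, compatibility of the terminal multi-set) are just the details the paper compresses into ``Then the assertions hold.''
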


\begin{proof}
If $S$ satisfies $(\phi\ast,a)_1$, then $S=\{f_a^{m_S(f_a)},h_a^{m_S(h_a)},i_a^{m_S(i_a)}\}$. Since
\[
\phi_a^{k_a}(S)=\{f_a^{m_S(f_a)-k_a},h_a^{m_S(h_a)-k_a},i_a^{m_S(i_a)+2k_a}\},
\]
it is maximal if $k_a=\min\{m_S(f_a),m_S(h_a)\}$. Then the assertions hold. Similarly, we can prove them in the case that $S$ satisfies $(\phi\ast,a)_2$.
\end{proof}

\subsection{Modifications at angles}\label{subsec:modif}

Throughout the rest of this section, let $T$ be a tagged triangulation of $\cS$ satisfying \eqref{diamond}, and $S$ be a multi-set of non-loop edges and segments in puzzle pieces of $\Omega(T)$.

\begin{defn}
An \emph{$a$-modification} (resp., \emph{maximal $a$-modification}) \emph{of $S$} is a multi-set obtained from $S$ by replacing $S\cap\triangle$ with an $a$-modification (resp., maximal $a$-modification) of $S\cap\triangle$ for each puzzle piece $\triangle$ of $\Omega(T)$.
\end{defn}

Since a maximal $a$-modification of $S$ is uniquely determined, we denote it by $\Phi(S)$.

\begin{defn}
We say that $S$ is \emph{$a$-modifiable} if $S\cap\triangle$ consists of pairwise compatible non-loop edges and segments for each puzzle piece $\triangle$ without angles $a$ where $S\cap\triangle$ satisfies $(\phi\ast,a)$.
\end{defn}

\begin{thm}\label{thm:comp S}
If $S$ is $a$-modifiable, then $\Phi(S)$ consists of pairwise compatible non-loop edges and segments. In particular, it contains no edges if and only if the following hold: For each puzzle piece $\triangle$ of $\Omega(T)$ and each angle $a$ of $\triangle$ such that $S\cap\triangle$ satisfies $(\phi\ast,a)$,
\begin{itemize}
\item $\max\{m_S(e_{a^l}),m_S(e_{a^r})\}\le m_S(h_a)$ if $\triangle$ is a triangle piece;
\item $\max\{m_S(f_a),m_S(f_a^{\bowtie})\}\le m_S(h_a)$ if $\triangle$ is a monogon piece.
\end{itemize}
\end{thm}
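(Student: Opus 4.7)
The plan is to reduce Theorem \ref{thm:comp S} to the local compatibility results Theorems \ref{thm:compatibility tri} and \ref{thm:compatibility mono} already established for triangle and monogon pieces. By the Proposition-Definition preceding the theorem, the maximal $a$-modification $\Phi(S)$ is constructed one puzzle piece at a time: for every piece $\triangle$ of $\Omega(T)$, the restriction $\Phi(S)\cap\triangle$ coincides with the maximal $a$-modification of $S\cap\triangle$ performed inside $\triangle$. Since by Subsection \ref{subsec:segment} any two edges or segments lying in distinct puzzle pieces are compatible by definition, global pairwise compatibility reduces to the corresponding statement within each $\triangle$.

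For the first assertion, I would split into cases for each puzzle piece $\triangle$. If $S\cap\triangle$ satisfies $(\phi\ast,a)$ for some angle $a$ of $\triangle$, then Theorem \ref{thm:compatibility tri} (when $\triangle$ is a triangle piece) or Theorem \ref{thm:compatibility mono} (when $\triangle$ is a monogon piece) applies directly and shows that the maximal $a$-modification of $S\cap\triangle$ consists of pairwise compatible non-loop edges and segments. If instead $S\cap\triangle$ satisfies $(\phi\ast,a)$ for no angle $a$, then the very definition of $a$-modifiability forces $S\cap\triangle$ to already be pairwise compatible, and since no modification step is applicable we have $\Phi(S)\cap\triangle=S\cap\triangle$, which remains pairwise compatible. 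Gluing the local compatibility statements yields the global one.

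For the characterization of when $\Phi(S)$ has no edges, I would observe that $\Phi(S)$ contains no edges if and only if $\Phi(S)\cap\triangle$ contains no edges for every puzzle piece $\triangle$. In each piece where $(\phi\ast,a)$ is active, the \emph{in particular} clause of the relevant local theorem reads off precisely the two maximum-multiplicity conditions listed in the statement; taking the conjunction over all such pieces produces the claimed characterization.

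The main obstacle is essentially cosmetic rather than substantive, since the combinatorial heart has been absorbed into the two local theorems and into Proposition \ref{prop:commute tri}, which ensures well-definedness of $\Phi$ via commutativity of single-angle modifications. The only subtlety is maintaining the clean dichotomy between pieces where some $(\phi\ast,a)$ is active (governed by the local theorems) and those where none is (governed directly by the hypothesis of $a$-modifiability), but this case split is transparent.
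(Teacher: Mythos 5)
Your proposal is correct and follows essentially the same route as the paper, whose proof of Theorem \ref{thm:comp S} is precisely the reduction to Theorems \ref{thm:compatibility tri} and \ref{thm:compatibility mono}, using that $\Phi(S)$ is defined piecewise and that elements in distinct puzzle pieces are compatible by convention. Your explicit case split between pieces with an active $(\phi\ast,a)$ and pieces governed directly by $a$-modifiability just spells out what the paper leaves implicit.
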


\begin{proof}
The assertions follow from Theorems \ref{thm:compatibility tri} and \ref{thm:compatibility mono}.
\end{proof}

Theorem \ref{thm:comp S} can be applied to $S_U$, and $\Phi(S_U)$ will be a desired multi-set in Theorem \ref{thm:segment} (see also Example \ref{ex:glue}). We will study all about $S_U$ and $\Phi(S_U)$ in Subsection \ref{subsec:proof}.

In general, $S$ is not recoverable from $\Phi(S)$ even if it is $a$-modifiable. In the rest of this section, we will give a sufficient condition of $S$ such that it is recoverable from $\Phi(S)$, and show that $S_U$ satisfies it. For that, we introduce a notion of modifications around punctures ($p$-modifications for short). Under a certain condition, the maximal $a$-modifications coincide with the maximal $p$-modifications (Proposition \ref{prop:modif=max p}). As a stronger result than what we need, we give a sufficient condition of $S$ such that it is recoverable from a $p$-modification of it (Definition \ref{def:recoverable} and Theorem \ref{thm:unique S}).

\subsection{Modifications around punctures}\label{subsec:p-modif}

For an angle $a$ of $\Omega(T)$, there is a unique puzzle piece $\triangle_a$ with $a$. Thus we naturally extend the notations in Subsections \ref{subsec:modif tri} and \ref{subsec:modif mono} to $S$: For example, we say that $S$ satisfies $(\phi,a)$ if its sub-multi-set $S\cap\triangle_a$ satisfies $(\phi,a)$. In which case, $\phi_a(S)$ is obtained from $S$ by replacing $S\cap\triangle_a$ with $\phi_a(S\cap\triangle_a)$. Moreover, we prepare the following notations for a puncture $p$:
\begin{itemize}
\item $A_p$ is the set of all angles of $\Omega(T)$ at $p$.
\item $A_p^{\phi}(S):=\{a\in A_p\mid\text{$S$ satisfies $(\phi,a)$}\}$.
\item $m_S(c_p):=\min\{m_S(h_a)\mid a\in A_p\}$.
\item $A_p^{\min}(S):=\{a\in A_p\mid m_S(h_a)=m_S(c_p)\}$.
\end{itemize}
Note that $S$ forms a multi-set $\{c_p^{m_S(c_p)}\}$ around $p$, where $c_p$ is a simple closed curve enclosing exactly one puncture $p$. If $p$ is not incident to $\Omega(T)$, then $A_p=A_p^{\phi}(S)=A_p^{\min}(S)=\emptyset$.

First, we consider the following condition of $S$:
\begin{equation*}
\tag{$\phi,p$} A_p^{\phi}(S)\neq\emptyset\ \text{ and maps $\{\phi_a\mid a\in A_p^{\phi}(S)\}$ for $S$ commute},
\end{equation*}
where the second condition means that
\[
\phi_p(S):=\Biggl(\prod_{a\in A_p^{\phi}(S)}\phi_a\Biggr)(S)
\]
is well-defined. Note that $\phi_a\phi_b=\phi_b\phi_a$ always holds for $a,b\in A_p^{\phi}(S)$ with $\triangle_a\neq\triangle_b$. Moreover, if $S$ satisfies $(\ast,a)$ and $(\ast,b)$ for $a,b\in A_p^{\phi}(S)$ with $\triangle_a=\triangle_b$, then $\phi_a\phi_b(S)=\phi_b\phi_a(S)$ by Proposition \ref{prop:commute tri} (see Lemma \ref{lem:phi p}).

Next, we consider the following condition of $S$:
\begin{equation*}
\tag{$\psi,p$} \text{$S$ satisfies $(\psi,a)$ for all $a\in A_p^{\min}(S)\neq\emptyset$ and maps $\{\psi_a\mid a\in A_p^{\min}(S)\}$ for $S$ commute},
\end{equation*}
where the conditions mean that
\[
\psi_p(S):=\Biggl(\prod_{a\in A_p^{\min}(S)}\psi_a\Biggr)(S)
\]
is well-defined. As in the previous subsections, we give a sufficient condition of $S$ such that $\phi_p(S)$ satisfies $(\psi,p)$.

\begin{defn}\label{def:ast p}
For a puncture $p$, we say that $S$ satisfies $(\ast,p)$ if it satisfies the following conditions:
{\setlength{\leftmargini}{15mm}
\begin{itemize}
\item[$(\ast 1,p)$] $A_p^{\phi}(S)\subseteq A_p^{\min}(S)$.
\item[$(\ast 2,p)$] $S$ satisfies $(\ast,a)$ for all $a\in A_p^{\phi}(S)$.
\end{itemize}}
\end{defn}

\begin{lem}\label{lem:phi p}
If $S$ satisfies $(\ast 2,p)$ for a puncture $p$, then $S$ satisfies $(\phi,p)$ if and only if $A_p^{\phi}(S)\neq\emptyset$. 
\end{lem}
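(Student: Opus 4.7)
The forward direction is immediate: $(\phi,p)$ includes $A_p^{\phi}(S)\neq\emptyset$ as part of its very definition. For the converse, I plan to assume $(\ast 2,p)$ together with $A_p^{\phi}(S)\neq\emptyset$ and show that the family $\{\phi_a\mid a\in A_p^{\phi}(S)\}$ pairwise commutes on $S$ (with the relevant intermediate multi-sets continuing to satisfy the required $(\phi,\cdot)$ conditions), so that $\phi_p(S)$ is unambiguously defined.

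The verification will split into two cases for distinct $a,b\in A_p^{\phi}(S)$. If $\triangle_a\neq\triangle_b$, then $\phi_a$ and $\phi_b$ modify $S$ inside disjoint puzzle pieces, so they commute trivially and neither interferes with the $(\phi,\cdot)$ status of the other. If $\triangle_a=\triangle_b$, this piece cannot be a monogon piece (which carries only one angle), hence it is a triangle piece and without loss of generality $b=a^l$. Here the plan is to invoke Proposition \ref{prop:commute tri}: hypothesis $(\ast 2,p)$ supplies both $(\ast,a)$ and $(\ast,a^l)$, so parts (2) and (3) of that proposition simultaneously guarantee well-definedness of both $\phi_a\phi_{a^l}(S)$ and $\phi_{a^l}\phi_a(S)$ and assert their equality.

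The remaining subtlety, and what I would check explicitly, is that an \emph{arbitrary} ordering of $A_p^{\phi}(S)$ yields a valid composition, not merely that modifications commute in pairs. For this I will use Proposition \ref{prop:commute tri}(1) to observe that within any single triangle piece, $(\ast 2,p)$ rules out having all three corner angles in $A_p^{\phi}(S)$: once $(\phi\ast,a)$ and $(\phi\ast,a^l)$ both hold, the third angle $a^r$ necessarily fails $(\phi,a^r)$. Hence each puzzle piece contributes at most two commuting modifications to the product, and the full composition $\phi_p(S)$ collapses to a well-defined multi-set. The argument is essentially a case analysis with the real content already encapsulated in Proposition \ref{prop:commute tri}; no significant obstacle beyond correctly organizing these cases is anticipated.
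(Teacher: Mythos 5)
Your proposal is correct and follows essentially the same route as the paper, whose proof simply invokes Proposition \ref{prop:commute tri}: trivial commutativity across distinct puzzle pieces, and Proposition \ref{prop:commute tri}(1)--(3) under $(\ast 2,p)$ for two angles of a common triangle piece (a monogon piece having only one angle). Your additional check that at most two angles of any triangle piece can lie in $A_p^{\phi}(S)$, so that the full product is well-defined in any order, is exactly the content implicit in the paper's one-line argument.
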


\begin{proof}
The assertion follows from Proposition \ref{prop:commute tri}.
\end{proof}

For short, we say that $S$ satisfies $(\phi\ast,p)$ (resp., $(\psi\ast,p)$) if it satisfies $(\phi,p)$ (resp., $(\psi,p)$) and $(\ast,p)$.

\begin{prop}\label{prop:recover p}
If $S$ satisfies $(\phi\ast,p)$ for a puncture $p$, then $\phi_p(S)$ satisfies $(\psi\ast,p)$ and $\psi_p\phi_p(S)=S$.
\end{prop}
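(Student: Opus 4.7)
The plan is to reduce the global statement to the local versions (Propositions \ref{prop:recover tri} and \ref{prop:recover mono}), with Proposition \ref{prop:commute tri} handling the one tricky case where two angles at $p$ share a triangle piece. First I would observe that $\phi_p(S)$ is well-defined by the very definition of $(\phi,p)$; however, I would still need to identify what that commutativity looks like locally. For angles $a,b\in A_p^\phi(S)$ in distinct puzzle pieces, $\phi_a$ and $\phi_b$ act on disjoint regions and commute trivially. The only remaining case is when $a$ and $b$ lie in a common puzzle piece; since monogon pieces contribute no angle to $A_p$ when $p$ is the enclosed puncture, this common piece must be a triangle, and Proposition \ref{prop:commute tri} combined with $(\ast 2,p)$ forces $b=a^l$ (up to symmetry) with $S\cap\triangle_a$ satisfying $(\phi\ast,a)_3$ and $(\phi\ast,a^l)_5$. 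In particular at most two angles of any triangle lie in $A_p^\phi(S)$.

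Next I would compute $A_p^{\min}(\phi_p(S))$. By Lemmas \ref{lem:Int tri} and \ref{lem:Int mono} each $\phi_a$ decreases $m_S(h_a)$ by exactly one, and in the same-triangle case the relevant entries in Table \ref{table:def phi} (cases 3 and 5) leave $m(h_{a^l})$ and $m(h_{a^r})$ untouched. Thus $m_{\phi_p(S)}(h_a)=m_S(h_a)-1$ for every $a\in A_p^\phi(S)$, while $m_{\phi_p(S)}(h_b)\ge m_S(h_b)$ for $b\in A_p\setminus A_p^\phi(S)$. Invoking $(\ast 1,p)$, this gives $m_{\phi_p(S)}(c_p)=m_S(c_p)-1$ and $A_p^{\min}(\phi_p(S))=A_p^\phi(S)$. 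Then for each $a\in A_p^{\min}(\phi_p(S))$, the slice $\phi_p(S)\cap\triangle_a$ equals either $\phi_a(S\cap\triangle_a)$ or $\phi_a\phi_{a^l}(S\cap\triangle_a)$, and in both cases one reads off directly from the tables that $(\psi\ast,a)$ holds; commutativity of the $\psi_a$'s on $\phi_p(S)$ is inherited from commutativity of the $\phi_a$'s via the local inversion $\psi_a\phi_a=\mathrm{id}$. This yields $(\psi,p)$, and $(\ast,p)$ on $\phi_p(S)$ follows because $(\ast,a)$ is part of $(\psi\ast,a)$ locally, together with the verification that no new $\phi$-able angles are created (the formula $\Int(h_a,X)=m_X(e_{a^l})+m_X(e_{a^r})+m_X(v_a)$ in $\triangle_a$ plus inspection of Table \ref{table:def phi} shows that any $\phi_b$ raising these multiplicities is incompatible with $a\notin A_p^\phi(S)$).

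Finally, $\psi_p\phi_p(S)=S$ follows by applying the identity $\psi_a\phi_a(S\cap\triangle_a)=S\cap\triangle_a$ of Propositions \ref{prop:recover tri} and \ref{prop:recover mono} inside each puzzle piece; in the two-angle triangle case the composition $\psi_a\psi_{a^l}\phi_a\phi_{a^l}$ unwinds correctly because of the commutativity established above. The main obstacle is the same-triangle, two-phi-able case: one must verify that $\phi_a\phi_{a^l}$ applied to the configuration from $(\phi\ast,a)_3\wedge(\phi\ast,a^l)_5$ lands in a configuration still satisfying $(\psi\ast,a)_3$ and $(\psi\ast,a^l)_5$, and that the reconstruction via $\psi$'s is unambiguous. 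This amounts to a direct check in Tables \ref{table:def phi}, \ref{table:def psi}, \ref{table:max phiast} and \ref{table:max psi}, but it is the step where every piece of the hypothesis $(\phi\ast,p)$ — the commutativity in $(\phi,p)$, the minimality in $(\ast 1,p)$, and the local compatibility in $(\ast 2,p)$ — is used simultaneously.
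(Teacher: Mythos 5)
Your argument is correct and essentially reproduces the paper's proof: the exact drop $m_{\phi_p(S)}(h_a)=m_S(h_a)-1$ for $a\in A_p^{\phi}(S)$ together with $(\ast 1,p)$ gives $A_p^{\min}(\phi_p(S))=A_p^{\phi}(S)$, after which Propositions \ref{prop:recover tri}, \ref{prop:recover mono}(1) and \ref{prop:commute tri} yield $(\psi\ast,p)$ and $\psi_p\phi_p(S)=S$, exactly as in the paper (your explicit treatment of the shared-triangle case via $(\phi\ast,a)_3$ and $(\phi\ast,a^l)_5$ is just Proposition \ref{prop:commute tri} spelled out). One small inaccuracy: your parenthetical claim that no $\phi_b$ can raise $m(e_{a^l})$, $m(e_{a^r})$, $m(v_a)$ for $a\notin A_p^{\phi}(S)$ is not literally true (cases $3$ and $5$ of Table \ref{table:def phi} raise $m(v_{b^r})$ resp.\ $m(v_{b^l})$ for an angle that need not lie in $A_p^{\phi}(S)$), but the conclusion you need, $A_p^{\phi}(\phi_p(S))\subseteq A_p^{\phi}(S)$, still holds because $\Int(h_a,-)$ never increases (Lemma \ref{lem:Int tri}) and $m(h_a)$ is only raised in configurations where $\Int(h_a,S)=0$ already, which is how the paper disposes of this point.
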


\begin{proof}
By the definition of $\phi_a$ for an angle $a$ of $\Omega(T)$,
\[
m_{\phi_p(S)}(h_a)
\begin{cases}
=m_S(h_a)-1&\text{if $a\in A_p^{\phi}(S)$};\\
\ge m_S(h_a)&\text{otherwise}.
\end{cases}
\]
Thus $(\ast 1,p)$ induces that $A_p^{\phi}(S)=A_p^{\min}(\phi_p(S))$. Since $A_p^{\phi}(\phi_p(S))$ must be a subset of $A_p^{\phi}(S)$ by Lemma \ref{lem:Int tri}, $\phi_p(S)$ satisfies $(\ast 1,p)$. By Propositions \ref{prop:recover tri}, \ref{prop:recover mono}(1), and $(\ast 2,p)$, $\phi_p(S)$ satisfies $(\psi\ast,a)$ for all $a\in A_p^{\phi}(S)=A_p^{\min}(\phi_p(S))$. Thus $\phi_p(S)$ satisfies the first condition in $(\psi,p)$ and $(\ast 2,p)$ since $A_p^{\phi}(\phi_p(S))\subseteq A_p^{\phi}(S)$. The second condition in $(\psi,p)$ follows from  Proposition \ref{prop:commute tri} and $A_p^{\phi}(S)=A_p^{\min}(\phi_p(S))$. Therefore, $\phi_p(S)$ satisfies $(\psi\ast,p)$. The last assertion $\psi_p\phi_p(S)=S$ also follows from Propositions \ref{prop:recover tri}, \ref{prop:recover mono}(1), and $A_p^{\phi}(S)=A_p^{\min}(\phi_p(S))$.
\end{proof}

The condition $(\ast,p)$ also gives the commutativity of $\phi_p$.

\begin{prop}\label{prop:commute}
If $S$ satisfies $(\phi\ast,p)$ and $(\phi\ast,q)$ for distinct punctures $p$ and $q$, then $\phi_q(S)$ satisfies $(\phi\ast,p)$ and $\phi_p\phi_q(S)=\phi_q\phi_p(S)$.
\end{prop}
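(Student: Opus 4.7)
The approach is a case analysis on whether angles $a \in A_p^{\phi}(S)$ and $b \in A_q^{\phi}(S)$ share a puzzle piece. Because $p \neq q$ and monogon pieces carry only one angle, such a shared piece must be a triangle with one angle at $p$ and one at $q$. When $\triangle_a \neq \triangle_b$, the maps $\phi_a$ and $\phi_b$ act on disjoint sub-multi-sets of $S$, commute trivially, and preserve each other's defining conditions. When $\triangle_a = \triangle_b$, Proposition \ref{prop:commute tri} applies and forces $S \cap \triangle_a$ into the configuration $(\phi\ast,a)_3$ and $(\phi\ast,b)_5$ (after relabeling left/right); it also yields both the local commutativity $\phi_a\phi_b = \phi_b\phi_a$ and the preservation of $(\phi\ast,a)$ after applying $\phi_b$.

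Using this dichotomy, I would verify $(\phi\ast,p)$ for $\phi_q(S)$ piece by piece. First, $A_p^{\phi}(\phi_q(S)) = A_p^{\phi}(S)$ and the condition $(\ast 2,p)$ follow by inspecting each $a \in A_p^{\phi}(S)$: either $\triangle_a$ contains no angle of $A_q^{\phi}(S)$, in which case $\phi_q(S) \cap \triangle_a = S \cap \triangle_a$ and the conditions at $a$ are inherited, or Proposition \ref{prop:commute tri} applies directly. The commutativity of $\{\phi_a : a \in A_p^{\phi}(\phi_q(S))\}$ on $\phi_q(S)$ reduces to the same case split. The condition $(\ast 1,p)$ requires the most care: one must track $m_{\phi_q(S)}(h_a)$ for every $a \in A_p$ via Table \ref{table:def phi}. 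In the shared case, row $k=5$ of that table (read with $b$ as the main angle) shows that $\phi_b$ leaves $m_S(h_a)$ unchanged for the non-main angles $a$ of $\triangle_b$, so the minimum-attaining property of $a \in A_p^{\phi}(S)$ is preserved relative to $m_S(c_p)$.

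The identity $\phi_p\phi_q(S) = \phi_q\phi_p(S)$ is then immediate: by the symmetric argument $\phi_p(S)$ also satisfies $(\phi\ast,q)$ with $A_q^{\phi}(\phi_p(S)) = A_q^{\phi}(S)$, so both sides unfold to the formal product $\prod_{a \in A_p^{\phi}(S)}\phi_a \prod_{b \in A_q^{\phi}(S)}\phi_b\,(S)$, whose individual factors pairwise commute by the first paragraph. The main obstacle is the $(\ast 1,p)$ bookkeeping: one must confirm that $m_{\phi_q(S)}(c_p) = m_{\phi_q(S)}(h_a)$ persists for every $a \in A_p^{\phi}(\phi_q(S))$, which reduces to a uniform analysis over the triangles of $\Omega(T)$ shared between $p$ and $q$ using the explicit values in Table \ref{table:def phi}.
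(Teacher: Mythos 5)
Your overall strategy is the same as the paper's (split on whether $\triangle_a=\triangle_b$, use Proposition \ref{prop:commute tri} in the shared-triangle case, then unfold both sides of $\phi_p\phi_q(S)=\phi_q\phi_p(S)$ into a product of pairwise commuting $\phi_a$'s), but there is a concrete gap in how you establish $A_p^{\phi}(\phi_q(S))=A_p^{\phi}(S)$ and, with it, $(\ast 1,p)$ and $(\ast 2,p)$ for $\phi_q(S)$. Inspecting only the angles $a\in A_p^{\phi}(S)$, as you propose, gives just the inclusion $A_p^{\phi}(S)\subseteq A_p^{\phi}(\phi_q(S))$. You must also rule out that an angle $a\in A_p\setminus A_p^{\phi}(S)$ lying in a triangle $\triangle_b$ with $b\in A_q^{\phi}(S)$ \emph{newly} satisfies $(\phi,a)$ after $\phi_b$: the rows $k\in\{2,4,6\}$ of Table \ref{table:def phi} can raise $m(h_a)$ from $0$ to $1$, so this is not automatic. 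If such an angle entered $A_p^{\phi}(\phi_q(S))$, it need not be minimal nor satisfy $(\ast,a)$, so $(\ast 1,p)$ and $(\ast 2,p)$ could fail, and $\phi_p$ applied to $\phi_q(S)$ would be a product over a different index set, breaking the final identity. The paper closes exactly this point with the last clause of Proposition \ref{prop:recover tri} (if $S$ does not satisfy $(\phi,b)$ for $b\in\{a^l,a^r\}$, then neither does $\phi_a(S)$), whose proof is itself nontrivial, resting on Lemmas \ref{lem:Int tri} and \ref{lem:except}; your plan never invokes it.

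A second, smaller imprecision sits in your $(\ast 1,p)$ bookkeeping. You argue via ``row $k=5$'' that $\phi_b$ leaves $m_S(h_a)$ unchanged at non-main angles, but that row is only forced when the $p$-angle of $\triangle_b$ is itself in $A_p^{\phi}(S)$ (the situation of Proposition \ref{prop:commute tri}). For a $p$-angle $a\notin A_p^{\phi}(S)$ in such a triangle, the configuration at $b$ may be $(\phi\ast,b)_2$, $(\phi\ast,b)_4$, or $(\phi\ast,b)_6$, and then $m(h_a)$ can strictly increase. What you actually need, and what the paper records, is the inequality $m_{\phi_q(S)}(h_a)\ge m_S(h_a)$ for all $a\in A_p$ (the non-main $h$-entries in Table \ref{table:def phi} are never negative), together with equality for $a\in A_p^{\phi}(S)$; this keeps $m(c_p)$ from dropping and keeps the old $\phi$-angles minimum-attaining. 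With these two repairs your argument matches the paper's proof.
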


\begin{proof}
First, we prove the following: For $a\in A_p$,
\begin{itemize}
\item[(1)] $a\in A_p^{\phi}(\phi_q(S))$ if and only if $a\in A_p^{\phi}(S)$, in which case, $\phi_q(S)$ also satisfies $(\ast,a)$;
\item[(2)] $m_{\phi_q(S)}(h_a)\ge m_S(h_a)$, where the equality holds if $a\in A_p^{\phi}(S)$.
\end{itemize}
If there is not an angle $b$ of $\triangle_a$ such that $b\in A_q^{\phi}(S)$, then (1) and (2) hold since $\phi_q(S)\cap\triangle_a=S\cap\triangle_a$. Assume that there is such an angle $b$. In particular, $\triangle_a=\triangle_b$ is a triangle piece. If $a\notin A_p^{\phi}(S)$, then $a\notin A_p^{\phi}(\phi_q(S))$ by Proposition \ref{prop:recover tri}, and $m_{\phi_q(S)}(h_a)\ge m_S(h_a)$ since $\phi_b$ does not remove $h_{b^l}$ and $h_{b^r}$. If $a\in A_p^{\phi}(S)$, then Proposition \ref{prop:commute tri} induces that $\phi_q(S)\cap\triangle_a=\phi_b(S)\cap\triangle_a$ satisfies $(\phi\ast,a)$ and $m_{\phi_q(S)}(h_a)=m_{\phi_b(S)}(h_a)= m_S(h_a)$. Therefore, (1) and (2) hold for any $a\in A_p$.

Since $A_p^{\phi}(S)\subseteq A_p^{\min}(\phi_q(S))$ by (2) and $(\ast 1,p)$ of $S$, $\phi_q(S)$ satisfies $(\ast 1,p)$ by (1). By (1) again, $\phi_q(S)$ satisfies $(\ast 2,p)$ and $A_p^{\phi}(\phi_q(S))=A_p^{\phi}(S)\neq\emptyset$. Thus $\phi_q(S)$ satisfies $(\phi\ast,p)$ by Lemma \ref{lem:phi p}.

In the same way as above, we obtain $A_q^{\phi}(\phi_p(S))=A_q^{\phi}(S)$. Moreover, by Proposition \ref{prop:commute tri}, $\phi_a\phi_b=\phi_b\phi_a$ for any $a\in A_p^{\phi}(S)$ and $b\in A_q^{\phi}(S)$. Therefore,
\begin{align*}
\phi_p\phi_q(S)&=\Biggl(\prod_{a\in A_p^{\phi}(\phi_q(S))}\phi_a\Biggr)\Biggl(\prod_{b\in A_q^{\phi}(S)}\phi_b\Biggr)(S)
=\Biggl(\prod_{a\in A_p^{\phi}(S)}\phi_a\Biggr)\Biggl(\prod_{b\in A_q^{\phi}(S)}\phi_b\Biggr)(S)\\
&=\Biggl(\prod_{b\in A_q^{\phi}(S)}\phi_b\Biggr)\Biggl(\prod_{a\in A_p^{\phi}(S)}\phi_a\Biggr)(S)
=\Biggl(\prod_{b\in A_q^{\phi}(\phi_p(S))}\phi_b\Biggr)\Biggl(\prod_{a\in A_p^{\phi}(S)}\phi_a\Biggr)(S)
=\phi_q\phi_p(S).\qedhere
\end{align*}
\end{proof}

\begin{propdef}\label{propdef:p-modif}
Let $k_p\in\bZ_{\ge 0}$ for all punctures $p$. We set a formal product
\[
\phi=\prod_{p\in\{\text{punctures}\}}\phi_p^{k_p}.
\]
If $\phi_p^k(S)$ satisfies $(\phi\ast,p)$ for all punctures $p$ and all $0\le k<k_p$, then $\phi(S)$ is well-defined. In which case, $\phi$ or $\phi(S)$ is called a \emph{modification of $S$ around punctures}. In addition, it is called \emph{maximal} if $\phi_p^{k_p}(S)$ does not satisfy $(\phi\ast,p)$ for all punctures $p$.
\end{propdef}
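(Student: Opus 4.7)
The plan is to establish well-definedness by reducing to the pairwise commutation relation of Proposition \ref{prop:commute}. The hypothesis supplies validity of the ``straight'' iterate $\phi_p^{k_p}(S)$ for each single puncture $p$; what must be shown is that interleaving the $\phi_p$'s in any order keeps every intermediate iterate in the domain of the next map, and yields the same final output. Proposition \ref{prop:commute} gives exactly the pairwise ingredients: it preserves $(\phi\ast,p)$ under a single application of $\phi_q$ ($p\neq q$), and it gives $\phi_p\phi_q=\phi_q\phi_p$ in that case.

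First I would lift the preservation statement to iterates. Namely, I claim that if $S$ satisfies $(\phi\ast,p)$ and if $\phi_q^j(S)$ satisfies $(\phi\ast,q)$ for all $0\le j<k_q$, then $\phi_q^j(S)$ satisfies $(\phi\ast,p)$ for all $0\le j\le k_q$. The base case $j=0$ is the hypothesis on $S$. For the inductive step, the multi-set $\phi_q^{j-1}(S)$ satisfies both $(\phi\ast,p)$ (by the inductive hypothesis) and $(\phi\ast,q)$ (by assumption), so Proposition \ref{prop:commute} yields that $\phi_q^j(S)=\phi_q(\phi_q^{j-1}(S))$ satisfies $(\phi\ast,p)$. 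Iterating Proposition \ref{prop:commute} in the same way, one obtains the two-puncture commutation
\[
\phi_p^{k_p}\phi_q^{k_q}(S)=\phi_q^{k_q}\phi_p^{k_q}(S)\qquad(\text{where }p\neq q),
\]
together with the fact that both intermediate towers stay inside the domain of every map being applied.

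Next I would handle an arbitrary ordering. Fix an enumeration $p_1,\ldots,p_m$ of the (finitely many) punctures with $k_{p_i}>0$ and consider any two compositions
\[
\phi_{p_{\sigma(1)}}^{k_{p_{\sigma(1)}}}\cdots\phi_{p_{\sigma(m)}}^{k_{p_{\sigma(m)}}}(S)
\]
indexed by permutations $\sigma\in\mathfrak{S}_m$. Any permutation is a product of adjacent transpositions, so it suffices to see that swapping two adjacent factors $\phi_{p_i}^{k_{p_i}}\phi_{p_{i+1}}^{k_{p_{i+1}}}$ in the composition is a legal move and leaves the value unchanged. The previous paragraph supplies precisely this at each such swap, once we also know that the prefix built from the earlier factors carries $(\phi\ast,p_i)$ and $(\phi\ast,p_{i+1})$ through it; the latter is again obtained by repeated application of the preservation statement, moving one $\phi_{p_j}^{k_{p_j}}$ at a time past the required property. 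Therefore every ordering produces the same multi-set, which we may then name $\phi(S)$.

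The step I expect to be the most delicate is the book-keeping in the iterated preservation: one must be confident that while pushing a $\phi_{q}$-tower past the data ``$S$ satisfies $(\phi\ast,p)$'', every intermediate $\phi_q^j(S)$ continues to satisfy both $(\phi\ast,p)$ and $(\phi\ast,q)$ so that Proposition \ref{prop:commute} may be invoked at the next step. Once this is set up as a clean double induction on $(j,k)$ (height in the $\phi_q$-tower and number of towers already commuted past), the commutativity of the full formal product, and hence the well-definedness of $\phi(S)$, follow at once.
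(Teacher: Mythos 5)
Your argument is correct and is essentially the paper's own proof: the paper disposes of well-definedness in one line by citing Proposition \ref{prop:commute}, and your double induction (preservation of $(\phi\ast,p)$ under iterates of $\phi_q$, then reduction of an arbitrary ordering to adjacent swaps) is precisely the routine book-keeping implicit in that citation. Only a typo to fix: the displayed two-puncture commutation should read $\phi_p^{k_p}\phi_q^{k_q}(S)=\phi_q^{k_q}\phi_p^{k_p}(S)$.
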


\begin{proof}
It follows from Proposition \ref{prop:commute} that $\phi(S)$ is well-defined.
\end{proof}

We refer to modifications around punctures as \emph{$p$-modifications}. For a $p$-modification $\phi$ in Proposition-Definition \ref{propdef:p-modif}, we denote by $P_{\phi}$ the set of all punctures $p$ with $k_p>0$, that is, $P_{\phi}\subseteq\{p\mid\text{$S$ satisfies $(\phi\ast,p)$}\}$ and the equality holds if $\phi$ is maximal.

\begin{prop}\label{prop:modif=max p}
If each angle $a$ where $S$ satisfies $(\phi\ast,a)$ is at some puncture $p$ where $S$ satisfies $(\ast,p)$, then the maximal $p$-modification of $S$ coincides with its maximal $a$-modification.
\end{prop}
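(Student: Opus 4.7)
The plan is to prove equality by showing that the maximal $p$-modification coincides with the maximal $a$-modification $\Phi(S)$ via (i) an \emph{unwinding} step and (ii) a \emph{maximality} step. For (i), every $p$-modification is, after commutative reordering, an $a$-modification; for (ii), the maximal $p$-modification admits no further $a$-modification, so by uniqueness of $\Phi(S)$ the two must agree.

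Step (i) is essentially bookkeeping on the definitions. By $(\ast 2,p)$ and Lemma \ref{lem:phi p}, each $\phi_p$ expands as a commuting product $\prod_{a\in A_p^{\phi}(\cdot)}\phi_a$ with all intermediate applicability conditions $(\phi\ast,a)$ satisfied, thanks to Proposition \ref{prop:commute tri}. Proposition \ref{prop:commute} then lets $\phi_p$-operators at different punctures be interleaved arbitrarily. Hence the whole composition defining the maximal $p$-modification rewrites as an iterated product $\phi_{a_N}\circ\cdots\circ\phi_{a_1}$ in which $(\phi\ast,a_i)$ holds at the corresponding intermediate stage. In particular, the maximal $p$-modification is a valid $a$-modification of $S$, and by maximality of $\Phi$, the multi-set $\Phi(S)$ is obtained from it by a (possibly trivial) further $a$-modification.

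For step (ii) I would prove by induction on the length of the $p$-modification sequence that the hypothesis of the proposition is preserved at every intermediate multi-set $S'$: every angle $a$ where $S'$ satisfies $(\phi\ast,a)$ still lies at a puncture where $S'$ satisfies $(\ast,\cdot)$. The inductive step, when $\phi_p$ is applied, combines three inputs. First, Proposition \ref{prop:recover p} gives $(\psi\ast,p)$ on $\phi_p(S')$, which in particular re-establishes $(\ast,p)$ at $p$ and rules out any new $(\phi,a)$ at angles of $p$. Second, Proposition \ref{prop:commute} shows $(\phi\ast,q)$, hence $(\ast,q)$, is preserved at every puncture $q\neq p$ where it already held. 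Third, the ``no new $(\phi,b)$'' clause in Proposition \ref{prop:recover tri}, together with the analogous confinement of monogon modifications from Subsection \ref{subsec:modif mono} (where $\phi_a$ only touches the interior of the monogon piece), prevents creation of fresh $(\phi\ast,\cdot)$-angles at corners adjacent to $p$. Granting preservation, suppose for contradiction that the maximal $p$-modification still admits some $(\phi\ast,a)$-angle; then $a$ lies at a puncture $q$ with $(\ast,q)$ still in force, so Lemma \ref{lem:phi p} yields $(\phi\ast,q)$ and another $\phi_q$ could be applied, contradicting maximality.

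The main obstacle is the inductive step, most delicately the preservation of $(\ast 1,p)$: the inclusion $A_q^{\phi}\subseteq A_q^{\min}$ must survive at a ``bystander'' puncture $q$ after a modification at an adjacent puncture $p$. This reduces to a case analysis based on Tables \ref{table:def phi} and \ref{table:def phi mono} on how $\phi_p$ alters the multiplicities $m(h_b)$: for $b\in A_q^{\phi}(S')$ inherited from $S'$, one shows $m_{\phi_p(S')}(h_b)$ equals the new minimum $m_{\phi_p(S')}(c_q)$, using that $\phi_p$ can only increase $m(h_b)$ at angles $b$ adjacent to the modified angle and never decreases $m(h_c)$ for $c$ lying at $q$ but outside the triangle pieces touching $p$.
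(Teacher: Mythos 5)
Your step (i) is essentially the paper's unwinding, but your step (ii) departs from the paper and contains the real gap. First, the inductive claim on which everything rests --- that every intermediate multi-set again satisfies the hypothesis of the proposition, in particular that $(\ast,q)$ (both $(\ast 1,q)$ and $(\ast 2,q)$) holds at a bystander puncture $q$ --- is precisely the hard point, and you only sketch it. The hypothesis gives no information at a puncture $q$ that hosts no $(\phi\ast,\cdot)$-angle of $S$, yet an application of $\phi_a$ at an adjacent angle in cases $(\phi,a)_2$, $(\phi,a)_4$, $(\phi,a)_6$ of Table \ref{table:def phi} creates a new $h_b$ at an angle $b$ of the same triangle piece, possibly putting $b$ into $A_q^{\phi}$ of the new multi-set; nothing in the hypothesis or in Propositions \ref{prop:recover tri}, \ref{prop:recover p}, \ref{prop:commute} guarantees $(\ast,b)$, hence $(\ast 2,q)$, for such newly created angles, so the preservation statement you need is not a routine consequence of the cited results. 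Second, your concluding contradiction misreads the definition of maximality: in Proposition-Definition \ref{propdef:p-modif} (and likewise for $a$-modifications) a modification $\prod_p\phi_p^{k_p}$ is \emph{maximal} when each power $\phi_p^{k_p}(S)$, applied to the original $S$, fails $(\phi\ast,p)$; it is not defined by the final multi-set $\phi(S)$ admitting no further applicable modification. So even granting your preservation lemma, ``another $\phi_q$ could be applied to $\phi(S)$'' does not contradict maximality without an extra bridging argument. (Your step (i) also asserts without proof that $\Phi(S)$ is obtained from the maximal $p$-modification by a further $a$-modification; that extension property is essentially the exponent-matching you are trying to avoid.)

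The paper's proof is much shorter and sidesteps all of this: the hypothesis together with Lemma \ref{lem:phi p} shows that an angle $a$ satisfies $(\phi\ast,a)$ for $S$ if and only if $a\in A_p^{\phi}(S)$ for a puncture $p$ where $S$ satisfies $(\phi\ast,p)$; then, using the commutation and recovery statements already proved (Propositions \ref{prop:commute tri}, \ref{prop:recover p}, \ref{prop:commute}), the maximal $p$-modification $\prod_p\phi_p^{k_p}$ is rewritten as $\prod_a\phi_a^{k_a}$ where each $k_a$ is exactly the maximal exponent for $\phi_a$ on $S$, which is by definition the maximal $a$-modification. No statement about what the final multi-set still admits is needed, so neither the preservation induction nor the ``no further modification'' reading of maximality enters the argument.
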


\begin{proof}
By the assumption and Lemma \ref{lem:phi p}, for an angle $a$, $S$ satisfies $(\phi\ast,a)$ if and only if $a\in A_p^{\phi}(S)$ for a puncture $p$ where $S$ satisfies $(\phi\ast,p)$. Then the maximal $p$-modification $\phi$ of $S$ is given by
\[
\phi=\prod_{p\in P_{\phi}}\phi_p^{k_p}=\prod_{p\in\{q\mid\text{$S$ satisfies $(\phi\ast,q)$}\}}\prod_{a\in A_p^{\phi}(S)}\phi_a^{k_a}=\prod_{a\in\{\text{angles}\}}\phi_a^{k_a},
\]
where $k_a\in\bZ_{\ge 0}$ such that $\phi_a^k(S)$ satisfies $(\phi\ast,a)$ for all $0\le k<k_a$ and $\phi_a^{k_a}(S)$ does not satisfy $(\phi\ast,a)$. It is just the maximal $a$-modification of $S$.
\end{proof}

\begin{example}\label{ex:p-modif}
The multi-set $S_U$ of non-loop edges and segments in Example \ref{ex:glue} satisfies $(\phi\ast,p)$ for the bottom puncture $p$. Then its maximal $p$-modification coincides with $\Phi(S_U)$ as follows:
\[
\begin{tikzpicture}[baseline=-1mm]
\coordinate(d)at(0,-1.2);\coordinate(u)at(90:2);
\coordinate(l)at($(d)+(120:1.3)$);\coordinate(r)at($(d)+(60:1.3)$);
\draw(0,0)circle(2);
\draw(u)..controls(-2,1.7)and(-2,-1.5)..coordinate[pos=0.74](15)coordinate[pos=0.78](14)coordinate[pos=0.82](13)coordinate[pos=0.86](12)coordinate[pos=0.9](11)(d);
\draw(u)--coordinate[pos=0.5](26)coordinate[pos=0.65](25)coordinate[pos=0.7](24)coordinate[pos=0.75](23)coordinate[pos=0.8](22)coordinate[pos=0.85](21)(d);
\draw(u)..controls(2,1.7)and(2,-1.5)..coordinate[pos=0.74](35)coordinate[pos=0.78](34)coordinate[pos=0.82](33)coordinate[pos=0.86](32)coordinate[pos=0.9](31)(d);
\draw(d)to[out=25,in=90,relative]coordinate[pos=0.2](8l1)coordinate[pos=0.27](8l2)coordinate[pos=0.34](8l3)coordinate[pos=0.41](8l4)coordinate[pos=0.48](8l5)($(d)+(120:1.7)$);
\draw(d)to[out=-25,in=-90,relative]coordinate[pos=0.2](8r1)coordinate[pos=0.27](8r2)coordinate[pos=0.34](8r3)coordinate[pos=0.41](8r4)coordinate[pos=0.48](8r5)($(d)+(120:1.7)$);
\draw(d)to[out=25,in=90,relative]coordinate[pos=0.2](9l1)coordinate[pos=0.27](9l2)coordinate[pos=0.34](9l3)coordinate[pos=0.41](9l4)coordinate[pos=0.48](9l5)coordinate[pos=0.7](9l6)($(d)+(60:1.7)$);
\draw(d)to[out=-25,in=-90,relative]coordinate[pos=0.2](9r1)coordinate[pos=0.27](9r2)coordinate[pos=0.34](9r3)coordinate[pos=0.41](9r4)coordinate[pos=0.48](9r5)($(d)+(60:1.7)$);
\draw[blue](d)to[out=10,in=160,relative](l)(d)to[out=-10,in=-160,relative](l);
\draw[blue](11)--(8l1)--(8r1)--(21)--(9l1)--(9r1)--(31)to[out=-130,in=0](0,-1.4)to[out=180,in=-50](11);
\draw[blue](12)--(8l2)--(8r2)--(22)--(9l2)--(9r2)--(32)to[out=-120,in=0](0,-1.5)to[out=180,in=-60](12);
\draw[blue](13)--(8l3)--(8r3)--(23)--(9l3)--(9r3)--(33)to[out=-110,in=0](0,-1.6)to[out=180,in=-70](13);
\draw[blue](14)--(8l4)--(8r4)--(24)--(9l4)--(9r4)--(34)to[out=-100,in=0](0,-1.7)to[out=180,in=-80](14);
\draw[blue](15)--(8l5)--(8r5)--(25)--(9l5)--(9r5)--(35)to[out=-90,in=0](0,-1.8)to[out=180,in=-90](15);
\draw[blue](u)--($(d)+(60:1.7)$)--(r);
\draw[blue](r)--(9l6)--(26)..controls(-1.5,1.5)and(-1.8,-1)..(d);
\fill(l)circle(0.07);\fill(r)circle(0.07);\fill(d)circle(0.07);\fill(u)circle(0.07);
\node at(0,2.3){$S_U$};
\end{tikzpicture}
\xrightarrow{\phi_p}
\begin{tikzpicture}[baseline=-1mm]
\coordinate(d)at(0,-1.2);\coordinate(u)at(90:2);
\coordinate(l)at($(d)+(120:1.3)$);\coordinate(r)at($(d)+(60:1.3)$);
\draw(0,0)circle(2);
\draw(u)..controls(-2,1.7)and(-2,-1.5)..coordinate[pos=0.74](15)coordinate[pos=0.78](14)coordinate[pos=0.82](13)coordinate[pos=0.86](12)coordinate[pos=0.9](11)(d);
\draw(u)--coordinate[pos=0.5](26)coordinate[pos=0.65](25)coordinate[pos=0.7](24)coordinate[pos=0.75](23)coordinate[pos=0.8](22)coordinate[pos=0.85](21)(d);
\draw(u)..controls(2,1.7)and(2,-1.5)..coordinate[pos=0.74](35)coordinate[pos=0.78](34)coordinate[pos=0.82](33)coordinate[pos=0.86](32)coordinate[pos=0.9](31)(d);
\draw(d)to[out=25,in=90,relative]coordinate[pos=0.2](8l1)coordinate[pos=0.27](8l2)coordinate[pos=0.34](8l3)coordinate[pos=0.41](8l4)coordinate[pos=0.48](8l5)($(d)+(120:1.7)$);
\draw(d)to[out=-25,in=-90,relative]coordinate[pos=0.2](8r1)coordinate[pos=0.27](8r2)coordinate[pos=0.34](8r3)coordinate[pos=0.41](8r4)coordinate[pos=0.48](8r5)($(d)+(120:1.7)$);
\draw(d)to[out=25,in=90,relative]coordinate[pos=0.2](9l1)coordinate[pos=0.27](9l2)coordinate[pos=0.34](9l3)coordinate[pos=0.41](9l4)coordinate[pos=0.48](9l5)coordinate[pos=0.7](9l6)($(d)+(60:1.7)$);
\draw(d)to[out=-25,in=-90,relative]coordinate[pos=0.2](9r1)coordinate[pos=0.27](9r2)coordinate[pos=0.34](9r3)coordinate[pos=0.41](9r4)coordinate[pos=0.48](9r5)($(d)+(60:1.7)$);
\draw[blue](d)--(l);
\draw[blue](11)--(8l1)--(8r1)--(21)--(9l1)--(9r1)--(31)to[out=-130,in=0](0,-1.4)to[out=180,in=-50](11);
\draw[blue](12)--(8l2)--(8r2)--(22)--(9l2)--(9r2)--(32)to[out=-120,in=0](0,-1.5)to[out=180,in=-60](12);
\draw[blue](13)--(8l3)--(8r3)--(23)--(9l3)--(9r3)--(33)to[out=-110,in=0](0,-1.6)to[out=180,in=-70](13);
\draw[blue](14)--(8l4)--(8r4)--(24)--(9l4)--(9r4)--(34)to[out=-100,in=0](0,-1.7)to[out=180,in=-80](14);
\draw[blue](15)--(8l5)--(l)--(8r5)--(25)--(9l5)--(9r5)--(35)to[out=-90,in=0](0,-1.8)to[out=180,in=-90](15);
\draw[blue](u)--($(d)+(60:1.7)$)--(r);
\draw[blue](r)--(9l6)--(26)..controls(-1.2,1.2)and(-1.5,0)..($(15)!0.5!(8l5)$);
\fill(l)circle(0.07);\fill(r)circle(0.07);\fill(d)circle(0.07);\fill(u)circle(0.07);
\node at(0,2.3){$\phi_p(S_U)$};
\end{tikzpicture}
\xrightarrow{\phi_p}
\begin{tikzpicture}[baseline=-1mm]
\coordinate(d)at(0,-1.2);\coordinate(u)at(90:2);
\coordinate(l)at($(d)+(120:1.3)$);\coordinate(r)at($(d)+(60:1.3)$);
\draw(0,0)circle(2);
\draw(u)..controls(-2,1.7)and(-2,-1.5)..coordinate[pos=0.74](15)coordinate[pos=0.78](14)coordinate[pos=0.82](13)coordinate[pos=0.86](12)coordinate[pos=0.9](11)(d);
\draw(u)--coordinate[pos=0.5](26)coordinate[pos=0.65](25)coordinate[pos=0.7](24)coordinate[pos=0.75](23)coordinate[pos=0.8](22)coordinate[pos=0.85](21)(d);
\draw(u)..controls(2,1.7)and(2,-1.5)..coordinate[pos=0.74](35)coordinate[pos=0.78](34)coordinate[pos=0.82](33)coordinate[pos=0.86](32)coordinate[pos=0.9](31)(d);
\draw(d)to[out=25,in=90,relative]coordinate[pos=0.2](8l1)coordinate[pos=0.27](8l2)coordinate[pos=0.34](8l3)coordinate[pos=0.41](8l4)coordinate[pos=0.48](8l5)($(d)+(120:1.7)$);
\draw(d)to[out=-25,in=-90,relative]coordinate[pos=0.2](8r1)coordinate[pos=0.27](8r2)coordinate[pos=0.34](8r3)coordinate[pos=0.41](8r4)coordinate[pos=0.48](8r5)($(d)+(120:1.7)$);
\draw(d)to[out=25,in=90,relative]coordinate[pos=0.2](9l1)coordinate[pos=0.27](9l2)coordinate[pos=0.34](9l3)coordinate[pos=0.41](9l4)coordinate[pos=0.48](9l5)coordinate[pos=0.7](9l6)($(d)+(60:1.7)$);
\draw(d)to[out=-25,in=-90,relative]coordinate[pos=0.2](9r1)coordinate[pos=0.27](9r2)coordinate[pos=0.34](9r3)coordinate[pos=0.41](9r4)coordinate[pos=0.48](9r5)($(d)+(60:1.7)$);
\draw[blue](11)--(8l1)--(8r1)--(21)--(9l1)--(9r1)--(31)to[out=-130,in=0](0,-1.4)to[out=180,in=-50](11);
\draw[blue](12)--(8l2)--(8r2)--(22)--(9l2)--(9r2)--(32)to[out=-120,in=0](0,-1.5)to[out=180,in=-60](12);
\draw[blue](13)--(8l3)--(8r3)--(23)--(9l3)--(9r3)--(33)to[out=-110,in=0](0,-1.6)to[out=180,in=-70](13);
\draw[blue](14)--(8l4)--(l)--(8r4)--(24)--(9l4)--(9r4)--(34)to[out=-100,in=0](0,-1.7)to[out=180,in=-80](14);
\draw[blue](15)--(8l5)--(l)--(8r5)--(25)--(9l5)--(9r5)--(35)to[out=-90,in=0](0,-1.8)to[out=180,in=-90](15);
\draw[blue](u)--($(d)+(60:1.7)$)--(r);
\draw[blue](r)--(9l6)--(26)..controls(-1.2,1.2)and(-1.5,0)..($(15)!0.5!(8l5)$);
\fill(l)circle(0.07);\fill(r)circle(0.07);\fill(d)circle(0.07);\fill(u)circle(0.07);
\node at(0,2.3){$\phi_p^2(S_U)=\Phi(S_U)$};
\end{tikzpicture}\ .
\]
\end{example}

\subsection{Glueability and enclosed punctures}\label{subsec:glue}

To give a sufficient condition of $S$ such that it is recoverable from a $p$-modification of it (Definition \ref{def:recoverable} and Theorem \ref{thm:unique S}), we prepare Definitions \ref{def:glueable} and \ref{def:enclosed}. For an angle $a$ of $\Omega(T)$ at a marked point $p$, we denote by $\overline{a}$ the next angle clockwise from $a$ at $p$ as follows:
\[
\begin{tikzpicture}[baseline=-10mm]
\coordinate(p)at(0,0)node[left]{$p$};
\draw(0:1.2)--(p)--(-100:1.2) (p)--(-50:1.2);
\node at(-25:0.45){$a$};\node at(-75:0.45){$\overline{a}$};
\fill(p)circle(0.07);
\end{tikzpicture}.
\]
We set $a^l=a^r=a$ for an angle $a$ of a monogon piece.

\begin{defn}\label{def:glueable}
We say that $S$ is \emph{glueable} if it satisfies the following conditions:
\begin{itemize}
\item[$(G1)$] $\Int(e_{a^r},S)=\Int(e_{\overline{a}^l},S)$ for all angles $a$ of $\Omega(T)$.
\item[$(G2)$] $m_S(e_{a^r})=m_S(e_{\overline{a}^l})$ for all angles $a$ of $\Omega(T)$.
\end{itemize}
\end{defn}

Assume that $S$ is glueable. By $(G1)$, when we glue puzzle pieces of $\Omega(T)$, segments in $S$ can be glued simultaneously (see Example \ref{ex:glue}). We say that the ``curves'' obtained at this time are \emph{tagged branched curves}. More precisely, a \emph{branched curve} in $\cS$ is a connected graph in $\cS$ whose vertices are interior points or marked points in $\cS$ such that the former vertices have degree three. A \emph{tagged branched curve} is a branched curve whose ends at marked points are tagged in the same way as tagged arcs.

Therefore, we obtain a multi-set of tagged branched curves in $\cS$ from $S$ by gluing segments simultaneously. We denote by $\widehat{S}$ the disjoint union of this multi-set and the multi-set $\{\gamma^{\frac{1}{2}m_S(\gamma)}\mid\gamma\in\Omega(T)\}$, where $m_S(\gamma)=m_S(e)+m_S(f)$ for the edges $e$ and $f$ corresponding to $\gamma$, and it is even by $(G2)$.

We can extend the notion of intersection numbers to tagged branched curves. For example,
\begin{equation}\label{eq:Int c}
\Int(c_p,\widehat{S})=\sum_{a\in A_p}\Bigl(\Int(h_a,S_s)+\frac{1}{2}\Int(h_a,S_e)\Bigr),
\end{equation}
where $S=S_s\sqcup S_e$ and $S_e$ consists of all edges in $S$ corresponding to plain curves in $\Omega(T)$. Moreover,
\[
\Int_{\Omega(T)}(\widehat{S})=\left(\Int(\gamma,\widehat{S})\right)_{\gamma\in\Omega(T)}=\left(\frac{1}{2}\Int(\gamma,S)\right)_{\gamma\in\Omega(T)}=\frac{1}{2}\Int_{\Omega(T)}(S).
\]

\begin{lem}\label{lem:phi Int}
Assume that $S$ is glueable and satisfies $(\phi\ast,p)$ for a puncture $p$. Then $\phi_p(S)$ is also glueable and $\Int(c_p,\widehat{\phi_p(S)})\le\Int(c_p,\widehat{S})-1$, in particular, the equality holds if and only if the set $S_p$ of all elements of $S$ incident to $p$ is one of the sets $\{v_a\}$, $\{e_{a^r}^k,e_{\overline{a}^l}^k\}$, $\{f_a^k\}$, and $\{(f_a^{\bowtie})^k\}$ for some $a\in A_p$ and $k\in\bZ_{>0}$.
\end{lem}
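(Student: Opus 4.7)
My plan is to verify glueability of $\phi_p(S)$ first, then compute the change $\Delta := \Int(c_p,\widehat{\phi_p(S)}) - \Int(c_p,\widehat{S})$ directly from formula \eqref{eq:Int c} together with Tables \ref{table:def phi} and \ref{table:def phi mono}.

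\emph{Glueability of $\phi_p(S)$.} Since $\phi_p$ only modifies pieces $\triangle_b$ with $b\in A_p^\phi(S)$, I need to check $(G1)$ and $(G2)$ only for edges adjacent to $p$. The key input is $(\ast 1,p)$: together with $A_p^\phi(S)\neq\emptyset$, it gives $m_S(c_p)>0$, so $m_S(h_b)>0$ for every $b\in A_p$. Hence for any $b\in A_p\setminus A_p^\phi(S)$, failure of $(\phi,b)$ forces $\Int(h_b,S\cap\triangle_b)=0$, which in the triangle case gives $m_S(e_{b^l})=m_S(e_{b^r})=m_S(v_b)=0$ (similarly in the monogon case). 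Combined with $(G2)$ of $S$, any edge shared between a piece with angle in $A_p^\phi(S)$ and a piece with angle in $A_p\setminus A_p^\phi(S)$ has multiplicity zero on both sides, so by Lemma \ref{lem:Int tri}(1) (resp., Lemma \ref{lem:Int mono}) $\phi_a$ leaves the shared data untouched. For edges shared between two $\phi$-affected pieces, the changes on both sides agree by direct comparison of the relevant rows of Tables \ref{table:def phi} and \ref{table:def phi mono}.

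\emph{The inequality.} Write $J_a:=\Int(h_a,S_s\cap\triangle_a)+\tfrac12\Int(h_a,S_e\cap\triangle_a)$, so that $\Int(c_p,\widehat{S})=\sum_{a\in A_p}J_a$. A direct expansion gives $J_a=m_S(v_a)+\tfrac12(m_S(e_{a^l})+m_S(e_{a^r}))$ in triangle pieces and $J_a=m_S(f_a)+m_S(f_a^{\bowtie})+m_S(e_a)$ in monogon pieces (note that $f_a,f_a^{\bowtie}$ correspond to arcs of $T$, not of $\Omega(T)$, so they lie in $S_s$ with coefficient $1$). Reading $\Delta J_a$ off the tables for each case $(\phi,a)_k$ gives, in triangle pieces, the values $-1,-\tfrac32,-\tfrac12,-\tfrac32,-\tfrac12,-2,-1$ for $k=1,\dots,7$, and in monogon pieces $-1,-1$ for $k=1,2$. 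A half-integer value $-\tfrac12$ occurs only in cases $3$ and $5$, where $m_S(e_{a^l})>0$ or $m_S(e_{a^r})>0$; by $(G2)$ of $S$ the neighboring angle across this nonzero edge has the matching positive multiplicity, and since $m_S(h_{\cdot})>0$ it also lies in $A_p^\phi(S)$, contributing the companion $-\tfrac12$. Proposition \ref{prop:commute tri} pairs these two cases $(\phi\ast,a)_3$ and $(\phi\ast,a^l)_5$ when two angles of the same triangle both belong to $A_p^\phi(S)$. Summing over $A_p^\phi(S)$ then yields $\Delta\le-1$.

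\emph{Equality.} Equality $\Delta=-1$ forces exactly one maximal contribution (a single $-1$, or a paired $-\tfrac12+(-\tfrac12)$) with no further decreases. This singles out exactly the four listed configurations: $S_p=\{v_a\}$ comes from the isolated triangle case $(\phi,a)_7$; $S_p=\{e_{a^r}^k,e_{\overline{a}^l}^k\}$ from the paired triangle cases $(\phi,a)_5$ and $(\phi,\overline{a})_3$; $S_p=\{f_a^k\}$ and $\{(f_a^{\bowtie})^k\}$ from monogon cases $(\phi,a)_1$ and $(\phi,a)_2$ respectively. Any additional nonzero multiplicity in $S_p$ either activates another $\phi_b$ with its own negative contribution, or forces a case with $k\in\{1,2,4,6\}$ giving a strictly larger decrease.

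The main obstacle is the bookkeeping in the paired-angle situation of Proposition \ref{prop:commute tri}: one must verify that the positive $v_{a^r}$-additions that arise when applying $\phi_a\phi_{a^l}$ in a single triangle are properly absorbed into the formula so that the combined contribution to $\sum_a J_a$ is exactly $-1$.
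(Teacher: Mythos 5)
Your strategy is essentially the paper's: glueability via Lemmas \ref{lem:Int tri} and \ref{lem:Int mono}, and the drop of $\Int(c_p,\widehat{S})$ read off from \eqref{eq:Int c} together with Tables \ref{table:def phi} and \ref{table:def phi mono}; your per-case values of the ``own'' change $\Delta J_a$ are all correct, as is the observation that $f_a,f_a^{\bowtie}$ sit in $S_s$ with full weight. The genuine gap is precisely the step you defer in your last sentence: you sum only the own changes over $a\in A_p^{\phi}(S)$, i.e.\ you use the identity $\Int(c_p,\widehat{\phi_p(S)})-\Int(c_p,\widehat{S})=\sum_{a\in A_p^{\phi}(S)}\Delta J_a$ without proof. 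But $\phi_a$ also changes $m(e_{a^l}),m(e_{a^r}),m(v_{a^l}),m(v_{a^r})$, which enter $J_b$ for the \emph{other} angles $b$ of the same triangle; if that triangle has a second vertex at $p$, these cross-terms count, and in case $(\phi,a)_3$ (resp.\ $(\phi,a)_5$) the added $v_{a^r}$ (resp.\ $v_{a^l}$) makes the cross-term $+\tfrac12$, which could cancel your $-\tfrac12$ and destroy both the inequality and the equality classification. The fix is available with the tools you already invoke, but it must be carried out: if $a$ satisfies $(\phi,a)_3$ and the vertex at $a^r$ were $p$, then $m_S(e_{a^l})>0$ gives $\Int(h_{a^r},S)>0$, while $A_p^{\phi}(S)\neq\emptyset$ and $(\ast 1,p)$ give $m_S(c_p)>0$, hence $m_S(h_{a^r})>0$, so $a^r\in A_p^{\phi}(S)$; Proposition \ref{prop:commute tri} applied to this pair of angles forces $a$ to satisfy $(\phi,a)_5$, contradicting $(\phi,a)_3$ (and symmetrically for case $5$, and likewise the negative cross-terms of cases $1,2,4$ never occur at $p$). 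Only after this does your summation become exact and the inequality and the paired $-\tfrac12$ analysis go through.

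A second, smaller soft spot is in your ``only if'' direction: the claim that any additional nonzero multiplicity in $S_p$ ``either activates another $\phi_b$ or forces a worse case'' is correct for $e$-type edges and for $v$-segments at other corners, but it does not cover a monogon corner $b\in A_p$ carrying \emph{both} $f_b$ and $f_b^{\bowtie}$ with positive multiplicity: such a corner is never in $A_p^{\phi}(S)$, since $(\phi,b)$ requires one of the two multiplicities to vanish, and it does not change the drop, yet it would enlarge $S_p$. The paper's own proof is equally terse at this point, so this is a matter of completing the classification rather than of a wrong approach, but if you want a self-contained argument you should either exclude this configuration in the situations where the lemma is applied (for $S_U$ it is excluded by (Fact 2)) or address it explicitly.
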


\begin{proof}
By Lemmas \ref{lem:Int tri} and \ref{lem:Int mono}, $\phi_p(S)$ still satisfies (G1) and (G2). Since for an angle $a$ of $\Omega(T)$
\[
\Int(h_a,\phi_p(S))-\Int(h_a,S)
\begin{cases}
\le -1&\text{if $a\in A_p^{\phi}$};\\
=0&\text{otherwise},
\end{cases}
\]
the desired inequality holds. Moreover, it also follows from $(G2)$ that $\Int(c_p,\widehat{\phi_p(S)})=\Int(c_p,\widehat{S})-1$ if and only if $S_p=\{e_{a^r}^k,e_{\overline{a}^l}^k\}$ or $\Int(h_a,\phi_p(S))=\Int(h_a,S)-1$ for exactly one $a\in A_p^{\phi}$. The latter always holds if $\triangle_a$ is a monogon piece. If $\triangle_a$ is a triangle piece, it is equivalent that $S$ satisfies $(\phi,a)_7$. Thus we can obtain the desired list of $S_p$.
\end{proof}

\begin{defn}\label{def:enclosed}
Assume that $S$ is glueable and satisfies $(\phi\ast,p)$ for a puncture $p$. We say that $p$ is \emph{enclosed} in $\phi_p(S)$ if $\Int(c_p,\widehat{\phi_p(S)})=\Int(c_p,\widehat{S})-1$.
\end{defn}

By Lemma \ref{lem:phi Int}, if a puncture $p$ is enclosed in $S$, then $\widehat{S}$ contains one of the following tagged branched curves $\gamma$ around $p$:
\[
\begin{tikzpicture}[baseline=-2mm]
\coordinate(0)at(0,0);\coordinate(l)at(-0.7,0.8);\coordinate(r)at(0.7,0.8);\draw(0)--(l)--(r)--(0);
\draw[blue](0)circle(0.5);\draw[blue](0,0.5)--(0,1);
\fill(0)circle(0.07)node[below]{$p$};\fill(l)circle(0.07);\fill(r)circle(0.07);\node at(0.7,-0.2){$\gamma$};
\end{tikzpicture}
\hspace{5mm}
\begin{tikzpicture}[baseline=-2mm]
\coordinate(0)at(0,0);\coordinate(l)at(-1,0.8);\coordinate(r)at(1,0.8);\coordinate(c)at(0,0.8);\draw(0)--(l)--(r)--(0)--(c);
\draw[blue](c)to[out=-150,in=180](0,-0.5);\draw[blue](c)to[out=-30,in=0](0,-0.5);
\fill(0)circle(0.07)node[below]{$p$};\fill(l)circle(0.07);\fill(r)circle(0.07);\fill(c)circle(0.07);\node at(0.6,-0.2){$\gamma$};
\end{tikzpicture}
\hspace{5mm}
\begin{tikzpicture}[baseline=-2mm]
\coordinate(0)at(0,0);\coordinate(c)at(0,0.7);
\draw(0)to[out=150,in=180](0,1.1);\draw[blue](0)to[out=30,in=0](0,1.1);
\draw[blue](c)to[out=-150,in=180](0,-0.5);\draw[blue](c)to[out=-30,in=0](0,-0.5);
\fill(0)circle(0.07)node[below]{$p$};\fill(c)circle(0.07);\node at(0.6,-0.2){$\gamma$};
\end{tikzpicture}
\hspace{5mm}
\begin{tikzpicture}[baseline=-2mm]
\coordinate(0)at(0,0);\coordinate(c)at(0,0.7);
\draw(0)to[out=150,in=180](0,1.1);\draw[blue](0)to[out=30,in=0](0,1.1);
\draw[blue](c)to[out=-150,in=180]node[pos=0.13]{\rotatebox{-45}{\footnotesize $\bowtie$}}(0,-0.5);
\draw[blue](c)to[out=-30,in=0]node[pos=0.13]{\rotatebox{45}{\footnotesize $\bowtie$}}(0,-0.5);
\fill(0)circle(0.07)node[below]{$p$};\fill(c)circle(0.07);\node at(0.6,-0.2){$\gamma$};
\end{tikzpicture},
\]
where the three punctured loops on the right are compatible with $\widehat{S}$.

\begin{lem}\label{lem:enclosed}
Assume that $S$ is glueable and satisfies $(\phi\ast,p)$ for a puncture $p$. If $p$ is enclosed in $\phi_p(S)$ and a puncture $q$ is not enclosed in $S$, then $q$ is not enclosed in $\phi_p(S)$.
\end{lem}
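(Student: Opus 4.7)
The plan is to exploit the strong rigidity imposed by the enclosure of $p$ (which pins down both $S_p$ and the angles where $\phi_p$ acts) together with a monotonicity analysis of how $\phi_p$ modifies multiplicities near $q$, and then rule out the four enclosing shapes for $(\phi_p(S))_q$ by a short case check.

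First, by the characterization at the end of the proof of Lemma \ref{lem:phi Int}, the hypothesis ``$p$ is enclosed in $\phi_p(S)$'' is equivalent to $S_p$ being one of $\{v_a\}$, $\{e_{a^r}^k,e_{\overline{a}^l}^k\}$, $\{f_a^k\}$, or $\{(f_a^{\bowtie})^k\}$ for some $a\in A_p$ and $k>0$. For every other angle $c\in A_p$, all multiplicities of elements incident to $p$ in $\triangle_c$ vanish, so $\Int(h_c,S)=0$ and $c\notin A_p^{\phi}(S)$. Combined with the preservation of $(G2)$ by $\phi_p$ (Lemma \ref{lem:phi Int}), this confines $A_p^{\phi}(S)$ to the at most two angles appearing in the description of $S_p$ and fixes the case of each $\phi_a$ in Tables \ref{table:def phi} and \ref{table:def phi mono}.

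Second, I would read off from Tables \ref{table:def phi} and \ref{table:def phi mono} the effect of these $\phi_a$'s on multiplicities of elements incident to $q$: edges incident to $q$ can only decrease, segments $v_b$ with $b\in A_q$ can only increase, all other multiplicities near $q$ are preserved, and each ``$+1$ to $v_b$'' move (case $(\phi,a)_5$ with $a^l=b$ or case $(\phi,a)_3$ with $a^r=b$) is paired with a simultaneous ``$-1$'' on the unique edge incident to $q$ in the same puzzle piece. Since a monogon piece carries only one angle, no monogon piece contributes any $p/q$ interaction, and therefore $S$ and $\phi_p(S)$ agree on every monogon piece with an angle at $q$.

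Third, suppose for contradiction that $q$ is enclosed in $\phi_p(S)$, so $(\phi_p(S))_q$ equals one of $\{v_b\}$, $\{e_{b^r}^k,e_{\overline{b}^l}^k\}$, $\{f_b^k\}$, $\{(f_b^{\bowtie})^k\}$. For the monogon-type targets $\{f_b^k\}$ and $\{(f_b^{\bowtie})^k\}$, the previous paragraph gives $S_q=(\phi_p(S))_q$, so $q$ is already enclosed in $S$. For the triangle-type targets, the monotonicity and the paired edge-removal/$v$-addition let me trace $(\phi_p(S))_q$ back to $S_q$: the form $\{v_b\}$ can only arise either from $S_q=\{v_b\}$ itself or from $S_q=\{e_{b^r}^1,e_{\overline{b}^l}^1\}$ via a single paired move, and the form $\{e_{b^r}^k,e_{\overline{b}^l}^k\}$ forces $S_q=\{e_{b^r}^{k'},e_{\overline{b}^l}^{k'}\}$ for some $k'\ge k$, since otherwise $\phi_p$ would leave an extra $v_b$ at $q$. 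In every case $S_q$ is itself of enclosing type, contradicting that $q$ is not enclosed in $S$. The main obstacle is the bookkeeping in this case analysis, but the restriction on $A_p^{\phi}(S)$ from the first step confines the set of affected puzzle pieces and keeps the verification manageable.
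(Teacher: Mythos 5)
There is a genuine gap: you misapply the equality-case characterization of Lemma \ref{lem:phi Int}. The four multi-sets $\{v_a\}$, $\{e_{a^r}^k,e_{\overline{a}^l}^k\}$, $\{f_a^k\}$, $\{(f_a^{\bowtie})^k\}$ describe the elements incident to the puncture in the multi-set \emph{before} the modification; they characterize $S_p$ when $p$ is enclosed in $\phi_p(S)$. Hence ``$q$ is enclosed in $\phi_p(S)$'' does not say that $(\phi_p(S))_q$ has one of these forms; it says that there is some glueable $S''$ satisfying $(\phi\ast,q)$ with $\phi_q(S'')=\phi_p(S)$ whose part incident to $q$ has one of these forms---equivalently, as the paper uses the notion, that $\widehat{\phi_p(S)}$ contains one of the four local enclosing configurations around $q$, in which typically nothing of $\phi_p(S)$ is incident to $q$ at all (the enclosing circle or punctured loop does not touch $q$). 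For the same reason your concluding contradiction is aimed at the wrong statement: showing that $S_q$ has one of the four forms would not contradict ``$q$ is not enclosed in $S$''; it would only say that $q$ would become enclosed in $\phi_q(S)$, which is a different assertion. So both ends of your case analysis concern the wrong objects, and repairing this would force you to compare $S$ with an unknown pre-image $S''$ of $\phi_p(S)$ under $\phi_q$---essentially the recoverability problem treated in Theorem \ref{thm:unique S}, not something the local monotonicity read off from Tables \ref{table:def phi} and \ref{table:def phi mono} settles.

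A second warning sign is that your argument is purely local and never invokes the standing assumption that $\cS$ is not a sphere with three punctures. The paper's proof is topological: since $q$ is not enclosed in $S$, any enclosing tagged branched curve for $q$ in $\widehat{\phi_p(S)}$ must contain a segment of $\phi_p(S)\setminus S$, and all such new segments sit next to $p$; combined with the enclosing configuration around $p$ this yields a punctured loop in $\widehat{\phi_p(S)}$ enclosing both $p$ and $q$, forcing $\cS$ to be a sphere with exactly three punctures, contrary to the blanket assumption on $\cS$. Any correct proof must use this (or an equivalent) global restriction, and your multiplicity bookkeeping cannot detect it.
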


\begin{proof}
Assume that $q$ is enclosed in $\phi_p(S)$. It must be enclosed by a tagged branched curve including at least one segment in $\phi_p(S)\setminus S$. It follows from the above local configurations that there is a punctured loop in $\widehat{\phi_p(S)}$ enclosing $p$ and $q$. This means that $\cS$ is a sphere with exactly three punctures. It contradicts our assumption.
\end{proof}

\begin{example}\label{ex:enclosed}
The multi-sets $S_U$, $\phi_p(S_U)$, and $\phi_p^2(S_U)=\Phi(S_U)$ in Example \ref{ex:p-modif} are glueable. The figures in Example \ref{ex:p-modif} also represent the multi-sets $\widehat{S_U}$, $\widehat{\phi_p(S_U)}$, and $\widehat{\phi_p^2(S_U)}$. Moreover, the bottom puncture $p$ is enclosed in $\phi_p^2(S_U)$, but not enclosed in $S_U$ and $\phi_p(S_U)$.
\end{example}

\subsection{Recoverability}\label{subsec:recover}

Assume that $S$ is glueable thoroughout this subsection. To prove Theorem \ref{thm:unique S}, for each puncture $p$, we consider two integers $m_S(c_p)$ and
\[
d_p^S:=m_S(c_p)-\Int'(c_p,\widehat{S}),
\]
where for a set $X$, we define the following notation:
\[
\Int'(x,X):=
\begin{cases}
\Int(x,X)&\text{if $x\in X$};\\
0&\text{otherwise}.
\end{cases}
\]
Here, we remark that $m_S(c_p)=m_{\widehat{S}}(c_p)$. The following lemma gives their changes by a single $p$-modification.

\begin{lem}\label{lem:phi difference}
Let $p$ be a puncture in $\cS$. If $S$ is glueable and satisfies $(\phi\ast,p)$ and $(\ast 1,q)$ for all punctures $q$, then there is a (possibly empty) set $Q$ of punctures such that $S$ does not satisfy $(\phi,q)$ for all $q\in Q$, and
\[
m_{\phi_p(S)}(c_q)-m_S(c_q)=
\left\{
\begin{aligned}
&-1&&\text{if $q=p$}\\
&1&&\text{if $q\in Q$}\\
&0&&\text{otherwise}
\end{aligned}
\right\}
\text{ and }\ 
d_q^{\phi_p(S)}-d_q^S
\begin{cases}
\ge 0&\text{if $q=p$ and $Q=\emptyset$};\\
>\#Q&\text{if $q=p$ and $Q\neq\emptyset$};\\
=1&\text{if $q\in Q$};\\
=0&\text{otherwise}.
\end{cases}
\]
\end{lem}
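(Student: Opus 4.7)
The plan is to analyze, step by step, how the modification $\phi_p$ locally affects the relevant quantities, treating $\phi_p$ as the composition $\prod_{a\in A_p^{\phi}(S)}\phi_a$ and using the tables of Section~\ref{sec:modif} together with the compatibility conditions in the definition of $(\ast,p)$.

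First, I would establish the change in $m_S(c_q)$. By Tables~\ref{table:def phi} and~\ref{table:def phi mono}, each $\phi_a$ decreases $m_S(h_a)$ by exactly one and is monotone non-decreasing on $m_S(h_b)$ for all other angles $b$. Since $(\ast 1,p)$ forces $A_p^{\phi}(S)\subseteq A_p^{\min}(S)$, every $a\in A_p^{\phi}(S)$ realizes the minimum $m_S(c_p)$ of $m_S(h_\bullet)$ on $A_p$, so that $m_{\phi_p(S)}(c_p)=m_S(c_p)-1$. I then define
\[
Q:=\{\,q\neq p\text{ puncture}\mid m_{\phi_p(S)}(c_q)>m_S(c_q)\,\}.
\]
For $q\in Q$, I would verify that the increase is exactly $1$: at most one angle of any triangle piece sharing a vertex with $q$ can simultaneously belong to $A_p^{\phi}(S)$ and have a nontrivial effect on the opposite $h_b$, because Proposition~\ref{prop:commute tri} shows that two adjacent angles of the same triangle piece in $A_p^{\phi}(S)$ force case $(\phi\ast,a)_3$--$(\phi\ast,a^l)_5$, in which the third $h_b$ is left unchanged.

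Next, for $q\in Q$ I would prove $S$ does not satisfy $(\phi,q)$. By definition of $Q$, every $b\in A_q^{\min}(S)$ has $m_{\phi_p(S)}(h_b)>m_S(h_b)$, so $b$ must be an angle $a^l$ or $a^r$ of a triangle piece containing some $a\in A_p^{\phi}(S)$ in cases $(\phi,a)_2$, $(\phi,a)_4$, or $(\phi,a)_6$. In each of these cases, $(\ast,a)$ combined with $\Int(v_a,h_{a^l})>0$ (and the analogous intersections for $e_{a^l},e_{a^r}$) forces $m_S(h_b)=0$. Hence $m_S(c_q)=0$, and then $(\ast 1,q)$ together with the requirement $m_S(h_b)>0$ for $(\phi,b)$ gives $A_q^{\phi}(S)=\emptyset$, as claimed.

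Finally, I would compute the change in $d_q^S=m_S(c_q)-\Int'(c_q,\widehat{S})$ case by case. For $q\notin\{p\}\cup Q$ both $m_S(c_q)$ and $\Int'(c_q,\widehat{S})$ are unchanged because $\phi_p$ only alters puzzle pieces containing an angle in $A_p^{\phi}(S)\subseteq A_p$, and any change of segments in such a piece that does not raise $m_S(h_b)$ for $b\in A_q$ cannot change $\Int(c_q,\widehat{S})$ via formula~\eqref{eq:Int c}. For $q\in Q$, the vanishing $m_S(c_q)=0$ (so $\Int'(c_q,\widehat{S})=0$) and the local structure after $\phi_p$ show that the newly formed component $c_q\in\widehat{\phi_p(S)}$ bounds a disk containing no other curve of $\widehat{\phi_p(S)}$, whence $\Int(c_q,\widehat{\phi_p(S)})=0$ and $d_q^{\phi_p(S)}-d_q^S=1$. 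For $q=p$, the bound $\Int(c_p,\widehat{\phi_p(S)})\le\Int(c_p,\widehat{S})-1$ of Lemma~\ref{lem:phi Int} yields $d_p^{\phi_p(S)}-d_p^S\ge 0$ when $Q=\emptyset$; when $Q\neq\emptyset$, each $q\in Q$ provides an additional unit of decrease in $\Int(c_p,\widehat{S})$ because the segments of $S$ producing the leak into $q$ must belong to the exceptional configurations of Lemma~\ref{lem:phi Int} away from equality, giving $d_p^{\phi_p(S)}-d_p^S>\#Q$.

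The main obstacle will be the last part: keeping track of $\Int(c_p,\widehat{S})$ through a composite $\phi_p=\prod\phi_a$ in which several $\phi_a$'s may simultaneously reshape the configurations at different corners, and showing that each element of $Q$ contributes strictly more than one to the decrease in $\Int(c_p,\widehat{S})$. This requires a careful local inspection of the seven triangle cases and the two monogon cases in Tables~\ref{table:smallest} and~\ref{table:smallest mono}, reusing $(\ast,a)$ to rule out configurations incompatible with the claimed bounds.
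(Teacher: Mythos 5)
Your proposal breaks down at two places. First, the claim that $m_S(c_q)=0$ for $q\in Q$ is based on the assertion $\Int(v_a,h_{a^l})>0$, which is false: $v_a$, $h_{a^l}$, $h_{a^r}$ are pairwise compatible (see Figure \ref{fig:compatible segments}), and Table \ref{table:max phiast} (cases $k=2,4,6$) shows that precisely in the configurations where $\phi_a$ creates a new copy of $h_{a^l}$ or $h_{a^r}$, the multiplicity $m_S(h_b)$ at the affected angle $b\in A_q$ may already be positive. What one can conclude for $q\in Q$ is only that $\Int(h_b,S)=0$ for every $b\in A_q^{\min}(S)$; this, combined with $(\ast 1,q)$, is the correct reason why $A_q^{\phi}(S)=\emptyset$ (any $b\in A_q^{\phi}(S)\subseteq A_q^{\min}(S)$ would require $\Int(h_b,S)>0$), whereas $m_S(c_q)$ need not vanish. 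Consequently your computation of $d_q^{\phi_p(S)}-d_q^S=1$ via ``the new component $c_q$ bounds an empty disk'' is unsupported; the equality has to come from checking $\Int'(h_b,\phi_p(S))=\Int'(h_b,S)$ for all angles $b\notin A_p$ (a case analysis on $\Int(h_b,S)$ and $m_S(h_b)$ using Proposition \ref{prop:commute tri}), which forces $\Int'(c_q,\widehat{\,\cdot\,})$ to be unchanged for all $q\neq p$.

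Second, the strict bound $d_p^{\phi_p(S)}-d_p^S>\#Q$ when $Q\neq\emptyset$ is left essentially unproved, and your heuristic of ``one additional unit of decrease per $q\in Q$'' cannot suffice: since $m(c_p)$ drops by $1$, you need $\Int'(c_p,\widehat{\phi_p(S)})-\Int'(c_p,\widehat{S})\le -\#Q-2$, i.e.\ strictly more than one unit per element of $Q$ plus one more. The actual argument shows that each $q\in Q$ forces at least \emph{two} angles of $A_p^{\phi}(S)$ to be of type $(\phi\ast,\cdot)_k$ with $k\in\{2,4,6\}$; this is where glueability (G1) enters, via the identity $\Int(e_c,S)=m_S(h_a)+m_S(h_b)$ for the third angle $c$ and a comparison of $m_S(h_{b'})$ with $m_S(c_q)$ and $m_S(c_p)$ for an adjacent pair of angles $(a',b')$, using the minimality of $m_S(c_p)$, $m_S(c_q)$ and $(\ast 1,p)$. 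One then counts $\#Q\le\tfrac12\#(A_2\sqcup A_4)+\#A_6$, while by \eqref{eq:Int c} each such angle contributes at least $3/2$ (resp.\ $2$) to the drop of $\Int(c_p,\widehat{S})$, with a separate refinement when $\#Q=1$. Nothing in your outline supplies this quantitative step, so the proposal has a genuine gap exactly at the inequality that the lemma is needed for.
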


\begin{proof}
It follows from the definition of $\phi_p$ and Lemma \ref{lem:phi Int} that $m_{\phi_p(S)}(c_p)-m_S(c_p)=-1$ and $d_p^{\phi_p(S)}-d_p^S\ge 0$. We take $Q$ as the set of all punctures $q$ such that $q\neq p$ and $m_{\phi_p(S)}(c_q)\neq m_S(c_q)$.

First, we consider $h_b$ for an angle $b\notin A_p$ instead of $c_q$. If there is not $a\in A_p^{\phi}$ with $\triangle_a=\triangle_b$, then $m_{\phi_p(S)}(h_b)=m_S(h_b)$ and $\Int(h_b,\phi_p(S))=\Int(h_b,S)$ since $\phi_p(S)\cap\triangle_b=S\cap\triangle_b$. Assume that there is $a\in A_p^{\phi}$ with $\triangle_a=\triangle_b$. In particular, $\triangle_a=\triangle_b$ is a triangle piece. The following observations give $\Int'(h_b,\phi_p(S))=\Int'(h_b,S)$, in particular, $\Int'(c_q,\widehat{\phi_p(S)})=\Int'(c_q,\widehat{S})$ for all punctures $q\neq p$.
\begin{itemize}
\item If $\Int(h_b,S)>0$ and $m_S(h_b)=0$, then $m_{\phi_p(S)}(h_b)=0$ by the definition of $\phi_a$ (see Tables \ref{table:smallest} and \ref{table:def phi}). Thus $\Int'(h_b,\phi_p(S))=\Int'(h_b,S)=0$.
\item If $\Int(h_b,S)>0$ and $m_S(h_b)>0$, then Proposition \ref{prop:commute tri} gives that $m_{\phi_p(S)}(h_b)=m_S(h_b)$ and $\Int(h_b,\phi_p(S))=\Int(h_b,S)$.
\item If $\Int(h_b,S)=0$, then $\Int(h_b,\phi_p(S))\le\Int(h_b,S)=0$ by Lemma \ref{lem:Int tri}, and $m_{\phi_p(S)}(h_b)-m_S(h_b)$ is either zero or one by the definition of $\phi_a$. In particular, if it is one, then $S$ satisfies $(\phi\ast,a)_k$ for some $k\in\{2,4,6\}$.
\end{itemize}

It follows from the above observations that the assertion holds for punctures that are not in $Q\sqcup\{p\}$. Let $q\in Q$. The above observations also give that for each $b\in A_q^{\min}(S)$, there must be $a\in A_p^{\phi}(S)$ with $\triangle_a=\triangle_b$, and $\Int(h_b,S)=0$ and
\[
m_{\phi_p(S)}(c_q)-m_S(c_q)=m_{\phi_p(S)}(h_b)-m_S(h_b)=1.
\]
Moreover, since $\Int(h_b,S)=0$ for all $b\in A_q^{\min}(S)$ and $S$ satisfies $(\ast 1,q)$, $A_q^{\phi}(S)$ must be empty. Thus $S$ does not satisfy $(\phi,q)$.

Finally, we only need to consider the difference $d_p^{\phi_p(S)}-d_p^S$ in the case that $Q\neq\emptyset$. Let $q\in Q$. For each $b\in A_q^{\min}(S)$, there is $a\in A_p^{\phi}(S)$ with $\triangle_a=\triangle_b$ such that $m_{\phi_p(S)}(h_b)-m_S(h_b)=1$, and $S$ satisfies $(\phi\ast,a)_k$ for some $k\in\{2,4,6\}$. In particular, since $m_S(v_c)=0$ for the third angle $c$ of $\triangle_a=\triangle_b$,
\begin{equation}\label{eq:ec}
\Int(e_c,S)=m_S(h_a)+m_S(h_b).
\end{equation}
For such a pair $(a,b)\in A_p^{\phi}(S)\times A_q^{\min}(S)$, we take a pair of angles in $A_p\times A_q$
\[
(a',b'):=
\left\{
\begin{aligned}
&(\overline{a},\overline{a}^r)&&\text{if $b=a^l$}\\
&(\overline{b}^r,\overline{b})&&\text{if $b=a^r$}
\end{aligned}
\right\}
\text{ that is, }\ 
\begin{tikzpicture}[baseline=-1mm]
\coordinate(u)at(0,1);\coordinate(d)at(0,-1);\coordinate(l)at(-1,0);\coordinate(r)at(1,0);
\draw(u)--(r)--(d)--(l)--(u)--(d);
\node at($(u)+(-67.5:0.5)$){$a$};\node at($(u)+(-112.5:0.5)$){$a'$};
\node at($(d)+(67.5:0.5)$){$b$};\node at($(d)+(112.5:0.5)$){$b'$};
\node at($(r)+(180:0.3)$){$c$};
\fill(u)circle(0.07);\fill(d)circle(0.07);\node at(0.3,1.1){$p$};\node at(0.3,-1.1){$q$};
\end{tikzpicture}
\ \text{ or }\ 
\begin{tikzpicture}[baseline=-1mm]
\coordinate(u)at(0,1);\coordinate(d)at(0,-1);\coordinate(l)at(-1,0);\coordinate(r)at(1,0);
\draw(u)--(r)--(d)--(l)--(u)--(d);
\node at($(u)+(-67.5:0.5)$){$a'$};\node at($(u)+(-112.5:0.5)$){$a$};
\node at($(d)+(67.5:0.5)$){$b'$};\node at($(d)+(112.5:0.5)$){$b$};
\node at($(l)+(0:0.3)$){$c$};
\fill(u)circle(0.07);\fill(d)circle(0.07);\node at(0.3,1.1){$p$};\node at(0.3,-1.1){$q$};
\end{tikzpicture}\ .
\]
Then the following inequalities hold:
\begin{align*}
m_S(c_q)&\le m_S(h_{b'})&&\text{(by the minimality of $m_S(c_q)$)}\\
&\le m_S(h_b)+m_S(h_a)-m_S(h_{a'})&&\text{(by (G1) and \eqref{eq:ec})}\\
&=m_S(c_q)+m_S(c_p)-m_S(h_{a'})&&\text{(by $(a,b)\in A_p^{\phi}(S)\times A_q^{\min}(S)$ and $(\ast 1,p)$)}\\
&\le m_S(c_q)&&\text{(by the minimality of $m_S(c_p)$)}.
\end{align*}
Thus $m_S(h_{b'})=m_S(c_q)$. Since
\[
m_{\phi_p(S)}(h_{b'})\ge m_{\phi_p(S)}(c_q)=m_S(c_q)+1=m_S(h_{b'})+1,
\]
$S$ satisfies $(\phi\ast,a')_k$ for some $k\in\{2,4,6\}$. This means that $A_q$ must contain at least two angles each of which is an angle of $\triangle_d$ for $d\in A_p^{\phi}(S)$ where $S$ satisfies $(\phi\ast,d)_k$ for some $k\in\{2,4,6\}$. For $k\in\{2,4,6\}$, we set
\[
A_k:=\{d\in A_p^{\phi}(S)\mid\text{$S$ satisfies $(\phi\ast,d)_k$}\}.
\]
Since $\#(\phi_d(S)\setminus S)=1$ for $d\in A_2\sqcup A_4$ and $\#(\phi_d(S)\setminus S)=2$ for $d\in A_6$, we have the inequality
\begin{equation}\label{eq:246}
\#Q\le\frac{1}{2}\#(A_2\sqcup A_4)+\#A_6.
\end{equation}
Therefore,
\begin{align*}
\Int'(c_p,\widehat{\phi_p(S)})-\Int'(c_p,\widehat{S})&\le-\frac{3}{2}\#(A_2\sqcup A_4)-2\#A_6&&\text{(by \eqref{eq:Int c} and the definition of $\phi_d$)}\\
&\le-2\#Q&&\text{(by \eqref{eq:246})}.
\end{align*}
If $\#Q=1$, then the distinct angles $a$ and $a'$ above are contained in $A_2\sqcup A_4\sqcup A_6$, that is,
\[
-\frac{3}{2}\#(A_p^2\sqcup A_4)-2\#A_6\le-\frac{3}{2}\left(\#(A_2\sqcup A_4)+\#A_6\right)\le-3<-2=-2\#Q.
\]
Therefore, if $Q\neq\emptyset$, then
\[
d_p^{\phi_p(S)}-d_p^S=-1-\left(\Int'(c_p,\widehat{\phi_p(S)})-\Int'(c_p,\widehat{S})\right)
\begin{cases}
>2\#Q-1=\#Q&\text{if $\#Q=1$};\\
\ge 2\#Q-1>\#Q&\text{if $\#Q>1$}.
\end{cases}
\]
Thus the desired inequality holds.
\end{proof}

\begin{example}\label{ex:new}
In this example, we keep the notations in Example \ref{ex:T Int}. Let $p$ (resp., $q$, $r$) be the bottom (resp., left, right) puncture in $\cS$.
 
(1) From Example \ref{ex:p-modif}, we obtain the following equalities:
\begin{gather*}
m_{\phi_p(S_U)}(c_s)-m_{S_U}(c_s)=
\left\{
\begin{aligned}
&-1&&\text{if $s=p$}\\
&0&&\text{otherwise}
\end{aligned}
\right\}
\text{ and }\ 
d_s^{\phi_p(S_U)}-d_s^{S_U}=
\begin{cases}
1&\text{if $s=p$};\\
0&\text{otherwise},
\end{cases}
\\
m_{\phi_p^2(S_U)}(c_s)-m_{\phi_p(S_U)}(c_s)=
\left\{
\begin{aligned}
&-1&&\text{if $s=p$}\\
&0&&\text{otherwise}
\end{aligned}
\right\}
\text{ and }\ 
d_s^{\phi_p^2(S_U)}-d_s^{\phi_p(S_U)}=0\ \text{ for any $s$}.
\end{gather*}

(2) We consider a new tagged triangulation of $\cS$
\[
\mu_5T=
\begin{tikzpicture}[baseline=-1mm]
\coordinate(d)at(0,-1.2);\coordinate(u)at(90:2);
\coordinate(l)at($(d)+(120:1.3)$);\coordinate(r)at($(d)+(60:1.3)$);
\draw(0,0)circle(2);
\draw[blue](u)..controls(-2,1.7)and(-2,-1.5)..node[left]{$1$}(d);
\draw[blue](u)--node[right]{$2$}(d)--node[left]{$4$}(l)--node[left]{$5'$}(u);
\draw[blue](u)..controls(2,1.7)and(2,-1.5)..node[right]{$3$}(d);
\draw[blue](d)to[out=20,in=130,relative]node[fill=white,inner sep=1]{$6$}(r);
\draw[blue](d)to[out=-20,in=-130,relative]node[fill=white,inner sep=1]{$7$}node[pos=0.8]{\scriptsize\rotatebox{0}{$\bowtie$}}(r);
\fill(l)circle(0.07);\fill(r)circle(0.07);\fill(d)circle(0.07);\fill(u)circle(0.07);
\node at($(d)+(0,-0.3)$){$p$};\node at($(l)+(-0.3,0)$){$q$};\node at($(r)+(0.2,0.2)$){$r$};
\end{tikzpicture}
\ ,\ \text{ where }\ 
\Omega(\mu_5T)=
\begin{tikzpicture}[baseline=-1mm]
\coordinate(d)at(0,-1.2);\coordinate(u)at(90:2);
\coordinate(l)at($(d)+(120:1.3)$);\coordinate(r)at($(d)+(60:1.3)$);
\draw(0,0)circle(2);
\draw[blue](u)..controls(-2,1.7)and(-2,-1.5)..node[left]{$1$}(d);
\draw[blue](u)--node[fill=white,inner sep=1]{$2$}(d)--node[left]{$4$}(l)--node[left]{$5'$}(u);
\draw[blue](u)..controls(2,1.7)and(2,-1.5)..node[right]{$3$}(d);
\draw[blue](d)to[out=25,in=90,relative]($(d)+(60:1.7)$);
\draw[blue](d)to[out=-25,in=-90,relative]($(d)+(60:1.7)$)node[above]{$9$};
\fill(l)circle(0.07);\fill(r)circle(0.07);\fill(d)circle(0.07);\fill(u)circle(0.07);
\end{tikzpicture}\ .
\]
For a multi-set $V=\{\rho(4)^3,\rho(5)^2\}\in\bM_{\cS}$, where $\Omega(V)=\{\rho(4),\rho(8)^2\}$, we define $S_V$ as a multi-set of non-loop edges and segments in puzzle pieces of $\Omega(\mu_5T)$. Then $S_V$ is glueable and satisfies $(\phi\ast,p)$, $(\ast,q)$, and $(\ast,r)$ since $A_q^{\phi}(S_V)=\emptyset=A_r$. Thus $\widehat{S_V}=V^{\circ}=\{8^2,c_p^5,c_q\}$ and the maximal $p$-modification $\phi_p(S_V)$ of $S_V$ are given as follows:
\[
\widehat{S_V}=V^{\circ}=
\begin{tikzpicture}[baseline=-1mm]
\coordinate(d)at(0,-1.2);\coordinate(u)at(90:2);
\coordinate(l)at($(d)+(120:1.3)$);\coordinate(r)at($(d)+(60:1.3)$);
\draw(0,0)circle(2);
\draw(u)..controls(-2,1.7)and(-2,-1.5)..(d);
\draw(u)--(d)--(l)--(u);
\draw(u)..controls(2,1.7)and(2,-1.5)..(d);
\draw(d)to[out=25,in=90,relative]($(d)+(60:1.7)$);
\draw(d)to[out=-25,in=-90,relative]($(d)+(60:1.7)$);
\draw[blue](l)circle(0.2);\draw[blue](d)circle(0.2)circle(0.3)circle(0.4)circle(0.5)circle(0.6);
\draw[blue](d)to[out=30,in=90,relative]($(d)+(120:2)$);\draw[blue](d)to[out=-30,in=-90,relative]($(d)+(120:2)$);
\draw[blue](d)to[out=10,in=90,relative]($(d)+(120:1.8)$);\draw[blue](d)to[out=-10,in=-90,relative]($(d)+(120:1.8)$);
\fill(l)circle(0.07);\fill(r)circle(0.07);\fill(d)circle(0.07);\fill(u)circle(0.07);
\end{tikzpicture}
\ \text{ and }\ 
\phi_p(S_V)=\Phi(S_V)=
\begin{tikzpicture}[baseline=-1mm]
\coordinate(d)at(0,-1.2);\coordinate(u)at(90:2);
\coordinate(l)at($(d)+(120:1.3)$);\coordinate(r)at($(d)+(60:1.3)$);
\draw(0,0)circle(2);
\draw(u)..controls(-2,1.7)and(-2,-1.5)..coordinate[pos=0.74](15)coordinate[pos=0.78](14)coordinate[pos=0.82](13)coordinate[pos=0.86](12)coordinate[pos=0.9](11)(d);
\draw(u)--coordinate[pos=0.65](25)coordinate[pos=0.7](24)coordinate[pos=0.75](23)coordinate[pos=0.8](22)coordinate[pos=0.85](21)(d);
\draw(u)..controls(2,1.7)and(2,-1.5)..coordinate[pos=0.74](35)coordinate[pos=0.78](34)coordinate[pos=0.82](33)coordinate[pos=0.86](32)coordinate[pos=0.9](31)(d);
\draw(d)--coordinate[pos=0.3](41)coordinate[pos=0.4](42)coordinate[pos=0.5](43)coordinate[pos=0.6](44)coordinate[pos=0.7](45)coordinate[pos=0.8](46)(l);
\draw(l)--coordinate[pos=0.1](51)coordinate[pos=0.2](52)coordinate[pos=0.3](53)(u);
\draw(d)to[out=25,in=90,relative]coordinate[pos=0.2](9l1)coordinate[pos=0.27](9l2)coordinate[pos=0.34](9l3)coordinate[pos=0.41](9l4)coordinate[pos=0.48](9l5)($(d)+(60:1.7)$);
\draw(d)to[out=-25,in=-90,relative]coordinate[pos=0.2](9r1)coordinate[pos=0.27](9r2)coordinate[pos=0.34](9r3)coordinate[pos=0.41](9r4)coordinate[pos=0.48](9r5)($(d)+(60:1.7)$);
\draw[blue](11)--(41)--(21)--(9l1)--(9r1)--(31)to[out=-130,in=0](0,-1.4)to[out=180,in=-50](11);
\draw[blue](12)--(42)--(22)--(9l2)--(9r2)--(32)to[out=-120,in=0](0,-1.5)to[out=180,in=-60](12);
\draw[blue](13)--(43)--(23)--(9l3)--(9r3)--(33)to[out=-110,in=0](0,-1.6)to[out=180,in=-70](13);
\draw[blue](14)--(44)--(24)--(9l4)--(9r4)--(34)to[out=-100,in=0](0,-1.7)to[out=180,in=-80](14);
\draw[blue](15)..controls(-1.3,-0.3)and(-1.3,0.7)..(53)--(25)--(9l5)--(9r5)--(35)to[out=-90,in=0](0,-1.8)to[out=180,in=-90](15);
\draw[blue](45)arc(-60:-290:0.4)to[out=-50,in=50](45);\draw[blue](46)arc(-60:-290:0.25)to[out=-50,in=50](46);
\fill(l)circle(0.07);\fill(r)circle(0.07);\fill(d)circle(0.07);\fill(u)circle(0.07);
\end{tikzpicture}\ .
\]
Therefore,
\[
m_{\phi_p(S_V)}(c_s)-m_{S_V}(c_s)=
\left\{
\begin{aligned}
&-1&&\text{if $s=p$}\\
&1&&\text{if $s=q$}\\
&0&&\text{if $s=r$}
\end{aligned}
\right\}
\text{ and }\ 
d_s^{\phi_p(S_V)}-d_s^{S_V}=
\begin{cases}
3&\text{if $s=p$};\\
1&\text{if $s=q$};\\
0&\text{if $s=r$}.
\end{cases}
\]
Remark that $\phi_p(S_V)$ satisfies $(\psi\ast,p)$ and $(\psi\ast,q)$, and $\psi_p\phi_p(S_V)\neq\psi_q\phi_p(S_V)$ (see Example \ref{ex:recoverable}).
\end{example}

Repeating Lemma \ref{lem:phi difference}, we obtain a similar result for general $p$-modifications.

\begin{prop}\label{prop:phi difference}
Assume that $S$ is glueable and satisfies $(\ast 1,p)$ for all punctures $p$. Let
\[
\phi=\prod_{p\in P_{\phi}}\phi_p^{k_p}
\]
be a $p$-modification of $S$, where $k_p\in\bZ_{>0}$ for $p\in P_{\phi}$. Then there is a (possibly empty) set $Q_{\phi}$ of punctures with $Q_{\phi}\cap P_{\phi}=\emptyset$, and $a_p\in\bZ_{> 0}$ for $p\in Q_{\phi}$ such that
\[
m_{\phi(S)}(c_p)-m_S(c_p)=
\left\{
\begin{aligned}
&-k_p&&\text{if $p\in P_{\phi}$}\\
&a_p&&\text{if $p\in Q_{\phi}$}\\
&0&&\text{otherwise}
\end{aligned}
\right\}
\text{ and }\ 
d_p^{\phi(S)}-d_p^S
\begin{cases}
\ge 0&\text{if $p\in P_{\phi}$};\\
=a_p&\text{if $p\in Q_{\phi}$};\\
=0&\text{otherwise}.
\end{cases}
\]
Moreover, if $Q_{\phi}\neq\emptyset$, then
\[
\sum_{p\in P_{\phi}}(d_p^{\phi(S)}-d_p^S)>\sum_{p\in Q_{\phi}}a_p.
\]
\end{prop}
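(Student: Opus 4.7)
The plan is to proceed by induction on $n := \sum_{p \in P_\phi} k_p$, the total number of elementary factors composing $\phi$. The base case $n = 1$, with $\phi = \phi_p$, is exactly Lemma \ref{lem:phi difference}: take $Q_\phi := Q$ and $a_q := 1$ for every $q \in Q_\phi$. The two stated identities are then just a rewriting of the lemma's identities, and the strict inequality $d_p^{\phi(S)} - d_p^S > \#Q$ becomes $\sum_{p \in P_\phi}(d_p^{\phi(S)} - d_p^S) > \sum_{q \in Q_\phi} a_q$ as required.

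For the inductive step, factor $\phi = \phi_p \circ \phi'$ with $p \in P_\phi$ and $\phi'$ of total count $n-1$, and verify that Lemma \ref{lem:phi difference} applies to $\phi'(S)$ at $p$. Glueability of $\phi'(S)$ follows by iterating Lemma \ref{lem:phi Int}; the condition $(\phi\ast, p)$ for $\phi'(S)$ is guaranteed because $\phi$ is a well-defined $p$-modification (Proposition-Definition \ref{propdef:p-modif} together with Proposition \ref{prop:commute}); and preservation of $(\ast 1, q)$ at every puncture $q$ at each intermediate stage requires a short verification from the single-step formula, noting that any increase in $m(h_b)$ for $b \in A_q$ under a single $\phi_r$ occurs at some $b \in A_q^{\min}$ and so the inclusion $A_q^\phi \subseteq A_q^{\min}$ persists. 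Applying the inductive hypothesis to $\phi'$ yields $Q_{\phi'}$ with weights $a_r'$, and applying Lemma \ref{lem:phi difference} to $\phi'(S)$ at $p$ yields a set $Q$ of unit contributions; the identities on $m(c_{(\cdot)})$ and $d_{(\cdot)}$ then follow by telescoping. Define $Q_\phi$ to consist of those punctures $r \notin P_\phi$ whose combined contribution $a_r$ is positive, which makes $Q_\phi \cap P_\phi = \emptyset$ automatic.

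The main obstacle is the strict inequality when $Q_\phi \neq \emptyset$. The cleanest route is to sum the single-step inequalities in Lemma \ref{lem:phi difference} over all $n$ elementary factors $\phi_{p_1}, \ldots, \phi_{p_n}$ of $\phi$: the $i$-th step contributes a set $Q^{(i)}$ together with an inequality that is $\ge \#Q^{(i)}$ on the $d$-difference at $p_i$, and is \emph{strict} whenever $Q^{(i)} \neq \emptyset$. Telescoping over $i$ yields
\[
\sum_{p \in P_\phi}\bigl(d_p^{\phi(S)} - d_p^S\bigr) \;\ge\; \sum_{i=1}^n \#Q^{(i)} \;\ge\; \sum_{q \in Q_\phi} a_q,
\]
because every unit of $a_q$ for $q \in Q_\phi$ was contributed by some step $i$ with $q \in Q^{(i)}$, while extra contributions from steps whose $Q^{(i)}$-elements later get acted upon by a factor $\phi_q$ (and therefore end up in $P_\phi$ rather than in $Q_\phi$) are accounted for by the second inequality above. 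The delicate part of the argument is the bookkeeping for these cancellations — a puncture $q$ can temporarily land in some $Q^{(i)}$ and be cancelled by a later $p_j = q$ — and this is precisely what forces the inequality to be strict when $Q_\phi \neq \emptyset$, since then at least one $Q^{(i)}$ is nonempty and contributes strictly on the left while contributing nothing extra on the right.
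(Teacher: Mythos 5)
Your overall strategy is the paper's: iterate Lemma \ref{lem:phi difference} over the elementary factors of $\phi$, telescope the single-step formulas, and let $a_q$ be the accumulated multiplicity of $q$ in the exceptional sets. The gap is in the one place where this needs care. You explicitly allow a puncture $q\in P_{\phi}$ to ``temporarily land in some $Q^{(i)}$ and be cancelled by a later $p_j=q$,'' and you lean on this picture both for the bookkeeping and for the strictness. But nothing cancels at the level of $m(c_q)$: by Lemma \ref{lem:phi difference}, each elementary step changes $m(c_q)$ by exactly $-1$ (when $q$ is the active puncture), $+1$ (when $q$ lies in that step's $Q$), or $0$; the $\phi_q$-steps contribute exactly $-k_q$ in total no matter what happened before. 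So if $q\in P_{\phi}$ ever appeared in an intermediate $Q^{(i)}$, the telescoped total would be $-k_q+\#\{i: q\in Q^{(i)}\}\neq -k_q$, contradicting the first identity you are trying to prove (and spoiling $Q_{\phi}\cap P_{\phi}=\emptyset$ together with the case distinction for $d_q^{\phi(S)}-d_q^S$). Hence the proof must show that punctures of $P_{\phi}$ \emph{never} occur in any intermediate exceptional set, which your argument neither states nor supplies; your inequality chain only absorbs such occurrences, it does not rule them out, and the claim that they are what ``forces'' strictness is backwards — strictness comes from the strict single-step inequality $>\#Q$ of Lemma \ref{lem:phi difference} whenever $Q\neq\emptyset$.

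The missing ingredient is exactly what the paper's proof records as $Q_p\cap P_{\phi}=\emptyset$ at every stage: membership of $q$ in a step's exceptional set forces the current multi-set to fail $(\phi,q)$, whereas by Proposition \ref{prop:commute} (iterated, as in Proposition-Definition \ref{propdef:p-modif}) every intermediate multi-set still satisfies $(\phi\ast,q)$ as long as the factor $\phi_q$ has not been exhausted; for stages after $\phi_q^{k_q}$ has been used up one invokes the order-independence of $\phi(S)$ (process $q$ last), which the paper points out at the end of its proof. Your remaining verifications (glueability via Lemma \ref{lem:phi Int}, $(\phi\ast,p)$ at intermediate stages, persistence of $(\ast 1,q)$ in the spirit of Proposition \ref{prop:recover p}) are in line with the paper, but as written the induction does not establish the exact identity $m_{\phi(S)}(c_p)-m_S(c_p)=-k_p$ for $p\in P_{\phi}$, so the argument is incomplete at its central point.
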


\begin{proof}
Let $p\in P_{\phi}$. Applying Lemma \ref{lem:phi difference} to $\phi_p^s(S)$ and $p$ for $s\in\{0,1,\ldots,k_p-1\}$, we obtain the set $Q_p^s$ of punctures in Lemma \ref{lem:phi difference}. We consider the sum of $Q_p^s$
\[
\overline{Q}_p:=\bigsqcup_{s=0}^{k_p-1}Q_p^s
\]
as a multi-set and its underlying set $Q_p$. Then $Q_p\cap P_{\phi}=\emptyset$ and
\[
m_{\phi_p^{k_p}(S)}(c_r)-m_S(c_r)=
\left\{
\begin{aligned}
&-k_p&&\text{if $r=p$}\\
&m_{\overline{Q}_p}(r)&&\text{otherwise}
\end{aligned}
\right\},\ 
d_r^{\phi_p^{k_p}(S)}-d_r^S
\begin{cases}
\ge 0&\text{if $r=p$ and $Q_p=\emptyset$};\\
>\#\overline{Q}_p&\text{if $r=p$ and $Q_p\neq\emptyset$};\\
=m_{\overline{Q}_p}(r)&\text{otherwise}.
\end{cases}
\]

Next, let $q\in P_{\phi}\setminus\{p\}$. Applying Lemma \ref{lem:phi difference} to $\phi_{q}^{t}\phi_p^{k_p}(S)$ and $q$ for $t\in\{0,1,\ldots,k_{q}-1\}$, we obtain the set $Q_{q}^{t}$ of punctures in Lemma \ref{lem:phi difference}. We consider the sum of $Q_{q}^{t}$
\[
\overline{Q}_{q}:=\bigsqcup_{t=0}^{k_{q}-1}Q_{q}^{t}
\]
as a multi-set and its underlying set $Q_{q}$. Then $Q_{q}\cap P_{\phi}=\emptyset$ and
\[
m_{\phi_{q}^{k_{q}}\phi_p^{k_p}(S)}(c_r)-m_S(c_r)=
\begin{cases}
-k_{p}&\text{if $r=p$};\\
-k_{q}&\text{if $r=q$};\\
m_{\overline{Q}_p}(r)+m_{\overline{Q}_{q}}(r)&\text{otherwise},
\end{cases}
\]
\[
d_r^{\phi_{q}^{k_{q}}\phi_p^{k_p}(S)}-d_r^S
\begin{cases}
\ge 0&\text{if $r$ is $p$ or $q$, and $Q_p=Q_{q}=\emptyset$};\\
>\#\overline{Q}_p&\text{if $r=p$ and $Q_p\neq\emptyset$};\\
>\#\overline{Q}_{q}&\text{if $r=q$ and $Q_{q}\neq\emptyset$};\\
=m_{\overline{Q}_{q}}(r)&\text{otherwise}.
\end{cases}
\]
Repeating these processes, we obtain a multi-set $\overline{Q}_p$ for each $p\in P_{\phi}$, and the underlying set $Q_{\phi}$ of
\[
\overline{Q}_{\phi}:=\bigsqcup_{p\in P_{\phi}}\overline{Q}_p
\]
is the desired set. In fact, taking $a_p=m_{\overline{Q}_{\phi}}(p)$ for $p\in Q_{\phi}$, the assertion holds. In particular, if $Q_{\phi}\neq\emptyset$, then
\[
\sum_{p\in P_{\phi}}(d_p^{\phi(S)}-d_p^S)>\sum_{p\in P_{\phi}}\#\overline{Q}_p=\#\overline{Q}_{\phi}=\sum_{p\in Q_{\phi}}m_{\overline{Q}_{\phi}}(p)=\sum_{p\in Q_{\phi}}a_p,
\]
where the first inequality follows from $Q_{\phi}\cap P_{\phi}=\emptyset$. Finally, we remark that each $\overline{Q}_p$ depends on the order of choosing $p\in P_{\phi}$ in the above processes, but $\overline{Q}_{\phi}$ is independent of that since $\phi(S)$ is well-defined.
\end{proof}

We recall that $V_T$ is the set of all punctures in $\cS$ incident to $\Omega(T)$, and $E_T$ is the set of all plain curves in $\Omega(T)$ connecting punctures. For $p,q\in V_T$, let $E_T^{pq}$ be the set of all plain curves in $\Omega(T)$ connecting $p$ and $q$. Notice that for $\gamma\in E_T^{pq}$,
\begin{equation}\label{eq:difference}
m_S(c_p)+m_S(c_q)\le\Int(\gamma,\widehat{S}).
\end{equation}

\begin{defn}\label{def:characterized}
We say that $S$ is \emph{characterized by $(n_{\gamma})_{\gamma\in E_T}\in\bZ_{\ge 0}^{E_T}$} if the following hold:
\begin{itemize}
\item[(C1)] For $\gamma\in E_T^{pq}$, if $n_{\gamma}>0$, then $m_S(c_p)+m_S(c_q)=\Int(\gamma,\widehat{S})$.
\item[(C2)] For $p\in V_T$,
\[
d_p^S=\sum_{q\in V_T\setminus\{p\}}\sum_{\gamma\in E_T^{pq}}n_{\gamma}+2\sum_{\gamma\in E_T^{pp}}n_{\gamma}.
\]
\end{itemize}
\end{defn}

We are ready to give a sufficient condition of $S$ such that it is recoverable from a $p$-modification of it.

\begin{defn}\label{def:recoverable}
We say that $S$ is \emph{$p$-recoverable} if it satisfies the following:
\begin{itemize}
\item[(1)] It satisfies $(\ast 1,p)$ for all punctures $p$ (Definition \ref{def:ast p}).
\item[(2)] It is glueable (Definition \ref{def:glueable}).
\item[(3)] It has no enclosed punctures (Definition \ref{def:enclosed}).
\item[(4)] It is characterized by a non-negative integer vector (Definition \ref{def:characterized}).
\end{itemize}
\end{defn}

\begin{thm}\label{thm:unique S}
Let $S$ and $S'$ be $p$-recoverable multi-sets of non-loop edges and segments in puzzle pieces of $\Omega(T)$, and $\phi$ and $\phi'$ be $p$-modifications of $S$ and $S'$, respectively. If $\phi(S)=\phi'(S')$, then $S=S'$.
\end{thm}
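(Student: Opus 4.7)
The plan is to show that a $p$-recoverable multi-set $S$, together with a $p$-modification $\phi$, is uniquely reconstructible from $T := \phi(S)$; since $\phi(S) = \phi'(S') = T$, the same reconstruction applied to $(S', \phi')$ will force $S = S'$. By Proposition~\ref{prop:recover p}, each factor $\phi_p$ in $\phi$ is inverted by $\psi_p$, so once the exponents $(k_p)_{p \in P_\phi}$ of $\phi$ are identified, $S$ is obtained from $T$ by applying $\prod_{p \in P_\phi} \psi_p^{k_p}$ (the order is immaterial by Proposition~\ref{prop:commute}). Hence the whole game is to recover $(k_p)$ and the characterizing vector $(n_\gamma)_{\gamma \in E_T}$ from $T$.

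The main tool is Proposition~\ref{prop:phi difference}, which expresses, for each puncture $p$,
\[
m_T(c_p) - m_S(c_p) = \begin{cases} -k_p & p \in P_\phi \\ a_p & p \in Q_\phi \\ 0 & \text{otherwise} \end{cases}, \qquad d_p^T - d_p^S = \begin{cases} \ge 0 & p \in P_\phi \\ a_p & p \in Q_\phi \\ 0 & \text{otherwise,} \end{cases}
\]
together with the strict separation $\sum_{p \in P_\phi}(d_p^T - d_p^S) > \sum_{p \in Q_\phi} a_p$ when $Q_\phi \neq \emptyset$. Meanwhile, the characterization (C2) writes $d_p^S$ as a linear function of $(n_\gamma)$, and (C1) combined with \eqref{eq:difference} pins down the support of $(n_\gamma)$ to the edges $\gamma \in E_T^{pq}$ that saturate $m_S(c_p)+m_S(c_q) = \Int(\gamma, \widehat{S})$. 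The upshot is a rigid system of equalities and inequalities relating the observable quantities $(m_T(c_p), d_p^T, \Int(\gamma, \widehat{T}))$ to the unknowns $(m_S(c_p), d_p^S, n_\gamma, k_p, a_p)$.

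To solve this system I would proceed by induction on $\sum_p k_p$ (or equivalently on $\sum_p(\Int(c_p,\widehat{S}) - \Int(c_p,\widehat{T}))$, which shrinks when a $\phi_p$ factor is peeled off). The inductive base $\phi = \mathrm{id}$ is tautological. For the step, one selects a puncture $p \in P_\phi$, uses the strict inequality above together with the no-enclosed-punctures hypothesis (3) — which via Lemma~\ref{lem:phi Int} forces $\Int(c_p,\widehat{\phi_p(S)}) \leq \Int(c_p, \widehat{S})-2$ and via Lemma~\ref{lem:enclosed} guarantees this property is preserved after the peel — to unambiguously assign the observed drop in $m_T(c_p)$ to the exponent $k_p$ rather than to some "collateral" contribution coming from $Q_\phi$. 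Once $k_p$ is recovered, one strips a $\phi_p$ off by applying $\psi_p$ and invokes the inductive hypothesis on $(\psi_p(T), \phi/\phi_p)$.

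The main obstacle, as I anticipate it, is precisely disentangling $P_\phi$ from $Q_\phi$: the set $Q_\phi$ arises from triangles where $S$ satisfies $(\phi\ast,a)_k$ with $k \in \{2,4,6\}$, so a $\phi_p$ factor can inflate $m_S(c_q)$ at a neighboring puncture $q$ and thereby mimic a spurious $\phi_q$ factor. The strict inequality in Proposition~\ref{prop:phi difference}, in combination with (C1)–(C2), is what guarantees that such collateral effects cannot be confused with genuine $\phi_q$ contributions; verifying this rigidity in detail — effectively showing that the linear map $(k_p,n_\gamma) \mapsto (m_T(c_p), d_p^T)$ has full rank when restricted to the $p$-recoverable cone — will be the technical heart of the argument.
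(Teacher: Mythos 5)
Your proposal correctly assembles the relevant tools (Proposition \ref{prop:recover p}, Proposition \ref{prop:commute}, Proposition \ref{prop:phi difference}, Lemmas \ref{lem:phi Int} and \ref{lem:enclosed}, and the characterization (C1)--(C2)), and it correctly identifies the crux: separating the genuine exponents $k_p$ ($p\in P_\phi$) from the collateral growth at punctures in $Q_\phi$. But the step that is supposed to carry the induction is wrong, and the crux itself is left unproved. Concretely, hypothesis (3) of $p$-recoverability says only that no puncture is enclosed in $S$ itself; it does \emph{not} imply, via Lemma \ref{lem:phi Int}, that $\Int(c_p,\widehat{\phi_p(S)})\le\Int(c_p,\widehat{S})-2$ at every peel, nor is this preserved along the modification. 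The paper's own Examples \ref{ex:enclosed} and \ref{ex:new}(1) refute exactly this: $S_U$ is $p$-recoverable, the first application of $\phi_p$ drops $\Int(c_p,\widehat{\,\cdot\,})$ by $2$, but the second drops it by exactly $1$, i.e.\ $p$ becomes enclosed in $\phi_p^2(S_U)$; moreover the intermediate multi-sets $\phi_p(S_U)$ and $\phi_p^2(S_U)$ are not $p$-recoverable, so you cannot ``strip a $\phi_p$ off and invoke the inductive hypothesis'' --- the hypothesis of the theorem fails for the stripped data. Lemma \ref{lem:enclosed} also does not give the preservation you invoke: its hypothesis is that $p$ \emph{is} enclosed in $\phi_p(S)$, and it is used in the paper only at the very end, for a contradiction, not as an invariant along a peeling induction. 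Finally, the ambiguity you flag is real and not resolved by your sketch: as in Example \ref{ex:recoverable}(2), a modified multi-set can satisfy $(\psi\ast,p)$ and $(\psi\ast,q)$ simultaneously with $\psi_p\phi_p(S_V)\neq\psi_q\phi_p(S_V)$, so deciding which $\psi$ to apply is precisely the content of the theorem; declaring that a certain map ``has full rank on the $p$-recoverable cone'' is not a proof, and the relation is not linear (the sets $Q_\phi$ and the numbers $a_p$ arise from case-dependent inequalities, not equalities).

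For comparison, the paper does not reconstruct $S$ from $\phi(S)$ step by step. It first cancels the common factors of $\phi$ and $\phi'$ (Proposition \ref{prop:commute}), reducing to $P_\phi\cap P_{\phi'}=\emptyset$; then it applies Proposition \ref{prop:phi difference} to both pairs at once, introduces the sets $V_{\pm}$ of punctures where $m_{S'}(c_p)-m_S(c_p)$ is positive/negative, and derives a closed chain of inequalities (the analogues of \eqref{eq:P+-}, \eqref{eq:P+ dm}, \eqref{eq:P- dm}) by playing the characterization (C1)--(C2) of $S$ against that of $S'$. The collapse of this chain forces $Q_\phi=Q_{\phi'}=\emptyset$ and forces every puncture of $P_\phi\sqcup P_{\phi'}$ to be enclosed in $\phi(S)=\phi'(S')$, after which Lemma \ref{lem:enclosed} and the no-enclosed hypothesis on $S$ and $S'$ kill $P_\phi$ and $P_{\phi'}$ and give $S=\phi(S)=\phi'(S')=S'$. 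This global two-sided comparison is what your one-sided inductive reconstruction is missing.
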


\begin{proof}
We set
\[
\phi=\prod_{p\in P_{\phi}}\phi_p^{k_p}\ \text{ and }\ \phi'=\prod_{p\in P_{\phi'}}\phi_p^{h_p},
\]
where $P_{\phi}$ and $P_{\phi'}$ are sets of some punctures and $k_p,h_{p'}\in\bZ_{>0}$ for $p\in P_{\phi}$ and $p'\in P_{\phi'}$.

First, we assume that $P_{\phi}\cap P_{\phi'}=\emptyset$. Since $S$ and $S'$ are glueable and satisfy $(\ast 1,p)$ for all punctures $p$, we can apply Proposition \ref{prop:phi difference} to both pairs $(S,\phi)$ and $(S',\phi')$. Then there are (possibly empty) subsets $Q_{\phi}$ and $Q_{\phi'}$ of $V_T$ with $Q_{\phi}\cap P_{\phi}=\emptyset$ and $Q_{\phi'}\cap P_{\phi'}=\emptyset$, and $a_p,b_p\in\bZ_{\ge 0}$ with $a_p=0$ (resp., $b_p=0$) if and only if $p\notin Q_{\phi}$ (resp., $p\notin Q_{\phi'}$) such that
\[
m_{S'}(c_p)-m_S(c_p)=\begin{array}{l}m_{S'}(c_p)-m_{\phi'(S')}(c_p)\\\ +m_{\phi(S)}(c_p)-m_S(c_p)\end{array}=
\begin{cases}
-k_p-b_p&\text{if $p\in P_{\phi}$};\\
h_p+a_p&\text{if $p\in P_{\phi'}$};\\
a_p-b_p&\text{if $p\in(Q_{\phi}\cup Q_{\phi'})\setminus(P_{\phi}\cup P_{\phi'})$};\\
0&\text{otherwise},
\end{cases}
\]
\[
d_p^{S'}-d_p^S=d_p^{S'}-d_p^{\phi'(S')}+d_p^{\phi(S)}-d_p^S
\begin{cases}
\ge -b_p&\text{if $p\in P_{\phi}$};\\
\le a_p&\text{if $p\in P_{\phi'}$};\\
=a_p-b_p&\text{if $p\in(Q_{\phi}\cup Q_{\phi'})\setminus(P_{\phi}\cup P_{\phi'})$};\\
=0&\text{otherwise}.
\end{cases}
\]
Moreover, if $Q_{\phi}\neq\emptyset$, then
\begin{equation}\label{eq:>}
\sum_{p\in P_{\phi}}(d_p^{S'}-d_p^S)=\sum_{p\in P_{\phi}}(d_p^{S'}-d_p^{\phi'(S')})+\sum_{p\in P_{\phi}}(d_p^{\phi(S)}-d_p^S)
>-\sum_{p\in P_{\phi}}b_p+\sum_{p\in Q_{\phi}}a_p.
\end{equation}
Similarly, if $Q_{\phi'}\neq\emptyset$, then
\begin{equation}\label{eq:<}
\sum_{p\in P_{\phi'}}(d_p^{S'}-d_p^S)<-\sum_{p\in Q_{\phi'}}b_p+\sum_{p\in P_{\phi'}}a_p.
\end{equation}

We consider two sets
\[
V_+:=\{p\in V_T\mid m_{S'}(c_p)-m_S(c_p)>0\}\ \text{ and }\ V_-:=\{q\in V_T\mid m_{S'}(c_q)-m_S(c_q)<0\},
\]
where $P_{\phi'}\subseteq V_+\subseteq P_{\phi'}\cup Q_{\phi}$ and $P_{\phi}\subseteq V_-\subseteq P_{\phi}\cup Q_{\phi'}$. If $Q_{\phi}=\emptyset$, then
\[
\sum_{q\in V_-}(d_q^{S'}-d_q^S)\ge\sum_{q\in V_-}(-b_q)\ge-\sum_{q\in Q_{\phi'}}b_q
\]
since $b_q=0$ for $q\notin Q_{\phi'}$. If $Q_{\phi}\neq\emptyset$, then
\begin{align*}
\sum_{q\in V_-}(d_q^{S'}-d_q^S)&=\sum_{q\in P_{\phi}}(d_q^{S'}-d_q^S)+\sum_{q\in V_-\setminus P_{\phi}}(d_q^{S'}-d_q^S)&&\text{(by $P_{\phi}\subseteq V_-$)}\\
&>\sum_{p\in Q_{\phi}}a_p-\sum_{q\in P_{\phi}}b_q+\sum_{q\in V_-\setminus P_{\phi}}(a_q-b_q)&&\text{(by \eqref{eq:>} and $(V_-\setminus P_{\phi})\subseteq Q_{\phi'}$)}\\
&\ge\sum_{p\in Q_{\phi}}a_p-\sum_{q\in V_-}b_q\\
&\ge\sum_{p\in Q_{\phi}}a_p-\sum_{q\in Q_{\phi'}}b_q&&\text{(since $b_q=0$ for $q\notin Q_{\phi'}$)}.
\end{align*}
Similarly, if $Q_{\phi'}=\emptyset$, then
\[
\sum_{p\in V_+}(d_p^{S'}-d_p^S)\le\sum_{p\in V_+}a_p\le\sum_{p\in Q_{\phi}}a_p
\]
since $a_p=0$ for $p\notin Q_{\phi}$. If $Q_{\phi'}\neq\emptyset$, then
\begin{align*}
\sum_{p\in V_+}(d_p^{S'}-d_p^S)&=\sum_{p\in P_{\phi'}}(d_p^{S'}-d_p^S)+\sum_{p\in V_+\setminus P_{\phi'}}(d_p^{S'}-d_p^S)&&\text{(by $P_{\phi'}\subseteq V_+$)}\\
&<\sum_{p\in P_{\phi'}}a_p-\sum_{q\in Q_{\phi'}}b_q+\sum_{p\in V_+\setminus P_{\phi'}}(a_p-b_p)&&\text{(by \eqref{eq:<} and $(V_+\setminus P_{\phi'})\subseteq Q_{\phi}$)}\\
&\le\sum_{p\in V_+}a_p-\sum_{q\in Q_{\phi'}}b_q\\
&\le\sum_{p\in Q_{\phi}}a_p-\sum_{q\in Q_{\phi'}}b_q&&\text{(since $a_p=0$ for $p\notin Q_{\phi}$)}.
\end{align*}
Therefore, in all cases,
\begin{equation}\label{eq:P+-}
\sum_{p\in V_+}(d_p^{S'}-d_p^S)\le\sum_{p\in Q_{\phi}}a_p-\sum_{q\in Q_{\phi'}}b_q\le\sum_{q\in V_-}(d_q^{S'}-d_q^S),
\end{equation}
and if the equalities hold, then $Q_{\phi}=Q_{\phi'}=\emptyset$.

On the other hand, by the assumption, $S$ is characterized by some $(n_{\gamma})_{\gamma\in E_T}\in\bZ_{\ge 0}^{E_T}$ and $S'$ is characterized by some $(n_{\gamma}')_{\gamma\in E_T}\in\bZ_{\ge 0}^{E_T}$. For $p\in V_+$, if there is no $\gamma\in E_T^{pq}$ with $n_{\gamma}>0$ for some $q\in V_T$, then $d_p^S=0$ by (C2); If there is $\gamma\in E_T^{pq}$ with $n_{\gamma}>0$ for some $q\in V_T$, then $m_S(c_p)$, $m_S(c_q)$, and $m_{S'}(c_p)$ are positive by (C2) and $p\in V_+$. If $m_{S'}(c_q)>0$, then
\begin{align*}
m_{S'}(c_q)&\le\Int(\gamma,\widehat{S'})-m_{S'}(c_p)&&\text{(by \eqref{eq:difference})}\\
&=\Int(\gamma,\widehat{\phi'(S')})-m_{S'}(c_p)&&\text{(by Lemmas \ref{lem:Int tri} and \ref{lem:Int mono})}\\
&<\Int(\gamma,\widehat{\phi(S)})-m_S(c_p)&&\text{(by $\phi(S)=\phi'(S')$ and $p\in V_+$)}\\
&=\Int(\gamma,\widehat{S})-m_S(c_p)&&\text{(by Lemmas \ref{lem:Int tri} and \ref{lem:Int mono})}\\
&=m_S(c_q)&&\text{(by (C1))},
\end{align*}
thus $q\in V_-$. If $m_{S'}(c_q)=0$, then it is clear that $q\in V_-$. Therefore, since $V_+\cap V_-=\emptyset$, (C2) gives
\begin{equation}\label{eq:P+ dm}
\sum_{p\in V_+}d_p^S=\sum_{p\in V_+}\sum_{q\in V_-}\sum_{\gamma\in E_T^{pq}}n_{\gamma}
\le\sum_{p\in V_T}\sum_{q\in V_-}\sum_{\gamma\in E_T^{pq}}n_{\gamma}\le\sum_{q\in V_-}d_q^S.
\end{equation}
Similarly, for $q\in V_-$, if there is no $\gamma\in E_T^{pq}$ with $n_{\gamma}'>0$ for some $p\in V_T$, then $d_q^S=0$ by (C2); If there is $\gamma\in E_T^{pq}$ with $n_{\gamma}'>0$ for some $p\in V_T$, then $m_S(c_q)$, $m_{S'}(c_p)$, and $m_{S'}(c_q)$ are positive by (C2) and $q\in V_-$. If $m_S(c_p)>0$, then we get that $m_S(c_p)<m_{S'}(c_p)$ in the same way as above, thus $p\in V_+$. If $m_S(c_p)=0$, then it is clear that $p\in V_+$. Therefore, since $V_+\cap V_-=\emptyset$, (C2) gives
\begin{equation}\label{eq:P- dm}
\sum_{q\in V_-}d_q^{S'}=\sum_{q\in V_-}\sum_{p\in V_+}\sum_{\gamma\in E_T^{pq}}n_{\gamma}'
\le\sum_{q\in V_T}\sum_{p\in V_+}\sum_{\gamma\in E_T^{pq}}n_{\gamma}'\le\sum_{p\in V_+}d_p^{S'}.
\end{equation}

The inequalities \eqref{eq:P+-}, \eqref{eq:P+ dm}, and \eqref{eq:P- dm} induce
\[
0\le\sum_{q\in V_-}d_q^S-\sum_{p\in V_+}d_p^S\le\sum_{q\in V_-}d_q^{S'}-\sum_{p\in V_+}d_p^{S'}\le 0.
\]
This means that the equalities in \eqref{eq:P+-} hold, that is, $Q_{\phi}=Q_{\phi'}=\emptyset$. Therefore, $V_+=P_{\phi'}$ and $V_-=P_{\phi}$. Moreover, the equalities in \eqref{eq:P+ dm} and \eqref{eq:P- dm} also hold, that is, 
\[
\sum_{q\in P_{\phi}}d_q^S=\sum_{p\in P_{\phi'}}d_p^S\ \text{ and }\ \sum_{q\in P_{\phi}}d_q^{S'}=\sum_{p\in P_{\phi'}}d_p^{S'}.
\]
On the other hand, by Proposition \ref{prop:phi difference},
\[
\sum_{q\in P_{\phi}}d_q^{\phi(S)}\ge\sum_{q\in P_{\phi}}d_q^S\ \text{ and }\ \sum_{p\in P_{\phi'}}d_p^{\phi(S)}=\sum_{p\in P_{\phi'}}d_p^S,
\]
where the first equality holds if and only if each $q\in P_{\phi}$ is enclosed in $\phi(S)$ by Lemma \ref{lem:phi Int}. Similarly, by Proposition \ref{prop:phi difference},
\[
\sum_{q\in P_{\phi}}d_q^{\phi'(S')}=\sum_{q\in P_{\phi}}d_q^{S'}\ \text{ and }\ \sum_{p\in P_{\phi'}}d_p^{\phi'(S')}\ge\sum_{p\in P_{\phi'}}d_p^{S'},
\]
where the second equality holds if and only if each $p\in P_{\phi'}$ is enclosed in $\phi'(S')$ by Lemma \ref{lem:phi Int}. Therefore, these inequalities and $\phi(S)=\phi'(S')$ induce
\begin{align*}
\sum_{q\in P_{\phi}}d_q^{\phi(S)}&\ge\sum_{q\in P_{\phi}}d_q^S=\sum_{p\in P_{\phi'}}d_p^S=\sum_{p\in P_{\phi'}}d_p^{\phi(S)}=\sum_{p\in P_{\phi'}}d_p^{\phi'(S')}\\
&\ge\sum_{p\in P_{\phi'}}d_p^{S'}=\sum_{q\in P_{\phi}}d_q^{S'}=\sum_{q\in P_{\phi}}d_q^{\phi'(S')}=\sum_{q\in P_{\phi}}d_q^{\phi(S)}.
\end{align*}
Thus each puncture in $P_{\phi}\sqcup P_{\phi'}$ is enclosed in $\phi(S)=\phi'(S')$. However, since $S$ has no enclosed punctures and $P_{\phi}\cap P_{\phi'}=\emptyset$, each puncture in $P_{\phi'}$ is not enclosed in $\phi(S)$ by Lemma \ref{lem:enclosed}, that is, $P_{\phi'}=\emptyset$. Similarly, each puncture in $P_{\phi}$ is not enclosed in $\phi'(S')$ by Lemma \ref{lem:enclosed}, that is, $P_{\phi}=\emptyset$. Therefore, $S=\phi(S)=\phi'(S')=S'$.

Finally, we assume that $P_{\phi}\cap P_{\phi'}\neq\emptyset$. We take
\[
\psi=\prod_{p\in P_{\phi}\cap P_{\phi'}}\psi_p^{\min\{k_p,h_p\}}.
\]
By Proposition \ref{prop:commute}, $\psi\phi(S)=\psi\phi'(S')$ is well-defined. Then $P_{\psi\phi}\cap P_{\psi\phi'}=\emptyset$ since 
\[
P_{\psi\phi}=P_{\phi}\setminus\{p\in P_{\phi}\cap P_{\phi'}\mid k_p\le h_p\}\ \text{ and }\ P_{\psi\phi'}=P_{\phi'}\setminus\{p\in P_{\phi}\cap P_{\phi'}\mid k_p\ge h_p\}.
\]
Therefore, the proof reduces to the above case.
\end{proof}

\begin{example}\label{ex:recoverable}
We consider (1) and (2) in Example \ref{ex:new}.

(1) Since the bottom puncture $p$ is enclosed in $\phi_p^2(S_U)$ by Example \ref{ex:enclosed}, $\phi_p^2(S_U)$ is not $p$-recoverable. If $\phi_p(S_U)$ is characterized by $(n_{\gamma})_{\gamma\in E_T}=(n_8,n_9)$, then $d_p^{\phi_p(S_U)}=4-1=3$ must be equal to $2(n_8+n_9)$ by $(C2)$, a contradiction. Thus $\phi_p(S_U)$ is not $p$-recoverable. Finally, it is not difficult to check that $S_U$ is $p$-recoverable (see Proposition \ref{prop:properties}) and does not satisfy $(\psi\ast,p)$. Therefore, $S_U$ is a unique $p$-recoverable multi-set such that $\phi_p^2(S_U)$ is its $p$-modification.

(2) We can see that $S_V$ is a unique $p$-recoverable multi-set such that $\phi_p(S_V)$ is its $p$-modification as follows: The multi-set $\phi_p(S_V)$ satisfies $(\psi\ast,p)$ and $(\psi\ast,q)$, but not $(\psi\ast,r)$. Moreover, $\psi_q\phi_p(S_V)$ does not satisfy $(\psi\ast,p)$, and $\psi_p\phi_p(S_V)=S_V$ satisfies neither $(\psi\ast,p)$ nor $(\psi\ast,q)$. Therefore, a multi-set such that $\phi_p(S_V)$ is its $p$-modification is given by either $S_V$, $\phi_p(S_V)$, or one of the following:
\begin{gather*}
\psi_q\phi_p(S_V)=
\begin{tikzpicture}[baseline=-1mm]
\coordinate(d)at(0,-1.2);\coordinate(u)at(90:2);
\coordinate(l)at($(d)+(120:1.3)$);\coordinate(r)at($(d)+(60:1.3)$);
\draw(0,0)circle(2);
\draw(u)..controls(-2,1.7)and(-2,-1.5)..coordinate[pos=0.74](15)coordinate[pos=0.78](14)coordinate[pos=0.82](13)coordinate[pos=0.86](12)coordinate[pos=0.9](11)(d);
\draw(u)--coordinate[pos=0.65](25)coordinate[pos=0.7](24)coordinate[pos=0.75](23)coordinate[pos=0.8](22)coordinate[pos=0.85](21)(d);
\draw(u)..controls(2,1.7)and(2,-1.5)..coordinate[pos=0.74](35)coordinate[pos=0.78](34)coordinate[pos=0.82](33)coordinate[pos=0.86](32)coordinate[pos=0.9](31)(d);
\draw(d)--coordinate[pos=0.3](41)coordinate[pos=0.4](42)coordinate[pos=0.5](43)coordinate[pos=0.6](44)coordinate[pos=0.7](45)coordinate[pos=0.8](46)(l);
\draw(l)--coordinate[pos=0.1](51)coordinate[pos=0.2](52)coordinate[pos=0.3](53)(u);
\draw(d)to[out=25,in=90,relative]coordinate[pos=0.2](9l1)coordinate[pos=0.27](9l2)coordinate[pos=0.34](9l3)coordinate[pos=0.41](9l4)coordinate[pos=0.48](9l5)($(d)+(60:1.7)$);
\draw(d)to[out=-25,in=-90,relative]coordinate[pos=0.2](9r1)coordinate[pos=0.27](9r2)coordinate[pos=0.34](9r3)coordinate[pos=0.41](9r4)coordinate[pos=0.48](9r5)($(d)+(60:1.7)$);
\draw[blue](11)--(41)--(21)--(9l1)--(9r1)--(31)to[out=-130,in=0](0,-1.4)to[out=180,in=-50](11);
\draw[blue](12)--(42)--(22)--(9l2)--(9r2)--(32)to[out=-120,in=0](0,-1.5)to[out=180,in=-60](12);
\draw[blue](13)--(43)--(23)--(9l3)--(9r3)--(33)to[out=-110,in=0](0,-1.6)to[out=180,in=-70](13);
\draw[blue](14)--(l)--(24)--(9l4)--(9r4)--(34)to[out=-100,in=0](0,-1.7)to[out=180,in=-80](14);
\draw[blue](15)--(l)--(25)--(9l5)--(9r5)--(35)to[out=-90,in=0](0,-1.8)to[out=180,in=-90](15);
\draw[blue](45)arc(-60:-290:0.4)to[out=-50,in=50](45);\draw[blue](46)arc(-60:-290:0.25)to[out=-50,in=50](46);
\draw[blue](44)arc(-60:-290:0.55)to[out=-50,in=50](44);
\fill(l)circle(0.07);\fill(r)circle(0.07);\fill(d)circle(0.07);\fill(u)circle(0.07);
\end{tikzpicture}\ ,
\hspace{3mm}
\psi_q^2\phi_p(S_V)=
\begin{tikzpicture}[baseline=-1mm]
\coordinate(d)at(0,-1.2);\coordinate(u)at(90:2);
\coordinate(l)at($(d)+(120:1.3)$);\coordinate(r)at($(d)+(60:1.3)$);
\draw(0,0)circle(2);
\draw(u)..controls(-2,1.7)and(-2,-1.5)..coordinate[pos=0.74](15)coordinate[pos=0.78](14)coordinate[pos=0.82](13)coordinate[pos=0.86](12)coordinate[pos=0.9](11)(d);
\draw(u)--coordinate[pos=0.65](25)coordinate[pos=0.7](24)coordinate[pos=0.75](23)coordinate[pos=0.8](22)coordinate[pos=0.85](21)(d);
\draw(u)..controls(2,1.7)and(2,-1.5)..coordinate[pos=0.74](35)coordinate[pos=0.78](34)coordinate[pos=0.82](33)coordinate[pos=0.86](32)coordinate[pos=0.9](31)(d);
\draw(d)--coordinate[pos=0.3](41)coordinate[pos=0.4](42)coordinate[pos=0.5](43)coordinate[pos=0.6](44)coordinate[pos=0.7](45)coordinate[pos=0.8](46)(l);
\draw(l)--coordinate[pos=0.1](51)coordinate[pos=0.2](52)coordinate[pos=0.3](53)(u);
\draw(d)to[out=25,in=90,relative]coordinate[pos=0.2](9l1)coordinate[pos=0.27](9l2)coordinate[pos=0.34](9l3)coordinate[pos=0.41](9l4)coordinate[pos=0.48](9l5)($(d)+(60:1.7)$);
\draw(d)to[out=-25,in=-90,relative]coordinate[pos=0.2](9r1)coordinate[pos=0.27](9r2)coordinate[pos=0.34](9r3)coordinate[pos=0.41](9r4)coordinate[pos=0.48](9r5)($(d)+(60:1.7)$);
\draw[blue](11)--(41)--(21)--(9l1)--(9r1)--(31)to[out=-130,in=0](0,-1.4)to[out=180,in=-50](11);
\draw[blue](12)--(42)--(22)--(9l2)--(9r2)--(32)to[out=-120,in=0](0,-1.5)to[out=180,in=-60](12);
\draw[blue](13)--(l)--(23)--(9l3)--(9r3)--(33)to[out=-110,in=0](0,-1.6)to[out=180,in=-70](13);
\draw[blue](14)--(l)--(24)--(9l4)--(9r4)--(34)to[out=-100,in=0](0,-1.7)to[out=180,in=-80](14);
\draw[blue](15)--(l)--(25)--(9l5)--(9r5)--(35)to[out=-90,in=0](0,-1.8)to[out=180,in=-90](15);
\draw[blue](45)arc(-60:-290:0.39)to[out=-50,in=50](45);\draw[blue](46)arc(-60:-290:0.25)to[out=-50,in=50](46);
\draw[blue](44)arc(-60:-290:0.53)to[out=-50,in=50](44);\draw[blue](43)arc(-60:-290:0.67)to[out=-50,in=50](43);
\draw[blue](l)to[out=20,in=170,relative](u);\draw[blue](l)to[out=-20,in=-175,relative](u);
\fill(l)circle(0.07);\fill(r)circle(0.07);\fill(d)circle(0.07);\fill(u)circle(0.07);
\end{tikzpicture}\ ,
\\
\psi_q^3\phi_p(S_V)=
\begin{tikzpicture}[baseline=-1mm]
\coordinate(d)at(0,-1.2);\coordinate(u)at(90:2);
\coordinate(l)at($(d)+(120:1.3)$);\coordinate(r)at($(d)+(60:1.3)$);
\draw(0,0)circle(2);
\draw(u)..controls(-2,1.7)and(-2,-1.5)..coordinate[pos=0.74](15)coordinate[pos=0.78](14)coordinate[pos=0.82](13)coordinate[pos=0.86](12)coordinate[pos=0.9](11)(d);
\draw(u)--coordinate[pos=0.65](25)coordinate[pos=0.7](24)coordinate[pos=0.75](23)coordinate[pos=0.8](22)coordinate[pos=0.85](21)(d);
\draw(u)..controls(2,1.7)and(2,-1.5)..coordinate[pos=0.74](35)coordinate[pos=0.78](34)coordinate[pos=0.82](33)coordinate[pos=0.86](32)coordinate[pos=0.9](31)(d);
\draw(d)--coordinate[pos=0.3](41)coordinate[pos=0.48](42)coordinate[pos=0.56](43)coordinate[pos=0.64](44)coordinate[pos=0.72](45)coordinate[pos=0.8](46)(l);
\draw(l)--coordinate[pos=0.1](51)coordinate[pos=0.2](52)coordinate[pos=0.3](53)(u);
\draw(d)to[out=25,in=90,relative]coordinate[pos=0.2](9l1)coordinate[pos=0.27](9l2)coordinate[pos=0.34](9l3)coordinate[pos=0.41](9l4)coordinate[pos=0.48](9l5)($(d)+(60:1.7)$);
\draw(d)to[out=-25,in=-90,relative]coordinate[pos=0.2](9r1)coordinate[pos=0.27](9r2)coordinate[pos=0.34](9r3)coordinate[pos=0.41](9r4)coordinate[pos=0.48](9r5)($(d)+(60:1.7)$);
\draw[blue](11)--(41)--(21)--(9l1)--(9r1)--(31)to[out=-130,in=0](0,-1.4)to[out=180,in=-50](11);
\draw[blue](12)--(l)--(22)--(9l2)--(9r2)--(32)to[out=-120,in=0](0,-1.5)to[out=180,in=-60](12);
\draw[blue](13)--(l)--(23)--(9l3)--(9r3)--(33)to[out=-110,in=0](0,-1.6)to[out=180,in=-70](13);
\draw[blue](14)--(l)--(24)--(9l4)--(9r4)--(34)to[out=-100,in=0](0,-1.7)to[out=180,in=-80](14);
\draw[blue](15)--(l)--(25)--(9l5)--(9r5)--(35)to[out=-90,in=0](0,-1.8)to[out=180,in=-90](15);
\draw[blue](45)arc(-60:-290:0.36)to[out=-50,in=50](45);\draw[blue](46)arc(-60:-290:0.25)to[out=-50,in=50](46);
\draw[blue](44)arc(-60:-290:0.47)to[out=-50,in=50](44);\draw[blue](43)arc(-60:-290:0.58)to[out=-50,in=50](43);
\draw[blue](42)arc(-60:-290:0.69)to[out=-50,in=50](42);
\draw[blue](l)to[out=20,in=170,relative](u);\draw[blue](l)to[out=-20,in=-175,relative](u);
\draw[blue](l)to[out=40,in=160,relative](u);\draw[blue](l)to[out=-40,in=-170,relative](u);
\fill(l)circle(0.07);\fill(r)circle(0.07);\fill(d)circle(0.07);\fill(u)circle(0.07);
\end{tikzpicture}\ ,
\hspace{3mm}
\psi_q^4\phi_p(S_V)=
\begin{tikzpicture}[baseline=-1mm]
\coordinate(d)at(0,-1.2);\coordinate(u)at(90:2);
\coordinate(l)at($(d)+(120:1.3)$);\coordinate(r)at($(d)+(60:1.3)$);
\draw(0,0)circle(2);
\draw(u)..controls(-2,1.7)and(-2,-1.5)..coordinate[pos=0.74](15)coordinate[pos=0.78](14)coordinate[pos=0.82](13)coordinate[pos=0.86](12)coordinate[pos=0.9](11)(d);
\draw(u)--coordinate[pos=0.65](25)coordinate[pos=0.7](24)coordinate[pos=0.75](23)coordinate[pos=0.8](22)coordinate[pos=0.85](21)(d);
\draw(u)..controls(2,1.7)and(2,-1.5)..coordinate[pos=0.74](35)coordinate[pos=0.78](34)coordinate[pos=0.82](33)coordinate[pos=0.86](32)coordinate[pos=0.9](31)(d);
\draw(d)--coordinate[pos=0.5](41)coordinate[pos=0.56](42)coordinate[pos=0.62](43)coordinate[pos=0.68](44)coordinate[pos=0.74](45)coordinate[pos=0.8](46)(l);
\draw(l)--coordinate[pos=0.1](51)coordinate[pos=0.2](52)coordinate[pos=0.3](53)(u);
\draw(d)to[out=25,in=90,relative]coordinate[pos=0.2](9l1)coordinate[pos=0.27](9l2)coordinate[pos=0.34](9l3)coordinate[pos=0.41](9l4)coordinate[pos=0.48](9l5)($(d)+(60:1.7)$);
\draw(d)to[out=-25,in=-90,relative]coordinate[pos=0.2](9r1)coordinate[pos=0.27](9r2)coordinate[pos=0.34](9r3)coordinate[pos=0.41](9r4)coordinate[pos=0.48](9r5)($(d)+(60:1.7)$);
\draw[blue](11)--(l)--(21)--(9l1)--(9r1)--(31)to[out=-130,in=0](0,-1.4)to[out=180,in=-50](11);
\draw[blue](12)--(l)--(22)--(9l2)--(9r2)--(32)to[out=-120,in=0](0,-1.5)to[out=180,in=-60](12);
\draw[blue](13)--(l)--(23)--(9l3)--(9r3)--(33)to[out=-110,in=0](0,-1.6)to[out=180,in=-70](13);
\draw[blue](14)--(l)--(24)--(9l4)--(9r4)--(34)to[out=-100,in=0](0,-1.7)to[out=180,in=-80](14);
\draw[blue](15)--(l)--(25)--(9l5)--(9r5)--(35)to[out=-90,in=0](0,-1.8)to[out=180,in=-90](15);
\draw[blue](45)arc(-60:-290:0.34)to[out=-50,in=50](45);\draw[blue](46)arc(-60:-290:0.25)to[out=-50,in=50](46);
\draw[blue](44)arc(-60:-290:0.43)to[out=-50,in=50](44);\draw[blue](43)arc(-60:-290:0.52)to[out=-50,in=50](43);
\draw[blue](42)arc(-60:-290:0.61)to[out=-50,in=50](42);\draw[blue](41)arc(-60:-290:0.7)to[out=-50,in=50](41);
\draw[blue](l)to[out=20,in=170,relative](u);\draw[blue](l)to[out=-20,in=-175,relative](u);
\draw[blue](l)to[out=40,in=160,relative](u);\draw[blue](l)to[out=-40,in=-170,relative](u);
\draw[blue](l)to[out=60,in=150,relative](u);\draw[blue](l)to[out=-60,in=-165,relative](u);
\fill(l)circle(0.07);\fill(r)circle(0.07);\fill(d)circle(0.07);\fill(u)circle(0.07);
\end{tikzpicture}\ .
\end{gather*}
Then $\psi_q^k\phi_p(S_V)$ never satisfy (C2) for $1\le k\le 4$ since $d_q^{\psi_q^k\phi_p(S_V)}\le -1$ by the above figures. Assume that $\phi_p(S_V)$ is characterized by $(n_{\gamma})_{\gamma\in E_V}=(n_4,n_9)$. Since $n_9$ must be zero by (C1), both $d_p^{\phi_p(S_V)}=4$ and $d_q^{\phi_p(S_V)}=2$ must be equal to $n_4$ by (C2), a contradiction. Therefore, $S_V$ is a unique $p$-recoverable multi-set such that $\phi_p(S_V)$ is its $p$-modification. Here, it is not difficult to check that $S_V$ is $p$-recoverable (see Proposition \ref{prop:properties}).
\end{example}

\subsection{Proof of Theorem \ref{thm:segment}}\label{subsec:proof}

Let $U\in\bM_{\cS}$ with $U\cap T=\emptyset$. We freely use the notations in the previous subsections and sections. In particular, we know that $\widehat{S_U}=U^{\circ}$. We list some facts that immediately follow from the definitions of notations:
{\setlength{\leftmargini}{20mm}
\begin{itemize}
\item[(Fact $1$)] For $s,t\in S_U$, if $\Int(s,t)>0$, then either $s$ or $t$ is $h_a$ for an angle $a$ of $\Omega(T)$ at a puncture $p$ in $V_T$ with $n(\Omega(U),p)>0$.
\item[(Fact $2$)] For $\gamma\in T$, if $m_{U_0}(\gamma)>0$, then there is $\delta\in U_1$ such that $\{\gamma,\delta\}$ is a pair of conjugate arcs and $m_{U_1}(\delta)=m_{U_0}(\gamma)$. In particular, if $\gamma$ is a plain arc, then $\delta$ is a $1$-notched arc.
\item[(Fact $3$)] The multi-set $S_U$ is glueable.
\end{itemize}}

First, we consider about the maximal $a$-modification $\Phi(S_U)$ of $S_U$.

\begin{prop}\label{prop:compatibility}
The multi-set $S_U$ is $a$-modifiable and $\Phi(S_U)$ consists of pairwise compatible segments.
\end{prop}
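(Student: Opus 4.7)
The plan is to verify the hypothesis of Theorem~\ref{thm:comp S}, namely that $S_U$ is $a$-modifiable, from which the pairwise compatibility of $\Phi(S_U)$ follows directly as its maximal $a$-modification. Concretely, for each puzzle piece $\triangle$ of $\Omega(T)$ where $S_U\cap\triangle$ fails to be pairwise compatible, I must exhibit an angle $a$ of $\triangle$ for which $S_U\cap\triangle$ satisfies $(\phi\ast,a)$.

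The guiding principle is (Fact $1$) from the opening of this section: any incompatible pair in $S_U$ must contain a segment $h_a$, where $a$ is an angle of $\Omega(T)$ at some puncture $p\in V_T$ with $n(\Omega(U),p)>0$. Since the tagged arcs of $U_0$ are pairwise compatible, and the closed curves $c_p$ are pairwise compatible among themselves, the only possible source of such an $h_a$ is the closed curve $c_p\in U^{\circ}$, and the other element of the incompatible pair must come from a plain arc of $U_0$ incident to $p$. I would take this $a$ as the candidate angle; then $(\phi,a)$ is immediate from $m_{S_U}(h_a)>0$ together with the existence of the incompatibility.

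The crux is $(\ast,a)$: no element of $S_U\cap\triangle$ distinct from $h_a$ and incident to the vertex of $\triangle$ at $a$ should be incompatible with anything in $S_U\cap\triangle$ other than $h_a$. Any violation would, by (Fact $1$) again, be a crossing with some other $h_b$ coming from a distinct $c_q\in U^{\circ}$ with $b\ne a$; such a configuration can be ruled out by showing it would force a pair of elements of $U_0$ whose geometric representatives in $\triangle$ cross transversally, contradicting the pairwise compatibility of $U_0$ (which in turn uses $U\cap T=\emptyset$ and (Fact $2$) to control which edges of $T$ can appear in $U_0$). The main obstacle is the case-by-case verification of $(\ast,a)$ across both triangle and monogon pieces, matching each possible vertex-$a$ element against all configurations of other $h$-segments it could clash with; once $(\phi\ast,a)$ is secured, Theorem~\ref{thm:comp S} immediately yields the pairwise compatibility of $\Phi(S_U)$.
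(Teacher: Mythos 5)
Your plan for the $a$-modifiability part is essentially the paper's own argument: (Fact 1) and (Fact 2) give $(\ast,a)$ at every angle $a$ where $(\phi,a)$ holds, and (Fact 1) then guarantees that any puzzle piece containing an incompatible pair has such an angle, so $S_U$ is $a$-modifiable. That part of your proposal is fine (one small inaccuracy: the other member of an incompatible pair need not come from a plain \emph{arc} of $U_0$ incident to $p$ — it can be a punctured loop of $U_0$, or an edge — but this does not affect the argument).

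The genuine gap is in your last sentence: Theorem \ref{thm:comp S} does \emph{not} "immediately yield" the statement. For an $a$-modifiable $S$, Theorem \ref{thm:comp S} only gives that $\Phi(S)$ consists of pairwise compatible non-loop \emph{edges and} segments, whereas the proposition asserts that $\Phi(S_U)$ consists of segments only. Edges really do occur in $S_U$: for instance, if $U$ contains a $1$-notched arc conjugate to a plain arc $\gamma\in T$, then $U_0$ contains $\gamma$, which is an edge of a puzzle piece (similarly $f_a$ or $f_a^{\bowtie}$ can occur via $2$-notched arcs inside a monogon piece), and such edges survive the maximal $a$-modification unless the multiplicity inequalities in the second part of Theorem \ref{thm:comp S} hold, namely $\max\{m_{S_U}(e_{a^l}),m_{S_U}(e_{a^r})\}\le m_{S_U}(h_a)$ for triangle pieces and $\max\{m_{S_U}(f_a),m_{S_U}(f_a^{\bowtie})\}\le m_{S_U}(h_a)$ for monogon pieces, at each angle $a$ where $(\phi\ast,a)$ holds. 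Your proposal never verifies these. The paper closes this by observing that $m_{S_U}(h_a)\ge m_{S_U}(c_p)=n(\Omega(U),p)$ for $a\in A_p$, while by (Fact 2) and the compatibility of $\Omega(U)$ the multiplicity of any edge of $S_U$ incident to $p$ is bounded by the number of notched tags at the relevant puncture, hence by the corresponding $m_{S_U}(c_p)$; you need to add an argument of this kind to obtain the "no edges" half of the conclusion.
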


\begin{proof}
It follows from (Fact 1) and (Fact 2) that $S_U$ satisfies $(\ast,a)$ for all angles $a$ of $\Omega(T)$ where $S_U$ satisfies $(\phi,a)$. Then it is $a$-modifiable by (Fact 1). Moreover, it satisfies the conditions in Theorem \ref{thm:comp S} since $n(\Omega(U),p)=m_{S_U}(c_p)$ for all $p\in V_T$. Thus $\Phi(S_U)$ consists of pairwise compatible segments by Theorem \ref{thm:comp S}.
\end{proof}

Second, we focus on intersection numbers with elements of $\Omega(T)$.

\begin{prop}\label{prop:Int=Int}
Let $\triangle$ be a puzzle piece of $\Omega(T)$ with $\gamma\in T\cup\Omega(T)$ as an edge. Then
\[
\Int(\gamma,\Phi(S_U)\cap\triangle)=\Int(\gamma,U).
\]
\end{prop}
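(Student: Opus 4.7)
My strategy is to unravel $\Int(\gamma,U)$ into a local count in $\triangle$ and then track how the sequence of modifications comprising $\Phi$ corrects this count. First, by Proposition~\ref{prop:Int conjugate} applied to each conjugate pair in $U$, I would reduce to $\Int(\gamma,U)=\Int(\gamma,\Omega(U))$. Decomposing $\Omega(U)=U_1\sqcup U_2$ and using Definition~\ref{def:Int}, the $U_2$ contribution is purely via $B$-counts (both $A$ and $C$ vanish, since curves in $U_2$ have plain underlying curves in $\Omega(T)$, so they cannot cross $\gamma\in\Omega(T)$ transversally). A careful bookkeeping gives an explicit formula whose pieces are $A_{\gamma,U_1}$, a $B$-term $\sum_p n(\Omega(U),p)\,k_p$ (with $k_p$ the number of endpoints of $\gamma$ at $p$), and the conjugate correction $C_{\gamma,\Omega(U)}=-m_{U_0}(\gamma)$, where $m_{U_0}(\gamma)$ counts $1$-notched conjugates of $\gamma$ in $U_1$ (vanishing when $\gamma\in\Omega(T)\setminus T$ is a plain punctured loop, by Lemma~\ref{lem:no loop edge}).

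Next, I would pass from $\Omega(U)$ to $U^{\circ}=U_0\sqcup\{c_p^{n(\Omega(U),p)}\}$. Plainifying $U_1$ to $U_0$ kills the $B$-contributions at $V_T$-punctures; this is exactly compensated by the new $A$-crossings of $\gamma$ with the closed curves $c_p$. Since $U^\circ$ is plain at $V_T$ and contains no conjugate pairs, $B_{\gamma,U^\circ}=C_{\gamma,U^\circ}=0$, yielding $\Int(\gamma,U)=\Int(\gamma,U^\circ)-m_{U_0}(\gamma)$. Moreover, because $\Int(\gamma,U^\circ)=A_{\gamma,U^\circ}$ counts total transversal crossings and each such crossing corresponds bijectively to the endpoint of a unique segment in $U^\circ\cap\triangle=S_U\cap\triangle$ lying on $\gamma$ from the $\triangle$-side, one obtains $\Int(\gamma,U^\circ)=\Int(\gamma,S_U\cap\triangle)$.

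Finally, I would track how $\Phi=\prod_a\phi_a^{k_a}$ changes this local count. By Lemmas~\ref{lem:Int tri} and~\ref{lem:Int mono}, each $\phi_a$ preserves $\Int(e_a,\cdot)$ (the edge opposite angle $a$ in a triangle piece, or the loop edge in a monogon piece) while decreasing $\Int(e,\cdot)$ by exactly $1$ for an adjacent edge $e$ precisely when the current multiplicity $m(e)>0$, in sync with $m(e)$ dropping by $1$. Summing telescopically across the full modification sequence yields
\[
\Int(\gamma,S_U\cap\triangle)-\Int(\gamma,\Phi(S_U)\cap\triangle)=m_{S_U}(\gamma)-m_{\Phi(S_U)}(\gamma).
\]
Since $\Phi(S_U)$ consists of pairwise compatible \emph{segments} (no edges) by Proposition~\ref{prop:compatibility}, we have $m_{\Phi(S_U)}(\gamma)=0$; and $m_{S_U}(\gamma)=m_{U^\circ}(\gamma)=m_{U_0}(\gamma)$. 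Combining all three stages,
\[
\Int(\gamma,\Phi(S_U)\cap\triangle)=\Int(\gamma,S_U\cap\triangle)-m_{U_0}(\gamma)=\Int(\gamma,U^\circ)-m_{U_0}(\gamma)=\Int(\gamma,U).
\]

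The main obstacle will be the careful bookkeeping in the first two stages, especially distinguishing the case $\gamma\in T$ (where $1$-notched conjugates of $\gamma$ in $U_1$ genuinely contribute to $C_{\gamma,\Omega(U)}$) from the case where $\gamma$ is a plain punctured loop in $\Omega(T)\setminus T$ (where Lemma~\ref{lem:no loop edge} forces both $m_{U_0}(\gamma)$ and $C_{\gamma,\Omega(U)}$ to vanish). A secondary obstacle is performing the final telescoping uniformly across both triangle and monogon pieces, since in a monogon piece the role of $h_a,e_{a^l},e_{a^r}$ is played by the more intricate interaction among $h_a,f_a,f_a^{\bowtie},i_a,i_a^{\bowtie}$ governed by Lemma~\ref{lem:Int mono}.
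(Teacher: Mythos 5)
Your chain $U\rightsquigarrow\Omega(U)\rightsquigarrow U^{\circ}\rightsquigarrow S_U\cap\triangle\rightsquigarrow\Phi(S_U)\cap\triangle$ is exactly the paper's route, and for $\gamma\in\Omega(T)$ (the triangle-piece edges and the loop edges of monogon pieces, where Lemma~\ref{lem:no loop edge} gives $m_{S_U}(\gamma)=0$) your argument is essentially the paper's first case. The genuine gap is the case $\gamma\in T\setminus\Omega(T)$, i.e.\ $\gamma=f_a$ or $f_a^{\bowtie}$ in a monogon piece whose base vertex is a puncture. There both of your key identities are false. First, the telescoping identity $\Int(\gamma,S_U\cap\triangle)-\Int(\gamma,\Phi(S_U)\cap\triangle)=m_{S_U}(\gamma)-m_{\Phi(S_U)}(\gamma)$ contradicts Lemma~\ref{lem:Int mono}: in case $(\phi,a)_2$ each application of $\phi_a$ \emph{increases} $\Int(f_a,\cdot)$ by $1$ while $m(f_a)$ stays $0$. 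Concretely, if $S_U\cap\triangle\supseteq\{f_a^{\bowtie},h_a\}$ with $m_{S_U}(f_a)=0$, then $\Int(f_a,S_U\cap\triangle)$ goes up by $m_{S_U}(f_a^{\bowtie})$ under $\Phi$, so the left-hand side is negative while your right-hand side is $0$. Second, your stage-two claim ``$B_{\gamma,U^{\circ}}=C_{\gamma,U^{\circ}}=0$, hence $\Int(\gamma,U)=\Int(\gamma,U^{\circ})-m_{U_0}(\gamma)$'' also fails for $\gamma=f_a$: curves of $U_0$ may be notched at the enclosed puncture (it is not in $V_T$, so it is not plainified), giving $B_{\gamma,U^{\circ}}\neq 0$, and if $\rho(f_a)\in U_1$ then $f_a^{\bowtie}\in U_0$ forms a conjugate pair with $f_a$, giving $C_{\gamma,U^{\circ}}=-m_{U_0}(f_a^{\bowtie})\neq 0$.

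The correct statements in this case each carry an extra term for the conjugate partner $\gamma'$: one has $\Int(\gamma,\Phi(S_U)\cap\triangle)=\Int(\gamma,S_U\cap\triangle)-m_{S_U}(f_a)+m_{S_U}(f_a^{\bowtie})$ (using the $\pm1$ of Lemma~\ref{lem:Int mono} together with $\max\{m_{S_U}(f_a),m_{S_U}(f_a^{\bowtie})\}\le m_{S_U}(h_a)$ from Theorem~\ref{thm:comp S}), and correspondingly $\Int(\gamma,U)=\Int(\gamma,U^{\circ})-m_{U_0}(\gamma)+m_{U_0}(\gamma')$, where Fact~2 identifies $m_{U_0}(\gamma)=m_{U_1}(\rho(\gamma'))$ and $m_{U_0}(\gamma')=m_{U_1}(\rho(\gamma))$; this is precisely how the paper treats $\gamma\in T\setminus\Omega(T)$ as a separate case (plus the easy sub-case where the monogon's base vertex lies on the boundary, where $\Phi(S_U)\cap\triangle=S_U\cap\triangle$). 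Your two errors are off by the same amount and so would cancel in the final equality, but as written neither intermediate step is proved for this case, so the argument needs this correction rather than the uniform ``in sync with $m(e)$'' bookkeeping you propose.
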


\begin{proof}
If $\gamma\in\Omega(T)$, then
\begin{align*}
\Int(\gamma,\Phi(S_U)\cap\triangle)
&=\Int(\gamma,S_U\cap\triangle)-m_{S_U\cap\triangle}(\gamma)&&\text{(by Lemmas \ref{lem:Int tri} and \ref{lem:Int mono})}\\
&=\Int(\gamma,U^{\circ})-m_{U^{\circ}}(\gamma)&&\text{(by (Fact 3))}\\
&=\Int(\gamma,\Omega(U))&&\text{(by Definition \ref{def:Int} and (Fact 2))}\\
&=\Int(\gamma,U)&&\text{(by Proposition \ref{prop:Int conjugate})}.
\end{align*}
If $\gamma\in T\setminus\Omega(T)$, then there is $\gamma'\in T\setminus\Omega(T)$ such that $\{\gamma,\gamma'\}$ is a pair of conjugate arcs. Without loss of generality, we can assume that $\gamma=f_a$ and $\gamma'=f_a^{\bowtie}$ for some angle $a$ of a monogon piece $\triangle$. If $\gamma$ is incident to the boundary of $\cS$, then the assertion holds since $\Phi(S_U)\cap\triangle=S_U\cap\triangle$. Assume that $\gamma$ connects punctures. Then $\{\gamma,\rho(\gamma')\}$ and $\{\gamma',\rho(\gamma)\}$ are pairs of conjugate arcs. Therefore,
\begin{align*}
\Int(\gamma,\Phi(S_U)\cap\triangle)
&=\Int(\gamma,S_U)-m_{S_U}(f_a)+m_{S_U}(f_a^{\bowtie})&&\text{(by Lemma \ref{lem:Int mono})}\\
&=\Int(\gamma,U^{\circ})-m_{U_0}(\gamma)+m_{U_0}(\gamma')&&\text{}\\
&=\Int(\gamma,U^{\circ})-m_{U_1}(\rho(\gamma'))+m_{U_1}(\rho(\gamma))&&\text{(by (Fact 2))}\\
&=\Int(\gamma,\Omega(U))&&\text{(by Definition \ref{def:Int})}\\
&=\Int(\gamma,U)&&\text{(by Proposition \ref{prop:Int conjugate})}.\qedhere
\end{align*}
\end{proof}

Third, we show that $S_U$ is $p$-recoverable.

\begin{prop}\label{prop:properties}
The multi-set $S_U$ satisfies the following properties:
\begin{itemize}
\item[(1)] It satisfies $(\ast,p)$ for all punctures $p$.
\item[(2)] It has no enclosed punctures.
\item[(3)] It is characterized by $(m_{U_2}(\rho(\gamma)))_{\gamma\in E_T}$.
\end{itemize}
Therefore, it is $p$-recoverable.
\end{prop}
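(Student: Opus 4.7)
My plan is to establish the three properties (1), (2), (3) in sequence and then conclude $p$-recoverability by combining them with the glueability of $S_U$ from (Fact $3$). Property (1) splits into $(\ast 1, p)$ and $(\ast 2, p)$: the latter follows from (Fact $1$) by exactly the same argument as in the proof of Proposition~\ref{prop:compatibility}, since any two elements of $S_U$ with positive intersection must include some $h_a$ with $a$ at a puncture of positive notched-tag count. For $(\ast 1, p)$, given $a \in A_p^\phi(S_U)$, the condition $\Int(h_a, S_U) > 0$ forces the presence in $\triangle_a$ of a segment or edge (such as $v_a$, $i_a$, $i_a^\bowtie$, $f_a$, $f_a^\bowtie$, $e_{a^l}$, or $e_{a^r}$) incident to $p$; each such contribution traces back to a $U$-arc $\delta'$ incident to $p$ at angle $a$. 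Any other $U$-curve winding or encircling $p$ through $\triangle_a$ would geometrically cross $\delta'$, contradicting the pairwise compatibility of $U$. Thus all $h_a$-contributions in $S_U$ come solely from the closed curves $c_p$ in $U^\circ$, giving $m_{S_U}(h_a) = n(\Omega(U), p)$; since every $h_b$ for $b \in A_p$ receives at least $n(\Omega(U), p)$ from those closed curves, $m_{S_U}(c_p) = m_{S_U}(h_a)$.

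Property (2) is the most immediate: each of the four local configurations that characterize an enclosed puncture, listed after Definition~\ref{def:enclosed}, contains a trivalent vertex of a tagged branched curve near $p$. But $\widehat{S_U} = U^\circ$ is a disjoint union of elements of $U_0$ (tagged arcs and punctured loops) and simple closed curves $c_r$, none of which have trivalent vertices, so $\widehat{S_U}$ contains none of the enclosure configurations and no puncture of $\cS$ is enclosed in $S_U$. For property (3), set $n_\gamma := m_{U_2}(\rho(\gamma))$ and verify (C1) and (C2) of Definition~\ref{def:characterized}. For (C1), when $n_\gamma > 0$ with $\gamma \in E_T^{pq}$, the compatibility of $\rho(\gamma) \in U$ (or of the conjugate pair giving $\rho(\gamma)$ via $\Omega$ when $\gamma$ is a punctured loop) with every element of $U$, together with Proposition~\ref{prop:Int conjugate} and the observation that tag changes leave interior intersection numbers unchanged, yields $\Int(\gamma, U_0) = 0$; combining with $\Int(\gamma, c_r) \neq 0$ only for $r \in \{p, q\}$ and the identity $m_{S_U}(c_r) = n(\Omega(U), r)$ from property (1) gives $m_{S_U}(c_p) + m_{S_U}(c_q) = \Int(\gamma, \widehat{S_U})$. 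For (C2), I would carry out a type-by-type accounting of the contributions to $d_p^{S_U} = m_{S_U}(c_p) - \Int'(c_p, \widehat{S_U})$ from each kind of $U$-element (plain, $1$-notched, and $2$-notched arcs, together with conjugate pairs producing punctured loops in $U_1$): the contributions from $U_1$ to $m_{S_U}(c_p) + n(U_1, p)$ and to $\Int(c_p, U_0)$ cancel type by type, leaving exactly $n(U_2, p)$, which equals the right-hand side of (C2).

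The conclusion is then immediate: (Fact $3$) provides glueability, and the three properties above supply the remaining conditions of Definition~\ref{def:recoverable}, so $S_U$ is $p$-recoverable. The main obstacle will be the careful case analysis for (C2): one must handle punctured loops around $p$ arising from conjugate pairs in $U$ and potential winding arcs in $U_0$, and verify that all contributions balance to yield exactly $n(U_2, p)$.
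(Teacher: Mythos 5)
The decisive problem is your proof of property (2). You claim that each of the four local configurations listed after Definition~\ref{def:enclosed} contains a trivalent vertex of a tagged branched curve, so that the absence of branch points in $\widehat{S_U}=U^{\circ}$ immediately rules them out. That is not true: only the first configuration is a genuinely branched curve; the paper explicitly describes the remaining three as ``punctured loops'', and punctured loops do occur in $U^{\circ}$ (in Example~\ref{ex:glue}, $U_0$ itself contains one). Consequently your argument proves nothing about those three cases, and it cannot be repaired without using the hypothesis $U\cap T=\emptyset$, which your part (2) never invokes: if $\widehat{S_U}$ contained a punctured loop $\delta\in U_0$ enclosing $p$ as in those pictures, then by Lemma~\ref{lem:phi Int} a plain arc $\gamma\in T$ from $p$ to the basepoint of $\delta$ is enclosed by $\delta$, and since $\delta=\Omega(\{\delta',\delta''\})$ for a conjugate pair $\{\delta',\delta''\}\subseteq U$, one checks (using the tag condition at the basepoint, as in the paper's case distinction on whether the basepoint lies in $V_T\cup(\cM\cap\partial\cS)$) that one of $\delta',\delta''$ must coincide with $\gamma$ or with its conjugate partner in $T$, contradicting $U\cap T=\emptyset$. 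Without this step the statement ``$S_U$ has no enclosed punctures'' is simply not established, and indeed it would be false if $U\cap T\neq\emptyset$ were allowed.

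Two smaller points. In (C1) you invoke ``the identity $m_{S_U}(c_r)=n(\Omega(U),r)$ from property (1)'', but property (1) only yields $m_{S_U}(h_a)=n(\Omega(U),r)=m_{S_U}(c_r)$ at angles $a\in A_r^{\phi}(S_U)$; in the situation of (C1) the needed equality at $p$ and $q$ has to be extracted from $\Int(\gamma,U_0)=0$ (no curve of $U_0$ can cut the corners adjacent to $\gamma$ at $p$ or $q$), or from the identification $m_{S_U}(c_r)=m_{\widehat{S_U}}(c_r)=m_{U^{\circ}}(c_r)$, which is how the paper argues. For (C2) you only announce a ``type-by-type accounting''; the actual verification is short once one notes that, when $c_p\in U^{\circ}$, compatibility of $\Omega(U)$ (which has no conjugate pairs) forces every end of $U_1$ at $p$ to be notched, so $\Int'(c_p,\widehat{S_U})=n(U_1,p)$ and $d_p^{S_U}=n(\Omega(U),p)-n(U_1,p)=n(U_2,p)$, which is the right-hand side of (C2); the case $c_p\notin U^{\circ}$ is immediate. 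Your plan is compatible with this, but as written it is a sketch, and together with the flawed part (2) the proposal does not yet constitute a proof of the proposition.
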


\begin{proof}
(1) For a puncture $p\notin V_T$, $S_U$ clearly satisfies $(\ast,p)$ since $A_p=\emptyset$. Let $p\in V_T$. Since $m_{S_U}(h_a)=n(\Omega(T),p)=m_{S_U}(c_p)$ for $a\in A_p^{\phi}$ by the compatibility of $U_0$, $S_U$ satisfies $(\ast 1,p)$. It follows from (Fact 1) and (Fact 2) that $S_U$ satisfies $(\ast 2,p)$.

(2) Assume that there is a puncture $p$ enclosed in $S_U$. When we see $U^{\circ}=\widehat{S_U}$ as a multi-set of tagged branched curves in $\cS$, it has no vertices with degree three. By Lemma \ref{lem:phi Int}, $p$ must be enclosed by a punctured loop $\delta\in U_0$ with endpoint $q$ such that the following hold (see the figures above Lemma \ref{lem:enclosed}):
\begin{itemize}
\item[(a)] There is a plain arc $\gamma$ in $T$ enclosed by $\delta$.
\item[(b)] If $q\in V_T\cup(\cM\cap\partial\cS)$, then tags of $\delta$ are plain.
\end{itemize}
Let $\{\delta',\delta''\}$ be a pair of conjugate arcs in $U$ such that $\Omega(\{\delta',\delta''\})=\delta$. If $q\in V_T\cup(\cM\cap\partial\cS)$, then either $\delta'$ or $\delta''$ is a plain arc by (b), and it is just $\gamma$ in (a). It contradicts $U\cap T=\emptyset$. Assume that $q\notin V_T\cup(\cM\cap\partial\cS)$. Then there is a pair $\{\gamma,\gamma'\}$ of conjugate arcs in $T$. It is easy to see that $\{\gamma,\gamma'\}\cap\{\delta',\delta''\}\neq\emptyset$. It contradicts $U\cap T=\emptyset$ again.

(3) Let $\gamma\in E_T^{pq}$ with $m_{U_2}(\rho(\gamma))>0$ for $p,q\in V_T$. Then
\[
m_{S_U}(c_p)+m_{S_U}(c_q)=n(\Omega(U),p)+n(\Omega(U),q)=\Int(\gamma,U^{\circ})=\Int(\gamma,\widehat{S_U}),
\]
where the second equality follows from $\Int(\gamma,U_0)=\Int(\rho(\gamma),U_1)=0$. Thus (C1) holds. On the other hand, let $p\in V_T$. If $c_p\notin U^{\circ}=\widehat{S_U}$, then $n(\Omega(U),p)=0$ and $\Int'(c_p,\widehat{S_U})=0$. In particular, there are no $\gamma\in E_T$ incident to $p$ such that $m_{U_2}(\rho(\gamma))>0$. Thus (C2) holds. If $c_p\in U^{\circ}=\widehat{S_U}$, then
\begin{align*}
d_p^{S_U}&=m_{S_U}(c_p)-\Int'(c_p,\widehat{S_U})\\
&=n(\Omega(U),p)-n(U_0,p)\\
&=n(U_2,p)\\
&=\sum_{q\in V_T\setminus\{p\}}\sum_{\gamma\in E_T^{pq}}m_{U_2}(\rho(\gamma))+2\sum_{\gamma\in E_T^{pp}}m_{U_2}(\rho(\gamma)).
\end{align*}
Therefore, (C2) holds.

Finally, (1), (2), (3), and (Fact 3) mean that $S_U$ is $p$-recoverable.
\end{proof}

We are ready to prove Theorem \ref{thm:segment}.

\begin{proof}[Proof of Theorem \ref{thm:segment}]
The first assertion follows from Propositions \ref{prop:compatibility} and \ref{prop:Int=Int}. By Propositions \ref{prop:modif=max p}, \ref{prop:properties}(1), and (Fact 1), $\Phi(S_U)$ is also the maximal $p$-modification of $S_U$. Therefore, the second assertion follows from Theorem \ref{thm:unique S} and Proposition \ref{prop:properties}.
\end{proof}

\subsection{On closed surfaces with exactly one puncture}\label{subsec:1-punc}

In this subsection, we assume that $\cS$ is a closed surface with exactly one puncture $p$. The observation here will be used in Subsection \ref{subsec:JT}. We first remark that any tagged triangulation of $\cS$ decomposes $\cS$ into only triangle pieces, and $\Omega(U)=U$ for $U\in\bM_{\cS}$.

Let $T$ be a tagged triangulation of $\cS$ satisfying \eqref{diamond} and $n\in\bZ_{>0}$. For tagged arcs $\gamma$ and $\delta$ in $\cS$, their \emph{n-intersection number $\Int^n(\gamma,\delta)$} is defined by $A_{\gamma,\delta}+nB_{\gamma,\delta}$, where $A_{\gamma,\delta}$ and $B_{\gamma,\delta}$ are defined in Definition \ref{def:Int}.

\begin{remark}
In a general marked surface $\cS'$, the $n$-intersection number of $\gamma,\delta\in\bAL_{\cS'}$ can be defined by $A_{\gamma,\delta}+n(B_{\gamma,\delta}+C_{\gamma,\delta})$ (cf. Definition \ref{def:Int}). In this paper, we only need it for a closed surface with exactly one puncture. In which case, since a pair of conjugate arcs does not appear, we can omit $C_{\gamma,\delta}$.
\end{remark}

For $U\in\bM_{\cS}$, the \emph{n-intersection vector of $U$ with respect to $T$} is the non-negative vector
\[
\Int^n_T(U):=\left(\Int^n(t,U)\right)_{t\in T}:=\Biggl(\sum_{u\in U}\Int^n(t,u)\Biggr)_{t\in T}\in\bZ_{\ge 0}^T.
\]
We also denote $\Int^n_T(\{\gamma\})$ by $\Int^n_T(\gamma)$. In particular, for a tagged arc $\gamma$ in $\cS$ whose tags are different from ones in $T$, its $n$-intersection vector with respect to $T$ is given by
\begin{equation}\label{eq:Int rho}
\Int^n_T(\gamma)=(A_{t,\gamma}+nB_{t,\gamma})_{t\in T}=(A_{t,\rho(\gamma)}+4n)_{t\in T}=\Int^n_T(\rho(\gamma))+(4n,\ldots,4n).
\end{equation}

For $U\in\bM_{\cS}$ with $U\cap T=\emptyset$, we consider multi-sets $U_0$, $U_1$, $U_2$, and $n(U,p)$ as in Subsection \ref{subsec:pf thm:main}. Moreover, we consider the following analogues of $U^{\circ}$ and $S_U$:
\[
U^{\circ,n}:=U_0\sqcup\{c_p^{n\cdot n(U,p)}\}
\ \text{ and }\ S_{U,n}:=\bigsqcup_{\triangle}(U^{\circ,n}\cap\triangle),
\]
where $\triangle$ runs over all triangle pieces of $T$. In particular, $S_{U,n}$ is a glueable multi-set of segments and
\begin{equation}\label{eq:Int n}
\Int_T(\widehat{S_{U,n}})=\Int_T(U^{\circ,n})=\Int^n_T(U).
\end{equation}

As in previous subsections, for a glueable multi-set $S$ of segments in puzzle pieces of $T$, we consider the difference
\[
d_{p,n}^S:=m_S(c_p)-n\cdot\Int'(c_p,\widehat{S}).
\]
Moreover, we also consider the following two conditions of $S$: (a) $d_{p,n}^S=0$. (b) $S$ does not satisfy $(\psi,p)$. The multi-set $S_{U,n}$ satisfies (a) or (b). In fact, if $U_2=\emptyset$, then it satisfies (a); If $U_2\neq\emptyset$, then there is $\gamma\in T$ such that $\rho(\gamma)\in U_2$ and $\Int(\gamma,\widehat{S_{U,n}})=2m_{S_{U,n}}(c_p)$. This means that $m_{S_{U,n}}(h_{a^l})=m_{S_{U,n}}(h_{a^r})=m_{S_{U,n}}(c_p)$ for a triangle piece of $T$ with an angle $a$ such that $e_a=\gamma$. Since $\psi_{a^l}(S_{U,n})$ (resp., $\psi_{a^r}(S_{U,n})$) does not satisfy $(\psi,a^r)$ (resp., $(\psi,a^l)$), $S_{U,n}$ satisfies (b).

\begin{thm}\label{thm:unique Intn}
Assume that $\cS$ is a closed surface with exactly one puncture $p$. Fix $n\in\bZ_{>0}$. Let $T$ be a tagged triangulation of $\cS$, and $U,V\in\bM_{\cS}$ with $U\cap T=V\cap T=\emptyset$. If $\Int^n_T(U)=\Int^n_T(V)$, then $U_1=V_1$ and $n(U_2,p)=n(V_2,p)$.
\end{thm}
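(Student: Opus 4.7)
The plan is to transplant the proof of Theorem \ref{thm:main} to the $n$-intersection setting, replacing $S_U, S_V$ with $S_{U,n}, S_{V,n}$. Equation \eqref{eq:Int n} gives $\Int_T(\widehat{S_{U,n}}) = \Int^n_T(U)$, and combined with the glueability identity $\Int_T(\widehat{S}) = \tfrac{1}{2}\Int_T(S)$ from Subsection \ref{subsec:glue}, this translates the hypothesis $\Int^n_T(U) = \Int^n_T(V)$ into $\Int_T(S_{U,n}) = \Int_T(S_{V,n})$.

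Next, I would establish the analogue of Theorem \ref{thm:segment} for $S_{U,n}$: construct a modification $\Phi(S_{U,n})$ (a maximal $a$-modification or $p$-modification in the sense of Subsections \ref{subsec:modif tri}--\ref{subsec:p-modif}) such that in every triangle piece $\triangle$, $\Phi(S_{U,n}) \cap \triangle$ consists of pairwise compatible segments and $\Int(\gamma, \Phi(S_{U,n}) \cap \triangle) = \Int^n(\gamma, U)$ for each edge $\gamma$ of $\triangle$. Existence and the intersection-number formula parallel Propositions \ref{prop:compatibility} and \ref{prop:Int=Int} applied to $U^{\circ,n}$ in place of $U^\circ$, invoking Theorem \ref{thm:comp S}. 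Proposition \ref{prop:Int triangle} then forces $\Phi(S_{U,n}) \cap \triangle = \Phi(S_{V,n}) \cap \triangle$ in each $\triangle$, hence $\Phi(S_{U,n}) = \Phi(S_{V,n})$.

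The decisive step is to recover $S_{U,n}$ from $\Phi(S_{U,n})$, i.e., to show that $\Phi(S_{U,n}) = \Phi(S_{V,n})$ forces $S_{U,n} = S_{V,n}$. Here the dichotomy noted just before the theorem is essential: $S_{U,n}$ always satisfies either (a) $d^{S_{U,n}}_{p,n}=0$, or (b) $S_{U,n}$ does not satisfy $(\psi,p)$. In case (a), the balance $m_{S_{U,n}}(c_p) = n\cdot \Int'(c_p,\widehat{S_{U,n}})$ plays the role that Definition \ref{def:characterized} plays in Theorem \ref{thm:unique S}; in case (b), the failure of $(\psi,p)$ forbids an alternative glueable multi-set from producing the same $p$-modification. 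Combining these cases yields a uniqueness statement analogous to Theorem \ref{thm:unique S}. Once $S_{U,n}=S_{V,n}$ is secured, gluing yields $U^{\circ,n}=V^{\circ,n}$, so $U_0=V_0$ and $n\cdot n(U,p)=n\cdot n(V,p)$; Lemma \ref{lem:recover0} then delivers $U_1=V_1$, and $n(U_2,p)=n(U,p)-n(U_1,p)=n(V,p)-n(V_1,p)=n(V_2,p)$.

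The main obstacle is this recovery step. Since $S_{U,n}$ is generally not $p$-recoverable in the sense of Definition \ref{def:recoverable} -- the extra $c_p^{n\cdot n(U,p)}$ copies inflate the $h_a$-multiplicities without themselves contributing to $c_p$-intersections, so $(\ast 1,p)$ can fail -- one must carefully track how $\phi_a$ and $\psi_a$ interact with the inflated multiplicities at $p$ and exploit the (a)/(b) dichotomy to pin down a unique preimage.
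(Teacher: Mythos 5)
Your skeleton matches the paper's proof up to the decisive step, but the recovery of $S_{U,n}$ from $\Phi(S_{U,n})$ -- which is the heart of the theorem -- is only asserted, and the mechanism you sketch is not the one that works. Appealing to ``a uniqueness statement analogous to Theorem \ref{thm:unique S}'' would require redoing the whole of Subsection \ref{subsec:recover} with $d_{p,n}$ in place of $d_p$, and your case (b) claim (``the failure of $(\psi,p)$ forbids an alternative glueable multi-set from producing the same $p$-modification'') is not justified: the failure of $(\psi,p)$ at $S_{U,n}$ only says $\psi_p$ cannot be applied there, it does not by itself exclude that $S_{V,n}$ sits strictly between $S_{U,n}$ and $\Phi(S_{U,n})$ in the $\phi_p$-chain with both endpoints satisfying (a) or (b). What actually closes the argument, and what is missing from your proposal, is the following: since there is a single puncture and $S_{U,n}$ satisfies $(\ast,p)$, Proposition \ref{prop:modif=max p} gives $\Phi(S_{U,n})=\phi_p^m(S_{U,n})$ for some $m$, and by Proposition \ref{prop:recover p} together with Lemma \ref{lem:phi Int} every intermediate stage $\phi_p^k(S_{U,n})$ with $1\le k\le m$ satisfies \emph{neither} (a) nor (b). Hence $m$ is read off from $\Phi(S_{U,n})$ as the first $k'$ for which $\psi_p^{k'}\Phi(S_{U,n})$ satisfies (a) or (b), so $S_{U,n}=\psi_p^{m}\Phi(S_{U,n})$ is determined by $\Phi(S_{U,n})$, which by Theorem \ref{thm:comp S}, Lemma \ref{lem:Int tri}, \eqref{eq:Int n}, and Proposition \ref{prop:Int triangle} is determined by $\Int^n_T(U)$. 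Without the ``intermediate stages violate both (a) and (b)'' fact, the dichotomy pins nothing down.

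A secondary but relevant inaccuracy: your claim that $(\ast 1,p)$ can fail for $S_{U,n}$ is wrong, and conceding it undermines the very tools you need. The copies $c_p^{n\cdot n(U,p)}$ contribute $h_a$ uniformly for every $a\in A_p$, and for $a\in A_p^{\phi}(S_{U,n})$ one has $m_{S_{U,n}}(v_a)>0$, which by compatibility of $U_0$ forces the $U_0$-contribution to $m_{S_{U,n}}(h_a)$ to vanish; hence $m_{S_{U,n}}(h_a)=n\cdot n(U,p)=m_{S_{U,n}}(c_p)$ and $A_p^{\phi}(S_{U,n})\subseteq A_p^{\min}(S_{U,n})$, exactly as for $S_U$ (and $(\ast 2,p)$ holds because all intersections in $S_{U,n}$ are between $h_a$'s coming from $c_p$ and $v_a$'s coming from $U_0$). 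This $(\ast,p)$ is precisely what licenses $\Phi(S_{U,n})=\phi_p^m(S_{U,n})$ and the use of Proposition \ref{prop:recover p}; where $S_{U,n}$ genuinely can fail to be $p$-recoverable is condition (4) of Definition \ref{def:recoverable}, which is why the paper replaces Theorem \ref{thm:unique S} by the stopping-time argument above rather than by an analogue of the characterization machinery.
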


\begin{proof}
Since $S_{U,n}$ is $a$-modifiable and satisfies $(\ast,p)$ like $S_U$, Proposition \ref{prop:modif=max p} implies that there is $m\in\bZ_{\ge 0}$ such that $\Phi(S_{U,n})=\phi_p^m(S_{U,n})$. By Proposition \ref{prop:recover p} and Lemma \ref{lem:phi Int}, $\phi_p^k(S_{U,n})$ satisfies neither (a) nor (b) for $1\le k\le m$. We take $m'\in\bZ_{\ge 0}$ such that $\psi_p^{k'}\Phi(S_{U,n})$ satisfies neither (a) nor (b) for $1\le k'<m'$, and $\psi_p^{m'}\Phi(S_{U,n})$ satisfies (a) or (b). Then it is clear that $m=m'$. Thus $S_{U,n}=\psi_p^{m'}\Phi(S_{U,n})$ is recoverable from $\Phi(S_{U,n})$.  

On the other hand, $\Phi(S_{U,n})$ consists of pairwise compatible segments by Theorem \ref{thm:comp S}. Moreover, Lemma \ref{lem:Int tri} and \eqref{eq:Int n} induce
\[
\Int_T(\widehat{\Phi(S_{U,n})})=\Int_T(\widehat{S_{U,n}})=\Int^n_T(U).
\]
Then it follows from Proposition \ref{prop:Int triangle} that $\Phi(S_{U,n})$ is uniquely determined by $\Int^n_T(U)$. Therefore, $S_{U,n}$ is uniquely determined by $\Int^n_T(U)$. The assertion is given in the same way as the proof of Theorem \ref{thm:main} in Subsection \ref{subsec:pf thm:main}.
\end{proof}

\begin{cor}\label{cor:unique tag tri}
Assume that $\cS$ is a closed surface with exactly one puncture. Fix $n\in\bZ_{>0}$. Let $T$, $U$, and $V$ be tagged triangulations of $\cS$. If $\Int_T^n(U)=\Int_T^n(V)$, then $U=V$.
\end{cor}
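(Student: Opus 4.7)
The plan is to reduce Corollary~\ref{cor:unique tag tri} to Theorem~\ref{thm:unique Intn} by handling the common part $U\cap T$ separately and then upgrading the conclusion of that theorem to a genuine equality of the notched components. First I would record the rigid structure of tagged arcs on $\cS$: every tagged arc is a loop at the unique puncture $p$, and the definition forces both ends of a loop to carry the same tag, so each tagged arc is either plain or $2$-notched, and no pair of conjugate arcs can occur (thus $\Omega$ is the identity on $\bM_{\cS}$). A direct computation gives $B_{\gamma,\delta}=4$ whenever $\gamma$ is plain and $\delta$ is $2$-notched, so such a pair is never compatible; consequently every tagged triangulation of $\cS$ is either entirely plain or entirely $2$-notched. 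Applying the involution that switches every tag at $p$ simultaneously to $T$, $U$, $V$ (it preserves $A$ and $B$, hence $\Int^n$) lets me assume from here on that $T$ is plain.

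Next I would prove $U\cap T=V\cap T$. For $\gamma\in U\cap T$, every $u\in U$ is compatible with $\gamma$; since there are no conjugate pairs we have $\Int^n(\gamma,u)=A+nB=0$, so the $\gamma$-entry of $\Int_T^n(U)$ vanishes. If $\gamma\notin V$, maximality of $V$ provides some $\delta\in V$ with $\Int(\gamma,\delta)=A+B>0$, and then $\Int^n(\gamma,\delta)=A+nB>0$, forcing the $\gamma$-entry of $\Int_T^n(V)$ to be positive, a contradiction. Setting $W:=U\cap T=V\cap T$ and $U':=U\setminus W$, $V':=V\setminus W$, elements of $W$ contribute zero to $\Int_T^n$, so $\Int_T^n(U')=\Int_T^n(V')$, and Theorem~\ref{thm:unique Intn} then gives $U'_1=V'_1$ and $n(U'_2,p)=n(V'_2,p)$.

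The main obstacle will be the last step: upgrading the matching of notched-endpoint counts $n(U'_2,p)=n(V'_2,p)$ to the set equality $U'_2=V'_2$. If either $U$ or $V$ is entirely plain then the corresponding $U'_2$ or $V'_2$ is empty, and the count forces both to be empty. The substantive case is when both $U$ and $V$ are entirely $2$-notched, so $W=\emptyset$, $U=U'_1\sqcup U'_2$, and $V=V'_1\sqcup V'_2$. I plan to pass to plain-tagged versions: since intersection numbers between two $2$-notched arcs coincide with those of their plain counterparts, the sets $U^{\mathrm{pl}}$ and $V^{\mathrm{pl}}$ obtained by switching every tag to plain are themselves plain-arc tagged triangulations (they have the correct cardinality and are pairwise compatible). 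In the decomposition $U^{\mathrm{pl}}=(U'_1)^{\mathrm{pl}}\sqcup(U'_2)^{\mathrm{pl}}$ one has $(U'_2)^{\mathrm{pl}}\subseteq T$ and $(U'_1)^{\mathrm{pl}}\cap T=\emptyset$; the pairwise compatibility of $T$ combined with the maximality of $U^{\mathrm{pl}}$ identifies $(U'_2)^{\mathrm{pl}}$ with the full set of arcs in $T$ compatible with $(U'_1)^{\mathrm{pl}}$. The analogous description holds for $V^{\mathrm{pl}}$; since $U'_1=V'_1$ these two subsets of $T$ coincide, and hence $U'_2=V'_2$, giving $U=V$ as desired.
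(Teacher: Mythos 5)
Your argument is correct, and its skeleton is the paper's: reduce to Theorem \ref{thm:unique Intn}, extract $U_1'=V_1'$, and finish with a maximality argument. The difference is in the endgame. The paper simply reruns the proof of Theorem \ref{thm:unique tag tri}: from $U_1=V_1$ and the fact that the curves underlying $U_2$ and $V_2$ lie in $T$, it deduces that $U$ and $V$ are compatible with each other, so any $\gamma\in U\setminus V$ would contradict the maximality of $V$; this needs no case distinction and never uses the counts $n(U_2,p)$. You instead determine the notched part outright, showing via maximality of the plain triangulation $U^{\mathrm{pl}}$ that $(U_2')^{\mathrm{pl}}$ is exactly the set of arcs of $T$ compatible with $(U_1')^{\mathrm{pl}}$, so that $U_2'$ is a function of $U_1'$; this requires the plain/$2$-notched dichotomy and a separate treatment of the mixed case, where you do invoke the count. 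What your version buys is that it makes explicit two steps the paper leaves implicit: the reduction to $U\cap T=V\cap T=\emptyset$ — genuinely needed before Theorem \ref{thm:unique Intn} (or Theorem \ref{thm:main}) can be applied, and settled cleanly by your observation that arcs of $U\cap T$ contribute zero entries — and the fact that on a once-punctured closed surface every tagged triangulation is entirely plain or entirely $2$-notched. One small simplification: the maximality of $U^{\mathrm{pl}}$ follows at once because the tag-changing involution $(-)^{(p)}$ preserves intersection numbers and hence sends maximal compatible sets to maximal compatible sets, so you need not appeal to the invariance of the cardinality of tagged triangulations.
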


\begin{proof}
The assertion follows from Theorem \ref{thm:unique Intn} and the proof of Theorem \ref{thm:unique tag tri} in Subsection \ref{subsec:pf thm:main}.
\end{proof}

\section{Cluster algebra theory}\label{sec:cluster}

\subsection{Cluster algebras}

We recall (skew-symmetric) cluster algebras with principal coefficients \cite{FZ02,FZ07}. For that, we need to prepare some notations. For a quiver $Q$, we denote by $Q_0$ the set of its vertices and by $Q_1$ the set of its arrows. An oriented cycle of length two is called a \emph{$2$-cycle}. Let $n \in \bZ_{> 0}$ and $\cF:=\bQ(t_1,\ldots,t_{2n})$ be the field of rational functions in $2n$ variables over $\bQ$. A \emph{seed with coefficients} is a pair $(\cl,Q)$ consisting of the following data:
\begin{enumerate}
 \item $\cl=(x_1,\ldots,x_n,y_1,\ldots,y_n)$ is a free generating set of $\cF$ over $\bQ$.
 \item $Q$ is a quiver without loops nor $2$-cycles such that $Q_0=\{1,\ldots,2n\}$.
\end{enumerate}
 Then we refer to the tuple $(x_1,\ldots,x_n)$ as the \emph{cluster}, to each $x_i$ as a \emph{cluster variable} and to $y_i$ as a \emph{coefficient}. For a seed $(\cl,Q)$ with coefficients and $k\in\{1,\ldots,n\}$, the \emph{mutation $\mu_k(\cl,Q)=(\cl',\mu_kQ)$ at $k$} is defined as follows:
\begin{enumerate}
 \item $\cl'=(x'_1,\ldots,x'_n,y_1,\ldots,y_n)$ is defined by $x'_i = x_i$ for $i\neq k$, and
 \[
  x_k x'_k = \prod_{(j \rightarrow k)\in Q_1}x_j\prod_{(j \rightarrow k)\in Q_1}y_{j-n}+\prod_{(j \leftarrow k)\in Q_1}x_j\prod_{(j \leftarrow k)\in Q_1}y_{j-n},
 \]
where $x_{n+1}=\cdots=x_{2n}=1=y_{1-n}=\cdots=y_0$.
 \item $\mu_kQ$ is the quiver obtained from $Q$ by the following steps:\par
 \begin{enumerate}
  \item For any path $i \rightarrow k \rightarrow j$, add an arrow $i \rightarrow j$.
  \item Reverse all arrows incident to $k$.
  \item Remove a maximal set of disjoint $2$-cycles.
 \end{enumerate}
\end{enumerate}

Note that $\mu_k$ is an involution, that is, $\mu_k\mu_k(\cl,Q)=(\cl,Q)$. Moreover, it is easy to see that $\mu_k(\cl,Q)$ is also a seed with coefficients.

For a quiver $Q$ without loops nor $2$-cycles such that $Q_0=\{1,\ldots,n\}$, we obtain the quiver $\hat{Q}$ from $Q$ by adding vertices $\{1',\ldots,n'\}$ and arrows $\{i \rightarrow i' \mid 1 \le i \le n\}$. We fix a seed $(\cl=(x_1,\ldots,x_n,y_1,\ldots,y_n),\hat{Q})$ with coefficients, called the \emph{initial seed}. We also call the tuple $(x_1,\ldots,x_n)$ the \emph{initial cluster}, and each $x_i$ the \emph{initial cluster variable}.

\begin{defn}
The \emph{cluster algebra $\cA(Q)=\cA(\cl,\hat{Q})$ with principal coefficients} for the initial seed $(\cl,\hat{Q})$ is a $\bZ$-subalgebra of $\cF$ generated by all cluster variables and coefficients obtained from $(\cl,\hat{Q})$ by sequences of mutations.
\end{defn}

One of the remarkable properties of cluster algebras with principal coefficients is the strongly Laurent phenomenon as follows.

\begin{prop}[{\cite[Proposition 3.6]{FZ07}}]\label{prop:Laurent phenomenon}
Every nonzero element $x$ of $\cA(Q)$ is expressed by a Laurent polynomial of $x_1,\ldots,x_n$, $y_1,\ldots,y_n$
\[
x=\frac{F(x_1, \ldots,x_n,y_1,\ldots,y_n)}{x_1^{d_1} \cdots x_n^{d_n}},
\]
where $d_i\in\bZ$ and $F(x_1, \ldots, x_n,y_1,\ldots,y_n)\in\bZ[x_1,\ldots, x_n,y_1,\ldots,y_n]$ is not divisible by any $x_i$.
\end{prop}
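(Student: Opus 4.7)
The statement is the classical Laurent phenomenon for cluster algebras with principal coefficients, originally due to Fomin--Zelevinsky in [FZ02, FZ07]. Since the paper cites [FZ07, Proposition 3.6], a reasonable expectation is that the authors simply invoke the reference; nevertheless, the plan below sketches how one would prove it from scratch.

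The plan is to proceed by induction on the length $\ell$ of a mutation sequence $\mu_{k_\ell}\cdots\mu_{k_1}$ producing the cluster in which the Laurent expansion is sought. For $\ell=0$ there is nothing to do: the initial cluster variables $x_i$ and coefficients $y_i$ are themselves Laurent monomials in $(x_1,\dots,x_n,y_1,\dots,y_n)$. For the inductive step, one must show that every cluster variable $x_k'$ obtained by a single further mutation $\mu_k$ from a seed whose cluster variables are already Laurent in the initial data, is itself Laurent in the initial data. The mutation relation
\[
x_k x_k' = \prod_{(j\to k)\in Q_1}x_j\prod_{(j\to k)\in Q_1}y_{j-n} + \prod_{(j\leftarrow k)\in Q_1}x_j\prod_{(j\leftarrow k)\in Q_1}y_{j-n}
\]
expresses $x_k'$ as a Laurent polynomial in the current cluster (with coefficients in $\bZ[y_1,\dots,y_n]$), and applying the induction hypothesis term by term yields an expression for $x_k'$ as a Laurent polynomial over $\bZ[y_1,\dots,y_n]$ in the initial cluster variables, once one proves that no denominator other than a monomial in the $x_i$'s can appear.

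The main obstacle is precisely this last point: after substituting Laurent expansions of the current cluster variables into the exchange relation, one must verify that the resulting expression, when cleared of denominators, is actually divisible by $x_k$ in the polynomial ring $\bZ[x_1^{\pm1},\dots,x_n^{\pm1},y_1,\dots,y_n]$. The standard route through this obstacle uses the ``coprimality'' or ``Caterpillar lemma'' of Fomin--Zelevinsky: one argues that adjacent Laurent expansions differ by a change of variable of a controlled form, and that certain binomial factors appearing in the exchange relation are coprime to the cluster variables in appropriately localised rings. Equivalently, one introduces the upper cluster algebra $\overline{\cA}(Q)$, defined as the intersection, over all clusters reachable from the initial one, of the Laurent polynomial rings in those cluster variables with coefficients in $\bZ[y_1,\dots,y_n]$, and verifies by direct computation on two adjacent seeds that $\overline{\cA}(Q)$ is stable under mutation; then $\cA(Q)\subseteq\overline{\cA}(Q)$ by construction, which is exactly the Laurent phenomenon.

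Once the Laurent expansion is established, the uniqueness of the denominator monomial $x_1^{d_1}\cdots x_n^{d_n}$ with $F$ not divisible by any $x_i$ is a formal consequence of working in the Laurent polynomial ring and extracting the unique reduced form; the integrality $d_i\in\bZ$ and $F\in\bZ[x_1,\dots,x_n,y_1,\dots,y_n]$ is automatic from the induction since every exchange relation has integer coefficients and introduces only polynomial factors in the $y_j$'s. Thus in this paper the natural presentation is to cite [FZ07, Proposition 3.6] and only record the form of the expansion, which is what is actually used to define the denominator vector $d(x)$ and the $F$-polynomial.
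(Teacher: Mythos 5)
The paper gives no proof of this proposition and simply imports it by citing [FZ07, Proposition 3.6], which is exactly what you conclude is the natural presentation; your accompanying sketch of the Fomin--Zelevinsky induction via the exchange relation, coprimality/Caterpillar lemma, and the upper cluster algebra is the standard argument and is sound as an outline. So your proposal matches the paper's approach.
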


\begin{defn}
Keeping the notations in Proposition \ref{prop:Laurent phenomenon}, we call $d(x):=(d_i)_{1 \le i \le n}$ the \emph{denominator vector} of $x$, and $f(x):=(f_i)_{1 \le i \le n}$ the \emph{$f$-vector} of $x$, where $f_i$ is the maximal degree of $y_i$ in the \emph{$F$-polynomial} $F(1,\ldots,1,y_1,\ldots,y_n)$ of $x$.
\end{defn}

\begin{defn}
A \emph{cluster monomial} is a monomial in cluster variables belonging to the same cluster. It is called \emph{non-initial} if it is a monomial in non-initial cluster variables.
\end{defn}

Fomin and Zelevinsky \cite{FZ03} conjectured that different cluster monomials have different denominator vectors (Conjecture \ref{conj:denominator}). Recently, Fei \cite{F} gave its counterexample as follows.

\begin{example}[\cite{F}]\label{ex:counterexample}
We consider the cluster algebra $\cA(Q)$ associated with a quiver
\[
Q=
\begin{tikzpicture}[baseline=-6mm]
\node(1)at(0,0){$1$};\node(2)at($(1)+(-90:1)$){$2$};\node(3)at($(2)+(0:1)$){$3$};\node(4)at($(3)+(90:1)$){$4$};
\draw[transform canvas={xshift=2},->](1)--(2);\draw[transform canvas={xshift=-2},->](1)--(2);
\draw[transform canvas={xshift=2},->](3)--(4);\draw[transform canvas={xshift=-2},->](3)--(4);
\draw[->](2)--(3);\draw[->](4)--(1);
\end{tikzpicture}\ .
\]
Applying sequences $\mu_2\mu_1\mu_4\mu_2\mu_3\mu_1\mu_4\mu_2$ and $\mu_4\mu_3\mu_4\mu_2\mu_3\mu_1\mu_4\mu_2$ of mutations to the initial seed, we obtain different cluster variables $x$ and $y$ with the same denominator vector $(4,6,4,6)$. In fact, their $g$-vectors are $(6,-5,3,-2)$ and $(3,-2,6,-5)$, respectively (see \cite[Section 6]{FZ07} for the notion of $g$-vectors). This gives a counterexample of Conjecture \ref{conj:denominator}.
\end{example}

For all initial cluster variables $x_i$, it is clear that
\begin{equation}\label{eq:initial d f}
d(x_i)=-\be_i\ \text{ and }\ f(x_i)=0,
\end{equation}
where $\be_1,\ldots,\be_n$ are the standard basis vectors in $\bZ^n$. The following theorem was conjectured in \cite[Conjecture 7.4(2)]{FZ07}.

\begin{thm}[{\cite[Theorem 11(ii)]{CL20}}]\label{thm:denominator}
Let $x$ be a non-initial cluster variable in $\cA(Q)$ with $d(x)=(d_1,\ldots,d_n)\in\bZ^n$. Then every $d_i$ is non-negative. Moreover, if there is a cluster containing $x$ and an initial cluster variable $x_k$ for $1\le k\le n$, then $d_k=0$.
\end{thm}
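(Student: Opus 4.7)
The plan is to deduce both statements from a careful analysis of the relationship between the denominator vector $d(x)$, the $g$-vector $g(x)=(g_1,\ldots,g_n)$, and the $F$-polynomial $F_x(y_1,\ldots,y_n)$ of the non-initial cluster variable $x$. Recall from the Fomin--Zelevinsky separation formula that
$$x = F_x(\hat{y}_1,\ldots,\hat{y}_n)\cdot x_1^{g_1}\cdots x_n^{g_n},\qquad \hat{y}_i = y_i\prod_{j}x_j^{b_{ji}},$$
where $B=(b_{ij})$ is the exchange matrix of $\hat{Q}$. Clearing the denominators that arise from $x^{g(x)}$ and from the substitution $\hat{y}_i \mapsto y_i\prod_j x_j^{b_{ji}}$, one can read off each $d_i$ as a piecewise-linear function of $g(x)$ and of the exponents appearing in the monomials of $F_x$.

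First, I would verify the base case by direct computation: for a single mutation $\mu_k$ applied to the initial seed, the exchange relation yields $x = x_k^{-1}(M_1+M_2)$, where $M_1$ and $M_2$ are monomials in $\{x_j,y_j\}_{j\neq k}$ that share no common $x$-factor (otherwise $\hat{Q}$ would contain a $2$-cycle through $k$). Hence $d(x)=\be_k$, which is non-negative, and $d_j(x)=0$ for $j\neq k$, matching the clusters in which $x$ coexists with the initial cluster variables $x_j$ for $j\neq k$.

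For the general case, I would invoke the sign-coherence of $g$-vectors, together with the \emph{enough $g$-pairs} property of the cluster complex: namely, if the $i$-th entry of $g(x)$ is non-positive, then some cluster contains both $x$ and the initial cluster variable $x_i$. Given such a cluster $\cl'$, one expands $x$ with respect to $\cl'$ (where the $i$-th denominator exponent is manifestly $0$, since $x$ already belongs to $\cl'$) and compares this expansion with the one with respect to the initial cluster via the mutation sequence connecting the two seeds. Sign-coherence of $c$-vectors along this sequence forces the $i$-th denominator exponent in the initial cluster to be non-negative, with equality precisely when $x$ and $x_i$ coexist in some cluster. Combined with the easy observation that $d_i(x)\ge 0$ whenever $g_i(x)>0$ (because then the $x^{g(x)}$ factor already contributes nothing to the denominator of $x_i$, and the $F$-polynomial substitution can only reinforce this), both assertions follow.

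The principal obstacle is establishing the enough $g$-pairs property, which is a genuinely non-trivial structural property of the cluster complex. The cleanest route is via scattering diagrams \`a la Gross--Hacking--Keel--Kontsevich, or alternatively via a simultaneous induction on mutation distance that tracks the sign-coherence of $c$-vectors and the maximality of certain $g$-vector cones. Once this property is in hand, the separation formula together with the comparison of cluster expansions reduces the theorem to the trivial case in the cluster containing $x_i$, and yields both the non-negativity $d_i\ge 0$ and the vanishing $d_k=0$ when $x$ and $x_k$ coexist.
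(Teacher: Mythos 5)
This theorem is not proved in the paper at all: it is quoted verbatim from \cite[Theorem 11(ii)]{CL20}, so the only meaningful comparison is with Cao--Li's own argument. Your sketch correctly identifies their key tool (the ``enough $g$-pairs property''), but as a proof it has genuine gaps beyond the fact that you explicitly leave that property unproven.

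Concretely: (a) your bridging claim ``if the $i$-th entry of $g(x)$ is non-positive, then some cluster contains both $x$ and $x_i$'' is false. Already in type $A_2$ (initial exchange matrix with $b_{12}=1$, $b_{21}=-1$, principal coefficients) the non-initial cluster variable with denominator vector $(1,1)$ has $g$-vector $(-1,0)$; it lies only in clusters consisting of the other two non-initial variables, so it is compatible with neither $x_1$ nor $x_2$ although both entries of its $g$-vector are $\le 0$. The enough $g$-pairs property of Cao--Li is a statement about pairs of seeds relative to a subset $I$ of indices (existence of a seed, reachable by mutations only in directions in $I$, whose cluster monomials realize prescribed truncated $g$-vector data); it is not a pointwise sign criterion on entries of $g(x)$, and the theorem cannot be reduced to such a criterion. (b) Your ``easy observation'' that $d_i(x)\ge 0$ whenever $g_i(x)>0$ is unjustified: the constant term $1$ of the $F$-polynomial only shows that one Laurent monomial of $x$ has $x_i$-exponent equal to $g_i>0$, which gives no control on the \emph{minimal} $x_i$-exponent over all monomials; excluding the possibility that every monomial is divisible by $x_i$ is exactly the non-negativity statement being proved, so this step begs the question. (c) The comparison-of-expansions step (``sign-coherence of $c$-vectors along this sequence forces the $i$-th denominator exponent to be non-negative, with equality precisely when $x$ and $x_i$ coexist'') is asserted rather than argued; tracking how denominator exponents transform under a mutation sequence is nontrivial (denominator vectors do not obey a clean mutation rule of the $g$-/$c$-vector type), and this is where the actual work in \cite{CL20} lies. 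As it stands the proposal is a roadmap to the cited reference, not a proof.
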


By Theorem \ref{thm:denominator}, we only need to consider non-initial cluster monomials to see if Conjecture \ref{conj:denominator} holds for a given cluster algebra as follows.

\begin{lem}\label{lem:conj non-initial}
The following are equivalent:
\begin{itemize}
\item[(1)] For any cluster monomials $x$ and $x'$ in $\cA(Q)$, if $d(x)=d(x')$, then $x=x'$ (Conjecture \ref{conj:denominator}).
\item[(2)] For any non-initial cluster monomials $x$ and $x'$ in $\cA(Q)$, if $d(x)=d(x')$, then $x=x'$.
\end{itemize}
\end{lem}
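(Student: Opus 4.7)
The direction $(1)\Rightarrow(2)$ is immediate since non-initial cluster monomials are, in particular, cluster monomials. For the substantive direction $(2)\Rightarrow(1)$, the plan is to show that the $d$-vector of any cluster monomial $x$ uniquely determines, and is compatible with, its factorization into an ``initial part'' (a monomial in $x_1,\ldots,x_n$) and a ``non-initial part'' (a monomial in non-initial cluster variables from the same cluster); then hypothesis (2) applied to the non-initial parts will finish the argument.

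More concretely, I would fix any cluster $C$ containing all factors of $x$ and write $C=\{x_i:i\in I'\}\sqcup\{z_j:j\in J\}$ with each $z_j$ non-initial, so that $x=\prod_{i\in I'}x_i^{a_i}\cdot\prod_{j\in J}z_j^{b_j}$ for unique $a_i,b_j\in\bZ_{\ge 0}$. The next step is to verify that $d$-vectors are additive under products of cluster variables: by Proposition \ref{prop:Laurent phenomenon} each factor has a Laurent expression $F/\prod_kx_k^{d_k}$ with $F\in\bZ[x_1,\ldots,x_n,y_1,\ldots,y_n]$ not divisible by any $x_k$, and since this polynomial ring is a UFD in which each $x_k$ is prime, the product of such numerators is still not divisible by any $x_k$, yielding additivity. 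Combining $d(x_i)=-\be_i$ from \eqref{eq:initial d f} with Theorem \ref{thm:denominator} applied to each non-initial $z_j$ (which gives $d(z_j)\ge 0$ componentwise and $d(z_j)_i=0$ for every $i\in I'$, since $z_j$ shares the cluster $C$ with each $x_i$), one then reads off
\[
d(x)_i=-a_i\quad(i\in I'),\qquad d(x)_i=\sum_{j\in J}b_j\,d(z_j)_i\ge 0\quad(i\notin I').
\]
Hence $a_i=\max\{-d(x)_i,0\}$ for every $i$, so the initial part $x_{\mathrm{init}}:=\prod_i x_i^{\max\{-d(x)_i,0\}}$ is determined by $d(x)$ alone, independently of the choice of $C$.

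To conclude, suppose $x$ and $x'$ are cluster monomials with $d(x)=d(x')$. By the previous step $x_{\mathrm{init}}=x'_{\mathrm{init}}$, and then $x_{\mathrm{ni}}:=x/x_{\mathrm{init}}$ and $x'_{\mathrm{ni}}:=x'/x'_{\mathrm{init}}$ are non-initial cluster monomials (in their respective clusters) with $d(x_{\mathrm{ni}})=d(x'_{\mathrm{ni}})$ by additivity. Hypothesis (2) forces $x_{\mathrm{ni}}=x'_{\mathrm{ni}}$, and multiplying back gives $x=x'$. The main (mild) obstacle is the $d$-vector additivity, which is pure unique factorization; a minor bookkeeping point is to treat the empty product $1$ as a (non-initial) cluster monomial, so that the degenerate case $x_{\mathrm{ni}}=1$ is still covered by (2).
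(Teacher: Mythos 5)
Your proposal is correct and follows essentially the same route as the paper: decompose a cluster monomial into its initial part and non-initial part, use $d(x_i)=-\be_i$ together with Theorem \ref{thm:denominator} to see that both parts' denominator vectors (hence the initial part itself) are determined by $d(x)$, and then apply hypothesis (2) to the non-initial parts. The only difference is that you spell out the additivity of denominator vectors via the UFD argument and the degenerate case $x_{\mathrm{ni}}=1$, which the paper's proof uses implicitly.
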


\begin{proof}
It is clear that $\cA(Q)$ satisfies (2) if it satisfies (1). Assume that $\cA(Q)$ satisfies (2). For a cluster monomial $x$ in $\cA(Q)$, there is a decomposition $x=x_Ix_N$, where $x_I$ (resp., $x_N$) is the maximal sub-monomial in initial (resp., non-initial) cluster variables. For $d(x)=(d_1,\ldots,d_n)$, \eqref{eq:initial d f} and Theorem \ref{thm:denominator} induce that $d(x_I)=-([-d_1]_+,\ldots,[-d_n]_+)$ and $d(x_N)=([d_1]_+,\ldots,[d_n]_+)$, where $[d]_+:=\max\{d,0\}$. Therefore, if $d(x)=d(x')$ for cluster monomials $x$ and $x'$ in $\cA(Q)$, then $d(x_I)=d(x'_I)$ and $d(x_N)=d(x'_N)$. It follows from \eqref{eq:initial d f} and (2) that $x_I=x'_I$ and $x_N=x'_N$, respectively. This means that $\cA(Q)$ satisfies (1).
\end{proof}

\subsection{Cluster algebras associated with triangulated surfaces}\label{subsec:cA T}

To a tagged triangulation $T$ of $\cS$, we associate a quiver $\bar{Q}_T$ with $(\bar{Q}_T)_0=T$ whose arrows correspond to angles between tagged arcs in $T$ as in Table \ref{table:Qtri}. We obtain a quiver $Q_T$ without loops nor $2$-cycles from $\bar{Q}_T$ by removing $2$-cycles. This construction commutes with flips and mutations, that is, $Q_{\mu_{\gamma}T}=\mu_{\gamma}Q_T$ for $\gamma\in T$ \cite[Proposition 4.8 and Lemma 9.7]{FoST08}.

%
%
\renewcommand{\arraystretch}{2}
{\begin{table}[ht]
\begin{tabular}{c|c|c|c|c}
$\triangle$
&
\begin{tikzpicture}[baseline=0mm]
 \coordinate(l)at(-150:1); \coordinate(r)at(-30:1); \coordinate(u)at(90:1);
 \draw(u)--node[left]{$1$}(l)--node[below]{$3$}(r)--node[right]{$2$}(u);
 \fill(u)circle(0.07); \fill(l)circle(0.07); \fill(r)circle(0.07);
\end{tikzpicture}
&
\begin{tikzpicture}[baseline=-2mm]
 \coordinate(c)at(0,0); \coordinate(u)at(0,1); \coordinate(d)at(0,-1);
 \draw(d)to[out=180,in=180]node[left]{$1$}(u);
 \draw(d)to[out=0,in=0]node[right]{$2$}(u);
 \draw(d)to[out=150,in=-150]node[fill=white,inner sep=1]{$3$}(c);
 \draw(d)to[out=30,in=-30]node[pos=0.8]{\rotatebox{40}{\footnotesize $\bowtie$}}node[fill=white,inner sep=1]{$4$}(c);
 \fill(c)circle(0.07); \fill(u)circle(0.07); \fill(d)circle(0.07);
\end{tikzpicture}
&
\begin{tikzpicture}[baseline=-2mm]
 \coordinate(l)at(-0.5,0); \coordinate(r)at(0.5,0); \coordinate(d)at(0,-1);
 \draw(0,0)circle(1); \node at(0,0.7){$1$};
 \draw(d)to[out=170,in=-130]node[fill=white,inner sep=1]{$2$}(l);
 \draw(d)to[out=95,in=0]node[left,inner sep=1,pos=0.4]{$3$}node[pos=0.8]{\rotatebox{60}{\footnotesize $\bowtie$}}(l);
 \draw(d)to[out=85,in=180]node[right,inner sep=1,pos=0.4]{$4$}(r);
 \draw(d)to[out=10,in=-50] node[pos=0.8]{\rotatebox{10}{\footnotesize $\bowtie$}}node[fill=white,inner sep=1]{$5$}(r);
 \fill(l)circle(0.07); \fill(r)circle(0.07); \fill(d)circle(0.07);
\end{tikzpicture}
&
\begin{tikzpicture}[baseline=1mm]
 \coordinate(c)at(0,0); \coordinate(u)at(90:1); \coordinate(r)at(-30:1); \coordinate(l)at(210:1);
 \draw(c)to[out=60,in=120,relative]node[fill=white,inner sep=1]{$1$}(u);
 \draw(c)to[out=-60,in=-120,relative]node[pos=0.8]{\rotatebox{40}{\footnotesize $\bowtie$}}node[fill=white,inner sep=1]{$2$}(u);
 \draw(c)to[out=60,in=120,relative]node[fill=white,inner sep=1]{$3$}(l);
 \draw(c)to[out=-60,in=-120,relative] node[pos=0.8]{\rotatebox{160}{\footnotesize $\bowtie$}}node[fill=white,inner sep=1]{$4$}(l);
 \draw(c)to[out=60,in=120,relative]node[fill=white,inner sep=1]{$5$}(r);
 \draw(c)to[out=-60,in=-120,relative]node[pos=0.8]{\rotatebox{-80}{\footnotesize $\bowtie$}}node[fill=white,inner sep=1]{$6$}(r);
 \fill(c)circle(0.07); \fill(u)circle(0.07); \fill(l)circle(0.07); \fill(r)circle(0.07);
\end{tikzpicture}
\\\hline
$Q_{\triangle}$
&
\begin{tikzpicture}[baseline=-3mm,scale=0.8]
 \node(1)at(150:1){$1$}; \node(2)at(30:1){$2$}; \node(3)at(-90:1){$3$};
 \draw[->](3)--(2); \draw[->](2)--(1); \draw[->](1)--(3);
\end{tikzpicture}
&
\begin{tikzpicture}[baseline=-2mm]
 \node(1)at(-0.7,0.5){$1$}; \node(2)at(0.7,0.5){$2$}; \node(3)at(0,-0.2){$3$}; \node(4)at(0,-0.8){$4$};
 \draw[->](3)--(2); \draw[->](2)--(1); \draw[->](1)--(3); \draw[->](4)--(2); \draw[->](1)--(4);
\end{tikzpicture}
&
\begin{tikzpicture}[baseline=-1mm]
 \node(1)at(0,0.8){$1$}; \node(2)at(-170:1){$2$}; \node(3)at(-130:1){$3$};
 \node(4)at(-10:1){$4$}; \node(5)at(-50:1){$5$};
 \draw[->] (1)--(2); \draw[->] (2)--(4); \draw[->] (4)--(1);
 \draw[->] (1)--(3); \draw[->] (3)--(4);
 \draw[->] (2)--(5); \draw[->] (5)--(1); \draw[->] (3)--(5);
\end{tikzpicture}
&
\begin{tikzpicture}[baseline=0mm]
 \node(1)at(90:0.5){$1$}; \node(2)at(90:1.1){$2$}; \node(3)at(-30:0.5){$5$};
 \node(4)at(-30:1.1){$6$}; \node(5)at(210:0.5){$3$}; \node(6)at(210:1.1){$4$};
 \draw[->](1)--(3); \draw[->](3)--(5); \draw[->](5)--(1);
 \draw[->] (2) to [out=40,in=140,relative] (4); \draw[->] (4) to [out=40,in=140,relative] (6); \draw[->] (6) to [out=40,in=140,relative] (2);
 \draw[->] (2) to [out=40,in=140,relative] (3); \draw[->] (4) to [out=40,in=140,relative] (5); \draw[->] (6) to [out=40,in=140,relative] (1);
 \draw[->] (1) to [out=40,in=140,relative] (4); \draw[->] (3) to [out=40,in=140,relative] (6); \draw[->] (5) to [out=40,in=140,relative] (2);
\end{tikzpicture}
\end{tabular}\vspace{3mm}
\caption{The quiver $Q_{\triangle}$ associated with each puzzle piece $\triangle$ of tagged triangulations}
\label{table:Qtri}
\end{table}}

For the associated cluster algebra $\cA(Q_T)$, the following are due to \cite[Theorem 7.11]{FoST08}, \cite[Theorem 6.1]{FT18}, and \cite[Theorem 7]{Y19}.

\begin{thm}[{\cite{FoST08,FT18,Y19}}]\label{thm:bijection x}
Let $T$ be a tagged triangulation of $\cS$.
\begin{itemize}
\item[(1)] If $\cS$ is not a closed surface with exactly one puncture, then there is a bijection
\[
x_T:\{\text{Tagged arcs in $\cS$}\}\rightarrow\{\text{Cluster variables in $\cA(Q_T)$}\}
\]
such that $\Int_T(\gamma)=f(x_T(\gamma))$ for all tagged arcs $\gamma$ in $\cS$. Moreover, it induces a bijection
\[
x_T:\{\text{Tagged triangulations in $\cS$}\}\rightarrow\{\text{Clusters in $\cA(Q_T)$}\}
\]
that sends $T$ to the initial cluster in $\cA(Q_T)$ and commutes with flips and mutations.
\item[(2)] If $\cS$ is a closed surface with exactly one puncture, then there is a bijection
\[
x_T:
\Biggl\{\begin{gathered}\text{Tagged arcs in $\cS$ with}\\\text{the same tags as ones in $T$}\end{gathered}\Biggr\}
\rightarrow\{\text{Cluster variables in $\cA(Q_T)$}\}
\]
such that $\Int_T(\gamma)=f(x_T(\gamma))$ for all tagged arcs $\gamma$ in $\cS$ with the same tags as ones in $T$. Moreover, it induces a bijection
\[
x_T:
\Biggl\{\begin{gathered}\text{Tagged triangulations in $\cS$ with}\\\text{the same tags as ones in $T$}\end{gathered}\Biggr\}
\rightarrow\{\text{Clusters in $\cA(Q_T)$}\}.
\]
that sends $T$ to the initial cluster in $\cA(Q_T)$ and commutes with flips and mutations.
\end{itemize}
\end{thm}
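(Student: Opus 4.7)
The plan is to combine three ingredients drawn from the cited references, structured as a bijection, a mutation-compatibility statement, and a geometric intersection-theoretic identification. First, I would establish the set-level bijection $x_T$. Starting from the initial seed attached to $T$, send each tagged arc in $T$ to its corresponding initial cluster variable. Any other tagged arc $\gamma$ (restricted to the ``plain-tag sector'' relative to $T$ when $\cS$ is a one-puncture closed surface) is reachable from $T$ by a finite sequence of flips, so I define $x_T(\gamma)$ to be the cluster variable produced by performing the corresponding sequence of mutations from the initial cluster. Well-definedness follows from the fact that the construction $T \mapsto Q_T$ is compatible with flips and mutations (i.e.\ $Q_{\mu_\gamma T} = \mu_\gamma Q_T$), together with the connectedness of the flip graph on tagged triangulations.

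Second, I would check that the induced map on tagged triangulations lands inside the set of clusters and is a bijection. Surjectivity onto clusters is immediate, since any cluster is obtained from the initial one by a sequence of mutations, and by construction each such sequence comes from a sequence of flips. Injectivity requires separating tagged arcs by the cluster variables they produce; the cleanest route is to use $g$-vectors or $f$-vectors as invariants that already distinguish the relevant arcs. In case (2), the restriction to tagged arcs with tags matching $T$ is essential because the flip graph of all tagged triangulations on a once-punctured closed surface is disconnected, and mutation only explores one connected component, which is what Fomin--Thurston handle in \cite{FT18}.

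Third, and geometrically the heart of the statement, I would prove $\Int_T(\gamma) = f(x_T(\gamma))$. The strategy is to use the explicit combinatorial formula for $F$-polynomials via snake graphs and their perfect matchings (or equivalently via the lamination/shear-coordinate formulas of \cite{FT18}). From such a formula, the $f$-vector of $x_T(\gamma)$ is read off as a count of crossings of $\gamma$ with the arcs of $T$, corrected by a local contribution at each puncture recording how the tags at that puncture compare. One then verifies termwise that this combinatorial count matches $A_{t,\gamma} + B_{t,\gamma} + C_{t,\gamma}$ from Definition \ref{def:Int}: $A_{t,\gamma}$ records the transverse crossings, $B_{t,\gamma}$ records the puncture-tag discrepancies, and $C_{t,\gamma}$ corrects for conjugate pairs (which do not produce an actual crossing but do contribute to the $F$-polynomial formula).

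The main obstacle is the third step, because the $f$-vector is most naturally matched with the ``intersection number'' of \cite{FoST08}, whereas the statement uses the $\Int$ of \cite{QZ17}, and these two notions differ at punctures and at conjugate pairs. The key technical point, which is the contribution of \cite{Y19}, is to prove that these two discrepancies cancel against each other so that the $\Int$ of \cite{QZ17} is exactly the correct object to equal the $f$-vector. I expect this to require a case-by-case local analysis at each puncture, handling plain, $1$-notched, $2$-notched, and conjugate configurations separately; the once-punctured closed case in (2) is the subtlest because the restriction to compatible tags is forced precisely to make the bijection and the intersection identity hold simultaneously.
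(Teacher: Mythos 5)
The paper does not actually prove this theorem: it is imported as a black box with the attribution \cite[Theorem 7.11]{FoST08}, \cite[Theorem 6.1]{FT18}, \cite[Theorem 7]{Y19}, so there is no internal argument to compare yours against. Judged as a reconstruction of those references, your outline has the right overall shape: the arc/variable correspondence compatible with flips and mutations is the content of \cite{FoST08,FT18}; your explanation of the restriction in case (2) is correct, since the tagged arc complex of a once-punctured closed surface is disconnected and mutation only explores the component of $T$; and the identity $\Int_T(\gamma)=f(x_T(\gamma))$ does come from expansion-formula computations of $F$-polynomials combined with a comparison of the intersection number of \cite{QZ17} with the one of \cite{FoST08} (the paper points to \cite{Y24} for exactly this discrepancy), which is the role of \cite{Y19}.

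Two steps as written, however, have genuine gaps. First, well-definedness of $x_T(\gamma)$ --- that any two flip sequences from $T$ to tagged triangulations containing $\gamma$ yield the same cluster variable --- does not follow from $Q_{\mu_\gamma T}=\mu_\gamma Q_T$ together with connectedness of the flip graph; those facts only give that every cluster is reached by some flip sequence. Independence of the chosen sequence (equivalently, that the cluster complex is isomorphic to the tagged arc complex) is the substantial content of \cite{FoST08,FT18} and is established there via laminations and shear coordinates (equivalently, $g$-vectors), not by quiver compatibility alone. Second, your proposed fallback of separating arcs by their $f$-vectors fails in general: as the present paper recalls from \cite{GY20} in the introduction, distinct non-initial cluster variables can share an $f$-vector whenever $T$ contains two tagged arcs joining the same punctures with different underlying curves --- indeed this failure is the very phenomenon the paper studies --- so injectivity must be run through $g$-vectors (shear coordinates), not $f$-vectors.
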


The bijections in Theorem \ref{thm:bijection x} also induce the following: If $\cS$ is not a closed surface with exactly one puncture, then $x_T$ in Theorem \ref{thm:bijection x}(1) induces a bijection
\begin{equation}\label{eq:bij}
x_T:\{U\in\bM_{\cS}\mid U\cap T=\emptyset\}\rightarrow\left\{\text{Non-initial cluster monomials in $\cA(Q_T)$}\right\};
\end{equation}
If $\cS$ is a closed surface with exactly one puncture, then $x_T$ in Theorem \ref{thm:bijection x}(2) induces a bijection
\begin{equation}\label{eq:bij 1}
x_T:\bM_{\cS}':=\left\{U\in\bM_{\cS}\ \middle|\ 
\begin{gathered}\text{$U\cap T=\emptyset$, and tags in $U$}\\\text{are the same as ones in $T$}\end{gathered}\right\}
\rightarrow
\Biggl\{\begin{gathered}\text{Non-initial cluster}\\\text{monomials in $\cA(Q_T)$}\end{gathered}\Biggr\}.
\end{equation}
In both cases, $\Int_T(U)=f(x_T(U))$.

\begin{prop}\label{prop:1-punc f}
Assume that $\cS$ is a closed surface with exactly one puncture. Let $T$ be a tagged triangulation of $\cS$. For any non-initial cluster monomials $x$ and $x'$ in $\cA(Q_T)$, if $f(x)=f(x')$, then $x=x'$.
\end{prop}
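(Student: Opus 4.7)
The plan is to pull the $f$-vector equality back through the bijection \eqref{eq:bij 1} to an equality of intersection vectors, observe that on $\bM_{\cS}'$ the ordinary intersection vector coincides with every $n$-intersection vector, and then invoke Theorem \ref{thm:unique Intn}. Concretely, given non-initial cluster monomials $x, x'$ in $\cA(Q_T)$ with $f(x)=f(x')$, I will write $x=x_T(U)$ and $x'=x_T(V)$ for $U,V\in\bM_{\cS}'$, so that
\[
\Int_T(U)=f(x)=f(x')=\Int_T(V),
\]
and the task reduces to showing $U=V$.

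A short preliminary reduction is that I may assume $T$ satisfies \eqref{diamond}, in fact that every arc of $T$ is plain. In a closed surface with the unique puncture $p$, every tagged arc is a loop at $p$ with equal tags at both ends; hence any all-plain arc $\gamma$ and any all-notched arc $\delta$ satisfy $B_{\gamma,\delta}=4$ and $C_{\gamma,\delta}=0$, forcing $\Int(\gamma,\delta)\ge 4$. Consequently a tagged triangulation of $\cS$ consists either entirely of plain arcs or entirely of notched arcs. In the notched case I apply the simultaneous tag swap $(-)^{(p)}$ to $T$, $U$, and $V$: this preserves underlying curves and all intersection numbers, leaves $Q_T$ and hence $\cA(Q_T)$ unchanged, and identifies the two instances of \eqref{eq:bij 1}. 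So I may assume $T$, $U$, $V$ all consist of plain arcs.

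With this reduction, the tags of $T$, $U$, $V$ all match, so in Definition \ref{def:Int} every $B_{t,u}$, $B_{t,v}$, $C_{t,u}$, $C_{t,v}$ vanishes. Therefore, for any $n\in\bZ_{>0}$,
\[
\Int^n_T(U)=\Int_T(U)=\Int_T(V)=\Int^n_T(V).
\]
Theorem \ref{thm:unique Intn} now yields $U_1=V_1$ and $n(U_2,p)=n(V_2,p)$. Since every arc in $U$ and every arc in $\Omega(T)=T$ is plain, $\Omega(U)=U$ contains no $2$-notched curve whose underlying plain curve belongs to $\Omega(T)$, so $U_2=\emptyset$; the same holds for $V_2$. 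Hence $U=U_1=V_1=V$, and so $x=x_T(U)=x_T(V)=x'$.

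The substance of the argument is entirely contained in Theorem \ref{thm:unique Intn}, and the expected obstacle would be there; the surrounding reasoning here is purely the elementary tag reduction and the observation that matching-tag compatibility collapses $n$-intersection numbers to ordinary intersection numbers on $\bM_{\cS}'$, neither of which presents any difficulty in the single-puncture closed-surface setting.
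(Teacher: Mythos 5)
Your argument is correct and is essentially the paper's own proof: pull $f(x)=f(x')$ back through \eqref{eq:bij 1} to $\Int_T(U)=\Int_T(V)$ for $U,V\in\bM_{\cS}'$, apply the intersection-vector uniqueness machinery, and conclude $U=V$ because $U_2=V_2=\emptyset$. The only cosmetic difference is that you route through Theorem \ref{thm:unique Intn} (after noting $\Int^n_T=\Int_T$ on matching-tag multi-sets) and make the tag-swap normalization explicit, whereas the paper cites Theorem \ref{thm:main} directly with the \eqref{diamond} normalization left implicit; in this setting the two statements coincide.
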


\begin{proof}
For $U,V\in\bM_{\cS}'$, we know that $U_2=V_2=\emptyset$. By Theorem \ref{thm:main}, if $\Int_T(U)=\Int_T(V)$, then $U=V$. Therefore, the assertion follows from \eqref{eq:bij 1}.
\end{proof}

Finally, we recall a relation between denominator vectors and $f$-vectors in $\cA(Q_T)$.

\begin{thm}[{\cite[Theorem 1.3 and Corollary 3.9]{Y24}}]\label{thm:d=f}
Let $T$ be a tagged triangulation of $\cS$. Then the following are equivalent:
\begin{itemize}
\item[(1)] For all non-initial cluster variables $x$ in $\cA(Q_T)$, $d(x)=f(x)$.
\item[(2)] The marked surface $\cS$ is a closed surface with exactly one puncture, or $T$ has neither loops nor tagged arcs connecting punctures.
\end{itemize}
\end{thm}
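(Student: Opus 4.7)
My plan is to reduce the equivalence to a geometric comparison of two intersection numbers on $\cS$. By Theorem \ref{thm:bijection x}, the $f$-vector satisfies $f(x_T(\gamma))=(\Int(t,\gamma))_{t\in T}$, where $\Int$ is the intersection number of Definition \ref{def:Int}. I would first establish a parallel formula $d(x_T(\gamma))=((t\,|\,\gamma))_{t\in T}$, where $(\cdot\,|\,\cdot)$ is the FoST08 intersection number of \cite[Definition 8.4]{FoST08} mentioned after Definition \ref{def:Int}; this should follow from a direct computation of the Laurent expansion (for instance, via snake-graph type formulas adapted to tagged arcs) by tracking the denominator of the minimal-matching Laurent monomial. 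Given these formulas in hand, the equivalence $d(x)=f(x)$ for all non-initial cluster variables reduces to the pointwise coincidence $\Int(t,\gamma)=(t\,|\,\gamma)$ for all $t\in T$ and all relevant tagged arcs $\gamma$ (all tagged arcs in the case of Theorem \ref{thm:bijection x}(1), and only those sharing tags with $T$ in case (2)).

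Next I would analyze the discrepancy $\Int(t,\gamma)-(t\,|\,\gamma)$ term by term. Writing $\Int(t,\gamma)=A_{t,\gamma}+B_{t,\gamma}+C_{t,\gamma}$ as in Definition \ref{def:Int}, the transverse-crossing term $A_{t,\gamma}$ matches the bulk of $(t\,|\,\gamma)$, so the discrepancy is concentrated in the endpoint-based terms $B$ and $C$ versus the puncture corrections built into the FoST08 definition. The key observation is that $B_{t,\gamma}$ vanishes whenever $t$ and $\gamma$ share all tags at each common puncture endpoint, and $C_{t,\gamma}$ vanishes unless $\{t,\gamma\}$ is a conjugate pair. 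Thus the whole discrepancy is supported at puncture-endpoints of $t$, and the question becomes whether tag-choices at such punctures can produce a mismatch that survives.

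For the direction $(2)\Rightarrow(1)$: in the closed-one-puncture case, Theorem \ref{thm:bijection x}(2) forces $\gamma$ to share all tags with $T$, so $B_{t,\gamma}=C_{t,\gamma}=0$ and the equality is automatic. In the other case, after normalizing via \eqref{diamond}, every $t\in T$ has at least one endpoint on $\partial\cS$ (since $T$ has no loops at punctures and no puncture-to-puncture arcs), so notching only occurs on one end; a case-by-case check over the local configurations of Table \ref{table:Qtri} then shows $B_{t,\gamma}+C_{t,\gamma}$ exactly matches the puncture correction in $(t\,|\,\gamma)$, giving equality. For $(1)\Rightarrow(2)$: assuming $\cS$ is not a closed surface with exactly one puncture and that $T$ contains either a loop or a puncture-to-puncture arc $t$, I would construct $\gamma$ by flipping a tag at a puncture incident to such a $t$; the resulting surplus $B_{t,\gamma}>0$ is not compensated in $(t\,|\,\gamma)$, so $f(x_T(\gamma))_t>d(x_T(\gamma))_t$ gives the desired non-initial cluster variable witnessing failure.

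The main obstacle will be establishing the denominator formula $d(x_T(\gamma))=((t\,|\,\gamma))_{t\in T}$ in full tagged generality: once either $t$ or $\gamma$ may carry notched ends, the natural Laurent expansions need careful modification to correctly track the exponent of each initial cluster variable $x_t$ in the denominator, and the verification reduces to a patient bookkeeping over all local tag-configurations around punctures. Once this formula is secured, the remaining term-by-term comparison of $\Int$ and $(\cdot\,|\,\cdot)$ is a routine case analysis guided by the puzzle-piece decomposition developed in Subsection \ref{subsec:segment}.
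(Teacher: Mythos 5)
A preliminary remark: Theorem \ref{thm:d=f} is not proved in this paper at all --- it is imported from \cite{Y24} (Theorem 1.3 and Corollary 3.9 there) --- so there is no internal proof to compare with, and your argument has to stand on its own. As written, it has a genuine gap.

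The load-bearing step, the denominator formula $d(x_T(\gamma))=\bigl((t\,|\,\gamma)\bigr)_{t\in T}$ with the intersection number of \cite[Definition 8.4]{FoST08}, asserted for every tagged triangulation $T$ and every tagged arc $\gamma$, is exactly the hard content of the theorem, and you only gesture at it (``tracking the denominator of the minimal-matching Laurent monomial''). For notched arcs the known expansion formulas are ratios of matching polynomials, and the reduced denominator exponent at $x_t$ can be strictly smaller than the crossing count because the numerator may be divisible by $x_t$; this cancellation is precisely the mechanism that makes $d(x)\neq f(x)$ possible, and it is governed by whether $t$ is a loop or joins two punctures --- that is, by the dichotomy in (2). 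Reading the denominator off an intersection count therefore assumes what has to be proved, and the ``main obstacle'' you acknowledge is the whole theorem, not a technicality.

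Moreover, even granting some formula of this shape, the proposed reduction to a pointwise comparison of $\Int(t,\gamma)$ with $(t\,|\,\gamma)$ cannot reproduce condition (2), because both quantities are local to the pair $(t,\gamma)$: they see crossings and tags at shared punctures, but not whether the other end of $t$ lies on $\partial\cS$ or at a puncture. Concretely, let $\cS$ be a once-punctured digon, $T=\{t_1,t_2\}$ the two plain boundary-to-puncture arcs (so (2) holds), and $\gamma$ the arc with the same underlying curve as $t_2$ but notched at the puncture. Then $B_{t_1,\gamma}=1>0$, exactly the local surplus you invoke for (1)$\Rightarrow$(2); nevertheless $x_T(\gamma)=(1+y_1)/x_1$, so $d(x_T(\gamma))=f(x_T(\gamma))=(1,0)$ and the surplus is fully reflected in the denominator. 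So the configuration ``end of $t$ at a puncture where $\gamma$ carries the opposite tag'' is sometimes compensated and sometimes not, depending on global features of $t$ (in particular on its other endpoint) that neither $\Int$ nor $(\cdot\,|\,\cdot)$ records; your two directions require the same local picture to behave in opposite ways, and nothing in the sketch supplies the distinguishing mechanism. (Also, in the closed once-punctured case, $B=C=0$ only gives $f(x_T(\gamma))=$ the crossing vector; the equality $d=f$ still needs the postponed denominator computation.) What is actually needed is the direct computation of $d(x_T(\gamma))$ and its comparison with $\Int_T(\gamma)$, with correction terms supported on loops and puncture-joining arcs of $T$ --- which is what \cite{Y24} does.
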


Note that the equivalent properties in Theorem \ref{thm:d=f} are also equivalent that $d(x)=f(x)$ for all non-initial cluster monomials $x$ in $\cA(Q_T)$. We are ready to prove Theorems \ref{thm:unique f-vector} and \ref{thm:conj}.

\begin{proof}[Proof of Theorem \ref{thm:unique f-vector}]
If $\cS$ is not a closed surface with exactly one puncture, then the assertion follows from Theorem \ref{thm:unique Int} and \eqref{eq:bij}. If $\cS$ is a closed surface with exactly one puncture, then the assertion follows from Proposition \ref{prop:1-punc f}.
\end{proof}

\begin{proof}[Proof of Theorem \ref{thm:conj}]
Since a tagged triangulation $T$ has no tagged arcs connecting punctures if and only if the associated graph $G_T$ has no edges, Theorem \ref{thm:unique f-vector}(2) always holds if Theorem \ref{thm:d=f}(2) holds. Therefore, if Theorem \ref{thm:d=f}(2) holds, then Theorems \ref{thm:unique f-vector}(1) and \ref{thm:d=f}(1) hold, that is, Lemma \ref{lem:conj non-initial}(2) also holds. Thus the assertion follows from Lemma \ref{lem:conj non-initial}.
\end{proof}

\begin{example}\label{ex:cluster}
For a tagged triangulation $T$ in Example \ref{ex:T Int}, the associated quiver is
\[
Q_T=
\begin{tikzpicture}[baseline=2mm,scale=0.7]
\node(2)at(90:1.5){$2$};\node(1)at($(2)+(-150:3)$){$1$}; \node(3)at($(2)+(-30:3)$){$3$};
\node(4)at($(1)!0.5!(2)$){$4$};\node(5)at($(4)+(-60:0.8)$){$5$};\node(6)at($(3)!0.5!(2)$){$6$};\node(7)at($(6)+(-120:0.8)$){$7$};
\draw[->](2)to[out=-30,in=-150,relative](1);
\draw[->](1)--(4);\draw[->](4)--(2);\draw[->](1)--(5);\draw[->](5)--(2);
\draw[->](3)to[out=-30,in=-150,relative](2);
\draw[->](2)--(6);\draw[->](6)--(3);\draw[->](2)--(7);\draw[->](7)--(3);
\draw[->](3)to[out=-160,in=-20](1);
\end{tikzpicture}\ .
\]
Then the cluster variable $x_T(\rho(5))$ in $\cA(Q_T)$ is given by
\[
\frac{
\begin{aligned}
&x_1x_2x_3x_6x_7+y_5x_1^2x_3x_6x_7+y_2y_5x_1x_3^2x_4x_5\\
&\hspace{10mm}+y_2y_5y_6x_1x_2x_3x_4x_5+y_2y_5y_7x_1x_2x_3x_4x_5+y_2y_5y_6y_7x_1x_2^2x_4x_5\\
&\hspace{20mm}+y_2y_3y_5y_6y_7x_2x_4x_5x_6x_7+y_1y_2y_3y_5y_6y_7x_2^2x_3x_6x_7+y_1y_2y_3y_5^2y_6y_7x_1x_2x_3x_6x_7
\end{aligned}}
{x_1x_2x_3x_5x_6x_7},
\]
where each initial cluster variable $x_k$ is equal to $x_T(k)$ for $k\in T$. Therefore, we can see that $\Int_T(\rho(5))=f(x_T(\rho(5)))=(1,1,1,0,2,1,1)$. Similarly, we can see that $\Int_T(\rho(4))=f(x_T(\rho(4)))=(1,1,1,2,0,1,1)$, $\Int_T(\rho(7))=f(x_T(\rho(7)))=(1,1,1,1,1,0,2)$, and $\Int_T(\rho(6))=f(x_T(\rho(6)))=(1,1,1,1,1,2,0)$. Thus the cluster monomials $x_T(\rho(4))x_T(\rho(5))$ and $x_T(\rho(6))x_T(\rho(7))$ have the same $f$-vector $(2,2,2,2,2,2,2)$ (cf. Example \ref{ex:graph}).
\end{example}

\section{$\tau$-tilting theory}\label{sec:tau}
\subsection{$\tau$-tilting theory and cluster tilting theory}

First, we recall $\tau$-tilting theory \cite{AIR14}. Let $K$ be an algebraically closed field and $\Lambda$ be a finite dimensional algebra over $K$. We denote by $\mod\Lambda$ (resp., $\proj\Lambda$) the category of finitely generated (resp., finitely generated projective) left $\Lambda$-modules. We denote by $\tau$ the Auslander-Reiten translation in $\mod\Lambda$ and by $|M|$ the number of non-isomorphic indecomposable direct summands of $M\in\mod\Lambda$. We say that $M\in\mod\Lambda$ is
\begin{itemize}
\item \emph{$\tau$-rigid} if $\Hom_{\Lambda}(M,\tau M)=0$;
\item \emph{support $\tau$-tilting} if there is an idempotent $e$ of $\Lambda$ such that $M$ is a $\tau$-rigid $(\Lambda/\langle e \rangle)$-module and $|M|=|\Lambda/\langle e \rangle|$.
\end{itemize}
We denote by $\trigid\Lambda$ (resp., $\itrigid\Lambda$, $\sttilt\Lambda$) the set of all isomorphism classes of $\tau$-rigid (resp., indecomposable $\tau$-rigid, basic support $\tau$-tilting) modules in $\mod\Lambda$. Remark that modules in $\sttilt\Lambda$ are basic, but modules in $\trigid\Lambda$ are not basic.

\begin{thm}[{\cite[Theorem 2.18]{AIR14}\cite[Proposition 5.7]{DF15}}]\label{thm:tau}
Let $M\oplus L\in\sttilt\Lambda$ with indecomposable $L$. Then there is a unique module $N\not\simeq L$ such that it is either indecomposable or zero, and $M\oplus N\in\sttilt\Lambda$. Moreover, we have either $L\in\Fac M$ or $N\in\Fac M$.
\end{thm}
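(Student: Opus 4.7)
The plan is to follow the mutation theory for support $\tau$-tilting modules, which exploits the bijection between $\sttilt\Lambda$ and functorially finite torsion classes in $\mod\Lambda$. The two complements of an almost complete support $\tau$-tilting module $M$ arise as the ``left'' and ``right'' mutations with respect to $L$, which I would construct explicitly by approximation.

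First, I would establish existence of $N$ by constructing it via an approximation. Take a minimal left $(\add M)$-approximation $f:L\to M^0$ and set $N':=\Cok f$. Writing $N'=N\oplus N''$ where $N''\in\add M$, one obtains a candidate $N$ (possibly zero). The key computation is verifying that $M\oplus N$ is $\tau$-rigid, which amounts to checking that $\Hom_{\Lambda}(N,\tau M)=0$ and $\Hom_{\Lambda}(M,\tau N)=0$; both follow from the $\tau$-rigidity of $M\oplus L$ by applying $\Hom_{\Lambda}(-,\tau M)$ and $\Hom_{\Lambda}(M,\tau-)$ to the defining exact sequence $L\xrightarrow{f}M^0\to N'\to 0$ and using Auslander-Reiten duality. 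A parallel dual construction with a minimal right $(\add M)$-approximation gives the other potential complement. Counting indecomposable summands, together with the criterion that a $\tau$-rigid module is support $\tau$-tilting iff $|M\oplus N|$ equals the rank of the associated idempotent quotient, shows $M\oplus N\in\sttilt\Lambda$.

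Second, I would prove uniqueness through the torsion-class correspondence. Associating to each support $\tau$-tilting module $T$ the torsion class $\Fac T$ gives a bijection to functorially finite torsion classes, and the number of complements of an almost complete support $\tau$-tilting module $M$ equals the number of functorially finite torsion classes strictly between the two torsion classes $\cT_-\subset\cT_+$ determined by $M$. Since any such intermediate torsion class must lie in a chain of length one (a standard consequence of the Hasse quiver structure on $\sttilt\Lambda$ being regular of valence one at each ``almost complete'' slot), there are exactly two complements, giving uniqueness of $N\neq L$.

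Finally, the dichotomy $L\in\Fac M$ or $N\in\Fac M$ reflects the orientation of the mutation arrow. Writing $\cT_L:=\Fac(M\oplus L)$ and $\cT_N:=\Fac(M\oplus N)$, these two torsion classes are comparable by the Hasse arrow between them, so either $\cT_N\subsetneq\cT_L$ or $\cT_L\subsetneq\cT_N$. In the first case $N\in\cT_N\subseteq\cT_L$ together with the construction of $N$ as the cokernel of an $\add M$-approximation forces $N\in\Fac M$; in the second case the symmetric argument yields $L\in\Fac M$. The main obstacle I anticipate is the verification of the $\tau$-rigidity of $M\oplus N$ after the cokernel construction, since it requires the careful use of Auslander-Reiten duality $\Hom_{\Lambda}(X,\tau Y)\simeq D\overline{\Ext^1_{\Lambda}}(Y,X)$ and the fact that $\add M$-approximations behave well with respect to $\tau$, rather than any surface-theoretic input.
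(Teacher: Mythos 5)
First, a point of reference: the paper does not prove Theorem \ref{thm:tau} at all — it is imported verbatim from \cite[Theorem 2.18]{AIR14} and \cite[Proposition 5.7]{DF15} — so the relevant comparison is with those sources, whose general strategy (exchange sequences by $\add$-approximations together with the bijection between $\sttilt\Lambda$ and functorially finite torsion classes) your sketch does follow in outline. However, your uniqueness argument is circular: the claim that the Hasse quiver of $\sttilt\Lambda$ is ``regular of valence one at each almost complete slot'' is precisely the statement that an almost complete support $\tau$-tilting pair has exactly two completions, i.e.\ the theorem being proved. Moreover, the asserted identification of complements with functorially finite torsion classes \emph{strictly} between $\cT_-=\Fac M$ and $\cT_+={}^{\perp}(\tau M)\cap Q^{\perp}$ (writing $(M,Q)$ for the almost complete pair) is not correct: the two completions correspond to the two \emph{endpoints} of this interval, and intermediate torsion classes need not yield new completions. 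The genuine content of the AIR proof is exactly the step you skip, namely that any completion $M\oplus N$ satisfies $\Fac(M\oplus N)\in\{\cT_-,\cT_+\}$; a non-circular alternative is Jasso's $\tau$-tilting reduction, which reduces to an algebra with a single simple module.

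There are also gaps on the existence side. The cokernel of a minimal left $\add M$-approximation of $L$ is automatically in $\Fac M$, so this construction can only ever produce the completion lying on the $\Fac M$ side; when $L\in\Fac M$ it fails outright. For example, for $\Lambda=k(1\to 2)$, $M=P_1$, $L=S_1\in\Fac M$, the minimal left $\add M$-approximation is $S_1\to 0$ with cokernel $0$, whereas the true second complement is $N=P_2$ (and $(P_1,P_2)$ is not a support $\tau$-tilting pair, so the candidate $N=0$ is simply wrong here). Your ``parallel dual construction'' via a minimal right $\add M$-approximation is asserted without justification: in $\mod\Lambda$ the kernel of such an approximation is not obviously $\tau$-rigid, and the literature instead obtains this second completion as the Bongartz completion of $(M,Q)$ (the Ext-projective generator of ${}^{\perp}(\tau M)\cap Q^{\perp}$), or passes to two-term silting complexes where both exchange triangles exist. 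Two further inaccuracies: the cokernel does not decompose as $N\oplus N''$ with $N''\in\add M$ — in \cite[Theorem 2.30]{AIR14} it is a direct sum of copies of a single indecomposable not in $\add(M\oplus L)$, and you never argue that your $N$ is indecomposable or zero, which the statement requires; and the $\tau$-rigidity check must also include $\Hom_\Lambda(N,\tau N)=0$, which your two functor applications do not address.
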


In Theorem \ref{thm:tau}, $M\oplus L$ and $M\oplus N$ are called \emph{mutations} of each other.

Next, we recall cluster tilting theory in $2$-Calabi-Yau triangulated categories \cite{BMRRT06,IY08}. Let $\cC$ be a Hom-finite, Krull-Schmidt, $2$-Calabi-Yau, triangulated category with the suspension functor $\Sigma$. We denote by $\add X$ the category of direct summands of finite direct sums of copies of $X\in\cC$. We say that $X\in\cC$ is
\begin{itemize}
\item \emph{rigid} if $\Hom_{\cC}(X,\Sigma X)=0$;
\item \emph{cluster tilting} if $\add X=\{U\in\cC \mid \Hom_{\cC}(X,\Sigma U)=0\}$.
\end{itemize}
We denote by $\rigid\cC$ (resp., $\irigid\cC$, $\ctilt\cC$) the set of all isomorphism classes of rigid (resp., indecomposable rigid, basic cluster tilting) objects in $\cC$. Remark that objects in $\ctilt\cC$ are basic, but objects in $\rigid\cC$ are not basic. We assume that $\cC$ has cluster tilting objects, that is, $\ctilt\cC\neq\emptyset$. In which case, any maximal rigid object in $\cC$ is cluster tilting \cite[Theorem 2.6]{ZZ11}.

\begin{thm}[{\cite{BIRS09,BMRRT06,IY08}}]\label{thm:ctilt mutation}
Let $R\oplus X\in\ctilt\cC$ with indecomposable $X$. Then there is a unique indecomposable object $Y\not\simeq X$ such that $R\oplus Y\in\ctilt\cC$. Moreover, there are two triangles
\[
X \xrightarrow{f} R_X \xrightarrow{g} Y \rightarrow \Sigma X\ \text{ and }\ 
Y \xrightarrow{f'} R_Y \xrightarrow{g'} X \rightarrow \Sigma Y,
\]
where $f$ (resp., $f'$) is a minimal left $(\add R)$-approximation of $X$ (resp., $Y$) and $g$ (resp., $g'$) is a minimal right $(\add R)$-approximation of $Y$ (resp., $X$). If the quiver of $\End_{\cC}(R\oplus X)$ has neither loops nor $2$-cycles, then its mutation at the vertex corresponding to $X$ is equal to the quiver of $\End_{\cC}(R\oplus Y)$.
\end{thm}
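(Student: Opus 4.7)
The plan is to build $Y$ by an approximation triangle and then verify the three claims (existence/uniqueness, the second triangle, quiver compatibility) one at a time.

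First, for existence, I would take a minimal left $(\add R)$-approximation $f\colon X\to R_X$ (which exists since $\cC$ is Hom-finite and Krull--Schmidt) and complete it to a triangle $X\xrightarrow{f}R_X\xrightarrow{g}Y\to \Sigma X$. I would then show $R\oplus Y\in\ctilt\cC$ by applying $\Hom_{\cC}(-,\Sigma R)$, $\Hom_{\cC}(R,\Sigma-)$, and $\Hom_{\cC}(Y,\Sigma-)$ to the triangle and using $\Hom_{\cC}(R,\Sigma R)=0$, $\Hom_{\cC}(R,\Sigma X)=0=\Hom_{\cC}(X,\Sigma R)$, together with the $2$-Calabi--Yau duality $\Hom_{\cC}(A,\Sigma B)\simeq D\Hom_{\cC}(B,\Sigma A)$. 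This shows $\Hom_{\cC}(R\oplus Y,\Sigma(R\oplus Y))=0$, and the cluster tilting property then follows from the fact that any maximal rigid object in $\cC$ is cluster tilting. Indecomposability of $Y$ (and $Y\not\simeq X$) is extracted from minimality of $f$: if $Y\simeq Y_1\oplus Y_2$ with $Y_2\in\add R$, then $f$ would split off a summand, contradicting minimality; and $Y\simeq X$ would force $f=0$, impossible since $R\oplus X$ is already cluster tilting.

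Second, for uniqueness, suppose $R\oplus Y'\in\ctilt\cC$ with $Y'$ indecomposable and $Y'\not\simeq X$. Because $\Hom_{\cC}(Y',\Sigma R)=0$ and $Y'\notin\add R$, the morphism $Y'\to\Sigma X$ obtained from $R\oplus Y'$ being cluster tilting, together with the defining triangle for $Y$, produces a map $Y'\to Y$ which is an isomorphism (using Krull--Schmidt and the fact that neither object lies in $\add R$). The second triangle $Y\xrightarrow{f'}R_Y\xrightarrow{g'}X\to\Sigma Y$ is obtained by the dual construction, starting with a minimal right $(\add R)$-approximation of $X$; uniqueness of the ``mutation partner'' forces these two triangles to have the same outer terms.

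Third, for the quiver mutation statement, I would identify the arrows of the quiver of $\End_{\cC}(R\oplus X)$ at vertex $X$ with irreducible morphisms in $\add(R\oplus X)$: arrows $X\to R_i$ correspond to a minimal left $(\add R)$-approximation, hence to direct summands of $R_X$, and arrows $R_j\to X$ correspond to direct summands of $R_Y$. After replacing $X$ by $Y$, the approximation triangles reverse the roles, which yields step (b) of the mutation rule (reversal of arrows at $X$). New arrows $R_j\to R_i$ in the mutated quiver come from the compositions $R_j\to X\to R_X$ (equivalently $R_Y\to X\to R_i$), giving step (a). The removal of $2$-cycles in step (c) corresponds to the cancellation occurring when a composition $R_i\to Y\to R_i$ (or the dual) becomes zero in $\End_{\cC}(R\oplus Y)$ modulo the radical squared, which I would verify by chasing the octahedral axiom applied to the two approximation triangles.

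The main obstacle will be the last part: matching the combinatorial mutation rule (including the cancellation of $2$-cycles) with the homological behavior of compositions through $X$. The rigidity and approximation arguments for existence and uniqueness are fairly standard once the $2$-Calabi--Yau duality is invoked, but pinning down \emph{exactly} which $2$-cycles cancel, and showing that no spurious arrows survive between vertices not involved in the mutation, requires a careful comparison of radicals in $\End_{\cC}(R\oplus X)$ and $\End_{\cC}(R\oplus Y)$ via the octahedral diagram relating the two mutation triangles.
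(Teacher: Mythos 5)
This statement is not proved in the paper at all: it is quoted as background from \cite{BIRS09,BMRRT06,IY08}, so the benchmark is the standard arguments in those references, and your sketch does follow the same route (complete a minimal left $(\add R)$-approximation to a triangle, check rigidity using $2$-Calabi--Yau duality, dualize for the second triangle, then invoke the quiver-mutation statement). Within that route, however, there are genuine gaps. For rigidity, the vanishing of $\Hom_{\cC}(Y,\Sigma R)$ and $\Hom_{\cC}(Y,\Sigma Y)$ is not a formal consequence of applying the three functors plus duality: the essential step is that any map $Y\to\Sigma R$ factors through the connecting map $Y\to\Sigma X$, and then, because $f$ is a left $(\add R)$-approximation, through $\Sigma f$, hence vanishes since consecutive maps in a triangle compose to zero; your sketch never uses the approximation property here, and without it the long exact sequences do not close up. For indecomposability of $Y$, minimality of $f$ only rules out direct summands of $Y$ lying in $\add R$; it does not exclude $Y\cong Y_1\oplus Y_2$ with both summands outside $\add R$, and to eliminate that case you need the invariance of the number of indecomposable summands of cluster tilting objects (Dehy--Keller, or the analysis in \cite[Theorem 5.3]{IY08}). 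Likewise ``$Y\simeq X$ would force $f=0$'' is not the right argument: the correct point is that the connecting map $Y\to\Sigma X$ is nonzero (otherwise the triangle splits and $X\in\add R$), which is incompatible with $\Hom_{\cC}(X,\Sigma X)=0$. Finally, the uniqueness step, where you assert that a nonzero map ``produces a map $Y'\to Y$ which is an isomorphism,'' is precisely the content of the two-complements theorem and is not delivered by Krull--Schmidt alone.

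The last assertion (that the quiver of $\End_{\cC}(R\oplus Y)$ is the Fomin--Zelevinsky mutation at $X$ under the no-loops/no-$2$-cycles hypothesis) is the substantial theorem of \cite{BIRS09}; identifying the arrows at the mutated vertex with the multiplicities of the summands of $R_X$ and $R_Y$ is fine, but the octahedral-axiom sketch does not yet control (i) the arrows between vertices of $R$ not adjacent to $X$, which must be shown to be unchanged, nor (ii) exactly which compositions through $X$ survive in $\rad/\rad^2$ of $\End_{\cC}(R\oplus Y)$, which is where the cancellation of $2$-cycles and the loop/$2$-cycle hypothesis actually enter. As it stands your proposal is a faithful outline of the cited proofs rather than a proof; making it self-contained would essentially require reproducing the relevant sections of \cite{IY08} and \cite{BIRS09}, which is exactly why the paper cites these results instead of proving them.
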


In Theorem \ref{thm:ctilt mutation}, $R\oplus X$ and $R\oplus Y$ are called \emph{mutations} of each other. 

There is a close relationship between cluster tilting theory and $\tau$-tilting theory.

\begin{thm}[{\cite[Theorem 4.1]{AIR14}}]\label{thm:ctilt tau}
Let $C\in\ctilt\cC$ and $\Lambda=\End_{\cC}(C)^{\rm op}$. Then there is a bijection
\[
\overline{(-)}:=\Hom_{\cC}(C,-):\irigid\cC\setminus\{\text{Direct summands of $\Sigma C$}\}\rightarrow\itrigid\Lambda.
\]
Moreover, it induces a bijection
\[
\overline{(-)}:\ctilt\cC\rightarrow\sttilt\Lambda
\]
that sends $C$ (resp., $\Sigma C$) to $\Lambda$ (resp., $0$) and commutes with mutations.
\end{thm}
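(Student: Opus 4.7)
The plan is to follow the proof strategy of \cite{AIR14}, whose key technical ingredient is a bifunctorial formula that transforms the rigidity condition in $\cC$ into the $\tau$-rigidity condition in $\mod\Lambda$. First, I would record the Keller--Reiten-type observation that $\overline{(-)}=\Hom_\cC(C,-)$ induces an equivalence $\cC/[\Sigma C]\simeq\mod\Lambda$, where $[\Sigma C]$ is the ideal of morphisms factoring through $\add\Sigma C$. This already tells us that $\overline{(-)}$ is essentially surjective, that it sends an indecomposable object $X\not\in\add\Sigma C$ to an indecomposable $\Lambda$-module, and that a summand $X\in\add\Sigma C$ is killed.

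The central technical step is to establish, for all $X,Y\in\cC$, the formula
\[
D\,\underline{\Hom}_\cC(Y,\Sigma X)\;\cong\;\Hom_\Lambda(\overline{X},\tau\overline{Y}),
\]
where $\underline{\Hom}_\cC$ denotes morphisms modulo those factoring through $\add\Sigma C$ and $D=\Hom_K(-,K)$. To derive this, I would take a triangle $C_1\xrightarrow{f}C_0\to X\to\Sigma C_1$ with $C_0,C_1\in\add C$ (which exists because $C$ is cluster tilting, so every object has such a ``projective presentation'' after applying $\overline{(-)}$), apply $\Hom_\cC(Y,-)$ and $\Hom_\cC(-,\Sigma C)$, and then use the $2$-Calabi--Yau duality $\Hom_\cC(A,\Sigma B)\cong D\Hom_\cC(B,\Sigma A)$ to convert the resulting exact sequence into the defining sequence of $\tau\overline{Y}$ via $\overline{f}:\overline{C_1}\to\overline{C_0}$, which is a projective presentation of $\overline{X}$. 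This is the main obstacle, because one must check carefully that $\underline{\Hom}$ arises naturally from the cokernel on the target side.

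With the formula in hand, the first bijection is immediate: $X$ rigid and $X\not\in\add\Sigma C$ forces $\Hom_\cC(X,\Sigma X)=0$, hence $\underline{\Hom}_\cC(X,\Sigma X)=0$, hence $\Hom_\Lambda(\overline{X},\tau\overline{X})=0$; conversely, starting from $M\in\itrigid\Lambda$ with minimal projective presentation $P_1\to P_0\to M\to 0$, lift to $C_1\to C_0\to X\to\Sigma C_1$ and use the formula together with the equivalence $\cC/[\Sigma C]\simeq\mod\Lambda$ to deduce that $X$ is indecomposable rigid with $\overline{X}=M$. Injectivity up to isomorphism follows from the fact that $\overline{(-)}$ is an equivalence modulo $[\Sigma C]$ and no indecomposable rigid object (outside $\add\Sigma C$) is sent to zero.

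For the second bijection, decompose $R\in\ctilt\cC$ as $R=R'\oplus R''$ with $R''\in\add\Sigma C$ the maximal such direct summand, and let $e$ be the idempotent of $\Lambda$ corresponding to the indecomposable summands of $C$ that are shifted to summands of $R''$. I would then verify $|R|=|C|$, $|\overline{R}|=|R|-|R''|=|\Lambda/\langle e\rangle|$, and, using the first bijection together with the fact that $\overline{R}$ is $(\Lambda/\langle e\rangle)$-supported, that $\overline{R}\in\sttilt\Lambda$. The inverse construction lifts a support $\tau$-tilting module to a basic maximal rigid object, which is cluster tilting by $\cite[\text{Theorem 2.6}]{ZZ11}$ recalled above. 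The endpoints $C\mapsto\Lambda$ and $\Sigma C\mapsto 0$ are then obvious. Finally, compatibility with mutation reduces to comparing the approximation triangles of Theorem \ref{thm:ctilt mutation} with the exchange sequences characterizing $\tau$-tilting mutation in Theorem \ref{thm:tau}: applying $\overline{(-)}$ to the triangle $X\to R_X\to Y\to\Sigma X$ yields either an exact sequence $\overline{X}\to\overline{R_X}\to\overline{Y}\to 0$ (when $Y\not\in\add\Sigma C$) or witnesses the support-change step (when $Y\in\add\Sigma C$), matching the two cases $Y\in\Fac M$ or $L\in\Fac M$ in Theorem \ref{thm:tau}.
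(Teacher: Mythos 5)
The paper offers no proof of this statement---it is quoted verbatim from \cite[Theorem 4.1]{AIR14}---so the comparison is with AIR's own argument, whose skeleton you reproduce correctly: the equivalence $\cC/[\Sigma C]\simeq\mod\Lambda$, the role of the summands of $\Sigma C$ as the support (idempotent) part, the count $|R|=|C|$ together with \cite[Theorem 2.6]{ZZ11} for the inverse construction, and the comparison of the exchange triangles of Theorem \ref{thm:ctilt mutation} with the mutation of Theorem \ref{thm:tau}. The genuine problem is your central formula, which as stated is false. Running your own derivation correctly (take a triangle $C_1\xrightarrow{f}C_0\to Y\to\Sigma C_1$ with $C_0\to Y$ a \emph{minimal} right $\add C$-approximation, apply $\overline{(-)}$ to get a minimal projective presentation of $\overline{Y}$, apply $\Hom_\Lambda(\overline{X},-)$ to $0\to\tau\overline{Y}\to\nu\overline{C_1}\to\nu\overline{C_0}$, and use $\Hom_\Lambda(\overline{X},\nu\overline{C_i})\cong D\Hom_\cC(C_i,X)$) yields
\[
\Hom_\Lambda(\overline{X},\tau\overline{Y})\;\cong\;D\bigl([\Sigma C](Y,\Sigma X)\bigr)\;\cong\;\Hom_\cC(X,\Sigma Y)/[\Sigma C](X,\Sigma Y),
\]
the second isomorphism coming from the $2$-Calabi--Yau pairing, under which the orthogonal of $[\Sigma C](Y,\Sigma X)$ is $[\Sigma C](X,\Sigma Y)$. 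So the $\tau$-Hom space is dual to the \emph{ideal part} of $\Hom_\cC(Y,\Sigma X)$, not to its stable quotient as you wrote. The two differ: in the cluster category of the Kronecker quiver with $C=P_1\oplus P_2$, take $X=P_1$ and $Y=R$ a quasi-simple regular module (so $\tau R\cong R$); then $\Hom_\Lambda(\overline{X},\tau\overline{Y})=\Hom_\Lambda(P_1,R)\cong K$, while $\Hom_\cC(R,\Sigma P_1)$ is one-dimensional and lies entirely in $[\Sigma C]$, so its stable quotient is zero and your formula would predict $0$.

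This is not just bookkeeping. For the forward direction ($X$ rigid $\Rightarrow$ $\overline{X}$ $\tau$-rigid) any version suffices, and when you combine both orders $(X,Y)$ and $(Y,X)$ the mis-attribution washes out of the final vanishing criterion. But in the converse direction, which you need for surjectivity onto $\itrigid\Lambda$ and for showing that the lift of a support $\tau$-tilting module is rigid, you must conclude $\Hom_\cC(X,\Sigma X)=0$ from $\Hom_\Lambda(\overline{X},\tau\overline{X})=0$, and this requires killing \emph{both} the subspace $[\Sigma C](X,\Sigma X)$ and the quotient $\Hom_\cC(X,\Sigma X)/[\Sigma C](X,\Sigma X)$; your single (and swapped) identification only controls one of the two pieces. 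The fix is to state and use both halves, namely $[\Sigma C](X,\Sigma X)\cong D\Hom_\Lambda(\overline{X},\tau\overline{X})$ and $\Hom_\cC(X,\Sigma X)/[\Sigma C](X,\Sigma X)\cong\Hom_\Lambda(\overline{X},\tau\overline{X})$, and to insist on minimal approximation triangles (otherwise the induced presentation of $\overline{Y}$ is not minimal and the computation of $\tau\overline{Y}$ acquires a spurious $\nu$-summand). With these corrections the remainder of your outline goes through essentially as in \cite{AIR14}.
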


In the rest of this section, we keep the notations in Theorems \ref{thm:ctilt mutation} and \ref{thm:ctilt tau}. For $M\in\mod\Lambda$, we denote by $[M]$ the corresponding element in the Grothendieck group $K_0(\Lambda)$ of $\mod\Lambda$. All isomorphism classes of simple $\Lambda$-modules form a basis of $K_0(\Lambda)$ and it induces the equivalence $K_0(\Lambda)\simeq\bZ^{|\Lambda|}$. In particular, $[M]\in\bZ^{|\Lambda|}_{\ge 0}$.

\begin{prop}
We keep the notations in Theorems \ref{thm:ctilt mutation} and \ref{thm:ctilt tau}. Then
\begin{align}
[\overline{Y}]&=-[\overline{X}]+\max\left([\overline{R_X}]+[\Cok\overline{\Sigma^{-1}g}],[\overline{R_Y}]+[\Cok\overline{\Sigma^{-1}g'}]\right)\label{eq:YX Sigma}\\
&=-[\overline{X}]+\max\left([\overline{R_X}]+[\Cok\overline{g}],[\overline{R_Y}]+[\Cok\overline{g'}]\right).\label{eq:YX}
\end{align}
\end{prop}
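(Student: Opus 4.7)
The plan is to apply $\overline{(-)}=\Hom_{\cC}(C,-)$ to each exchange triangle, extract dimension-vector identities from the resulting long exact sequences, and then use Theorem~\ref{thm:tau} together with the Adachi--Iyama--Reiten exchange sequence to identify which side of the maximum is attained.

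First I would observe that applying $\overline{(-)}$ to a rotation of the triangle $X\xrightarrow{f}R_X\xrightarrow{g}Y\xrightarrow{h}\Sigma X$ produces the six-term exact sequence
\[
\overline{\Sigma^{-1}R_X}\xrightarrow{\overline{\Sigma^{-1}g}}\overline{\Sigma^{-1}Y}\xrightarrow{\overline{\Sigma^{-1}h}}\overline{X}\xrightarrow{\overline{f}}\overline{R_X}\xrightarrow{\overline{g}}\overline{Y}\xrightarrow{\overline{h}}\overline{\Sigma X},
\]
which in particular yields $\Cok\overline{\Sigma^{-1}g}=\mathrm{Ker}\,\overline{f}$. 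Splitting the middle portion into three short exact sequences and summing classes in $K_0(\Lambda)$ gives the key identity
\[
[\overline{X}]+[\overline{Y}]=[\overline{R_X}]+[\Cok\overline{g}]+[\Cok\overline{\Sigma^{-1}g}],
\]
and the analogous $[\overline{X}]+[\overline{Y}]=[\overline{R_Y}]+[\Cok\overline{g'}]+[\Cok\overline{\Sigma^{-1}g'}]$ from the other exchange triangle.

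Next I would verify that $\overline{f}$ and $\overline{g}$ are respectively a left and a right $\add\overline{R}$-approximation. For $\overline{g}$ this follows from $\Hom_{\cC}(R,\Sigma X)=0$ (a consequence of $R\oplus X\in\ctilt\cC$), which makes $\Hom_{\cC}(R,R_X)\to\Hom_{\cC}(R,Y)$ surjective and then transports through the equivalence $\cC/\add\Sigma C\simeq\mod\Lambda$ induced by $\overline{(-)}$; the argument for $\overline{f}$ is dual. Consequently $\overline{Y}\in\Fac\overline{R}$ iff $\Cok\overline{g}=0$, and dually $\overline{X}\in\Fac\overline{R}$ iff $\Cok\overline{g'}=0$. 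Theorem~\ref{thm:tau} (transported via Theorem~\ref{thm:ctilt tau}) then says exactly one of these holds; assume without loss of generality $\overline{Y}\in\Fac\overline{R}$, so $\Cok\overline{g}=0$. By \cite[Theorem 2.30]{AIR14} there is a short exact sequence $0\to\overline{X}\to\overline{R}'\to\overline{Y}\to 0$ with $\overline{R}'\in\add\overline{R}$; since $\overline{f}$ is a left $\add\overline{R}$-approximation the monomorphism $\overline{X}\hookrightarrow\overline{R}'$ factors through $\overline{f}$, which forces $\overline{f}$ to be injective and hence $\Cok\overline{\Sigma^{-1}g}=\mathrm{Ker}\,\overline{f}=0$ as well.

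Finally, in this case both fundamental identities collapse to $[\overline{R_X}]=[\overline{R_X}]+[\Cok\overline{g}]=[\overline{R_X}]+[\Cok\overline{\Sigma^{-1}g}]=[\overline{X}]+[\overline{Y}]$, while the identities from the other triangle give $[\overline{R_Y}]+[\Cok\overline{g'}]\le[\overline{X}]+[\overline{Y}]$ and $[\overline{R_Y}]+[\Cok\overline{\Sigma^{-1}g'}]\le[\overline{X}]+[\overline{Y}]$ componentwise, since all cokernel classes are non-negative. Hence the maxima in both \eqref{eq:YX Sigma} and \eqref{eq:YX} equal $[\overline{X}]+[\overline{Y}]$, and subtracting $[\overline{X}]$ yields $[\overline{Y}]$; the case $\overline{X}\in\Fac\overline{R}$ is symmetric. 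The main obstacle is the AIR step, which is what promotes the bare exact sequence obtained from $\overline{(-)}$ into a genuine short exact sequence on the active side and therefore makes both the $\overline{\Sigma^{-1}g}$ and $\overline{g}$ presentations of the formula simultaneously valid.
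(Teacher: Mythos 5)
Your skeleton (apply $\overline{(-)}$ to the rotated exchange triangles, split into short exact pieces to get $[\overline{X}]+[\overline{Y}]=[\overline{R_X}]+[\Cok\overline{g}]+[\Cok\overline{\Sigma^{-1}g}]=[\overline{R_Y}]+[\Cok\overline{g'}]+[\Cok\overline{\Sigma^{-1}g'}]$, then use the $\Fac$ dichotomy of Theorem \ref{thm:tau} via Theorem \ref{thm:ctilt tau} and non-negativity of classes) is exactly the paper's argument. The gap is the step claiming that the branch $\overline{Y}\in\Fac\overline{R}$ forces $\overline{f}$ to be injective, so that $\Cok\overline{g}$ and $\Cok\overline{\Sigma^{-1}g}$ vanish \emph{simultaneously on the same side}. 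This is false, and the citation does not support it: the AIR exchange result only produces a right exact sequence $\overline{X}\xrightarrow{\overline{f}}\overline{R}'\to\overline{Y}\to 0$ with $\overline{f}$ a minimal left $\add\overline{R}$-approximation; injectivity of $\overline{f}$ is precisely what can fail for $\tau$-tilting mutation. A concrete failure occurs whenever $Y\in\add\Sigma C$ and $X\notin\add\Sigma C$ (this is the case $N=0$ allowed in Theorem \ref{thm:tau}): then $\overline{Y}=0\in\Fac\overline{R}$ and $\Cok\overline{g}=0$, but if $\overline{f}$ were injective the sequence would give $\overline{X}\cong\overline{R_X}\in\add\overline{R}$, hence $X\in\add R$ via the equivalence $\cC/\add\Sigma C\simeq\mod\Lambda$, contradicting that $R\oplus X$ is basic; so $\Cok\overline{\Sigma^{-1}g}\cong\mathrm{Ker}\,\overline{f}\neq 0$. (Explicitly: in the cluster category of type $A_2$ with $C=P_1\oplus P_2$, $R=S_1$, $X=P_1$, $Y=\Sigma P_2$, one finds $\Cok\overline{g}=0=\Cok\overline{\Sigma^{-1}g'}$ but $\Cok\overline{\Sigma^{-1}g}\neq 0\neq\Cok\overline{g'}$, and in \eqref{eq:YX} the maximum is attained by $[\overline{R_Y}]+[\Cok\overline{g'}]$, not by the term your argument certifies. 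The preprojective algebra of type $A_2$ gives such an example with both modules nonzero.)

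The repair is the paper's: the two dichotomies are independent and must be obtained separately, and the two displayed formulas are then proved by a cross-over. Theorem \ref{thm:tau} applied to $\overline{R\oplus X}$, $\overline{R\oplus Y}$ gives $\Cok\overline{g}=0$ or $\Cok\overline{g'}=0$; applying the same argument to the cluster tilting objects $\Sigma^{-1}(R\oplus X)$ and $\Sigma^{-1}(R\oplus Y)$, whose exchange triangles are the $\Sigma^{-1}$-shifts of the originals, gives $\Cok\overline{\Sigma^{-1}g}=0$ or $\Cok\overline{\Sigma^{-1}g'}=0$, possibly on the opposite side. That suffices: by the displayed $K_0$-identity, each entry of the maximum in \eqref{eq:YX} equals $[\overline{X}]+[\overline{Y}]$ minus the class of the corresponding $\Sigma^{-1}$-cokernel, so the maximum is $[\overline{X}]+[\overline{Y}]$ because one of those two cokernels vanishes and classes of modules are componentwise non-negative; symmetrically, \eqref{eq:YX Sigma} follows from the vanishing of one of $\Cok\overline{g}$, $\Cok\overline{g'}$. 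Your non-negativity observation is exactly what is needed, but the side on which each maximum is attained is governed by the \emph{other} family of cokernels, which your same-side reduction misses.
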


\begin{proof}
Applying $\overline{(-)}$ to the triangles in Theorem \ref{thm:ctilt mutation}, there are exact sequences
\begin{gather*}
0 \rightarrow \Cok\overline{\Sigma^{-1}g} \rightarrow \overline{X} \xrightarrow{\overline{f}} 
\overline{R_X} \xrightarrow{\overline{g}} \overline{Y} \rightarrow \Cok\overline{g} \rightarrow 0,\text{ and}\\
0 \rightarrow \Cok\overline{\Sigma^{-1}g'} \rightarrow \overline{Y} \xrightarrow{\overline{f'}} 
\overline{R_Y} \xrightarrow{\overline{g'}} \overline{X} \rightarrow \Cok\overline{g'} \rightarrow 0,
\end{gather*}
and they induce
\begin{equation}\label{eq:[]}
[\overline{X}]+[\overline{Y}]=[\overline{R_X}]+[\Cok\overline{g}]+[\Cok\overline{\Sigma^{-1}g}]=[\overline{R_Y}]+[\Cok\overline{g'}]+[\Cok\overline{\Sigma^{-1}g'}].
\end{equation}
On the other hand, Theorems \ref{thm:tau} and \ref{thm:ctilt tau} mean that either $\overline{Y}\in\Fac\overline{R}$ or $\overline{X}\in\Fac\overline{R}$ holds, in particular, either $\Cok\overline{g}=0$ or $\Cok\overline{g'}=0$. Similarly, we have either $\Cok\overline{\Sigma^{-1}g}=0$ or $\Cok\overline{\Sigma^{-1}g'}=0$. Therefore, the desired equalities follow from \eqref{eq:[]}.
\end{proof}

\begin{remark}
\begin{enumerate}
\item Assume that the quiver of $\End_{\Lambda}(\overline{R\oplus X})$ has no loops at $X$ and the quiver of $\End_{\Lambda}(\overline{R\oplus Y})$ has no loops at $Y$. Then $\Cok\overline{g}\oplus\Cok\overline{g'}$ gives a labeling in \cite[Definition 2.14]{A20} of the arrow between $\overline{R\oplus X}$ and $\overline{R\oplus Y}$ in the exchange quiver of $\sttilt\Lambda$. The corresponding element in $K_0(\Lambda)$ coincides with the $c$-vector defined in \cite[Subsection 3.4]{A20}.
\item If $\cC$ is a cluster category as in the next subsection and $R\oplus X$ is obtained from $\Sigma T$ by a sequence of mutations, then $X$ corresponds to a cluster variable $x$ and $[\overline{X}]$ coincides with the $f$-vector of $x$. In which case, \eqref{eq:YX} corresponds to the recurrence relation of $f$-vectors (see e.g. \cite[Proposition 2.7]{FGy19}).
\end{enumerate}
\end{remark}

Finally, we consider the following special conditions of $C'\in\ctilt\cC$: Let $\bv\in K_0(\Lambda)$.
{\setlength{\leftmargini}{17mm}
\begin{itemize}
\item[(S1,$\bv$)] $[\overline{\Sigma^{-1}Z}]=[\overline{Z}]+\bv$ for any indecomposable direct summand $Z$ of $C'$.
\item[(S2)] For any indecomposable direct summand $Z$ of $C'$ with $C'=R\oplus Z$, $R_Z$ has exactly two indecomposable direct summands.
\end{itemize}}

\begin{prop}\label{prop:special condition}
We keep the notations in Theorems \ref{thm:ctilt mutation} and \ref{thm:ctilt tau}. Then if $R\oplus X\in\ctilt\cC$ satisfies the conditions (S1,$\bv$) and (S2) for $\bv\in K_0(\Lambda)$, then $R\oplus Y\in\ctilt\cC$ satisfies (S1,$\bv$).
\end{prop}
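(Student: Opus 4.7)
The plan is to verify the identity $[\overline{\Sigma^{-1}Z}] = [\overline{Z}] + \bv$ for each indecomposable direct summand $Z$ of $R \oplus Y$. Every such $Z$ is either a summand of $R$ or is $Y$ itself. The first case is immediate: such a $Z$ is also an indecomposable summand of $R \oplus X$, so (S1,$\bv$) for $R \oplus X$ applies directly. All the work therefore lies in the case $Z = Y$.

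For $Z = Y$, I would exploit the fact that $\Sigma^{-1}$ is a triangle autoequivalence of $\cC$. Applying it to the two exchange triangles in Theorem \ref{thm:ctilt mutation} exhibits $\Sigma^{-1}R \oplus \Sigma^{-1}Y$ as the mutation of $\Sigma^{-1}R \oplus \Sigma^{-1}X$ at $\Sigma^{-1}X$, with $\Sigma^{-1}g$ and $\Sigma^{-1}g'$ playing the roles of $g$ and $g'$ for this shifted mutation. The key trick is to use the two formulas \eqref{eq:YX Sigma} and \eqref{eq:YX} asymmetrically: apply \eqref{eq:YX Sigma} to the original mutation to express $[\overline{Y}]$, and apply \eqref{eq:YX} to the shifted mutation to express $[\overline{\Sigma^{-1}Y}]$. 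Both right-hand sides then involve exactly the same cokernel terms $[\Cok\overline{\Sigma^{-1}g}]$ and $[\Cok\overline{\Sigma^{-1}g'}]$, which makes them directly comparable.

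The computation is then short. Condition (S1,$\bv$) for $R \oplus X$ applied at $X$ yields $[\overline{\Sigma^{-1}X}] = [\overline{X}] + \bv$; applied at each indecomposable summand of $R_X$ and $R_Y$ (all of which are summands of $R$), combined with (S2), which forces each of $R_X$ and $R_Y$ to split into exactly two indecomposable summands, it yields $[\overline{\Sigma^{-1}R_X}] = [\overline{R_X}] + 2\bv$ and $[\overline{\Sigma^{-1}R_Y}] = [\overline{R_Y}] + 2\bv$. Substituting into \eqref{eq:YX} for the shifted mutation and pulling the constant $2\bv$ out of the max gives
\[
[\overline{\Sigma^{-1}Y}] = \bv - [\overline{X}] + \max\left([\overline{R_X}] + [\Cok\overline{\Sigma^{-1}g}],\; [\overline{R_Y}] + [\Cok\overline{\Sigma^{-1}g'}]\right),
\]
and the max on the right equals $[\overline{Y}] + [\overline{X}]$ by \eqref{eq:YX Sigma}. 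Hence $[\overline{\Sigma^{-1}Y}] = [\overline{Y}] + \bv$, as required.

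The main obstacle is simply spotting the correct pairing of \eqref{eq:YX Sigma} and \eqref{eq:YX}: one must apply them to different (but $\Sigma^{-1}$-related) mutations in order to produce matching cokernel terms. Condition (S2) is then exactly the combinatorial input needed to bridge the single shift of $\bv$ on $[\overline{\Sigma^{-1}X}]$ and the double shift on $[\overline{\Sigma^{-1}R_X}]$ and $[\overline{\Sigma^{-1}R_Y}]$ required for the cancellation.
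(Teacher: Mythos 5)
Your proof is correct and follows essentially the same route as the paper: apply \eqref{eq:YX} to the $\Sigma^{-1}$-shifted mutation, use (S1,$\bv$) together with (S2) to replace $[\overline{\Sigma^{-1}X}]$, $[\overline{\Sigma^{-1}R_X}]$, $[\overline{\Sigma^{-1}R_Y}]$ by $[\overline{X}]+\bv$, $[\overline{R_X}]+2\bv$, $[\overline{R_Y}]+2\bv$, pull $2\bv$ out of the maximum, and identify the remaining maximum via \eqref{eq:YX Sigma} applied to the original mutation. Your explicit remarks that the summands of $R$ are handled directly by (S1,$\bv$) for $R\oplus X$ and that $\Sigma^{-1}$ carries the exchange triangles to those of the shifted mutation only make explicit what the paper leaves implicit.
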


\begin{proof}
The assumption gives the following equalities:
\begin{align*}
[\overline{\Sigma^{-1}Y}]&=-[\overline{\Sigma^{-1}X}]+\max\left([\overline{\Sigma^{-1}R_X}]+[\Cok\overline{\Sigma^{-1}g}],[\overline{\Sigma^{-1}R_Y}]+[\Cok\overline{\Sigma^{-1}g'}]\right)&&(\text{by }\eqref{eq:YX})\\
&=-[\overline{X}]-\bv+\max\left([\overline{R_X}]+2\bv+[\Cok\overline{\Sigma^{-1}g}],[\overline{R_Y}]+2\bv+[\Cok\overline{\Sigma^{-1}g'}]\right)&&(\text{by (S1,$\bv$), (S2)})\\
&=-[\overline{X}]-\bv+2\bv+\max\left([\overline{R_X}]+[\Cok\overline{\Sigma^{-1}g}],[\overline{R_Y}]+[\Cok\overline{\Sigma^{-1}g'}]\right)\\
&=[\overline{Y}]+\bv.&&(\text{by }\eqref{eq:YX Sigma})
\end{align*}
Thus $R\oplus Y\in\ctilt\cC$ satisfies the condition (S1,$\bv$).
\end{proof}

\subsection{Jacobian algebras and cluster categories}

We recall quivers with potentials and their Jacobian algebras \cite{DWZ08}. Let $Q$ be a quiver without loops. We denote by $\widehat{KQ}$ the complete path algebra of $Q$ with radical-adic topology. A \emph{potential} $W\in\widehat{KQ}$ of $Q$ is a (possibly infinite) linear combination of oriented cycles in $Q$. The pair $(Q,W)$ is called a \emph{quiver with potential} (QP for short). For an oriented cycle $\alpha_1\cdots\alpha_m$ and an arrow $\alpha$ in $Q$, we define
\[
\partial_{\alpha}(\alpha_1\cdots\alpha_m):=\sum_{i : \alpha_i=\alpha}\alpha_{i+1}\cdots\alpha_m\alpha_1\cdots\alpha_{i-1}.
\]
It is extended to the cyclic derivative $\partial_{\alpha}(W)$ of $W$ by linearity and continuously. The \emph{Jacobian ideal} $I(Q,W)$ is the closure, on radical-adic topology, of the ideal of $\widehat{KQ}$ generated by the set $\{\partial_{\alpha}W\mid\alpha\in Q_1\}$. The \emph{Jacobian algebra} $J_{Q,W}$ of $(Q,W)$ is the quotient algebra $\widehat{KQ}/I(Q,W)$. Let $S_i$ be a simple $J_{Q,W}$-module at $i\in Q_0$. Then $\{[S_i]\mid i\in Q_0\}$ forms a basis of $K_0(J_{Q,W})$ and it induces the equivalence $K_0(J_{Q,W})\simeq\bZ^{Q_0}$. For $M\in\mod J_{Q,W}$, the integer vector corresponding to $[M]$ is called its \emph{dimension vector}, denoted by $\udim M$.

One can define the notion of mutations of a QP (see \cite{DWZ08} for the details). We say that a potential $W$ of $Q$ is \emph{non-degenerate} if every quiver obtained from $(Q,W)$ by a sequence of mutations has no $2$-cycles. In which case, $\mu_k(Q,W)=(\mu_kQ,W')$ for some non-degenerate potential $W'$ of $\mu_kQ$. In particular, such $W$ exists if $K$ is uncountable \cite[Corollary 7.4]{DWZ08}. Moreover, a QP $(Q,W)$ gives the Ginzburg differential graded algebra $\Gamma_{Q,W}$ and the generalized cluster category $\cC_{Q,W}$ (see \cite{A09,G06,KY11} for the details). The following means that the observations in the previous subsection can be applied to $\cC_{Q,W}$ when $J_{Q,W}$ is finite dimensional.

\begin{thm}[{\cite[Theorem 3.5]{A09}}]\label{thm:cluster category}
Let $(Q,W)$ be a QP such that $J_{Q,W}$ is finite dimensional. Then $\cC_{Q,W}$ is a Hom-finite, Krull-Schmidt, $2$-Calabi-Yau, triangulated category with a cluster tilting object $\Gamma_{Q,W}$ and $\End_{\cC_{Q,W}}(\Gamma_{Q,W})^{\rm op}\simeq J_{Q,W}$.
\end{thm}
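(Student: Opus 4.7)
The plan is to follow Amiot's original strategy, which realizes $\cC_{Q,W}$ as a Verdier quotient of categories built from the Ginzburg dg algebra. First I would recall the Ginzburg dg algebra $\Gamma=\Gamma_{Q,W}$: it is a dg algebra concentrated in non-positive degrees, with generators in degrees $0,-1,-2$ and a differential built from $W$, satisfying $H^0(\Gamma)\simeq J_{Q,W}$ and a bimodule $3$-Calabi-Yau property (Keller). Then I would define $\cC_{Q,W}:=\operatorname{per}(\Gamma)/\cD_{fd}(\Gamma)$, where $\operatorname{per}(\Gamma)$ denotes the perfect derived category and $\cD_{fd}(\Gamma)$ the full triangulated subcategory of dg modules with finite-dimensional total cohomology. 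The triangulated structure is inherited from the Verdier quotient, and Krull-Schmidt-ness follows from idempotent completeness together with the usual local-endomorphism-ring argument.

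The next step is Hom-finiteness, which is the main place where the assumption $\dim_K J_{Q,W}<\infty$ is used. Here I would invoke Amiot's key lemma: under this finite-dimensionality, the inclusion $\cD_{fd}(\Gamma)\hookrightarrow\operatorname{per}(\Gamma)$ admits both adjoints, and one can show that for every $X\in\operatorname{per}(\Gamma)$ the space $\Hom_{\cC_{Q,W}}(\Gamma,X)$ is a subquotient of a finite-dimensional cohomology of $X$. Concretely, using the fundamental domain $\cF$ for $\Sigma^{-2}(\operatorname{add}\Gamma)$ inside $\operatorname{per}(\Gamma)$, one shows every object of $\cC_{Q,W}$ is isomorphic to one in $\cF$, and Hom-spaces between objects of $\cF$ are finite-dimensional. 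The $2$-Calabi-Yau property is then extracted from the $3$-Calabi-Yau property of $\Gamma$: the defining functorial isomorphism $\mathrm{D}\Hom_{\cD\Gamma}(X,Y)\simeq\Hom_{\cD\Gamma}(Y,\Sigma^3X)$ for $X\in\cD_{fd}(\Gamma)$, $Y\in\operatorname{per}(\Gamma)$ descends to $\cC_{Q,W}$ and drops one degree through the quotient by $\cD_{fd}(\Gamma)$, yielding a $2$-CY pairing.

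For the cluster-tilting assertion, first I would verify $\Hom_{\cC_{Q,W}}(\Gamma,\Sigma\Gamma)=0$: since $\Gamma$ is concentrated in non-positive degrees one has $\Hom_{\operatorname{per}\Gamma}(\Gamma,\Sigma\Gamma)=H^1(\Gamma)=0$, and morphisms through $\cD_{fd}(\Gamma)$ can be ruled out by the $2$-CY duality once it is in place. Conversely, if $\Hom_{\cC_{Q,W}}(\Gamma,\Sigma X)=0$, representing $X$ in the fundamental domain and using the truncation triangles of $\Gamma$-modules forces $X\in\operatorname{add}\Gamma$. Finally, $\End_{\cC_{Q,W}}(\Gamma)^{\mathrm{op}}\simeq J_{Q,W}$ is obtained from $\Hom_{\operatorname{per}\Gamma}(\Gamma,\Gamma)=H^0(\Gamma)=J_{Q,W}$ together with the observation that no morphism $\Gamma\to\Gamma$ in $\cC_{Q,W}$ factors through $\cD_{fd}(\Gamma)$, using again that $\Gamma$ lies in non-positive degrees.

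The hard part will be the combined Hom-finiteness/$2$-CY package: this is the step that genuinely uses $\dim J_{Q,W}<\infty$ and requires the careful construction of the fundamental domain $\cF$ and of the adjoints to $\cD_{fd}(\Gamma)\hookrightarrow\operatorname{per}(\Gamma)$. Once that machinery is set up, the cluster-tilting property of $\Gamma$ and the identification of its endomorphism algebra follow formally. In practice, since this is a quotation of \cite[Theorem 3.5]{A09}, one may simply cite Amiot's proof; the sketch above indicates where the essential work lies.
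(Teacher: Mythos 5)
The paper offers no proof of this statement---it is quoted verbatim from Amiot's work---and your sketch is an accurate outline of Amiot's original argument (the Verdier quotient $\operatorname{per}(\Gamma)/\mathcal{D}_{fd}(\Gamma)$, Hom-finiteness via the fundamental domain using $\dim_K J_{Q,W}<\infty$, the $2$-CY property descending from the bimodule $3$-CY property of the Ginzburg dg algebra, and the cluster-tilting property of $\Gamma$ with $\End(\Gamma)^{\rm op}\simeq H^0(\Gamma)\simeq J_{Q,W}$). So your proposal is correct and follows essentially the same route as the cited source; citing \cite[Theorem 3.5]{A09}, as the paper does, is all that is needed here.
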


For a QP $(Q,W)$ such that $J_{Q,W}$ is finite dimensional,
\begin{itemize}
\item $\rctilt\cC_{Q,W}$ is the set of all objects in $\ctilt\cC_{Q,W}$ obtained from $\Sigma\Gamma_{Q,W}$ by sequences of mutations;
\item $\ririgid\cC_{Q,W}$ is the subset of $\irigid\cC_{Q,W}$ consisting of indecomposable direct summands of objects in $\rctilt\cC_{Q,W}$;
\item $\rrigid\cC_{Q,W}$ is the subset of $\rigid\cC_{Q,W}$ consisting of finite direct sums of objects in $\ririgid\cC_{Q,W}$.
\end{itemize}
The following is a consequence of a lot of studies about an additive categorification of cluster algebras (e.g. \cite{A09,BIRS09,CK06,BMRRT06,DWZ08,FK10,Pa08}).

\begin{thm}[{\cite[Theorems 6.3 and 6.5]{FK10}}]\label{thm:bijection sC}
Let $(Q,W)$ be a QP such that $W$ is non-degenerate and $J_{Q,W}$ is finite dimensional. Then there is a bijection
\[
\sC_{Q,W}:\ririgid\cC_{Q,W}\rightarrow\{\text{Cluster variables in $\cA(Q)$}\}
\]
such that
\[
\udim\Hom_{\cC_{Q,W}}(\Gamma_{Q,W},X)=f(\sC_{Q,W}(X))
\]
for $X\in\ririgid\cC_{Q,W}$. Moreover, it induces a bijection
\[
\sC_{Q,W}:\rctilt\cC_{Q,W}\rightarrow\{\text{Clusters in $\cA(Q)$}\}
\]
that sends $\Sigma\Gamma_{Q,W}$ to the initial cluster and commutes with mutations.
\end{thm}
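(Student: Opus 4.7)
The plan is to realize $\sC_{Q,W}$ as Palu's cluster character associated with the $2$-Calabi--Yau triangulated category $\cC_{Q,W}$ and its cluster tilting object $\Gamma_{Q,W}$ supplied by Theorem~\ref{thm:cluster category}. Concretely, for a rigid object $X\in\cC_{Q,W}$, one defines $\sC_{Q,W}(X)$ as a Laurent polynomial in the initial cluster variables and coefficients, built out of Euler characteristics of quiver Grassmannians of the $J_{Q,W}$-module $\Hom_{\cC_{Q,W}}(\Sigma\Gamma_{Q,W},X)$ weighted by index monomials. The first thing to check is the initial data: the indecomposable summands of $\Sigma\Gamma_{Q,W}$ have vanishing Hom to $\Sigma\Gamma_{Q,W}$, so their cluster characters reduce to pure index monomials and, for the correct normalisation of the index, agree with the initial cluster variables of $\cA(Q)$.

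The central step is mutation compatibility. Given $R\oplus X,R\oplus Y\in\rctilt\cC_{Q,W}$ that are mutations of each other as in Theorem~\ref{thm:ctilt mutation}, one must prove that $\sC_{Q,W}(X)\cdot\sC_{Q,W}(Y)$ equals the binomial expression dictated by the quiver of $\End_{\cC_{Q,W}}(R\oplus X)^{\mathrm{op}}$ together with the exchange triangles. This is Palu's multiplication formula for the cluster character, and it is where the non-degeneracy of $W$ enters essentially: by \cite{DWZ08} the sequence of categorical mutations of $\Gamma_{Q,W}$ induces a sequence of QP-mutations whose underlying quivers coincide with the quiver mutations of $Q$, so the exchange relations produced categorically match those defining $\cA(Q)$. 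Combined with Theorem~\ref{thm:ctilt ctilt tau} and the inductive construction of clusters via mutation, this shows that the map $\sC_{Q,W}$ sends $\rctilt\cC_{Q,W}$ into the set of clusters and commutes with mutations; in particular every cluster variable is reached, giving surjectivity.

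The main obstacle is injectivity of $\sC_{Q,W}$ on $\ririgid\cC_{Q,W}$, which is equivalent to showing that two reachable indecomposable rigid objects with the same cluster character must be isomorphic. The standard route is to extract the index (equivalently the $g$-vector) from the Laurent expansion, invoke sign-coherence of $g$-vectors established in \cite{DWZ08} for non-degenerate QPs, and then use that a reachable rigid object is determined by its index together with the $J_{Q,W}$-module $\Hom_{\cC_{Q,W}}(\Sigma\Gamma_{Q,W},-)$. Once bijectivity is in place, the $f$-vector identity $\udim\Hom_{\cC_{Q,W}}(\Gamma_{Q,W},X)=f(\sC_{Q,W}(X))$ drops out: specialising $x_1=\cdots=x_n=1$ in $\sC_{Q,W}(X)$ yields the Caldero--Chapoton/$F$-polynomial $\sum_{\mathbf{e}}\chi(\Gr_{\mathbf{e}}(\overline{X}))\,y^{\mathbf{e}}$ with $\overline{X}=\Hom_{\cC_{Q,W}}(\Gamma_{Q,W},X)$, whose top exponent in each $y_i$ is exactly $\dim_K(\overline{X})_i$, which is the component of $\udim\overline{X}$ at $i$.
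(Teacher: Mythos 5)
The paper does not prove this statement at all---it is imported verbatim from Fu--Keller \cite[Theorems 6.3 and 6.5]{FK10}---and your sketch reconstructs essentially that standard argument: Palu's multiplication formula for the cluster character with coefficients, non-degeneracy of $W$ (via \cite{DWZ08}) to match categorical mutation of cluster tilting objects with quiver mutation of $Q$, determination of reachable rigid objects by their index (so injectivity, where sign-coherence is not really needed since the $g$-vector is just the principal-coefficient degree), and the quiver-Grassmannian description of the $F$-polynomial for the $\udim$/$f$-vector identity. One small correction: with the convention that $\Sigma\Gamma_{Q,W}$ corresponds to the initial cluster, the module whose submodule Grassmannians enter the character, and whose dimension vector gives the $f$-vector, is $\Hom_{\cC_{Q,W}}(\Gamma_{Q,W},X)\cong\Hom_{\cC_{Q,W}}(\Sigma\Gamma_{Q,W},\Sigma X)$, not $\Hom_{\cC_{Q,W}}(\Sigma\Gamma_{Q,W},X)$ as written in your first and third paragraphs---your final paragraph already uses the correct module, so this is only an internal inconsistency of conventions, not a gap in the argument.
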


Note that when $J_{Q,W}$ is infinite dimensional in Theorem \ref{thm:bijection sC}, a similar result follows from \cite[Corollary 3.5]{IKLP13}, \cite[Subsection 3.3]{Pl11a}, and \cite[Theorem 4.1]{Pl11c}.

\subsection{Jacobian algebras associated with triangulated surfaces}\label{subsec:JT}

Let $T$ be a tagged triangulation of $\cS$. If a Jacobian algebra associated with $Q_T$ is finite dimensional, then the corresponding cluster category has a good property as follows.

\begin{thm}[{\cite[Theorem 1.4]{Y20}}]\label{thm:all rigid}
Let $W$ be a non-degenerate potential of $Q_T$ such that $J_{Q_T,W}$ is finite dimensional.
\begin{itemize}
\item[(1)] If $\cS$ is not a closed surface with exactly one puncture, then $\rigid\cC_{Q_T,W}=\rrigid\cC_{Q_T,W}$.
\item[(2)] If $\cS$ is a closed surface with exactly one puncture, then the suspension functor $\Sigma$ induces a bijection
\[
\Sigma^{-1}:\rrigid\cC_{Q_T,W}\rightarrow\rigid\cC_{Q_T,W}\setminus\rrigid\cC_{Q_T,W}
\]
that sends $\Sigma\Gamma_{Q_T,W}$ to $\Gamma_{Q_T,W}$ and commutes with mutations.
\end{itemize}
\end{thm}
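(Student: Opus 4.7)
The plan is to classify all indecomposable rigid objects in $\cC_{Q_T,W}$ using a surface-theoretic model, and then read off the distinction between reachable and non-reachable objects. Via Theorems \ref{thm:ctilt tau} and \ref{thm:bijection sC}, the set $\ririgid\cC_{Q_T,W}$ corresponds bijectively to cluster variables in $\cA(Q_T)$, which by Theorem \ref{thm:bijection x} correspond to all tagged arcs in $\cS$ in case (1), and only to tagged arcs sharing tags with $T$ at every puncture in case (2). So the task reduces to showing that every indecomposable rigid object in $\cC_{Q_T,W}$ either comes from this bijection (case (1)) or lies in the union of that image with its $\Sigma^{-1}$-shift (case (2)).

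For (1), I would first exhibit a surface-theoretic parameterization of $\itrigid J_{Q_T,W}$ by tagged arcs $\gamma\notin T$, using that for such $\cS$ the algebra $J_{Q_T,W}$ is known (from \cite{AIR14} combined with the surface description of $\tau$-rigid modules alluded to in Section \ref{sec:tau}) to admit a string/skew-gentle-type combinatorial model, in which indecomposable $\tau$-rigid modules are labelled precisely by tagged arcs not in $T$. Matching dimension vectors with intersection vectors (i.e.\ $\udim M = \Int_T(\gamma)$) and invoking Theorem \ref{thm:main} gives uniqueness of the label, hence a bijection. Adding the summands of $\Gamma_{Q_T,W}$ (corresponding to $T$ itself), Theorem \ref{thm:ctilt tau} lifts this to a bijection between $\irigid\cC_{Q_T,W}\setminus\{\text{summands of }\Sigma\Gamma_{Q_T,W}\}$ and tagged arcs not in $T$. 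Composing with $\sC_{Q_T,W}$ shows every such indecomposable rigid is reachable, yielding $\rigid=\rrigid$.

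For (2), the same strategy applies but with a two-fold covering phenomenon. Using the specific potential $W_T^{\lambda,n}$ of \cite{GLM22,Lab09}, indecomposable $\tau$-rigid $J_{Q_T,W}$-modules split into two families according to a ``sign at the unique puncture'' (equivalently, plain vs.\ notched tags); the plain family has $\udim$ matching the $n$-intersection vectors $\Int_T^n(\gamma)$ of tagged arcs $\gamma$ with tags like $T$ (by Theorem \ref{thm:unique Intn}), so lifts via Theorem \ref{thm:ctilt tau} to give exactly the indecomposable summands of $\rctilt\cC_{Q_T,W}$, that is, $\ririgid\cC_{Q_T,W}$. The notched family should then be realized as $\Sigma^{-1}$ applied to the plain family: one argues that the tag-flip symmetry of $W_T^{\lambda,n}$ at the puncture is induced by the autoequivalence $\Sigma^{-1}$ on $\cC_{Q_T,W}$, so that $\Sigma^{-1}\Gamma_{Q_T,W}$ is again cluster tilting and its reachable neighbourhood covers the notched family. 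Since $\Sigma$ is an autoequivalence, it automatically commutes with mutations, and the requirement $\Sigma^{-1}(\Sigma\Gamma_{Q_T,W}) = \Gamma_{Q_T,W}$ is tautological; that the map is a bijection $\rrigid\to\rigid\setminus\rrigid$ then follows from disjointness of the two families (no indecomposable rigid can have both signs) combined with surjectivity supplied by the classification.

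The main obstacle is the surface-theoretic classification of $\itrigid J_{Q_T,W}$ uniformly in $T$ and in the chosen potential. In case (1), the algebra $J_{Q_T,W}$ may have wild representation type in high genus, so one cannot classify all indecomposables directly; the point is rather to classify the $\tau$-rigid ones, which requires a string-module framework tailored to tagged arcs and the Qiu--Zhou intersection numbers of \cite{QZ17}. In case (2), the extra difficulty is justifying that $\Sigma^{-1}$ is precisely the tag-flipping symmetry: this is where the specific shape of $W_T^{\lambda,n}$ matters, since a generic non-degenerate potential need not realize this symmetry as $\Sigma^{-1}$, and one must analyze the Ginzburg dg-algebra $\Gamma_{Q_T,W_T^{\lambda,n}}$ to verify the identification.
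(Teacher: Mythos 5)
This statement is not proved in the paper at all: it is imported verbatim from \cite[Theorem 1.4]{Y20}, so there is no internal argument to compare against. Judged on its own, your proposal has a genuine gap, and in fact a circularity. The easy half of the statement is that reachable indecomposable rigid objects correspond to tagged arcs (Theorems \ref{thm:ctilt tau}, \ref{thm:bijection sC}, \ref{thm:bijection x}); the hard content is that there are \emph{no other} rigid objects in case (1), and that the others are exactly the $\Sigma^{-1}$-shifts in case (2). You propose to get this from a classification of $\itrigid J_{Q_T,W}$ by tagged arcs via a ``string/skew-gentle-type'' model, citing ``the surface description of $\tau$-rigid modules alluded to in Section \ref{sec:tau}''. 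But that description (the bijections $\sM_{T,W}$ and $\sM_T^{\lambda,n}$) is \emph{derived} in Section \ref{sec:tau} from Theorem \ref{thm:all rigid} itself, so invoking it here is circular. Moreover, the claimed combinatorial model does not exist at the level of generality required: the theorem is stated for an arbitrary non-degenerate potential $W$ with $J_{Q_T,W}$ finite dimensional, and for punctured or closed surfaces such Jacobian algebras are not string or skew-gentle in general, so there is no off-the-shelf labelling of indecomposable $\tau$-rigid modules by tagged arcs to match dimension vectors against. (Theorem \ref{thm:main} also cannot substitute for such a classification; it only compares intersection vectors of arc systems, and the paper itself shows distinct $\tau$-rigid modules can share a dimension vector.)

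In case (2) the same issue recurs in sharper form: surjectivity of $\Sigma^{-1}:\rrigid\rightarrow\rigid\setminus\rrigid$ again needs the full classification of rigid objects, and your identification of $\Sigma^{-1}$ with the tag-flip symmetry of $W_T^{\lambda,n}$ is flagged as requiring an analysis of the Ginzburg dg algebra but never carried out; note also that the statement is for any non-degenerate $W$ with finite-dimensional Jacobian algebra, not only for $W_T^{\lambda,n}$. The actual proof in \cite{Y20} avoids classifying modules altogether: rigid objects in a $2$-Calabi--Yau category are determined by their indices (g-vectors), and one shows that the g-vector cones of \emph{reachable} cluster tilting objects are dense (covering all of the space in case (1), and exactly ``half'' of it, up to the shift, in case (2)), which forces every rigid object to be reachable or a shift of a reachable one. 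If you want a self-contained argument, that index/density route is the one to pursue; the module-theoretic classification you sketch is both unavailable and, as used here, circular.
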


The following theorem is given as a consequence of the classification of non-degenerate potentials of $Q_T$ studied in \cite{GG15,GLS16,GLM22,Lab09,Lab16,Lad12} (see also \cite{Lab16survey}).

\begin{thm}[Finite dimensionality]\label{thm:fin dim}
If $\cS$ is not a closed surface with exactly one puncture, then $J_{Q_T,W}$ is finite dimensional for any non-degenerate potential $W$ of $Q_T$.
\end{thm}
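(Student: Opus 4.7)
The plan is to reduce the statement, via the classification of non-degenerate potentials on $Q_T$, to finite dimensionality of one specific \emph{reference} Jacobian algebra, namely Labardini-Fragoso's potential $W_T^{\mathrm{Lab}}$ associated to the tagged triangulation $T$. Recall that right-equivalent QPs have isomorphic Jacobian algebras, so finite dimensionality is an invariant of the right-equivalence class of $(Q_T,W)$. Hence it suffices to show that (a) $J_{Q_T,W_T^{\mathrm{Lab}}}$ is finite dimensional, and (b) every non-degenerate potential on $Q_T$ is right-equivalent to $W_T^{\mathrm{Lab}}$ (up to an automorphism of the completed path algebra fixing the arrows modulo higher terms).

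For (a), I would split on the type of $\cS$. When $\partial\cS\neq\emptyset$, the Jacobian algebra $J_{Q_T,W_T^{\mathrm{Lab}}}$ is a \emph{gentle} (or generalized gentle/skewed-gentle) algebra on the underlying quiver, for which Assem--Brüstle--Charbonneau-Jodoin--Plamondon (and Labardini) proved finite dimensionality by an explicit basis/relations analysis: the cyclic relations $\partial_\alpha W_T^{\mathrm{Lab}}$ force every sufficiently long path to vanish. When $\partial\cS=\emptyset$ but $\cS$ has at least two punctures, finite dimensionality of $J_{Q_T,W_T^{\mathrm{Lab}}}$ was established by Labardini-Fragoso (\cite{Lab09,Lab16}) together with the analysis of self-folded and once-punctured configurations; here I would invoke the cited statement directly rather than reprove the dimension bound. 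In both cases the argument reduces, after cutting $\cS$ along arcs of $T$, to analysing compositions of arrows coming from distinct puzzle pieces, using the local shape of $W_T^{\mathrm{Lab}}$ around each triangle/monogon piece tabulated in Table \ref{table:Qtri}.

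For (b), I would invoke the classification/uniqueness results of Geiss--Labardini-Fragoso--Schröer \cite{GLS16}, Geuss--Geuenich \cite{GG15}, Labardini \cite{Lab16}, and Geiss--Labardini--Musiker \cite{GLM22}: outside the case of a closed surface with exactly one puncture, there is (up to right-equivalence and weak right-equivalence) a unique non-degenerate potential on $Q_T$, namely $W_T^{\mathrm{Lab}}$. In particular, for any non-degenerate $W$ we have $J_{Q_T,W}\simeq J_{Q_T,W_T^{\mathrm{Lab}}}$ as $K$-algebras, which combined with (a) yields the theorem.

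The main obstacle is clearly (b): non-degeneracy is an asymptotic property (all iterated mutations must avoid 2-cycles), so uniqueness is subtle, and in fact fails precisely for closed surfaces with one puncture, where a one-parameter family $W_T^{\lambda,n}$ of genuinely non-right-equivalent non-degenerate potentials exists. Consequently, any proof must use input specific to the surface topology to rule out exotic non-degenerate potentials in the remaining cases. In the write-up I would therefore treat (a) as a local, puzzle-piece-wise computation that can be presented in a few lines, and outsource (b) to the cited classification theorems, explicitly matching the hypotheses of each reference (presence of boundary, number of punctures, genus) against the cases excluded by the theorem.
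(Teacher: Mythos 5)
Your proposal follows essentially the same route as the paper, which derives Theorem \ref{thm:fin dim} precisely by combining the classification results cited there (\cite{GG15,GLS16,GLM22,Lab09,Lab16,Lad12}): outside the closed once-punctured case every non-degenerate potential on $Q_T$ is (weakly) right-equivalent to Labardini-Fragoso's potential, whose Jacobian algebra is known to be finite dimensional, and right-equivalence preserves the Jacobian algebra. Your extra remarks on how to verify finite dimensionality of the reference potential (gentle-type analysis when $\partial\cS\neq\emptyset$, Labardini's results otherwise) are consistent with, and no more than a mild elaboration of, what the paper outsources to the same references.
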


In the rest of this section, we prove Theorems \ref{thm:unique rigid0}, \ref{thm:unique sttilt}, and \ref{thm:unique rigid}.

First, we assume that $\cS$ is not a closed surface with exactly one puncture. Let $W$ be a non-degenerate potential of $Q_T$. Thus $J_{Q_T,W}$ is finite dimensional by Theorem \ref{thm:fin dim}. Theorems \ref{thm:bijection x}(1), \ref{thm:bijection sC}, and \ref{thm:all rigid}(1) induce a bijection
\begin{equation}\label{eq:sX}
\sX_{T,W}:=\sC_{Q_T,W}^{-1}x_{T}:\bM_{\cS}\rightarrow\rigid\cC_{Q_T,W}
\end{equation}
that sends $T$ to $\Sigma\Gamma_{Q_T,W}$ and $\Int_T(U)=\udim\Hom_{\cC_{Q_T,W}}(\Gamma_{Q_T,W},\sX_{T,W}(U))$ for $U\in\bM_{\cS}$. The following theorem was given in \cite{BQ15} under the assumption that $J_{Q_T,W}$ is finite dimensional, but the assumption automatically holds by Theorem \ref{thm:fin dim}.

\begin{thm}[{\cite[Theorem 3.8]{BQ15}}]\label{thm:rotation}
If $\cS$ is not a closed surface with exactly one puncture, then for $U\in\bM_{\cS}$,
\[
\sX_{T,W}(\rho(U))=\Sigma\sX_{T,W}(U).
\]
\end{thm}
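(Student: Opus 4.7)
The plan is to reduce to the case of a single tagged arc and then to induct on the intersection with $T$, leveraging the mutation-compatibility packaged into Theorems \ref{thm:bijection x} and \ref{thm:ctilt mutation}.

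First, I would establish that $\sX_{T,W}$ is additive on pairwise compatible multi-sets, i.e.\ $\sX_{T,W}(U \sqcup V) = \sX_{T,W}(U) \oplus \sX_{T,W}(V)$. This is essentially built into the construction \eqref{eq:sX}, since cluster monomials factor as products of their constituent cluster variables and $\sC_{Q_T,W}$ attaches to each factor an indecomposable summand of a common cluster-tilting object. Because $\rho$ distributes over $\sqcup$ and $\Sigma$ over $\oplus$, it suffices to verify $\sX_{T,W}(\rho(\gamma)) = \Sigma \sX_{T,W}(\gamma)$ for a single tagged arc $\gamma$.

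For the base case $\gamma \in T$, the rotated triangulation $\rho(T)$ has its associated QP canonically isomorphic to $(Q_T,W)$ (up to the bijection $\rho \colon T \to \rho(T)$ on vertices), so tagged rotation induces an auto-equivalence $F$ of $\cC_{Q_T,W}$ with $F(\sX_{T,W}(\gamma)) = \sX_{T,W}(\rho(\gamma))$. The base case thus reduces to identifying $F$ with $\Sigma$ on the summands of $\Sigma \Gamma_{Q_T,W}$. I would do this by computing the action on $f$-vectors: using Theorem \ref{thm:bijection sC}, $\udim \Hom_{\cC_{Q_T,W}}(\Gamma_{Q_T,W}, \sX_{T,W}(\rho(\gamma))) = \Int_T(\rho(\gamma))$, while the $2$-Calabi--Yau property identifies $\udim \Hom_{\cC_{Q_T,W}}(\Gamma_{Q_T,W}, \Sigma \sX_{T,W}(\gamma))$ with a dimension vector of extension groups that one can match against $\Int_T(\rho(\gamma))$ by a direct local computation puzzle-piece-by-puzzle-piece. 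For the inductive step, fix $\gamma$ and pick $\delta \in T$ crossing $\gamma$; set $T_1 = \mu_\delta T$, so $\gamma$ has strictly smaller intersection vector with respect to $T_1$ at the vertex $\delta$. By Theorem \ref{thm:bijection x}(1) the bijections $x_T$ and $x_{T_1}$ are intertwined by mutation at $\delta$, and Theorem \ref{thm:ctilt mutation} provides the corresponding statement for $\sC_{-,-}$. Since $\Sigma$ commutes with mutations of cluster-tilting objects and tagged rotation commutes with flips, $\rho \circ \mu_\delta = \mu_{\rho(\delta)} \circ \rho$, the induction hypothesis applied to $(T_1, \gamma)$ upgrades to the conclusion for $(T, \gamma)$.

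The main obstacle is the base case: showing $F = \Sigma$ is subtle because $F$ is defined combinatorially (via the involution on tags and the boundary rotation), while $\Sigma$ is a piece of the triangulated structure of $\cC_{Q_T,W}$. A conceptual route is to go through Auslander--Reiten theory of the $2$-Calabi--Yau category: the AR-translate there is $\Sigma$, and one must verify that tagged rotation realizes the AR-translate on the initial summands, which amounts to checking that the minimal $(\add R)$-approximation triangles produced by flipping at $\delta \in T$ match the local combinatorial data of $\rho$ around the endpoints of $\delta$. Once this topological/categorical dictionary for $\Sigma$ is in place on the initial triangulation, the induction of the previous paragraph completes the proof.
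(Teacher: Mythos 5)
First, note that the paper does not prove this statement at all: it is imported verbatim from \cite[Theorem 3.8]{BQ15}, and the only contribution of the present paper is the remark that the finite-dimensionality hypothesis needed there holds automatically by Theorem \ref{thm:fin dim}. So your proposal has to be judged as a self-contained proof attempt, and as such it has a genuine gap exactly where you locate "the main obstacle": the base case is not a routine verification but is the entire content of the theorem, and the method you propose for it cannot work. You want to identify $\sX_{T,W}(\rho(\gamma))$ with $\Sigma\sX_{T,W}(\gamma)$ for $\gamma\in T$ by comparing $f$-vectors, i.e.\ by comparing $\Int_T(\rho(\gamma))$ with $\udim\Hom_{\cC_{Q_T,W}}(\Gamma_{Q_T,W},\Sigma\sX_{T,W}(\gamma))$. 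But dimension vectors do not determine rigid objects: this very paper is devoted to the failure of that separation (Theorems \ref{thm:unique Int} and \ref{thm:unique rigid}, and the criterion of \cite{GY20} quoted in the introduction, which already fails for indecomposables when $T$ has two tagged arcs joining the same pair of punctures). So even a successful puzzle-piece computation of the two vectors would not let you conclude that the two objects are isomorphic. Likewise, the auxiliary autoequivalence $F$ you invoke is not available for free: the canonical identification $Q_{\rho(T)}=Q_T$ only says the two reference QPs coincide after relabelling; the statement $F\circ\sX_{T,W}=\sX_{T,W}\circ\rho$ compares two parametrizations anchored at two different cluster-tilting objects, and establishing it, together with $F=\Sigma$, is precisely what \cite{BQ15} prove (by a careful categorical analysis identifying the rotation with the Auslander--Reiten translation/shift), not something that follows from matching numerical invariants.

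Two smaller points. Your inductive step asserts that flipping at an arbitrary $\delta\in T$ crossing $\gamma$ strictly decreases the relevant intersection data; that is false for an arbitrary choice, and you would need a well-founded measure (e.g.\ total intersection number with a carefully chosen flip) together with an argument that the Keller--Yang equivalence attached to the mutation intertwines $\sX_{T_1,W_1}$ with $\sX_{T,W}$ and commutes with the exchange triangles --- plausible bookkeeping, but it must be spelled out since the two parametrizations are anchored at $\Sigma\Gamma_{Q_{T_1},W_1}$ and $\Sigma\Gamma_{Q_T,W}$ respectively. None of this bookkeeping, however, repairs the base case, which is where the actual theorem lives; as written, the proposal reduces the statement to itself.
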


 Theorem \ref{thm:ctilt tau} and \eqref{eq:sX} induce a bijection
\[
\sM_{T,W}:=\Hom_{\cC_{Q_T,W}}(\Gamma_{Q_T,W},\sX_{T,W}(-)):\{U\in\bM_{\cS}\mid U\cap T=\emptyset\}\rightarrow\trigid J_{Q_T,W}\setminus\{0\}
\]
that sends $\rho^{-1}(T)$ to $J_{Q_T,W}$ by Theorem \ref{thm:rotation}, and $\Int_T(U)=\udim(\sM_{T,W}(U))$ for $U\in\bM_{\cS}$ with $U\cap T=\emptyset$. Moreover, it induces a bijection
\[
\sM_{T,W}:\{\text{Tagged triangulations of $\cS$}\}\rightarrow\sttilt J_{Q_T,W}
\]
that sends $\rho^{-1}(T)$ (resp., $T$) to $J_{Q_T,W}$ (resp., $0$) and commutes with flips and mutations. Therefore, Theorems \ref{thm:unique rigid0}, \ref{thm:unique sttilt}, and \ref{thm:unique rigid} follow from these observations and Theorems \ref{thm:main}, \ref{thm:unique tag tri}, and \ref{thm:unique Int}, respectively.

Next, we assume that $\cS$ is a closed surface with exactly one puncture. Then $T$ decomposes $\cS$ into only triangle pieces. Each triangle piece $\triangle$ gives rise to an oriented cycle $\cT(\triangle)$ of length three in $Q_T$ up to cyclical equivalence. On the other hand, there is a unique oriented cycle $\cG(T)=\alpha_n\cdots\alpha_1$ in $Q_T$ up to cyclical equivalence such that both $\alpha_i$ and $\alpha_{i+1}$ are not in a common triangle piece for $1\le i\le n$, where $\alpha_{n+1}=\alpha_1$. Then for $\lambda\in K\setminus\{0\}$ and $n\in\bZ_{>0}$, we define a potential of $Q_T$
\begin{equation}\label{eq:potential}
W_T^{\lambda,n}:=\sum_{\triangle}\cT(\triangle)+\lambda\cG(T)^n,
\end{equation}
where $\triangle$ runs over all triangle pieces of $T$. For short, we use the notations
\[
J_T^{\lambda,n}:=J_{Q_T,W_T^{\lambda,n}},\ \Gamma_T^{\lambda,n}:=\Gamma_{Q_T,W_T^{\lambda,n}},\ \text{ and }\ 
\cC_T^{\lambda,n}:=\cC_{Q_T,W_T^{\lambda,n}}.
\]
Note that $W_T^{\lambda,1}$ coincides with a potential introduce in \cite{Lab09}. The following is due to \cite[Theorem 3.1 and Proof of Proposition 3.3]{GLM22}, \cite[Theorem 8.1]{Lab16}, and \cite[Proposition 4.2]{Lad12}.

\begin{thm}[{\cite{GLM22,Lab16,Lad12}}]\label{thm:lambda n}
The potential $W_T^{\lambda,n}$ of $Q_T$ is non-degenerate. Moreover, we assume that the characteristic of $K$ is zero if $n>1$. Then
\begin{itemize}
\item $J_T^{\lambda,n}$ is finite dimensional;
\item $\udim P=(4n,\ldots,4n)$ for any indecomposable projective $J_T^{\lambda,n}$-module $P$.
\end{itemize}
\end{thm}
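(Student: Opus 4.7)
The plan is to verify the three assertions separately by invoking the detailed analyses in \cite{GLM22,Lab09,Lab16,Lad12}, which together cover all cases $n\ge 1$. The three claims are structurally quite different: non-degeneracy is an inductive property of mutation, finite-dimensionality is a relation-counting statement, and the projective dimension vector is a basis-counting statement.

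For non-degeneracy: when $n=1$, Labardini's construction in \cite{Lab09}, refined in \cite{Lad12} and Theorem 8.1 of \cite{Lab16}, shows that each mutation of $(Q_T,W_T^{\lambda,1})$ is right-equivalent to $(Q_{T'},W_{T'}^{\lambda,1})$ for the tagged triangulation $T'$ obtained from $T$ by the corresponding flip; in particular no $2$-cycles appear in any mutation sequence. For $n>1$, I would appeal directly to \cite{GLM22}, which under the characteristic-zero hypothesis constructs the entire family $\{W_T^{\lambda,n}\}_{n\ge 1}$ as non-degenerate potentials. The key new input for $n>1$ is that higher powers of $\cG(T)$ contribute extra terms under mutation but never introduce $2$-cycles; this is verified inductively in \cite{GLM22} by analyzing how $\cG(T)^n$ transforms under the mutation procedure of Derksen--Weyman--Zelevinsky.

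For finite-dimensionality of $J_T^{\lambda,n}$: I would analyze the Jacobian ideal directly. The cyclic derivatives of $\sum_{\triangle}\cT(\triangle)$ give the standard ``triangle relations'' of surface-type QPs, which force most long paths to vanish after crossing a single triangle piece in a non-backtracking manner. The derivatives $\partial_{\alpha}(\lambda\cG(T)^n)$ then bound the number of times any surviving path may wrap around the unique puncture by at most $n$. Combining these two sources of relations yields an explicit finite spanning set; the bookkeeping is carried out in \cite{GLM22} for $n\ge 2$ and in \cite{Lab09,Lad12} for $n=1$.

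For the dimension vector statement: once a basis of each indecomposable projective $P_i$ is described as equivalence classes of paths in $Q_T$ starting at $i$ modulo the Jacobian relations above, a direct count yields $\udim P_i=(4n,\ldots,4n)$. The factor $n$ encodes the wrapping bound imposed by $\partial_{\alpha}\cG(T)^n=0$, while the factor $4$ reflects the four ``quadrants'' of the local geometry around each puncture-incident arc (equivalently, the four short paths through adjacent triangle pieces at the puncture). The main obstacle to a self-contained argument is this combinatorial count near the puncture, which is delicate because the relations from $\cG(T)^n$ interact nontrivially with the triangle relations; however, the explicit basis descriptions in \cite{GLM22} and the analogous analyses in \cite{Lab09,Lab16,Lad12} render the final count routine, and I would invoke them rather than reproduce the calculation.
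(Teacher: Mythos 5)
Your proposal is correct and follows essentially the same route as the paper, which gives no independent proof but attributes the theorem to \cite[Theorem 3.1 and Proof of Proposition 3.3]{GLM22}, \cite[Theorem 8.1]{Lab16}, and \cite[Proposition 4.2]{Lad12} (with \cite{Lab09} supplying the $n=1$ potential). Your added heuristics about the triangle relations, the wrapping bound from $\partial_\alpha\cG(T)^n$, and the count giving $(4n,\ldots,4n)$ are consistent glosses on those references rather than a different argument.
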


Note that there is a non-degenerate potential $W$ such that $J_{Q_T,W}$ is infinite dimensional \cite[Proof of Proposition 9.13]{GLS16}.

Now, we assume that the characteristic of $K$ is zero if $n>1$. Thus $J_T^{\lambda,n}$ is finite dimensional by Theorem \ref{thm:lambda n}. Theorems \ref{thm:bijection x}(2), \ref{thm:bijection sC}, and \ref{thm:all rigid}(2) induce bijections
\[
\SelectTips{eu}{}\xymatrix@C=10mm@R=5mm{
\bM_{\cS}^T:=\left\{U\in\bM_{\cS}\mid\text{tags in $U$ are the same as ones in $T$}\right\}\ar[r]^-{\sC_{Q_T,W_T^{\lambda,n}}^{-1}x_{T}}\ar@{<->}[d]_{\rho}&
\rrigid\cC_T^{\lambda,n}\ar[d]^{\Sigma^{-1}}\\
\bM_{\cS}\setminus\bM_{\cS}^T=\left\{U\in\bM_{\cS}\mid\text{tags in $U$ are different from ones in $T$}\right\}&
\rigid\cC_T^{\lambda,n}\setminus\rrigid\cC_T^{\lambda,n},
}
\]
where the left bijection $\rho$ is just an involution changing all tags. These bijections naturally induce a bijection
\begin{equation}\label{eq:sX 1-punc}
\sX_T^{\lambda,n}:\bM_{\cS}\rightarrow\rigid\cC_T^{\lambda,n}
\end{equation}
such that $\sX_T^{\lambda,n}(T)=\Sigma\Gamma_T^{\lambda,n}$ and $\sX_T^{\lambda,n}(\rho(T))=\Gamma_T^{\lambda,n}$.

\begin{lem}\label{lem:S1}
Every object in $\ctilt\cC_T^{\lambda,n}\setminus\rctilt\cC_T^{\lambda,n}$ satisfies the condition (S1,$(4n,\ldots,4n)$).
\end{lem}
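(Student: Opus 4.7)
The plan is an induction along the mutation graph, propagating (S1,$(4n,\ldots,4n)$) from $\Gamma_T^{\lambda,n}$ to every object in $\ctilt\cC_T^{\lambda,n}\setminus\rctilt\cC_T^{\lambda,n}$ via Proposition \ref{prop:special condition}. First I set up the connectivity: by Theorem \ref{thm:all rigid}(2), the autoequivalence $\Sigma^{-1}$ restricts to a mutation-equivariant bijection between $\rctilt\cC_T^{\lambda,n}$ and $\ctilt\cC_T^{\lambda,n}\setminus\rctilt\cC_T^{\lambda,n}$ sending $\Sigma\Gamma_T^{\lambda,n}$ to $\Gamma_T^{\lambda,n}$. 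Since $\rctilt\cC_T^{\lambda,n}$ is mutation-connected from $\Sigma\Gamma_T^{\lambda,n}$ by definition, every $C'\in\ctilt\cC_T^{\lambda,n}\setminus\rctilt\cC_T^{\lambda,n}$ is connected to $\Gamma_T^{\lambda,n}$ by a finite sequence of mutations staying inside $\ctilt\cC_T^{\lambda,n}\setminus\rctilt\cC_T^{\lambda,n}$.

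\emph{Base case.} For $C'=\Gamma_T^{\lambda,n}$ and each indecomposable summand $\Gamma_i$, Theorem \ref{thm:lambda n} gives $[\overline{\Gamma_i}]=\udim P_i=(4n,\ldots,4n)$, so the task is to check $[\overline{\Sigma^{-1}\Gamma_i}]=(8n,\ldots,8n)$. I would compute $\overline{\Sigma^{-1}\Gamma_i}=\Hom_{\cC_T^{\lambda,n}}(\Gamma,\Sigma^{-1}\Gamma_i)$ by combining the $2$-Calabi--Yau duality
\[
\Hom_{\cC_T^{\lambda,n}}(\Gamma,\Sigma^{-1}\Gamma_i)\simeq D\Hom_{\cC_T^{\lambda,n}}(\Sigma^{-1}\Gamma_i,\Sigma\Gamma)=D\Hom_{\cC_T^{\lambda,n}}(\Gamma_i,\Sigma^{2}\Gamma)
\]
with the fact that, for $\cS$ a closed surface with exactly one puncture, $J_T^{\lambda,n}$ is self-injective and all its indecomposable projectives share the dimension vector $(4n,\ldots,4n)$ by Theorem \ref{thm:lambda n}. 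Tracking the Nakayama permutation through the AR-structure of $\cC_T^{\lambda,n}$ then yields $\udim\overline{\Sigma^{-1}\Gamma_i}=(8n,\ldots,8n)$.

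\emph{Propagation.} At each step I verify (S2) in order to invoke Proposition \ref{prop:special condition}. Since $\cS$ is a closed surface with exactly one puncture, every tagged triangulation of $\cS$ decomposes $\cS$ into only triangle pieces (as noted at the start of Subsection \ref{subsec:1-punc}). For any $C'\in\ctilt\cC_T^{\lambda,n}\setminus\rctilt\cC_T^{\lambda,n}$ corresponding via \eqref{eq:sX 1-punc} to a tagged triangulation $T'$, an indecomposable summand $Z$ is a tagged arc lying in exactly two triangle pieces of $T'$, and by the description of the arrows in Table \ref{table:Qtri} each such triangle contributes exactly one outgoing arrow from the vertex corresponding to $Z$ in $Q_{T'}$. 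Hence $R_Z$ has exactly two indecomposable summands and (S2) is satisfied. Proposition \ref{prop:special condition} then propagates (S1,$(4n,\ldots,4n)$) along every mutation, completing the induction.

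\emph{Main obstacle.} The hard part is the base-case identification of $[\overline{\Sigma^{-1}\Gamma_i}]$. The $2$-Calabi--Yau formula reduces it to a computation in $\mod J_T^{\lambda,n}$, but to conclude the sharp value $(8n,\ldots,8n)$ one must use that the specific potential $W_T^{\lambda,n}$ in \eqref{eq:potential} produces a self-injective Jacobian algebra whose Nakayama automorphism preserves dimension vectors; without this additional structure, the correspondence between shifts in $\cC_T^{\lambda,n}$ and the tagged rotation on $\bM_{\cS}$ via $\sX_T^{\lambda,n}$ is not compatible enough to transport the identity directly from the $n$-intersection side.
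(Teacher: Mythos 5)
Your overall skeleton is the same as the paper's: verify (S2) using that a tagged triangulation of a once-punctured closed surface has only triangle pieces, take $\Gamma_T^{\lambda,n}=\sX_T^{\lambda,n}(\rho(T))$ as the base object, and propagate (S1,$(4n,\ldots,4n)$) through $\ctilt\cC_T^{\lambda,n}\setminus\rctilt\cC_T^{\lambda,n}$ by Proposition \ref{prop:special condition}. The problem is that the base case, which is the only substantive point of the lemma, is not established, and the reduction you offer is actually incorrect. In a $2$-Calabi--Yau category the Serre functor is $\Sigma^2$, so
$\Hom_{\cC_T^{\lambda,n}}(\Gamma,\Sigma^{-1}\Gamma_i)\simeq D\Hom_{\cC_T^{\lambda,n}}(\Sigma^{-1}\Gamma_i,\Sigma^{2}\Gamma)\simeq D\Hom_{\cC_T^{\lambda,n}}(\Gamma_i,\Sigma^{3}\Gamma)$,
whereas the term you wrote, $D\Hom(\Sigma^{-1}\Gamma_i,\Sigma\Gamma)=D\Hom(\Gamma_i,\Sigma^{2}\Gamma)$, is by that same duality isomorphic to $\Hom_{\cC_T^{\lambda,n}}(\Gamma,\Gamma_i)=\overline{\Gamma_i}$, whose class is $(4n,\ldots,4n)$; taken literally your computation would force $[\overline{\Sigma^{-1}\Gamma_i}]=[\overline{\Gamma_i}]$ and hence contradict, not prove, (S1,$(4n,\ldots,4n)$). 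Beyond this slip, the ingredients you then invoke --- self-injectivity of $J_T^{\lambda,n}$, a Nakayama automorphism preserving dimension vectors, and a controlled interaction of $\Sigma$ with the AR-structure --- are neither proved in your argument nor available in the paper, and your closing ``main obstacle'' paragraph in effect concedes that the base case is open. The paper does not go through self-injectivity at all: it reads (S1,$(4n,\ldots,4n)$) for $\Gamma_T^{\lambda,n}$ off Theorem \ref{thm:lambda n}, i.e.\ off the fact that the potential $W_T^{\lambda,n}$ was chosen precisely so that every indecomposable projective $J_T^{\lambda,n}$-module has dimension vector $(4n,\ldots,4n)$.

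There is also a smaller gap in your (S2) step. The number of indecomposable summands of $R_Z$ is governed by the Gabriel quiver of $\End_{\cC_T^{\lambda,n}}(C')$, not a priori by the combinatorial quiver $Q_{T'}$; you count arrows in $Q_{T'}$ without showing the two quivers agree for $C'\notin\rctilt\cC_T^{\lambda,n}$. The paper closes this by propagating the endomorphism quiver along mutations from $\Sigma\Gamma_T^{\lambda,n}$ (the last statement of Theorem \ref{thm:ctilt mutation}, using the compatibility of flips and mutations) and then transferring to the complement of $\rctilt\cC_T^{\lambda,n}$ via $\End_{\cC_T^{\lambda,n}}(C)\simeq\End_{\cC_T^{\lambda,n}}(\Sigma^{-1}C)$ together with $Q_{T'}=Q_{\rho(T')}$; some such identification is needed before you may conclude that each summand $Z$ has exactly two approximation summands.
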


\begin{proof}
Let $C\in\ctilt\cC_T^{\lambda,n}$ and $T'$ be a tagged triangulation of $\cS$ such that $\sX_T^{\lambda,n}(T')=C$. Note that $T'$ is obtained from $T$ (resp., $\rho(T)$) by a sequence of flips if and only if $C\in\rctilt\cC_T^{\lambda,n}$ (resp., $C\notin\rctilt\cC_T^{\lambda,n}$).

First, we show that $C$ satisfies (S2). For $C=R\oplus Z$ with indecomposable $Z$, the number of indecomposable direct summands of $R_Z$ is the number of arrows ending at the vertex corresponding to $Z$ in the quiver of $\End_{\cC_T^{\lambda,n}}(C)$. Therefore, we only need to show that the quiver of $\End_{\cC_T^{\lambda,n}}(C)^{\rm op}$ coincides with $Q_{T'}$ since $T'$ decomposes $\cS$ into only triangle pieces.

The quiver of $\End_{\cC_T^{\lambda,n}}(\sX_T^{\lambda,n}(T))^{\rm op}$ coincides with $Q_T$ by Theorem \ref{thm:cluster category}. Then it is follows from Theorem \ref{thm:ctilt mutation} and the compatibility of flips and mutations that the quiver of $\End_{\cC_T^{\lambda,n}}(C)^{\rm op}$ coincides with $Q_{T'}$ if $C\in\rctilt\cC_T^{\lambda,n}$. Moreover, since $Q_{T'}=Q_{\rho(T')}$ and $\End_{\cC_T^{\lambda,n}}(C)\simeq\End_{\cC_T^{\lambda,n}}(\Sigma^{-1}C)$, the quiver of $\End_{\cC_T^{\lambda,n}}(C)^{\rm op}$ coincides with $Q_{T'}$ for any $C$. Therefore, $C$ satisfies (S2).

Finally, $\Gamma_T^{\lambda,n}=\sX_T^{\lambda,n}(\rho(T))$ satisfies (S1,$(4n,\ldots,4n)$) by Theorem \ref{thm:lambda n}. Therefore, if $C\notin\rctilt\cC_T^{\lambda,n}$, then it also satisfies (S1,$(4n,\ldots,4n)$)  by Proposition \ref{prop:special condition}.
\end{proof}

\begin{prop}\label{prop:Intn=dim}
For $U\in\bM_{\cS}$,
\[
\Int^n_T(U)=\udim\Hom_{\cC_T^{\lambda,n}}(\Gamma_T^{\lambda,n},\sX_T^{\lambda,n}(U)).
\]
\end{prop}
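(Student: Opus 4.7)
The plan is to split on whether $U \in \bM_{\cS}^T$ and to handle the nontrivial case by extending the $(S1)$-mutation argument of Lemma~\ref{lem:S1} from $\ctilt\cC_T^{\lambda,n} \setminus \rctilt\cC_T^{\lambda,n}$ to $\rctilt\cC_T^{\lambda,n}$.

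First I would dispose of the case $U \in \bM_{\cS}^T$: here $\sX_T^{\lambda,n}(U)$ is the image of $x_T(U)$ under $\sC_{Q_T,W_T^{\lambda,n}}^{-1}$, so Theorem~\ref{thm:bijection sC} identifies $\udim\Hom_{\cC_T^{\lambda,n}}(\Gamma_T^{\lambda,n},\sX_T^{\lambda,n}(U))$ with $f(x_T(U))$, which equals $\Int_T(U)$ by Theorem~\ref{thm:bijection x}(2). Since no pair $(t,u) \in T \times U$ has differing tags at the puncture, the $B$-term in Definition~\ref{def:Int} vanishes on all such pairs, so $\Int^n_T(U) = \Int_T(U)$, finishing this case.

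For $U \in \bM_{\cS} \setminus \bM_{\cS}^T$ one has $\sX_T^{\lambda,n}(U) = \Sigma^{-1}\sX_T^{\lambda,n}(\rho(U))$ with $\rho(U) \in \bM_{\cS}^T$. Decomposing $\sX_T^{\lambda,n}(\rho(U))$ into indecomposable summands in $\rrigid\cC_T^{\lambda,n}$ (with total multiplicity $|U|$) and using additivity of $\udim\Hom(\Gamma_T^{\lambda,n},-)$, the desired identity, combined with the previous case and with \eqref{eq:Int rho} extended over multi-sets, reduces to the following claim: for each indecomposable $X \in \rrigid\cC_T^{\lambda,n}$,
\[
[\overline{\Sigma^{-1}X}] - [\overline{X}] = (4n,\ldots,4n) \quad \text{in } K_0(J_T^{\lambda,n}).
\]
To establish this claim I would show that every $R \in \rctilt\cC_T^{\lambda,n}$ satisfies condition $(S1,(4n,\ldots,4n))$, by the same machinery as Lemma~\ref{lem:S1} but with base point $\Sigma\Gamma_T^{\lambda,n}$. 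At the base, each indecomposable summand is some $\Sigma P_i$ with $\overline{\Sigma P_i} = \Hom_{\cC_T^{\lambda,n}}(\Gamma_T^{\lambda,n},\Sigma\Gamma_T^{\lambda,n}) = 0$ and $\overline{\Sigma^{-1}(\Sigma P_i)} = \overline{P_i}$ of dimension vector $(4n,\ldots,4n)$ by Theorem~\ref{thm:lambda n}, proving $(S1)$; the quiver of $\End_{\cC_T^{\lambda,n}}(\Sigma\Gamma_T^{\lambda,n})^{\rm op}$ is $Q_T$, in which every vertex has in-degree $2$ because $\cS$ is closed with exactly one puncture and $T$ is composed of triangle pieces only, yielding $(S2)$. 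Proposition~\ref{prop:special condition} then propagates $(S1)$ along every mutation step within $\rctilt\cC_T^{\lambda,n}$.

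The main obstacle is confirming that $(S2)$ persists throughout $\rctilt\cC_T^{\lambda,n}$, not only at the base. This is a soft combinatorial statement: the quiver of $\End_{\cC_T^{\lambda,n}}(R')^{\rm op}$ coincides with $Q_{T'}$ for the tagged triangulation $T'$ attached to $R'$ (an identification already used in the proof of Lemma~\ref{lem:S1}), and flipping a tagged triangulation of a closed surface with one puncture again produces a triangulation made of triangle pieces only, so the in-degree-$2$ property is preserved under every flip. Once $(S2)$ is seen to propagate, $(S1)$ does as well by Proposition~\ref{prop:special condition}, which yields the claim for all indecomposable $X \in \rrigid\cC_T^{\lambda,n}$ and completes the proof of the proposition.
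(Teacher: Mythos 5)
Your proposal is correct, and its first case and the reduction via \eqref{eq:Int rho} to the identity $[\overline{\Sigma^{-1}X}]=[\overline{X}]+(4n,\ldots,4n)$ for indecomposable $X\in\ririgid\cC_T^{\lambda,n}$ coincide with the paper's computation. Where you diverge is in how that identity is obtained: the paper simply cites Lemma \ref{lem:S1}, which concerns the objects of $\ctilt\cC_T^{\lambda,n}\setminus\rctilt\cC_T^{\lambda,n}$ with base point $\Gamma_T^{\lambda,n}$, whereas you prove the mirrored statement that every object of $\rctilt\cC_T^{\lambda,n}$ satisfies (S1,$(4n,\ldots,4n)$), with base point $\Sigma\Gamma_T^{\lambda,n}$. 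Your version is in fact the form that the final step literally requires: the summands whose $\Sigma^{-1}$-shift one must control are the reachable summands of $\sX_T^{\lambda,n}(\rho(U))$, and for these the condition (S1) as written in the paper pertains to reachable cluster tilting objects; the paper's Lemma \ref{lem:S1}, read literally, is one suspension off in this application, so your self-contained variant is the cleaner route (and its base case really is immediate from Theorem \ref{thm:lambda n} together with rigidity of $\Gamma_T^{\lambda,n}$, while the propagation uses Proposition \ref{prop:special condition} exactly as the paper does). Your treatment of (S2) along the mutation path is also sound and requires nothing beyond what the paper already establishes: the identification of the quiver of $\End_{\cC_T^{\lambda,n}}(R')^{\rm op}$ with $Q_{T'}$ for reachable $R'$ is precisely the part of the proof of Lemma \ref{lem:S1} that starts from Theorem \ref{thm:cluster category} and propagates by Theorem \ref{thm:ctilt mutation}, and every vertex of $Q_{T'}$ has in- and out-degree two because a tagged triangulation of a once-punctured closed surface consists of triangle pieces only. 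The only blemish is notational: the indecomposable summands of $\Sigma\Gamma_T^{\lambda,n}$ are the shifts $\Sigma\Gamma_i$ of the summands of $\Gamma_T^{\lambda,n}$, not ``$\Sigma P_i$''; it is $\overline{\Gamma_i}=\Hom_{\cC_T^{\lambda,n}}(\Gamma_T^{\lambda,n},\Gamma_i)$ that is the indecomposable projective of dimension vector $(4n,\ldots,4n)$, while $\overline{\Sigma\Gamma_i}=0$. With that relabeling your argument is complete.
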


\begin{proof}
If $U\in\bM_{\cS}^T$, then
\[
\Int^n_T(U)=\Int_T(U)=f(x_T(U))=\udim\Hom_{\cC_T^{\lambda,n}}(\Gamma_T^{\lambda,n},\sX_T^{\lambda,n}(U)).
\]
If $U\in\bM_{\cS}\setminus\bM_{\cS}^T$, then the desired equality is given as follows:
\begin{align*}
\Int^n_T(U)
&=\Int^n_T(\rho(U))+|U|(4n,\ldots,4n)
&&\text{(by \eqref{eq:Int rho})}\\
&=\udim\Hom_{\cC_T^{\lambda,n}}(\Gamma_T^{\lambda,n},\sX_T^{\lambda,n}(\rho(U)))+|U|(4n,\ldots,4n)
&&\text{(by $\rho(U)\in\bM_{\cS}^T$)}\\
&=\udim\Hom_{\cC_T^{\lambda,n}}(\Gamma_T^{\lambda,n},\Sigma\sX_T^{\lambda,n}(U))+|U|(4n,\ldots,4n)
&&\text{(by the definition of $\sX_T^{\lambda,n}$)}\\
&=\udim\Hom_{\cC_T^{\lambda,n}}(\Gamma_T^{\lambda,n},\sX_T^{\lambda,n}(U))
&&\text{(by Lemma \ref{lem:S1})}.\qedhere
\end{align*}
\end{proof}

By Theorem \ref{thm:ctilt tau}, \eqref{eq:sX 1-punc}, and Proposition \ref{prop:Intn=dim}, there is a bijection
\[
\sM_T^{\lambda,n}:=\Hom_{\cC_T^{\lambda,n}}(\Gamma_T^{\lambda,n},\sX_T^{\lambda,n}(-)):\{U\in\bM_{\cS}\mid U\cap T=\emptyset\}\rightarrow\trigid J_T^{\lambda,n}\setminus\{0\}
\]
such that $\Int^n_T(U)=\udim\sM_T^{\lambda,n}(U)$ for $U\in\bM_{\cS}$ with $U\cap T=\emptyset$. Moreover it induces a bijection
\[
\sM_T^{\lambda,n}:\{\text{Tagged triangulations of $\cS$}\}\rightarrow\sttilt J_T^{\lambda,n}.
\]
Therefore, Theorems \ref{thm:unique rigid0} and \ref{thm:unique sttilt} follow from Theorem \ref{thm:unique Intn} and Corollary \ref{cor:unique tag tri}, respectively. For any $\gamma,\delta\in T$, they form two cycles of length one in $G_T$ and
\[
\udim\sM_T^{\lambda,n}(\rho(\gamma))=\Int^n_T(\rho(\gamma))=\Int^n_T(\rho(\delta))=\udim\sM_T^{\lambda,n}(\rho(\delta)).
\]
Thus Theorem \ref{thm:unique rigid} also holds.

\begin{example}\label{ex:rep}
For a tagged triangulation $T$ in Example \ref{ex:T Int}, we take a non-degenerate potential (see \cite{Lab09})
\[
W_T=-a_3a_2a_1+a_5a_4a_1-b_3b_2b_1+b_5b_4b_1+cb_3b_2a_3a_2
\]
of
\[
Q_T=
\begin{tikzpicture}[baseline=-1mm]
\node(2)at(90:1.5){$2$};\node(1)at($(2)+(-150:3)$){$1$}; \node(3)at($(2)+(-30:3)$){$3$};
\node(4)at($(1)!0.5!(2)$){$4$};\node(5)at($(4)+(-60:0.8)$){$5$};\node(6)at($(3)!0.5!(2)$){$6$};\node(7)at($(6)+(-120:0.8)$){$7$};
\draw[->](2)to[out=-30,in=-150,relative]node[fill=white,inner sep=0.1]{$a_1$}(1);
\draw[->](1)--node[fill=white,inner sep=1]{$a_2$}(4);\draw[->](4)--node[fill=white,inner sep=1]{$a_3$}(2);
\draw[->](1)--node[fill=white,inner sep=1]{$a_4$}(5);\draw[->](5)--node[fill=white,inner sep=1]{$a_5$}(2);
\draw[->](3)to[out=-30,in=-150,relative]node[fill=white,inner sep=0.1]{$b_1$}(2);
\draw[->](2)--node[fill=white,inner sep=1]{$b_2$}(6);\draw[->](6)--node[fill=white,inner sep=1]{$b_3$}(3);
\draw[->](2)--node[fill=white,inner sep=1]{$b_4$}(7);\draw[->](7)--node[fill=white,inner sep=1]{$b_5$}(3);
\draw[->](3)to[out=-160,in=-20]node[fill=white,inner sep=1]{$c$}(1);
\end{tikzpicture}\ .
\]
Then the associated Jacobian algebra $J_T:=J_{Q_T,W_T}$ is the quotient of the path algebra $KQ_T$ by the ideal generated by
\begin{gather*}
a_1a_5,\ a_4a_1,\ b_1b_5,\ b_4b_1,\ b_3b_2a_3a_2,\ -a_3a_2+a_5a_4,\ -b_3b_2+b_5b_4,\\
-a_1a_3+cb_3b_2a_3,\ -a_2a_1+a_2cb_3b_2,\ -b_1b_3+a_3a_2cb_3,\ -b_2b_1+b_2a_3a_2c.\\
\end{gather*}
The indecomposable projective $J_T$-module $P_4$ is described by the following representation of $Q_T$ (see e.g. \cite{ASS06}):
\[
\begin{tikzpicture}[baseline=-1mm,scale=1.5]
\node(2)at(90:1.5){$K$};\node(1)at($(2)+(-150:3)$){$K$}; \node(3)at($(2)+(-30:3)$){$K$};
\node(4)at($(1)!0.5!(2)$){$K^2$};\node(5)at($(4)+(-60:0.8)$){$0$};\node(6)at($(3)!0.5!(2)$){$K$};\node(7)at($(6)+(-120:0.8)$){$K$};
\draw[->](2)to[out=-30,in=-150,relative]node[fill=white,inner sep=0.1]{$1$}(1);
\draw[->](1)--node[fill=white,inner sep=1]{\renewcommand\arraystretch{0.7}$\begin{bmatrix}0\\1\end{bmatrix}$}(4);
\draw[->](4)--node[fill=white,inner sep=1]{$[1,0]$}(2);
\draw[->](1)--node[fill=white,inner sep=1]{$0$}(5);\draw[->](5)--node[fill=white,inner sep=1]{$0$}(2);
\draw[->](3)to[out=-30,in=-150,relative]node[fill=white,inner sep=0.1]{$0$}(2);
\draw[->](2)--node[fill=white,inner sep=1]{$1$}(6);\draw[->](6)--node[fill=white,inner sep=1]{$1$}(3);
\draw[->](2)--node[fill=white,inner sep=1]{$1$}(7);\draw[->](7)--node[fill=white,inner sep=1]{$1$}(3);
\draw[->](3)to[out=-160,in=-20]node[fill=white,inner sep=1]{$1$}(1);
\end{tikzpicture}\ ,
\]
where $1$ denotes the identity. Then $\udim P_4=(1,1,1,2,0,1,1)=\Int_T(\rho^{-1}(4))=\Int_T(\rho(4))$. Similarly, we can see that $\udim P_5=(1,1,1,0,2,1,1)=\Int_T(\rho(5))$, $\udim P_6=(1,1,1,1,1,2,0)=\Int_T(\rho(6))$, and $\udim P_7=(1,1,1,1,1,0,2)=\Int_T(\rho(7))$. Therefore, the basic projective $J_T$-modules $P_4\oplus P_5$ and $P_6\oplus P_7$ have the same dimension vector $(2,2,2,2,2,2,2)$ (cf. Examples \ref{ex:graph} and \ref{ex:cluster}).

Since the Jacobian algebra $J_T=P_1\oplus\cdots\oplus P_7$ is a basic support $\tau$-tilting $J_T$-module with dimension vector $\udim J_T=\Int_T(\rho^{-1}(T))=(9,8,6,7,7,7,7)$, there are no basic support $\tau$-tilting $J_T$-modules with dimension vector $(9,8,6,7,7,7,7)$ by Theorem \ref{thm:unique sttilt}. On the other hand, the $\tau$-rigid $J_T$-module $P_1\oplus P_2\oplus P_3\oplus P_4^{\oplus 2}\oplus P_5^{\oplus 2}$ has the same dimension vector $(9,8,6,7,7,7,7)$.

Finally, we also know that non-basic support $\tau$-tilting $J_T$-modules $J_T\oplus P_4\oplus P_5$ and $J_T\oplus P_6\oplus P_7$ have the same dimension vector $(11,10,8,9,9,9,9)$.
\end{example}

\medskip\noindent{\bf Acknowledgements}.
The author would like to thank Changjian Fu, Yasuaki Gyoda, and Daniel Labardini-Fragoso for useful comments and discussions. He also thank Sota Asai and Jiarui Fei for kindly answering my questions. He was JSPS Overseas Research Fellow and supported by JSPS KAKENHI Grant Numbers JP21K13761.

\bibliographystyle{alpha}
\bibliography{bib}

\end{document}